\definecolor{purple}{rgb}{0.65, 0, 1}
\definecolor{pink}{rgb}{1, 0, 0.9}
\definecolor{orange}{rgb}{1,.5,0}
\numberwithin{equation}{section}
\newtheorem{theorem}{Theorem}[section]
\newtheorem{lemma}[theorem]{Lemma}
\newtheorem{definition}[theorem]{Definition}
\newtheorem{proposition}[theorem]{Proposition}
\newtheorem{corollary}[theorem]{Corollary}
\newcommand{\comments}[1]{}
\newcounter{DTheoremCounter}
\newcounter{DAppCounter}
\newcounter{claimCounter}
\definecolor{purple}{rgb}{0.65, 0, 1}
\definecolor{green}{rgb}{0, 1, 0}
\definecolor{orange}{rgb}{1,.5,0}
\def \Rm {\mathbb R}
\def \Tm {\mathbb T}
\def \z {z}
\def \Z {Z}
\def \eps {\epsilon}
\newcommand{\vertiii}[1]{{\vert\kern-0.25ex\vert\kern-0.25ex\vert #1
    \vert\kern-0.25ex\vert\kern-0.25ex\vert}}
\begin{document}

\title{Local regularity for the modified SQG patch equation}
\author{Alexander Kiselev\footnote{Department of Mathematics, Rice
    University, 6100 S. Main St.,
Houston, TX 77005-1892; kiselev@rice.edu}
\and Yao Yao\footnote{School of Mathematics, Georgia Institute of Technology, 686 Cherry Street, Atlanta, GA 30332-0160; yaoyao@math.gatech.edu}
\and Andrej Zlato\v s\footnote{Department of Mathematics, University of
  Wisconsin, 480 Lincoln Dr.
Madison, WI 53706-1325; zlatos@math.wisc.edu}}

\maketitle

\begin{abstract}
We study the patch dynamics on the whole plane and on the half-plane for a family of active scalars called modified SQG equations.
These involve a parameter $\alpha$ which appears in the power of the kernel in their Biot-Savart laws and
describes the degree of regularity of the equation.  The values $\alpha=0$ and $\alpha=\frac 12$ correspond to the 2D Euler and SQG equations, respectively.
We establish here local-in-time regularity for these models, for all $\alpha\in(0,\frac 12)$ on the whole plane and for all small $\alpha>0$ on the half-plane.
We use the latter result in \cite{KRYZ2}, where we show existence of regular initial data on the half-plane which lead to a finite time singularity.
\end{abstract}

\section{Introduction}

Two of the most important models in two-dimensional fluid dynamics are the (incompressible) 2D Euler equation, modeling motion of inviscid fluids, and the surface quasi-geostrophic (SQG) equation, which is used in atmospheric science models, appearing for instance in Pedlosky \cite{Ped}. In the mathematical literature, the SQG equation was first discussed in the work of Constantin, Majda, and Tabak \cite{CMT}.   Both these equations (the former in the vorticity formulation) can be written in the  form
\begin{equation}\label{sqg}
\partial_t \omega + (u \cdot \nabla) \omega =0,
\end{equation}
along with initial condition $\omega(\cdot,0)=\omega_0$ and the Biot-Savart law $u := \nabla^\perp (-\Delta)^{-1+\alpha} \omega$. Here $\nabla^\perp := (\partial_{x_2}, -\partial_{x_1})$, and the Euler and SQG cases are obtained by taking $\alpha=0$ and $\alpha=\frac 12$, respectively.  Note that the Biot-Savart law for the 2D Euler equation is therefore more regular (by one derivative) than that of the SQG equation.

Global regularity of solutions to the 2D Euler equation  has been known since the works of Wolibner \cite{Wolibner} and H\"older \cite{Holder}.
The necessary estimates  barely close, and the upper bound on the growth of the derivatives of the vorticity is double exponential
in time. Recently,  Kiselev and \v Sver\' ak showed that this upper bound is sharp by constructing an example  of a solution
to the 2D Euler equation on a disk whose gradient indeed grows double exponentially in time \cite{KS}. Some earlier examples of unbounded growth are due to Yudovich \cite{Yud1,Yud2},
Nadirashvili \cite{Nad}, and Denisov \cite{Den1,Den2}, and exponential growth on a domain without a boundary (the torus $\Tm^2$) was recently shown to be possible by Zlato\v s \cite{ZlaEuler}.
On the other hand, while existence of global weak solutions for the SQG equation (which shares many of its features with the 3D Euler equation --- see, e.g., \cite{CMT,mb,Rodrigo}) was proved by Resnick \cite{Resnick}, the global regularity vs finite time blow-up question for it is a major open problem.

Both the 2D Euler and SQG equations belong to the class of active scalars,  equations of the form \eqref{sqg} where the fluid velocity is determined from the advected scalar $\omega$ itself.
A natural family of active scalars which interpolates between the 2D Euler and SQG equations is given by \eqref{sqg} with $\alpha\in(0,\frac 12)$ in the above Biot-Savart law.  
This family  has been called modified or generalized SQG equations in the literature
(see, e.g., \cite{CIW}, or the paper \cite{PHS} by Pierrehumbert, Held, and Swanson  for a geophysical literature reference).
The global regularity vs finite time blow-up question is still open for all $\alpha>0$.

While the above works studied active scalars with sufficiently smooth initial data, an important class of solutions to  these equations arises from rougher initial data.  Of particular interest is the case of characteristic functions of domains with smooth boundaries, or more generally,  sums of characteristic functions of such domains multiplied by some coupling constants.
Solutions originating from such initial data are called vortex patches, and they model flows with abrupt variations in their vorticity. The latter, including hurricanes and tornados, are common in nature.  Existence and uniqueness of vortex patch solutions to the 2D Euler equation on the whole plane  goes back to the work of  Yudovich \cite{Yudth}, and
regularity in this setting refers to a sufficient smoothness of the patch boundaries as well as to a lack of both self-intersections of each patch boundary and touching of different patches.

The vortex patch problem can be viewed as an interface evolution problem, and singularity formation for 2D Euler patches had initially been conjectured by Majda \cite{Majda} based on relevant numerical simulations by Buttke \cite{Butt}.  Later, simulations by Dritschel, McIntyre, and Zabusky \cite{DM,DZ}
questioning the singularity formation prediction appeared, and we refer to \cite{Pulli} for a review of these and related works.  This controversy was settled in 1993, when  Chemin \cite{c} proved that the boundary of a 2D Euler patch remains regular for all times, with a double exponential upper bound on the temporal growth of its curvature (see also the work by Bertozzi and Constantin \cite{bc} for a different proof).

The patch problem for the SQG equation is comparatively more involved to set up rigorously. Local existence and uniqueness in the class of weak solutions of the special type $\omega(x,t)=\chi_{\{x_2<\varphi(x_1,t)\}}$ with $\varphi\in C^\infty$ and periodic in $x_1$, corresponding to (single patch) initial data with the same property, was proved by Rodrigo \cite{Rodrigo}.
For SQG and modified SQG patches with boundaries which are simple closed $H^3$ curves, local existence was obtained by Gancedo \cite{g} via solving a related contour equation whose solutions are some parametrizations of the patch boundary (uniqueness of solutions was also proved for the contour equation when $\alpha\in(0,\frac 12)$, although not for the original modified SQG patch equation).  Local existence of such contour solutions in the more singular case $\alpha\in(\frac 12,1]$ was obtained by Chae, Constantin, C\' ordoba, Gancedo, and Wu \cite{CCCGW}.  Finally, existence of splash singularities (touch of exactly  two segments of a patch boundary, which remains uniformly $H^3$) for the SQG equation was ruled out by Gancedo and Strain \cite{GS}.

A computational study of the SQG and modified SQG patches by C\' ordoba, Fontelos, Mancho, and Rodrigo \cite{CFMR} (where the patch problem for the modified SQG equation first appeared) suggested finite time singularity formation, with two patches touching each other and simultaneously developing corners at the point of touch. A more careful numerical study
by Mancho \cite{Mancho} suggests self-similar elements involved in this singularity formation
process, but its rigorous confirmation and understanding is still lacking.

In this paper, we consider the patch evolution for the modified SQG equations, both on the whole plane and on the half-plane, and prove local-in-time regularity for these models (for all $\alpha\in(0,\frac 12)$ on the plane and for all sufficiently small $\alpha>0$ on the half-plane).  Our motivation is, in fact, primarily the half-plane case because in the companion paper \cite{KRYZ2} we show existence of finite time blow-up for patch solutions to the modified SQG equation with small $\alpha>0$ on the half-plane.
To the best of our knowledge, this is the first rigorous proof of finite time blow-up in this type of fluid dynamics models.

Let us now turn to the specifics of the model we will study.  We only consider here the case $\alpha\in(0,\frac 12)$, and we concentrate on the half-plane case $D := \Rm\times\Rm^+$.  This is both because this case is our main motivation, and because the proofs are more involved (in fact, the whole-plane proofs are essentially contained in the half-plane ones).
The corresponding patch evolution can then be formally defined via the Biot-Savart law
\begin{equation}
u(x, t) :=  \int_D \left( \frac{(x-y)^\perp}{|x-y|^{2+2\alpha}} -
\frac{(x-\bar y)^\perp}{|x-\bar y|^{2+2\alpha}} \right) \omega(y,t) dy
\label{eq:velocity_law}
\end{equation}
for $x \in \bar D$, along with the requirement that $\omega$ is advected by the flow given by $u$, that is,
\begin{equation}\label{1.31}
\omega(x,t) = \omega \left(\Phi^{-1}_t(x),0\right),
\end{equation}
where
\begin{equation}\label{eq:alpha}
\frac{d}{dt}\Phi_t(x) = u\left(\Phi_t(x),t \right) \qquad \text{and} \qquad \Phi_0(x)=x.
\end{equation}
Here $v^{\perp}:=(v_2, -v_1)$ and $\bar v:=(v_1, -v_2)$ for $v=(v_1,v_2)$, and we note that the integral in \eqref{eq:velocity_law} equals  $\nabla^\perp (-\Delta)^{-1+\alpha}\omega$  (up to a positive pre-factor, which can be dropped without loss due to scaling), with the Dirichlet Laplacian on $D$.  The vector field $u$ is then divergence free and tangential to the boundary $\partial D$ (i.e., $u_2(x,t)=0$ when $x_2=0$).

We have to be careful, however, with the rigorous definition of the evolution because the low regularity of the fluid velocity $u$ need not allow for a unique definition of trajectories from \eqref{eq:alpha} when $\alpha>0$ (existence will not be an issue here because $u$ is continuous for $\alpha<\frac 12$).
We introduce here the following Definition \ref{D.1.1} which, as we discuss below, encompasses various previously used definitions.  We start with a definition of some  norms of boundaries of domains in $\Rm^2$, letting here $\mathbb T:=[-\pi,\pi]$ with $\pm\pi$ identified.

\begin{definition} \label{D.1.0}
Let  $\Omega\subseteq \mathbb{R}^2$ be a bounded open set whose boundary $\partial\Omega$ is a simple closed $C^{1}$ curve with arc-length $|\partial\Omega|$.  We call a {\it constant speed parametrization} of $\partial\Omega$ any counter-clockwise parametrization $z:\mathbb{T}\to \mathbb{R}^2$ of $\partial\Omega$ with $|z'|\equiv \frac {|\partial\Omega|}{2\pi}$ on $\mathbb T$ (all such $z$ are translations of each other),
and we define $\|\Omega\|_{C^{m,\gamma}}:=\|z\|_{C^{m,\gamma}}$ and $\|\Omega\|_{H^{m}}:=\|z\|_{H^{m}}$.
\end{definition}

{\it Remark.}
 It is not difficult to see (using Lemma \ref{lem:regularity} below), that an $\Omega$ as above satisfies $\|\Omega\|_{C^{m,\gamma}}<\infty$ (resp.~$\|\Omega\|_{H^{m}}<\infty$) precisely when for some $r>0$, $M<\infty$, and each $x\in\partial\Omega$, the set $\partial\Omega\cap B(x,r)$ is (in the coordinate system centered at $x$ and with axes given by the tangent and normal vectors to $\partial\Omega$ at $x$) the graph of a function with $C^{m,\gamma}$ (resp. $H^m$) norm less than $M$.

Next, let $d_H(\Gamma,\tilde\Gamma)$ be the Hausdorff distance of sets $\Gamma,\tilde\Gamma$, and for a set $\Gamma\subseteq\Rm^2$, vector field $v:\Gamma\to\Rm^2$, and $h\in\Rm$, we let
\[
X_{v}^h[\Gamma]:= \{ x+hv(x)\,:\, x\in\Gamma\}.
\]
Our definition of patch solutions to \eqref{sqg}-\eqref{eq:velocity_law} on the half-plane is now as follows.

\begin{definition}\label{D.1.1}
Let $D:=\Rm\times\Rm^+$, let $\theta_1,\dots,\theta_N\in\Rm\setminus\{0\}$, and for each $t\in[0,T)$, let $\Omega_1(t),\dots,\Omega_N(t)\subseteq D$ be bounded open sets whose boundaries $\partial \Omega_k(t)$ are pairwise disjoint simple closed curves, such that each $\partial \Omega_k(t)$ is also continuous in $t\in[0,T)$ with respect to $d_H$.  Denote $\partial\Omega(t):=\bigcup_{k=1}^N \partial\Omega_k(t)$ and let 
\begin{equation} \label{1.55}
\omega(\cdot,t) := \sum_{k=1}^N \theta_k \chi_{\Omega_k(t)}.
\end{equation}

If for each $t\in(0,T)$ we have
\begin{equation}\label{1.3}
\lim_{h\to 0} \frac{d_H \Big(\partial\Omega(t+h),X_{u(\cdot,t)}^h[\partial\Omega(t)] \Big)}h = 0,
\end{equation}
with $u$ from \eqref{eq:velocity_law},
then $\omega$ is a patch solution to \eqref{sqg}-\eqref{eq:velocity_law} on the time interval $[0,T)$.  If we also have $\sup_{t\in [0,T']} \|\Omega_k(t)\|_{C^{m,\gamma}}<\infty$ (resp.~$\sup_{t\in [0,T']} \|\Omega_k(t)\|_{H^{m}}<\infty$)  for each $k$ and $T'\in(0,T)$,
then $\omega$ is a $C^{m,\gamma}$ (resp.~$H^m$) patch solution to \eqref{sqg}-\eqref{eq:velocity_law} on  $[0,T)$.
\end{definition}

{\it Remarks.}
1.   Continuity of $u$ (which is not hard to show, see the last claim in the elementary Lemma \ref{lemma:uniform_u_bound} below)
and \eqref{1.3} mean that for patch solutions, $\partial\Omega$ is moving with velocity $u(x,t)$ at $t\in[0,T)$ and $x\in \partial\Omega(t)$.

2.  We note that our definition encompasses well-known definitions for the 2D Euler equation  in terms of \eqref{eq:alpha} and in terms of the normal velocity at $\partial\Omega$.  Indeed, if $\omega$ satisfies $\partial\Omega_{k}(t) = \Phi_t(\partial\Omega_{k}(0))$ for each $k$ and $t\in[0,T)$ and the patch boundaries remain pairwise disjoint simple closed curves, then continuity of $u$, compactness of $\partial\Omega(t)$, and \eqref{eq:alpha} show that $\omega$ is a patch solution to  \eqref{sqg}-\eqref{eq:velocity_law} on  $[0,T)$.  Moreover, if $\partial\Omega(t)$ is $C^1$  and $n_{x,t}$ is the outer unit normal vector at $x\in \partial\Omega(t)$, then it is easy to see that \eqref{1.3} is equivalent to motion of $\partial\Omega(t)$ with outer normal velocity $u(x,t)\cdot n_{x,t}$ at each $x\in\partial\Omega(t)$ (which can be defined in a natural way by \eqref{1.3} with $u(\cdot,t)$ replaced by $(u(\cdot,t)\cdot n_{\cdot,t})n_{\cdot,t}$).
However, Definition \ref{D.1.1} can be stated even if $\Phi_t(x)$ cannot be uniquely defined for some $x\in\partial\Omega(0)$ (when $\alpha>0$, this might even be the case for $x\notin\partial\Omega(0)$, as the hypotheses of Proposition~\ref{P.1.3}(a) below suggest) or when $\partial\Omega(t)$ is not $C^1$.

3. As we show at the end of this introduction, $C^1$ patch solutions to \eqref{sqg}-\eqref{eq:velocity_law} are also weak solutions to \eqref{sqg} in the sense that for each $f\in C^1(\bar D)$ we have
\begin{equation} \label{1.6}
\frac d{dt} \int_D \omega(x,t)f(x)dx = \int_D \omega(x,t) [u(x,t)\cdot\nabla f(x)] dx
\end{equation}
for all $t\in(0,T)$, with  both sides continuous in $t$.
Also, weak solutions to \eqref{sqg}-\eqref{eq:velocity_law} which are of the form \eqref{1.55} and have $C^1$ boundaries $\partial\Omega_k(t)$ which move with some continuous velocity $v:\Rm^2\times(0,T)\to\Rm^2$ (in the sense of \eqref{1.3} with $v$ in place of $u$), do satisfy \eqref{1.3} with $u$ (hence they are patch solutions if those boundaries remain pairwise disjoint simple closed curves).
Finally, $|\Omega_k(t)|=|\Omega_k(0)|$ holds for each $k$ and $t\in[0,T)$.

4.  In the 2D Euler case $\alpha=0$, it is not difficult to show using standard results of Yudovich theory that if $\omega(x,0)=\omega_0(x)$ is as in Definition~\ref{D.1.1}, then there exists
a unique global weak solution $\omega$ to \eqref{sqg}, and it is of the form \eqref{1.55}, with $\partial\Omega_{k}(t) = \Phi_t(\partial\Omega_{k}(0)).$ 
Remark 2 then shows that if the patch boundaries remain disjoint simple closed curves,
$\omega$ is also a patch solution to  \eqref{sqg}-\eqref{eq:velocity_law} on  $[0,\infty)$.
Moreover, $\omega$ must be unique in the class of $C^1$ patch solutions (if it belongs there) because these are also weak solutions.
In \cite{KRYZ2} we prove that the $C^{1,\gamma}$ patch solutions in the 2D Euler case are globally regular.
Therefore, since the Euler case is well-understood, we will only consider  $\alpha>0$ here.

Note that Definition \ref{D.1.1} automatically requires patch boundaries to not touch each other or themselves.  If this happens, the solution develops a singularity.
Also note that the definition allows for, e.g., $\Omega_2(t)\subseteq\Omega_1(t)$ and $\theta_2=-\theta_1$, and then $\sum_{k=1}^2 \theta_k \chi_{\Omega_k(t)}$ represents a non-simply connected patch.
Finally, we will say that $\omega$ is a patch solution to \eqref{sqg}-\eqref{eq:velocity_law} on $[0,T]$ if it is a patch solution to \eqref{sqg}-\eqref{eq:velocity_law} on $[0,T')$ for some $T'>T$.

Before we turn to our main results, let us address the relationship of the flow maps  from \eqref{eq:alpha} to the patch solution definition \eqref{1.3}. Note that since $u$ is smooth away from $\partial\Omega$ (see Lemma \ref{lem:du_crude} below), $\Phi_t(x)$ remains unique at least until it hits $\partial\Omega$ (in the Euler case, $\Phi_t(x)$ is always unique because $u$ is log-Lipschitz), after which it still exists but need not be unique.   The following result shows, in particular, that for $\alpha<\frac 14$ and patch solutions with sufficiently smooth boundaries, this remains true for any $x\in \bar D\setminus\partial\Omega(0)$ until time $T$.

\begin{proposition} \label{P.1.3}
Let $\omega$ be as in the first paragraph of Definition \ref{D.1.1}. For $x\in \bar D\setminus \partial\Omega(0)$, let $t_x\in [0,T]$ be the maximal time such that the solution of \eqref{eq:alpha} with $u$ from \eqref{eq:velocity_law} satisfies $\Phi_t(x)\in \bar D\setminus \partial\Omega(t)$ for each $t\in[0,t_x)$.
Then we have the following.
%
\begin{enumerate}[(a)]
\item If $\alpha\in(0,\frac 14)$, $\gamma\in(\frac{2\alpha}{1-2\alpha},1]$, and $\omega$ is a $C^{1,\gamma}$ patch solution to \eqref{sqg}-\eqref{eq:velocity_law} on  $[0,T)$, then $t_x=T$ for each  $x\in \bar D\setminus\partial\Omega(0)$ and $\Phi_t:[\bar D\setminus \partial\Omega(0)]\to [\bar D\setminus \partial\Omega(t)]$ is a bijection for each $t\in[0,T)$.
\item If $\alpha\in(0,\frac 12)$, $t_x=T$ for each  $x\in \bar D\setminus\partial\Omega(0)$, and $\Phi_t:[\bar D\setminus \partial\Omega(0)]\to [\bar D\setminus \partial\Omega(t)]$ is a bijection for each $t\in[0,T)$, then $\omega$ is a patch solution to \eqref{sqg}-\eqref{eq:velocity_law} on  $[0,T)$.  Moreover,  $\Phi_t$ is measure preserving on $\bar D\setminus \partial\Omega(0)$ and it also  preserves the connected components of $\bar D\setminus \partial\Omega$. Finally,  we have
\begin{equation} \label{1.32}
\Phi_t(\partial\Omega_k(0)) = \partial\Omega_k(t)
\end{equation}
for each $k$ and $t\in[0,T)$, in the sense that any solution of \eqref{eq:alpha} with $x\in\partial\Omega_k(0)$ has $\Phi_t(x)\in \partial\Omega_k(t)$, as well as that for each  $y\in\partial\Omega_k(t)$, there is $x\in\partial\Omega_k(0)$ and a solution of \eqref{eq:alpha} such that $\Phi_t(x)=y$.
\end{enumerate}
\end{proposition}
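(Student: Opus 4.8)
The plan is to establish part (a) first, since part (b) will then largely be a matter of unwinding definitions together with the flow-map information part (a) produces. For part (a), the core quantitative input must be a modulus-of-continuity estimate for $u(\cdot,t)$ near $\partial\Omega(t)$: I expect that for a $C^{1,\gamma}$ patch the velocity field is, away from $\partial\Omega$, better than merely continuous, and in fact its gradient blows up only logarithmically-or-H\"older-mildly in the distance $\delta(x):=\mathrm{dist}(x,\partial\Omega(t))$. More precisely, I would use the kernel structure in \eqref{eq:velocity_law} (the kernel is $O(|x-y|^{-1-2\alpha})$) to show that for $x$ with $\delta(x)$ small, $|\nabla u(x,t)| \lesssim \delta(x)^{-2\alpha}$ when the patch boundary is merely $C^1$, but that the $C^{1,\gamma}$ regularity upgrades this: splitting the integral into the region within distance $\sim\delta(x)$ of the nearest boundary point and its complement, the near contribution is controlled by flatness of the boundary (the $C^{1,\gamma}$ bound gives cancellation of the leading singular term, leaving something like $\delta(x)^{\gamma-2\alpha}$ or a bounded quantity), and the far contribution is harmless. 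The condition $\gamma>\frac{2\alpha}{1-2\alpha}$ should be exactly what is needed so that, after tracking how fast $\delta(\Phi_t(x))$ can decay along the flow, the resulting differential inequality for $\delta(\Phi_t(x))$ — of the form $\frac{d}{dt}\delta \geq -C\delta^{1-\beta}$ with $\beta<1$, hence $\delta$ cannot reach $0$ in finite time — closes. This differential-inequality argument for $\delta(\Phi_t(x))$ is the step I expect to be the main obstacle: one needs a two-sided estimate, namely that the normal component of $u$ near the boundary does not push trajectories into $\partial\Omega$ faster than a sub-Lipschitz rate, and getting the exponent bookkeeping right (relating the spatial H\"older exponent $\gamma$, the kernel exponent $\alpha$, and the decay rate of $\delta$) is where the hypothesis on $\gamma$ is consumed. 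Uniqueness of $\Phi_t(x)$ on $\bar D\setminus\partial\Omega(0)$ then follows because $u(\cdot,t)$ is locally Lipschitz (indeed smooth, by Lemma~\ref{lem:du_crude}) on $\bar D\setminus\partial\Omega(t)$, and $t_x=T$ for every such $x$ means the trajectory never leaves this good region.

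For the bijectivity claim in part (a): injectivity of $\Phi_t$ on $\bar D\setminus\partial\Omega(0)$ is immediate from uniqueness of solutions of the ODE \eqref{eq:alpha} (run backward). For surjectivity onto $\bar D\setminus\partial\Omega(t)$, I would run the flow backward from a point $y\in\bar D\setminus\partial\Omega(t)$: by the same estimates (applied to the time-reversed flow, using that $\partial\Omega(s)$ is continuous in $s$ with respect to $d_H$ and stays a disjoint family of simple closed curves) the backward trajectory $\Psi_{t-s}(y)$ stays in $\bar D\setminus\partial\Omega(s)$ for all $s\in[0,t]$, so its value at $s=0$ is the desired preimage. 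One should also check that $\Phi_t$ does not escape to infinity in finite time — this follows from the uniform bound on $|u|$ (Lemma~\ref{lemma:uniform_u_bound}) — and that it respects the half-plane boundary, which holds because $u_2=0$ on $\{x_2=0\}$, so $\{x_2=0\}$ is invariant.

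Granting part (a)'s conclusion as the hypothesis of part (b), I would argue as follows. First, $\Phi_t$ being a bijection of $\bar D\setminus\partial\Omega(0)$ onto $\bar D\setminus\partial\Omega(t)$, together with continuity of $\Phi_t$ and connectedness considerations, forces $\Phi_t$ to map each connected component of $\bar D\setminus\partial\Omega$ onto a connected component of $\bar D\setminus\partial\Omega(t)$; matching up which component goes where uses continuity in $t$ and the fact that at $t=0$ it is the identity. Measure preservation comes from $\nabla\cdot u=0$: on the open set $\bar D\setminus\partial\Omega(t)$ where $u$ is smooth, the flow is volume-preserving by Liouville's theorem, and since $\partial\Omega(t)$ has measure zero this gives $|\Phi_t(E)|=|E|$ for measurable $E\subseteq\bar D\setminus\partial\Omega(0)$. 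The identity \eqref{1.32} then follows: trajectories starting on $\partial\Omega_k(0)$ exist (by continuity of $u$, $\alpha<\frac12$) and, since they cannot cross any $\partial\Omega_j$ (a trajectory meeting $\partial\Omega_j(s)$ transversally would contradict the bijection statement near that point, by comparison with nearby trajectories on either side), they stay on the common boundary of the two components they separate — which by the component-matching is exactly $\partial\Omega_k(t)$; the reverse inclusion is the backward-flow surjectivity argument again. Finally, to conclude $\omega$ is a patch solution I must verify \eqref{1.3}: this is where I would invoke Remark~2 of Definition~\ref{D.1.1} — since $\Phi_t(\partial\Omega_k(0))=\partial\Omega_k(t)$ and $u$ is continuous and the boundaries are compact, the uniform convergence $\Phi_{t+h}(x)=\Phi_t(x)+h\,u(\Phi_t(x),t)+o(h)$ (uniform over the compact set $\partial\Omega_k(0)$, using that $u$ is bounded and the right-hand side of \eqref{eq:alpha} is continuous along the flow) yields precisely $d_H(\partial\Omega(t+h),X_{u(\cdot,t)}^h[\partial\Omega(t)])=o(h)$. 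The preservation of $|\Omega_k(t)|$ is then immediate from measure preservation of $\Phi_t$ and \eqref{1.32} giving $\Phi_t(\Omega_k(0))=\Omega_k(t)$.
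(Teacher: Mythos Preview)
Your overall plan for part~(a) --- velocity estimates near $\partial\Omega(t)$ yield a differential inequality for $\delta(t):={\rm dist}(\Phi_t(x),\partial\Omega(t))$ which prevents $\delta$ from vanishing --- is exactly what the paper does (Lemma~\ref{L.5.2}, built on Corollary~\ref{cor:du_multiple} and Proposition~\ref{prop:vertical_v}), and your treatment of part~(b) matches the paper's proof almost line for line. But the quantitative heart of your part~(a) argument contains a real gap.

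The differential inequality you propose, $\tfrac{d}{dt}\delta\ge -C\delta^{1-\beta}$ with $\beta<1$, does \emph{not} prevent $\delta$ from reaching $0$ in finite time when $\beta\in(0,1)$: the equality ODE has solution $\delta^\beta(t)=\delta_0^\beta-C\beta t$, which vanishes. A ``sub-Lipschitz'' normal-velocity bound is therefore insufficient; you need at least Lipschitz ($\beta\le 0$). This is linked to a second misconception: the full gradient does \emph{not} satisfy $|\nabla u(x)|\lesssim\delta(x)^{\gamma-2\alpha}$ or anything bounded --- it genuinely diverges like $\delta^{-2\alpha}$ regardless of boundary regularity, because the tangential derivative of the tangential component sees no cancellation. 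The paper's key observation (Lemma~\ref{lem:onepatch}) is that the \emph{normal} component is nevertheless Lipschitz along the normal: subtracting a strip $S_P$ for which $v_{S_P}\cdot n_P\equiv 0$ by symmetry, the $C^{1,\gamma}$ flatness makes the remaining symmetric-difference contribution to $\nabla u(x)\,n_P$ bounded, giving $|(u(x)-u(P))\cdot n_P|\le C|x-P|$ (Corollary~\ref{cor:du_multiple}) and hence $\delta'\ge -C\delta$, so $\delta(t)\ge e^{-Ct}\delta(0)$ by Gronwall. The stronger hypothesis $\gamma>\tfrac{2\alpha}{1-2\alpha}$ (equivalently $\tfrac{\gamma}{1+\gamma}>2\alpha$), rather than merely $\gamma>2\alpha$, is consumed precisely in the half-plane complications: controlling the reflected patch in Proposition~\ref{lem:multiple_patch} via Lemma~\ref{lem:regularity}(d), and proving $|u_2(x)|\lesssim x_2$ in Proposition~\ref{prop:vertical_v}. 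On that last point, your remark that $u_2=0$ on $\partial D$ makes $\partial D$ invariant is correct but does not by itself keep interior trajectories off $\partial D$; the Lipschitz bound $|u_2(x)|\lesssim x_2$ is what gives $(\Phi_t(x))_2\ge e^{-Ct}x_2$ and closes that part of the argument.
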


{\it Remarks.}
1.  Since $H^3(\mathbb T)\subseteq C^{1,1}(\mathbb T)$, we see that when $\alpha< \frac 14$, this result applies to the $H^3$ patch solutions from our main result below.

2.  We do not know whether this result holds for $\gamma\le \frac{2\alpha}{1-2\alpha}$.

Let us call the initial data $\omega_0$ for the problem \eqref{sqg}-\eqref{eq:velocity_law} {\it patch-like} if
\[
\omega_0 = \sum_{k=1}^N \theta_k \chi_{\Omega_{0k}},
\]
with $\theta_1,\dots,\theta_N\in\Rm\setminus\{0\}$ and $\Omega_{01},\dots,\Omega_{0N}\subseteq D$ bounded open sets whose boundaries are pairwise disjoint simple closed curves.  That is, $\omega_0$ is as $\omega(\cdot,0)$ in Definition \ref{D.1.1}.  Notice also that if $\omega(\cdot,0) = \sum_{k=1}^{N'} \theta_k' \chi_{\Omega_k(0)}$ is as in Definition \ref{D.1.1} and  $\omega(\cdot,0)=\omega_0$, then $N'=N$, and (up to a permutation) $\theta_k'=\theta_k$ and $\Omega_k(0)=\Omega_{0k}$ for each $k$.

Here is our first main result, local existence and uniqueness of $H^3$ patch solutions to \eqref{sqg}-\eqref{eq:velocity_law} on the half-plane $D=\Rm\times\Rm^+$ for small $\alpha>0$.
  Recall that uniqueness for patch solutions with $\alpha>0$ was previously only proved within a special class of SQG patches on $\Rm^2$ with $C^\infty$ boundaries in \cite{Rodrigo}.

\begin{theorem}\label{T.1.1}
Let  $\alpha\in(0,\frac 1{24})$. Then for each $H^3$ patch-like initial data $\omega_0$,  there exists a unique local $H^3$ patch solution $\omega$ to \eqref{sqg}-\eqref{eq:velocity_law} with $\omega(\cdot,0)=\omega_0$.  Moreover, if the maximal time $T_\omega$ of existence of $\omega$ is finite, then at $T_\omega$ either two patch boundaries touch, or a patch boundary touches itself, or a patch boundary loses $H^3$ regularity.
\end{theorem}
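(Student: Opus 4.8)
The plan is to recast the patch evolution as a \emph{contour equation} for parametrizations $z_k(\cdot,t):\mathbb T\to D$ of the boundaries $\partial\Omega_k(t)$, to prove local well-posedness of this contour equation in $H^3$ by an a priori estimate together with a regularization scheme, and then to translate the result back into the framework of Definition \ref{D.1.1}. We would use the identity $\frac{(x-y)^\perp}{|x-y|^{2+2\alpha}}=\frac1{2\alpha}\nabla_y^\perp|x-y|^{-2\alpha}$ and its reflected analogue, together with Green's theorem, to convert the area integrals in \eqref{eq:velocity_law} over each $\Omega_k(t)$ into contour integrals over $\partial\Omega_k(t)$, so that for $x=z_j(\xi,t)$,
\begin{equation*}
u(z_j(\xi,t),t)=c_\alpha\sum_{k=1}^N\theta_k\int_{\mathbb T}\left(\frac{\partial_\eta z_k(\eta,t)}{|z_j(\xi,t)-z_k(\eta,t)|^{2\alpha}}\pm\frac{\partial_\eta\bar z_k(\eta,t)}{|z_j(\xi,t)-\bar z_k(\eta,t)|^{2\alpha}}\right)d\eta
\end{equation*}
(with an explicit nonzero constant $c_\alpha$ and a sign fixed by orientation, which we do not track here). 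Following Gancedo \cite{g}, we would take the contour equation to be $\partial_t z_k=u(z_k(\cdot,t),t)+\beta_k(\xi,t)\,\partial_\xi z_k$, where the tangential coefficient $\beta_k$ is chosen so that $|\partial_\xi z_k(\xi,t)|$ is independent of $\xi$; this modification alters neither the set $\partial\Omega_k(t)$ nor its normal velocity, but keeps the parametrization under control.

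The core of the argument is an a priori estimate for the regularity functional
\begin{equation*}
R(t):=\max_{1\le k\le N}\left(\|z_k(\cdot,t)\|_{H^3}+\left\|\frac{|\xi-\eta|}{|z_k(\xi,t)-z_k(\eta,t)|}\right\|_{L^\infty(\mathbb T\times\mathbb T)}\right)+\frac1{\delta(t)},
\end{equation*}
where $\delta(t)$ is the minimum of the pairwise distances between $\partial\Omega_1(t),\dots,\partial\Omega_N(t)$ and of the distances $\mathrm{dist}(\partial\Omega_k(t),\partial D)$. The arc-chord term controls self-intersection, while $\delta(t)^{-1}$ controls both mutual touching of patches and the size of the reflected (half-plane) terms. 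We would differentiate $\sum_k\|z_k(\cdot,t)\|_{H^3}^2$, commute $\partial_\xi^3$ through the singular integral operator, isolate and cancel the worst top-order contribution (the standard principal-part/commutator analysis of \cite{g}), and bound everything else by a continuous function $C$ of $R(t)$. The feature absent in the whole-plane case is that the reflected kernels produce extra terms concentrated where $z_j(\xi,t)$ is close to $\bar z_k(\eta,t)$, i.e.\ where a patch boundary approaches $\partial D$; one bounds these using the lower bound $\delta(t)$ on $|z_j(\xi,t)-\bar z_k(\eta,t)|$, and it is precisely the requirement that the gain from the small kernel exponent $2\alpha$ beat the loss from these small denominators after three $\xi$-derivatives that forces $\alpha\in(0,\frac1{24})$. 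Controlling $\delta(t)$ and the arc-chord quantity themselves reduces, via the velocity bounds of Lemma \ref{lemma:uniform_u_bound} and Lemma \ref{lem:du_crude}, again to a continuous function of $R(t)$, so that $R$ stays bounded on a time interval whose length depends only on $R(0)$.

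Granting this estimate, existence of an $H^3$ solution $(z_1,\dots,z_N)$ on some $[0,T)$ would follow by a standard regularization procedure (mollify the equation or add a vanishing viscosity, solve, extract a limit using the uniform bounds, verify the limit solves the contour equation). The region $\Omega_k(t)$ enclosed by $z_k(\cdot,t)$, with $\omega$ from \eqref{1.55}, is then a patch solution in the sense of \eqref{1.3}, since the curves $\partial\Omega_k(t)$ move with velocity $u(\cdot,t)$ up to the immaterial tangential term (as discussed in the remarks following Definition \ref{D.1.1}). For uniqueness, we would take any $H^3$ patch solution $\omega$ with the given data; since $\alpha<\frac1{24}<\frac14$, so that $\frac{2\alpha}{1-2\alpha}<1$, and $H^3(\mathbb T)\subseteq C^{1,1}(\mathbb T)$, Proposition \ref{P.1.3}(a) applies, and its conclusion is exactly the hypothesis of Proposition \ref{P.1.3}(b), which yields \eqref{1.32}, i.e.\ $\partial\Omega_k(t)=\Phi_t(\partial\Omega_k(0))$. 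Using this, one shows that a suitable (e.g.\ constant-speed, with base point transported along $\Phi_t$) parametrization of each $\partial\Omega_k(t)$ satisfies the contour equation above with the prescribed $H^3$ initial data, and uniqueness then follows from a Gronwall estimate for $\sum_k\|z_k^{(1)}(\cdot,t)-z_k^{(2)}(\cdot,t)\|_{L^2}^2$ using the a priori bounds of the previous step.

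Finally, the blow-up alternative is immediate: by the a priori estimate the solution extends past every time at which $R(t)$ stays bounded, and $R(t)$ blows up only if some $\|z_k(\cdot,t)\|_{H^3}\to\infty$ (a patch boundary loses $H^3$ regularity), or an arc-chord ratio blows up (a patch boundary touches itself), or $\delta(t)\to0$; the last case means either two distinct patch boundaries touch or some $\partial\Omega_k(t)$ reaches $\partial D$, and via the odd reflection of $\omega$ across $\partial D$ the latter is again of one of the listed types. The two steps where I expect the real difficulty to lie are the top-order energy estimate in the presence of the half-plane boundary — the source of the condition $\alpha<\frac1{24}$, which does not obviously relax — and, within the uniqueness argument, the reduction of a patch solution in the geometric sense of Definition \ref{D.1.1} to a solution of the contour equation, for which Proposition \ref{P.1.3} (hence the constraint $\alpha<\frac14$ behind it) is essential.
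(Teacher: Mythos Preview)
Your overall architecture---contour equation, a priori bound on an $H^3$+arc-chord+$\delta^{-1}$ functional, regularization, and then uniqueness---matches the paper. But there are two substantive gaps.

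\textbf{The reflected terms are not controlled by distance to $\partial D$.} In the paper's setting patch boundaries are allowed to touch $\partial D$: the $\delta[Z(t)]$ in \eqref{del_def} only measures distances between \emph{different} patch boundaries, and no lower bound on $|z_k(\xi)-\bar z_k(\eta)|$ is ever assumed. Your proposal to absorb the reflected kernels using ``the lower bound $\delta(t)$ on $|z_j(\xi,t)-\bar z_k(\eta,t)|$'' would only prove the theorem for patches strictly away from the boundary, and would make the blow-up criterion include ``a patch reaches $\partial D$,'' which the theorem does \emph{not} list. The paper instead exploits that $z_k^2\ge 0$ via the pointwise inequality $|f'(\xi)|\le C\|f\|_{H^3}^{1/2}f(\xi)^{1/2}$ (Lemma~\ref{lemma:f'bis}) to get $|\partial_\xi z(\xi)-\partial_\xi\bar z(\eta)|\lesssim |z(\xi)-\bar z(\eta)|^{1/2}$, and then the integral inequality of Lemma~\ref{cor:sobolevbis} (bounding $\int |f'|^n f^{-\beta-n/2}$ and $\int (f'')^2 f^{-\beta}$ for $\beta\le\tfrac16$) to control the resulting fractional powers of $z_k^2$ in the $I_2,I_3$ estimates. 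It is this Sobolev-type lemma on nonnegative $H^3$ functions, applied with $\beta=\tfrac16$ (hence $\tfrac{11}{12}+2\alpha<1$), that produces the threshold $\alpha<\tfrac1{24}$---not a balance between $2\alpha$ and powers of $\delta^{-1}$.

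\textbf{The uniqueness argument via Proposition~\ref{P.1.3} has a gap.} Proposition~\ref{P.1.3}(b) gives $\Phi_t(\partial\Omega_k(0))=\partial\Omega_k(t)$ only as a set identity; it explicitly allows $\Phi_t(x)$ to be non-unique for $x\in\partial\Omega(0)$. So you cannot define $z_k(\xi,t):=\Phi_t(z_k(\xi,0))$ and claim it solves the contour equation. The paper avoids this entirely: its uniqueness proof (Theorem~\ref{thm:unique_patch}) does not reduce an arbitrary $H^3$ patch solution to a contour solution. Instead it uses Lemma~\ref{lem:onestep} to show that the Hausdorff distance between two $H^3$ patch solutions with the same data grows at most like $t^{1/2\alpha}$, then a telescoping argument over $J$ intermediate times (each restarted via Corollary~\ref{C.5.7}) together with the $L^2$ stability \eqref{eq:l2diff} and Lemmas~\ref{lem:l2_symm}, \ref{lem:arcl_diff} to show $|\Omega(T_1)\triangle\tilde\Omega(T_1)|\lesssim J^{1-1/2\alpha}\to 0$. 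Proposition~\ref{P.1.3} is not used at all in the proof of Theorem~\ref{T.1.1}.
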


{\it Remarks.}  1.  The last claim means that either $\partial\Omega_k(T_\omega)\cap \partial\Omega_i(T_\omega)\neq\emptyset$ for some $k\neq i$, or $\partial\Omega_k(T_\omega)$ is not a simple closed curve for some $k$, or  $\lim_{t\nearrow T_\omega} \|\Omega_k(t)\|_{H^3}=\infty$ for some $k$.  Note that the sets $\partial\Omega_k(T_\omega):=\lim_{t\nearrow T_\omega}\partial\Omega_k(t)$ (with the limit taken in Hausdorff distance) are well defined if $T_\omega<\infty$ because
$u$ is uniformly bounded (see \eqref{uLinfty}).
In fact, an argument from Lemma \ref{lem:onestep} yields $d_H(\partial\Omega(t),\partial\Omega(s))\le \|u\|_{L^\infty}|t-s|$ for $t,s\in[0,T_\omega)$.

2.  The last claim further justifies  our definition of $H^3$ patch solutions because it shows that a solution cannot stay regular up to (and including) the time $T_\omega$ but stop existing due to some artificial limitation stemming from the definition of solutions.

3. We show (see Corollary \ref{C.5.7}) that  $T_\omega$  is bounded below by a constant depending on $\alpha, N, \|\omega\|_{L^\infty}$, and the quantity $\vertiii{\{\Omega_{0k}\}_{k=1}^N}_{H^3}$ from Definition \ref{D.5.6} (the latter expresses how close the initial patch boundaries are to touching each other or themselves, and how large their $H^3$ norms are).

4. Note that Remark 3 after Definition \ref{D.1.1} shows that the above solution is also the unique weak solution to \eqref{sqg}-\eqref{eq:velocity_law} from the class of functions which are of the form \eqref{1.55} and have $H^3$ boundaries $\partial\Omega_k(t)$ which are disjoint simple closed curves and move with some continuous velocity $v:\Rm^2\times(0,T)\to\Rm^2$ (in the sense of \eqref{1.3} with $v$ in place of $u$).

5. The hypothesis $\alpha<\frac 1{24}$ may well be an artifact of the proof, as it only appears in the application of the technical Lemma \ref{cor:sobolevbis} in the existence part.  The rest of the proof applies to all $\alpha\in(0,\frac 14)$, so it is possible that the result extends to at least this range.

As we mentioned above, our method also works on the whole plane, where non-existence of a boundary allows us to treat all $\alpha\in(0,\frac 12)$.
In this case we again use Definition \ref{D.1.1} but with $D:=\Rm^2$, and the flow is given by
\begin{equation}
u(x, t) =  \int_{\Rm^2}  \frac{(x-y)^\perp}{|x-y|^{2+2\alpha}} \, \omega(y,t) dy
\label{eq:velocity_law_R}
\end{equation}
instead of \eqref{eq:velocity_law}.
Our second main result is the corresponding version of Theorem \ref{T.1.1}.

\begin{theorem} \label{T.1.7}
With $D:=\Rm^2$ and \eqref{eq:velocity_law} replaced by \eqref{eq:velocity_law_R}, Proposition \ref{P.1.3}  holds as stated and Theorem \ref{T.1.1} holds with $\alpha\in(0,\frac 12)$.
\end{theorem}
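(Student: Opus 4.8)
The plan is to explain how the whole-plane result in Theorem \ref{T.1.7} reduces, essentially verbatim, to the half-plane arguments that prove Proposition \ref{P.1.3} and Theorem \ref{T.1.1}. The key observation is that the half-plane Biot--Savart kernel \eqref{eq:velocity_law} is built from the whole-plane kernel \eqref{eq:velocity_law_R} together with a correction term involving the reflected point $\bar y$; that correction term is smooth in a neighborhood of $\bar D$ (its singularity sits on the reflected half-plane, which is disjoint from $\bar D$ except on $\partial D$, where the kernel extends continuously), and it does not contribute to any of the critical near-diagonal estimates that drive the half-plane proofs. Consequently every estimate in the half-plane argument either has a strictly easier whole-plane analogue (the correction term simply disappears) or is a near-diagonal estimate that is identical in both settings. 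So the strategy is: (i) isolate, in each lemma cited in the proofs of Proposition \ref{P.1.3} and Theorem \ref{T.1.1}, the role played by the reflection term; (ii) check that removing it preserves the estimate; (iii) conclude that the boundary-term obstruction which forced $\alpha<\frac1{24}$ (and, in Proposition \ref{P.1.3}(a), $\alpha<\frac14$) was purely an artifact of the half-plane geometry, so that on $\Rm^2$ the arguments close for all $\alpha\in(0,\frac12)$.

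First I would restate the local velocity regularity facts (the analogues of Lemma \ref{lem:du_crude} and Lemma \ref{lemma:uniform_u_bound}) on $\Rm^2$: for $\alpha\in(0,\frac12)$, $u$ from \eqref{eq:velocity_law_R} is uniformly bounded, continuous on $\Rm^2$, and $C^\infty$ away from $\partial\Omega$, with quantitative bounds depending only on $\alpha$, $\|\omega\|_{L^\infty}$, and the measures and $C^{1,\gamma}$ (resp.\ $H^3$) norms of the patches. These are proved exactly as their half-plane counterparts, the difference being that one does \emph{not} need the reflection-term bookkeeping near $\partial D$. With these in hand, Proposition \ref{P.1.3}(a) on $\Rm^2$ follows the half-plane proof: one estimates $|u(\Phi_t(x),t)-\Phi_t(x)\text{'s distance to }\partial\Omega(t)|$ via the Osgood-type / modulus-of-continuity argument that keeps $\Phi_t(x)$ away from $\partial\Omega(t)$ for all $t<T$; the condition $\gamma>\frac{2\alpha}{1-2\alpha}$ enters through the near-diagonal behavior of the kernel, which is the same on $\Rm^2$ and on $D$, so no change is needed. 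Part (b) is a direct transcription — it is purely a statement about the map $\Phi_t$ once bijectivity and $t_x=T$ are assumed, together with the divergence-free and tangency structure of $u$, all of which hold for \eqref{eq:velocity_law_R}; measure preservation and preservation of connected components follow from $\nabla\cdot u=0$ and continuity.

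Next I would carry over Theorem \ref{T.1.1} to $\Rm^2$. The uniqueness half of the proof is an energy/contour-type estimate comparing two $H^3$ patch solutions; again the reflection term only makes the half-plane estimate harder, so on $\Rm^2$ uniqueness holds for all $\alpha\in(0,\frac12)$ with the same argument. For existence, one runs the same approximation/a priori-estimate scheme: parametrize each patch boundary, derive the evolution equation for the parametrization, and close an $H^3$ a priori bound on a short time interval via commutator estimates, using the structure that the worst-order term in the $H^3$ energy identity is an antisymmetric (or lower-order-after-integration-by-parts) quantity. The only place where $\alpha<\frac1{24}$ is used in the half-plane proof is the invocation of the technical Lemma \ref{cor:sobolevbis}, which handles a boundary contribution specific to $D$; on $\Rm^2$ that lemma is not needed, and the remaining estimates (stated in the excerpt to work for all $\alpha\in(0,\frac14)$, and in fact, absent the boundary term, for all $\alpha\in(0,\frac12)$) close the argument. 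The blow-up criterion — that finiteness of $T_\omega$ forces touching of boundaries or loss of $H^3$ — is then a standard continuation argument: the a priori estimate shows the $H^3$ norm and the minimal separation control the existence time from below (the analogue of Corollary \ref{C.5.7}), so if none of the three bad scenarios occurs the solution extends past $T_\omega$.

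The main obstacle, and the step deserving genuine care rather than a one-line dismissal, is verifying the claim that the half-plane reflection term is genuinely harmless in \emph{every} estimate of the original proofs — in particular in the $H^3$ a priori estimate for existence, where one must confirm that the top-order commutator terms involving the $\bar y$-kernel are in fact lower order (because the kernel $x\mapsto (x-\bar y)^\perp|x-\bar y|^{-2-2\alpha}$ is smooth for $x\in\bar D$ uniformly away from $\partial D$, but near $\partial D$ the points $y$ and $\bar y$ coalesce and one needs the cancellation $(x-y)^\perp|x-y|^{-2-2\alpha}-(x-\bar y)^\perp|x-\bar y|^{-2-2\alpha}$ to recover smoothness). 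On $\Rm^2$ this difficulty is simply absent, so rather than re-proving anything, the honest route is to point to the precise lemmas in the body of the paper and note, lemma by lemma, that each either (a) is a near-diagonal estimate whose statement and proof are independent of the presence of a boundary, or (b) becomes strictly simpler when the $\bar y$-term is deleted, and that the $\alpha$-range restrictions in Proposition \ref{P.1.3}(a) and Theorem \ref{T.1.1} are traced entirely to boundary-specific steps (the $\gamma>\frac{2\alpha}{1-2\alpha}$ threshold being the exception, as it is intrinsic to the kernel exponent and is therefore retained). I would organize the write-up as a short sequence of such remarks, one per cited lemma, followed by the observation that the proofs of Proposition \ref{P.1.3} and Theorem \ref{T.1.1} then go through unchanged on $\Rm^2$ for all $\alpha\in(0,\frac12)$.
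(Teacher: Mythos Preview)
Your overall approach---drop the reflection ($m=2$) terms from the half-plane argument and observe that every boundary-specific difficulty disappears---matches the paper's. The paper's proof is in fact a single sentence: it cites \cite{g} for the whole-plane contour-equation local well-posedness (Theorem~\ref{thm:local}) for all $\alpha\in(0,\tfrac12)$, and notes that Proposition~\ref{prop:equal}, Corollary~\ref{C.5.7}, and Theorem~\ref{thm:unique_patch} then follow with that range, while the proof of Proposition~\ref{P.1.3} is identical. Your lemma-by-lemma plan amounts to re-deriving \cite{g} by deleting the $m=2$ terms from Section~\ref{sec:lwp}; that is correct but more laborious than necessary.

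There is, however, a genuine error in what you claim. You assert that the restriction $\alpha<\tfrac14$ in Proposition~\ref{P.1.3}(a) is a half-plane artifact that lifts to $\alpha\in(0,\tfrac12)$ on $\Rm^2$. It does not. Theorem~\ref{T.1.7} says Proposition~\ref{P.1.3} holds \emph{as stated}, i.e.\ still with $\alpha\in(0,\tfrac14)$ in part (a); only Theorem~\ref{T.1.1} is extended to $\alpha\in(0,\tfrac12)$. The reason is intrinsic to the kernel: the hypothesis $\gamma>\tfrac{2\alpha}{1-2\alpha}$ with $\gamma\le 1$ already forces $\alpha<\tfrac14$, and that condition enters through the near-diagonal estimate in Lemma~\ref{lem:onepatch} (via $2\alpha\tfrac{1+\gamma}{\gamma}<1$), which involves only the whole-plane kernel \eqref{4.1} and no reflection term. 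So your step (iii) overclaims, and your diagnosis that ``the $\alpha$-range restrictions in Proposition~\ref{P.1.3}(a) \dots\ are traced entirely to boundary-specific steps'' is wrong. Once you correct this---proving Proposition~\ref{P.1.3} on $\Rm^2$ exactly as stated, and reserving the range extension for Theorem~\ref{T.1.1}---your outline is fine.
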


The paper is organized as follows.  In Section \ref{sec:lwp}
we derive a contour equation corresponding to the patch dynamics for \eqref{sqg}-\eqref{eq:velocity_law} on the half-plane $D=\Rm\times\Rm^+$ and prove that it is locally well-posed for all $\alpha\in(0,\frac 1{24})$.
The proof largely follows that in~\cite{g} for the whole plane, but the presence of the boundary $\partial D$ will require us to introduce several non-trivial new  arguments.  We therefore present it in detail.
In Section~\ref{sec:regularity}, we prove some auxiliary estimates on the fluid velocity and on the geometry of the boundaries of sufficiently regular patches.
These are then used in Section~\ref{sec:patch} to show that the contour solution in fact yields a unique patch solution to \eqref{sqg}-\eqref{eq:velocity_law}, and Theorem \ref{T.1.1} will follow.
Some more delicate estimates on the fluid velocity generated by sufficiently regular patches are obtained in Section \ref{sec:prop13}, and these are then used to prove Proposition~\ref{P.1.3}.  We conclude Section \ref{sec:prop13} and the paper with the proof of Theorem \ref{T.1.7}.

\begin{proof}[Proof of Remark 3 after Definition \ref{D.1.1}]
Since $\nabla\cdot u=0$ in $\bar D\setminus \partial\Omega(t)$ and $u$ is continuous, the right-hand side of \eqref{1.6} is
\[
\sum_{k=1}^N \int_{\partial \Omega_k(t)} \theta_k [u(x,t)\cdot n_{x,t}] f(x) d\sigma(x).
\]
This equals the left-hand side of \eqref{1.6}, which can be seen by noticing that the area of the rectangle with vertices $x,x',x'+hu(x',t),x+hu(x,t)$ is $|x-x'|[u(x,t)\cdot n_{x,t}]h+o(|h||x-x'|)$ if $x,x'\in\partial\Omega_k(t)$ (because $u$ is continuous), and then taking $x,x'$ to be successive points on a progressively finer mesh of $\partial\Omega_k(t)$ (as well as letting $h\to 0$).
Finally, continuity of the right-hand side of \eqref{1.6} in time follows from continuity of $\partial\Omega_k$ in time in Hausdorff distance and from boundedness of $u\cdot \nabla f$, which is due to $f\in C^1(\bar D)$ and \eqref{uLinfty} below.

Next, if a weak solution  of the form \eqref{1.55} satisfies \eqref{1.3} with some continuous $v$ in place of $u$, then the above computation applied to smooth characteristic functions $f$ of successively smaller squares centered at any fixed $x\in\partial\Omega(t)$ yields
\[
|x-x'|[v(x,t)\cdot n_{x,t}]h+o(|h||x-x'|) = |x-x'|[u(x,t)\cdot n_{x,t}]h+o(|h||x-x'|).
\]
Hence $v(x,t)\cdot n_{x,t}=v(x,t)\cdot n_{x,t}$, so $\partial\Omega(t)$ being $C^1$ shows that \eqref{1.3} holds with $u$ as well.

Now fix any $\tau\in [0,T)$ and let $f$ in \eqref{1.6} be 1 on some open neighborhood of $\partial\Omega_k(\tau)$ and with its support having positive distance from $\partial\Omega(\tau)\setminus\partial\Omega_k(\tau)$.  Then continuity of $\partial\Omega$ in $t$ and $\nabla\cdot u=0$ show that for all $t$ close to $\tau$, the right-hand side of \eqref{1.6} equals
\[
\int_{\Omega_k(t)} \theta_k [u(x,t)\cdot\nabla f(x)]dx = \theta_k\int_{\partial \Omega_k(t)}  [u(x,t)\cdot n_{x,t}]d\sigma(x) = 0.
\]
  Thus $ \int_{\Omega_k(t)}f(x) dx$ is constant in all $t$ near $\tau$ because it equals $\theta_k^{-1}\int_D \omega(x,t)f(x)dx$.
  Since also $\int_{\Omega_k(t)} (1-f(x)) dx$ is constant in all $t$ near $\tau$ (because the support of $1-f(x)$ has positive distance from $\partial\Omega_k(\tau)$),
  we obtain that $|\Omega_k(t)|$ is constant on an open interval containing $\tau$.  This holds for all $\tau\in[0,T)$, hence $|\Omega_k(t)|$ is constant on $[0,T)$.

We note that for the 2D Euler equation, the definition of weak solutions via \eqref{1.6} can be found, for instance, in \cite[Theorem 3.2]{MP}, and that it easily implies
\[
\int_D \omega (x,T')g(x,T')dx - \int_D \omega (x,0)g(x,0)dx = \int_{D\times(0,T')} \omega(x,t)[\partial_t g(x,t)+u(x,t)\cdot\nabla g(x,t)]dxdt
\]
for all $T'\in[0,T)$ and  $g\in C^1(\bar D\times [0,T'])$.
\end{proof}

{\bf Acknowledgment.} We thank Peter Guba, Giovanni Russo and Lenya Ryzhik for useful discussions.
We acknowledge partial support by 
NSF-DMS grants  1056327, 1159133, 1411857, 1412023, and 1453199.

 \section{Local regularity for a contour equation and small~$\alpha$}
\label{sec:lwp}

This section is the first step towards the proof of Theorem \ref{T.1.1}.  We will derive a PDE whose solutions are time-dependent $H^3$ curves on the half-plane $\bar D=\Rm\times\Rm^+_0$, and one expects the latter to be some parametrizations of the patch boundaries $\partial\Omega_k(t)$.  We will then prove local regularity for this {\it contour equation} in the main result of this section, Theorem~\ref{thm:local} (which is the half-plane analog of its whole plane version from \cite{g}).  We will later show in Section~\ref{sec:patch}, using some crucial estimates derived in Section~\ref{sec:regularity}, that the solutions of the contour equation indeed yield $H^3$ patch solutions to \eqref{sqg}-\eqref{eq:velocity_law}.



\subsection{Derivation of the contour equation}
\label{sec:derivation}

Let us first derive our contour equation. Assuming that we have an $H^3$ patch solution to \eqref{sqg}-\eqref{eq:velocity_law}, let us parametrize the patch boundaries by
\begin{equation}\label{apr102}
\partial \Omega_k(t) = \{\z_k(\xi, t) = (\z_k^1(\xi, t), \z_k^2(\xi, t)): \xi \in \mathbb{T}\} \subseteq \bar D,
\end{equation}
with each $\z_k(\cdot,t)$ running once counter-clockwise along $\partial \Omega_k(t)$.
We do this so that at $t=0$, the curves $\z_k(\cdot, 0)$ all belong
to $H^3(\mathbb{T})$, and are non-degenerate in the sense of the right-hand side of \eqref{F_def} below being finite for $t=0$.
Of course, even when all $z_k(\cdot,0)$ are given, the choice of $z_k(\cdot,t)$ is not unique.  Hence, we will have to be careful  when choosing our contour equation for the $z_k$.  While our choice is similar to the case of the whole plane in
\cite{g}, the boundary $\partial D$ creates some new terms, so we present the derivation below for the sake of completeness.


Let $x\in \partial \Omega_k(t)$ and let $n(x,t)$
denote the outer unit normal vector for $\Omega_k(t)$ at $x$. We use the Biot-Savart law
to compute the outer normal velocity at $x$ as follows:
\begin{equation}
\begin{split}
\label{derivation_v}
u_n(x,t)&=u(x,t) \cdot n(x,t) \\
&= -\sum_{i=1}^N \theta_i \int_{\Omega_i} \left[
\frac{(x-y)\cdot n(x,t)^{\perp}}{|x-y|^{2+2\alpha}} - \frac{(x-\bar{y})\cdot n(x,t)^{\perp}}{|x-\bar{y}|^{2+2\alpha}}\right]dy~~~
\text{(since $u^\perp\cdot v = -u \cdot v^\perp$)} \\
&= -\sum_{i=1}^N \theta_i \int_{\Omega_i} \left[\frac{(x-y)\cdot n(x,t)^{\perp}}{|x-y|^{2+2\alpha}} - \frac{(\bar{x}-{y})\cdot \overline{{n}(x,t)^{\perp}}}{|\bar{x}-{y}|^{2+2\alpha}}\right]dy~~~
\text{(since $u\cdot \bar{v} = \bar{u} \cdot v$)}\\
&=- \sum_{i=1}^N \frac{\theta_i}{2\alpha} \int_{\Omega_i} \nabla_{y}\cdot\left[\frac{n(x,t)^{\perp}}{|x-y|^{2\alpha}} + \frac{{\bar{n}(x,t)^{\perp}}}{|\bar {x}-{y}|^{2\alpha}}\right]dy ~~~ \text{(since $\bar n^{\perp} = - \overline{n^\perp}$)}\\
&= \sum_{i=1}^N \frac{\theta_i}{2\alpha} \int_{\partial \Omega_i}  \left[\frac{n(y,t)^{\perp}}{|x-y|^{2\alpha}} + \frac{\overline{n(y,t)^{\perp}}}{|{x}-\bar{y}|^{2\alpha}} \right]\cdot n(x,t) d\sigma(y).
\end{split}
\end{equation}
Using  (\ref{apr102}), we conclude that the normal
velocity at $x = \z_k(\xi, t) \in \partial \Omega_k(t)$ is
\begin{equation}
u_n(x,t) = -\sum_{i=1}^N \frac{\theta_i}{2\alpha}\int_{\mathbb{T}}
\left[\frac{\partial_\xi \z_i(\xi-\eta,t)}{|\z_k(\xi,t) -
    \z_i(\xi-\eta, t)|^{2\alpha}}
+ \frac{\partial_\xi \bar \z_i(\xi - \eta,t)}{|\z_k(\xi, t) -
  \bar \z_i(\xi-\eta, t)|^{2\alpha}}
\right]\cdot n(x,t) d\eta .
\label{eq:contour_u}
\end{equation}
Intuitively, one can add any multiple of the tangent vector $\partial_\xi \z_k(\xi,t)$ to the velocity without changing the evolution of the patch itself (this does affect the particular parametrization $\z_k$, though). Hence, we
will use as the {\it contour equation} for  $\partial\Omega_k(t)$ the equation
\begin{equation}
\partial_t \z_k(\xi,t) =  \sum_{i=1}^N \frac{\theta_i}{2\alpha} \int_{\mathbb{T}} \left[
\frac{\partial_\xi \z_k(\xi,t) - \partial_\xi \z_i(\xi -
  \eta, t)}{|\z_k(\xi,t)
- \z_i(\xi-\eta, t)|^{2\alpha}} + \frac{\partial_\xi
\z_k(\xi,t)
- \partial_\xi \bar \z_i(\xi - \eta, t)}{|\z_k(\xi,t)
- \bar \z_i(\xi-\eta, t)|^{2\alpha}}\right] d\eta.
\label{eq:contour}
\end{equation}
(This particular choice of the tangential component of $\partial_t z_k$ will allow us to derive \eqref{eq:third_derivative} below.)
To simplify the notation, we let $y_i^1 := \z_i$ and $y_i^2 := \bar
\z_i$, so \eqref{eq:contour} becomes
\begin{equation}
\partial_t \z_k(\xi,t) = \sum_{i=1}^N \sum_{m=1}^2 \frac{\theta_i}{2\alpha}
\int_{\mathbb{T}}
 \frac{\partial_\xi \z_k(\xi,t)
- \partial_\xi y_i^m(\xi - \eta, t)}{|\z_k(\xi,t)  - y_i^m(\xi-\eta, t)|^{2\alpha}}  d\eta.
\label{eq:contour2}
\end{equation}
Note that while our $v^\perp$ is the negative of $v^\perp$ from \cite{g},
we have parametrized the curve~$\partial \Omega_i$ in the opposite
direction as well. Therefore our half-plane contour equation
\eqref{eq:contour2} is similar to that in the
whole space $D=\Rm^2$  \cite{g}, which however  only contains the $m=1$ terms.

\subsection{A priori estimates for the contour equation and small $\alpha$}
\label{sec:apriori}
Let us first define some norms and functionals  of the patch
boundaries $\Z(t) = \{\z_k(\cdot,t)\}_{k=1}^N$ which we will need in order to establish local well-posedness
for the contour equation:
\begin{eqnarray}
&&\|\Z(t)\|_{H^3}^2 := \sum_{k=1}^N \Big(\|\z_k(\cdot,t)\|_{L^\infty}^2 + \|\partial_\xi^3 \z_k(\cdot,t)\|_{L^2}^2 \Big).
\label{H3_def}
\\
&&\|\Z(t)\|_{C^2} :=\displaystyle\max_{1\leq k \leq N} \displaystyle\max_{0\le j\leq 2} \|\partial_\xi^j \z_k (\cdot,t)\|_{L^\infty},
\notag
\\
&&
\delta[\Z(t)] := \displaystyle \min \left\{\min_{i\neq k} \min_{\xi, \eta\in\mathbb{T}} |\z_i(\xi, t) - \z_k(\eta, t)|, 1 \right\},
\label{del_def}
\\
&&
F[\Z(t)] := \max \left\{ \max_{1\leq k\leq N}\,\sup_{\xi,\eta\in \mathbb{T}, \eta \neq 0}\dfrac{|\eta|}{|\z_k(\xi, t) - \z_k(\xi-\eta, t)|}, 1 \right\}. 
\label{F_def}
\end{eqnarray}
Note that the $H^3(\mathbb T)$-norm above is equivalent to the usual definition, where
$\|\z_k(\cdot,t)\|_{L^\infty}^2$ is replaced by~$\|\z_k(\cdot,t)\|_{L^2}^2$. We also let $\delta[\Z(t)]=1$ if $N=1$.  Finally, let
us define
\begin{equation}\vertiii{\Z(t)} := \|\Z( t)\|_{H^3} +  \delta[\Z(t)]^{-1} + F[\Z(t)].
\label{eq:def_norm}
\end{equation}
Note that $\vertiii{\cdot}$ is not a norm, but this will not affect our arguments.
Our goal is to obtain an a priori control on the growth of
$\vertiii{\Z(t)}$ for smooth solutions. We will show that if $\alpha\in (0,\tfrac 1{24})$ and  $\Z( t)$ solves \eqref{eq:contour2} with
$\vertiii{\Z(0)}<\infty$, then $\vertiii{\Z(t)}$ will remain finite for a
short time.  This follows from the main result here, the estimate \eqref{eq:apriori} below.  To prove it, we will now obtain bounds on the growth of the terms constituting $\vertiii{\Z(t)}$.

\subsubsection*{The evolution of  $\|\z_k(\cdot, t)\|_{L^\infty}$ and $\delta[\Z(t)]^{-1}$}

The evolution of these terms is controlled via the following lemma.  (A better bound, on the velocity $u$ rather than on $\partial_t z_k$, will be provided in Lemma \ref{lemma:uniform_u_bound} below.  However, since we will also need to work with regularizations of \eqref{eq:contour2}, for which we do not have \eqref{eq:velocity_law}, Lemma \ref{lemma:uniform_u_bound} will not be sufficient here.)
Let us denote $\Theta:=\sum_{k=1}^N |\theta_k|$.


\begin{lemma}\label{lemma:S}
For $\alpha\in(0,\tfrac 12)$ and $S_k[Z(t)](\xi)$ the right-hand side of \eqref{eq:contour2} we have
\[
\|S_k[Z(t)]\|_{L^\infty} \leq  \frac{4\pi  \Theta}{\alpha(1-2\alpha)}  \big( \delta[\Z(t)]^{-1}+F[\Z(t)] \big)^{2\alpha} \|Z(t)\|_{C^2}.
\]
\end{lemma}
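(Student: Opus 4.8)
The plan is to estimate, pointwise in $\xi$, each of the $2N$ integrals comprising $S_k[Z(t)](\xi)$ and then sum. Write $S_k[Z(t)](\xi)=\sum_{i=1}^N\sum_{m=1}^2\frac{\theta_i}{2\alpha}\,I_{i,m}(\xi)$, where $I_{i,m}(\xi):=\int_{\mathbb T}|z_k(\xi,t)-y_i^m(\xi-\eta,t)|^{-2\alpha}\big(\partial_\xi z_k(\xi,t)-\partial_\xi y_i^m(\xi-\eta,t)\big)\,d\eta$. Since $y_i^1=z_i$ and $y_i^2=\bar z_i$, and reflection preserves the length of a vector, we have $|\partial_\xi y_i^m|=|\partial_\xi z_i|\le\|Z(t)\|_{C^2}$ for either $m$; hence the numerator is bounded by $2\|Z(t)\|_{C^2}$ uniformly, and the problem reduces to controlling $\int_{\mathbb T}|z_k(\xi,t)-y_i^m(\xi-\eta,t)|^{-2\alpha}\,d\eta$.

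The one genuinely non-routine point is the treatment of the reflected ("image") terms $m=2$, which I would handle using the half-plane geometry. Since every $z_j(\cdot,t)$ maps into $\bar D=\Rm\times\Rm^+_0$, its second component is nonnegative, so for any points $p,q\in\bar D$ one has $|p-\bar q|^2=(p_1-q_1)^2+(p_2+q_2)^2\ge(p_1-q_1)^2+(p_2-q_2)^2=|p-q|^2$. Applying this with $p=z_k(\xi,t)$ and $q=z_i(\xi-\eta,t)$ gives $|z_k(\xi,t)-\bar z_i(\xi-\eta,t)|\ge|z_k(\xi,t)-z_i(\xi-\eta,t)|$. Consequently, for $i\neq k$ both values of $m$ yield $|z_k(\xi,t)-y_i^m(\xi-\eta,t)|\ge\delta[Z(t)]$ by \eqref{del_def}, while for $i=k$ both values of $m$ yield $|z_k(\xi,t)-y_k^m(\xi-\eta,t)|\ge|z_k(\xi,t)-z_k(\xi-\eta,t)|\ge|\eta|/F[Z(t)]$ by \eqref{F_def}.

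With these lower bounds, the off-diagonal integrals obey $\|I_{i,m}\|_{L^\infty}\le 2\|Z(t)\|_{C^2}\,\delta[Z(t)]^{-2\alpha}\,|\mathbb T|=4\pi\|Z(t)\|_{C^2}\,\delta[Z(t)]^{-2\alpha}$, and the diagonal ones, using $\alpha<\tfrac12$ so that $\int_{-\pi}^{\pi}|\eta|^{-2\alpha}\,d\eta=\tfrac{2\pi^{1-2\alpha}}{1-2\alpha}\le\tfrac{2\pi}{1-2\alpha}$, obey $\|I_{k,m}\|_{L^\infty}\le\tfrac{4\pi}{1-2\alpha}\|Z(t)\|_{C^2}\,F[Z(t)]^{2\alpha}$. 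Summing over $i,m$ with the weights $\tfrac{|\theta_i|}{2\alpha}$ and using $\tfrac1\alpha\le\tfrac1{\alpha(1-2\alpha)}$ gives
\[
\|S_k[Z(t)]\|_{L^\infty}\le\frac{4\pi\|Z(t)\|_{C^2}}{\alpha(1-2\alpha)}\Big(|\theta_k|\,F[Z(t)]^{2\alpha}+\sum_{i\neq k}|\theta_i|\,\delta[Z(t)]^{-2\alpha}\Big).
\]
To conclude, I would bound the bracket by $\Theta\max\{F[Z(t)]^{2\alpha},\delta[Z(t)]^{-2\alpha}\}=\Theta\max\{F[Z(t)],\delta[Z(t)]^{-1}\}^{2\alpha}\le\Theta\big(\delta[Z(t)]^{-1}+F[Z(t)]\big)^{2\alpha}$, using $\sum_i|\theta_i|=\Theta$ and monotonicity of $t\mapsto t^{2\alpha}$, which yields exactly the asserted estimate. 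The only real obstacle is recognizing the distance-monotonicity under reflection for the image points; the remainder is a routine singular-integral computation.
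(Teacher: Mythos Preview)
Your proof is correct and follows exactly the approach of the paper, which simply notes that the numerators are bounded by $2\|Z(t)\|_{C^2}$ and the denominators by $\delta[Z(t)]^{2\alpha}$ or $F[Z(t)]^{-2\alpha}|\eta|^{2\alpha}$, leaving the rest as ``a simple computation.'' You have merely made explicit the reflection inequality $|p-\bar q|\ge|p-q|$ for the image terms (which the paper uses elsewhere, cf.\ \eqref{eq:temp}) and carried out the summation and integration in detail.
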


\begin{proof}
In the integrands of $S_k[Z(t)](\xi)$, the numerators are bounded above by $2\|Z(t)\|_{C^2}$, and the denominators are bounded below by either $\delta[\Z(t)]^{2\alpha}$ or $F[\Z(t)]^{-2\alpha} |\eta|^{2\alpha}$. The claim now follows by a simple computation.
\end{proof}

Thus
\begin{equation} \label{3.11a}
\left|\frac d{dt}\max_k \|\z_k(\cdot, t)\|_{L^\infty}\right|  \le \frac{4\pi  \Theta}{\alpha(1-2\alpha)}  \big( \delta[\Z(t)]^{-1}+F[\Z(t)] \big)^{2\alpha} \|Z(t)\|_{C^2},
\end{equation}
 while $| \frac{d}{dt} \delta[\Z(t)]  |$ is bounded by twice that, so we have
\begin{equation} \label{3.51}
\left| \frac{d}{dt} \delta[Z(t)]^{-1} \right| \leq \frac{8\pi  \Theta}{\alpha(1-2\alpha)}  \left( \delta[Z(t)]^{-1}+F[Z(t)] \right)^{2+2\alpha} \|Z(t)\|_{C^2}.
\end{equation}

\subsubsection*{The evolution of $\|\partial_\xi^3 \z_k(\cdot, t)\|_{L^2}$}

In the following computation, let us assume that each $\z_k\in C^{4,1}(\mathbb{T}\times [0,T])$ for each $T<\infty$ (by which we mean that $\partial^4_{abcd} \z_k\in C(\mathbb{T}\times [0,T])$ whenever $a,b,c,d\in\{\xi,t\}$ and at most one of them is $t$).  This will be sufficient because we will eventually apply the obtained  estimates to a family of regularized solutions which do possess this regularity.  Then for $1\leq k\leq N$ we have
\begin{equation}
\frac{d}{dt}\|\partial_\xi^3 \z_k(\cdot, t)\|_{L^2}^2 =  \sum_{i=1}^N \sum_{m=1}^2 \frac{\theta_i}{\alpha} \int_{\mathbb{T}^2} \partial_\xi^3 \z_k(\xi,t) \cdot  \partial_\xi^3 
\left(\frac{\partial_\xi \z_k(\xi,t) - \partial_\xi y_i^m( \xi-\eta, t)}{|\z_k(\xi,t)  - y_i^m(\xi-\eta, t)|^{2\alpha}}
\right) d\eta d\xi.
\label{eq:third_derivative}
\end{equation}
  Here we used that
\[
\partial_\xi^3 \int_{\mathbb{T}}
\frac{\partial_\xi \z_k(\xi,t) - \partial_\xi y_i^m( \xi-\eta, t)}{|\z_k(\xi,t)  - y_i^m(\xi-\eta, t)|^{2\alpha}} d\eta =
 \int_{\mathbb{T}} \partial_\xi^3
 \left( \frac{\partial_\xi \z_k(\xi,t) - \partial_\xi y_i^m( \xi-\eta, t)}{|\z_k(\xi,t)  - y_i^m(\xi-\eta, t)|^{2\alpha}} \right) d\eta,
\]
which is obvious for $i\neq k$ as long as $\delta[\Z(t)]>0$, and for $i=k$ it follows from $z_k\in C^{4,1}$ and $2\alpha<1$ via a triple application of the Leibnitz integral rule.

We will analyze the right hand side of \eqref{eq:third_derivative} separately for $i\neq k$ and $i=k$. For the sake of clarity we will omit the $t$ dependence in the rest of this argument, since all the estimates of the right hand side of \eqref{eq:third_derivative} are done at a fixed time $t$.  We will also omit $Z$ in $\delta[\Z(t)]$ and $F[\Z(t)]$, instead writing just $\delta$ and $F$.

\noindent\textbf{Step 1: Contribution to \eqref{eq:third_derivative}  from the $i\neq k$ terms.}
For $i\neq k$ and $m=1,2$, the integral on the right hand side
of \eqref{eq:third_derivative} can be written as $\sum_{j=0}^3 \binom{3}{j} I_{k,i,j}^m$, where
\[
I_{k,i,j}^m :=  \int_{\mathbb{T}^2}
\partial_\xi^3 \z_k(\xi)\cdot
\underbrace{\partial_\xi^{3-j} \Big(\partial_\xi \z_k(\xi) - \partial_\xi y_i^m(\xi-\eta)\Big)}_{T_1(\xi,\eta)} \underbrace{\partial_\xi^{j} \Big( \big| \z_k(\xi) -  y_i^m (\xi-\eta)\big|^{-2\alpha}\Big)}_{T_2(\xi,\eta)} d\eta d\xi.
\]

The $j=2$ term is the easiest to control, where we directly use
\[
|T_1| \leq 2 \|\Z\|_{C^2} \le C \|\Z\|_{H^3} \qquad \text{and } \qquad |T_2| \leq C(\alpha) \delta^{-2-2\alpha} (\|\Z\|_{C^2}+1)^2
\]
to obtain
\[
|I_{k,i,2}^m| \leq C(\alpha)  \delta^{-2-2\alpha}  (\|\Z\|_{C^2}+1)^2 \|\Z\|_{H^3}^{2}.
\]

For $j=1$ we have a similar estimate for $T_2$, so
\begin{equation*}
\begin{split}
|I_{k,i,1}^m| &\le C(\alpha) \delta^{-2-2\alpha} (\|\Z\|_{C^2}+1)^2  \int_{\mathbb{T}^2} \left[  \left|\partial_\xi^3 z_k(\xi)\right|^2 + |\partial_\xi^3 z_k(\xi) \cdot \partial_\xi^3 y_i^m(\xi-\eta)| \right] d\eta d\xi\\
&\le  C(\alpha)\delta^{-2-2\alpha} (\|\Z\|_{C^2}+1)^2  \|Z\|_{H^3}^2.
\end{split}
\end{equation*}

For $j=3$ we have $|T_1| \leq 2\|\Z\|_{C^2}$ and
\[
|T_2| \leq C(\alpha)  \delta^{-1-2\alpha} |\partial_\xi^3 \z_k(\xi) -  \partial_\xi^3 y_i^m(\xi-\eta) |  + C(\alpha) \delta^{-3-2\alpha} (\|\Z\|_{C^2}+1)^3 ,
\]
so that we obtain (also using $\|\Z\|_{C^2} \le C \|\Z\|_{H^3}$)
\[
\begin{split}
|I_{k,i,3}^m| &\leq C(\alpha)   \|Z\|_{C^2} \left( \delta^{-1-2\alpha} \|Z\|_{H^3}^2 + \delta^{-3-2\alpha} (\|\Z\|_{C^2}+1)^3  \|Z\|_{H^3} \right)\\
&\leq C(\alpha) \delta^{-3-2\alpha}(\|\Z\|_{C^2}+1)^3  \|Z\|_{H^3}^2.
\end{split}
\]

For $j=0$, we split the integral as follows:
\begin{equation*}
\begin{split}
I_{k,i,0}^m &= \underbrace{\int_{\mathbb{T}^2} \partial_\xi^3 \z_k(\xi) \cdot
\frac{\partial_\xi^4 \z_k(\xi) }{|\z_k(\xi)  -
y_i^m(\xi-\eta)|^{2\alpha}}d\eta d\xi }_{I_{01}}
- \underbrace{\int_{\mathbb{T}^2} \partial_\xi^3 \z_k(\xi) \cdot
\frac{ \partial_\xi^4  y_i^m(\xi-\eta)}{|\z_k(\xi)  -
y_i^m(\xi-\eta)|^{2\alpha}}d\eta d\xi}_{I_{02}}.
\end{split}
\end{equation*}
Integration by parts in $\xi$ yields
\begin{equation}
\label{eq:I01}
\begin{split}
|I_{01}| &= \frac{1}{2} \left| \int_{\mathbb{T}^2} \left|\partial_\xi^3 \z_k(\xi)\right|^2 \partial_\xi \left(|\z_k(\xi)  -
y_i^m(\xi-\eta)|^{-2\alpha}\right) d\eta d\xi \right|
\leq C(\alpha) \delta^{-1-2\alpha} \|\Z\|_{C_2} \|\Z\|_{H^3}^2.
\end{split}
\end{equation}
Since $\partial_\xi^4  y_i^m(\xi-\eta)=\frac{d^4}{d\eta^4} y_i^m(\xi-\eta)$,  integration by parts in $\eta$ also yields
\begin{equation}
\label{eq:I02}
\begin{split}
|I_{02}| &= \left|\int_{\mathbb{T}^2} \partial_\xi^3 \z_k(\xi) \cdot
\frac{ \frac{d^4}{d\eta^4}  y_i^m(\xi-\eta)}{|\z_k(\xi)  -
y_i^m(\xi-\eta)|^{2\alpha}}d\eta d\xi \right|\\
&=\left|  \int_{\mathbb{T}^2} \partial_\xi^3 \z_k(\xi) \cdot \frac{d^3}{d\eta^3}  y_i^m(\xi-\eta) \frac{d}{d\eta} \left(|\z_k(\xi)  -
y_i^m(\xi-\eta)|^{-2\alpha}\right)d\eta d\xi\right|\\
&\leq C(\alpha) \delta^{-1-2\alpha} \|\Z\|_{C_2} \|\Z\|_{H^3}^2.
\end{split}
\end{equation}

Thus for $i\neq k$, the integral on the right-hand side of \eqref{eq:third_derivative}  is bounded by
\[
C(\alpha) \delta^{-3-2\alpha} (\|\Z\|_{C^2}+1)^3 \|\Z\|_{H^3}^2.
\]

\noindent \textbf{Step 2: Contribution to \eqref{eq:third_derivative}  from the $i=k$ terms.}
An argument as in \cite{g} (see the
bound just below (24) in \cite{g}) shows that the
integral on the right-hand side of \eqref{eq:third_derivative} with $i=k$ and $m=1$ (that is, $y_i^m=\z_k$), is bounded by
\[
C(\alpha) F^{3+2\alpha}  (\|\Z\|_{C^2}+1)^{3} \|\Z\|_{H^3}^2.
\]
However, the term with $i=k$ and $m=2$ (that is, $y_i^m = \bar{\z}_k$), creates some
new difficulties.  Nevertheless, we will be able to obtain for it the almost identical bound \eqref{3.1t} below. (Also, as the reader can easily check,  the argument in the case $m=1$ is essentially a subset of the argument below for $m=2$.)

Using the notation from \cite{g} and writing $z=z_k$, the integral in \eqref{eq:third_derivative} with $i=k$ and $m=2$ becomes $I_0+3I_1+3I_2 + I_3$,
where
\[
I_j := I_{k,k,j}^2 = \int_{\mathbb{T}^2} \partial_\xi^3 \z(\xi)
\cdot  \partial_\xi^{3-j}
\left(\partial_\xi \z(\xi) - \partial_\xi \bar{\z}(\xi-\eta)\Big)
\partial_\xi^{j} \Big(|\z(\xi)  - \bar{\z}(\xi-\eta)|^{-2\alpha}\right) d\eta d\xi.
\]
For $j=0$ we have using $u\cdot v = \bar u\cdot \bar v$ and a change of variables,
\begin{equation*}
\begin{split}
I_0 =  \int_{\mathbb{T}^2} \partial_\xi^3 \z(\xi) \cdot
\frac{\partial_\xi^4 \z(\xi) -
\partial_\xi^4 \bar{\z}(\xi-\eta)}{|\z(\xi)  -
\bar{\z}(\xi-\eta)|^{2\alpha}}d\eta d\xi
&=\int_{\mathbb{T}^2} \partial_\xi^3\bar \z(\xi) \cdot
\frac{\partial_\xi^4 \bar \z(\xi) -
\partial_\xi^4 {\z}(\xi-\eta)}{|\bar \z(\xi)  - {\z}(\xi-\eta)|^{2\alpha}}d\eta d\xi\\
&= \int_{\mathbb{T}^2} \partial_\xi^3\bar \z(\xi-\eta) \cdot
\frac{\partial_\xi^4 \bar \z(\xi-\eta) -
\partial_\xi^4 {\z}(\xi)}{|\bar \z(\xi-\eta)  - {\z}(\xi)|^{2\alpha}}d\eta d\xi.
\end{split}
\end{equation*}
This, an integration by parts in $\xi$, and a change of variables now yield
\begin{equation}
\begin{split}
|I_0|
&= \frac{1}{2} \left| \int_{\mathbb{T}^2} \Big(\partial_\xi^3 \z(\xi) - \partial_\xi^3 \bar{\z}(\xi-\eta)\Big) \cdot \frac{\partial_\xi(\partial_\xi^3 \z(\xi) - \partial_\xi^3 \bar{\z}( \xi-\eta))}{|\z(\xi)  - \bar{\z}(\xi-\eta)|^{2\alpha}}d\eta d\xi\right|\\
&\leq \frac{2\alpha}{4}  \int_{\mathbb{T}^2}
\Big|\partial_\xi^3 \z(\xi) - \partial_\xi^3 \bar{\z}(\xi-\eta)\Big|^2 \frac{|\partial_\xi \z(\xi) - \partial_\xi \bar{\z}( \xi-\eta)|}{|\z(\xi)  - \bar{\z}(\xi-\eta)|^{1+2\alpha}}d\eta d\xi\\
&\leq \alpha  \int_{\mathbb{T}^2}
\left(\left|\partial_\xi^3 \z(\xi)\right|^2 +\left| \partial_\xi^3 \bar{\z}(\eta)\right|^2\right)  \underbrace{\frac{|\partial_\xi \z(\xi) - \partial_\xi \bar{\z}(\eta)|}{|\z(\xi)  - \bar{\z}(\eta)|^{1+2\alpha}}}_{T_3(\xi,\eta)}d\eta d\xi\\[-0.5cm]
&\leq 2\alpha \|\Z\|_{H^3}^2 \max_{\xi\in\mathbb{T}} \int_{\mathbb{T}} T_3(\xi,\eta) d\eta.
\label{eq:temp_i0}
\end{split}
\end{equation}
The above computation is similar to that in \cite{g}, with the latter having $\z$ in place of $\bar \z$ (which is our case $m=1$).
In that case the numerator of $T_3$ is bounded above by
$\|\Z\|_{C^2} |\xi-\eta|$, and the denominator is bounded below by
$F^{-1-2\alpha}|\xi-\eta|^{1+2\alpha}$, giving $|T_3(\xi,\eta)|\leq  F^{1+2\alpha} \|\Z\|_{C^2} |\xi-\eta|^{-2\alpha}$.
Since $2\alpha<1$, this now yields a bound on $I_0$.

In our case $m=2$, the lower bound for the denominator continues to hold
because
\begin{equation}
|\z(\xi)-\bar{\z}(\eta)| \ge  |\z(\xi)-\z(\eta)| \ge F^{-1} |\xi-\eta|.
\label{eq:temp}
\end{equation}
However, we no longer have the same estimate for the numerator. With the notation $\z(\xi)=(\z^1(\xi), \z^2(\xi))$, the second component of the numerator becomes $\partial_\xi \z^2(\xi) + \partial_\xi \z^2(\eta)$, which need not converge to 0 as $\xi-\eta\to 0$.  The following lemma will help us instead.

\begin{lemma}\label{lemma:f'bis}
If $\gamma\in[0,1]$ and $0\le f\in C^{1,\gamma}(\mathbb{T})$, then for any $\xi\in\mathbb{T}$ we have
\begin{equation}
|f'(\xi)| \leq 2 \|f\|_{C^{1,\gamma}}^{1/(1+\gamma)}  f(\xi)^{\gamma/(1+\gamma)}.
\label{eq:f'bis}
\end{equation}
\end{lemma}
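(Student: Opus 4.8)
The plan is to derive \eqref{eq:f'bis} by a classical Glaeser-type argument (the case $\gamma=1$ being the familiar bound $|f'(\xi)|^2\le 2f(\xi)\,\|f''\|_{L^\infty}$ for nonnegative $f\in C^{1,1}$), the only extra care being to keep the auxiliary increment within the scale of $\mathbb T$. The case $\gamma=0$ is immediate, since then $f(\xi)^{\gamma/(1+\gamma)}=1$ and $|f'(\xi)|\le\|f'\|_{L^\infty}\le\|f\|_{C^{1,0}}$. So from now on fix $\gamma\in(0,1]$ and $\xi\in\mathbb T$, and write $K:=\|f\|_{C^{1,\gamma}}$; thus $\|f\|_{L^\infty}\le K$ and $|f'(\zeta_1)-f'(\zeta_2)|\le K\,|\zeta_1-\zeta_2|^{\gamma}$ whenever $|\zeta_1-\zeta_2|\le\pi$ (Hölder control of $f'$ with respect to the arc-length distance on $\mathbb T$). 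The core of the proof is the one-parameter inequality
\[
|f'(\xi)|\;\le\;\frac{f(\xi)}{h}+\frac{K}{1+\gamma}\,h^{\gamma}\qquad\text{for all }h\in(0,1].
\]

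To establish this, I would apply the fundamental theorem of calculus to the periodic $C^1$ function $f$: for $h\in(0,1]$ and both sign choices (matching throughout),
\[
0\le f(\xi\pm h)=f(\xi)\pm\Big(h\,f'(\xi)+\int_0^h\big(f'(\xi\pm s)-f'(\xi)\big)\,ds\Big),
\]
where the nonnegativity uses $f\ge0$. Since $0\le s\le h\le1<\pi$, the Hölder bound gives $\big|\int_0^h(f'(\xi\pm s)-f'(\xi))\,ds\big|\le K\int_0^h s^{\gamma}\,ds=\frac{K}{1+\gamma}h^{1+\gamma}$; rearranging the two inequalities and combining the sign choices yields $|h\,f'(\xi)|\le f(\xi)+\frac{K}{1+\gamma}h^{1+\gamma}$, and dividing by $h$ gives the displayed bound.

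It then remains to optimize in $h$. If $f(\xi)=0$, letting $h\to0^+$ in the displayed inequality forces $f'(\xi)=0$, which is \eqref{eq:f'bis}. If $f(\xi)>0$, I would choose $h:=\big(f(\xi)/K\big)^{1/(1+\gamma)}$, which is legitimate precisely because $f(\xi)\le\|f\|_{L^\infty}\le K$ guarantees $h\le1$. For this $h$ one has $f(\xi)/h=K^{1/(1+\gamma)}f(\xi)^{\gamma/(1+\gamma)}$ and $\frac{K}{1+\gamma}h^{\gamma}=\frac{1}{1+\gamma}K^{1/(1+\gamma)}f(\xi)^{\gamma/(1+\gamma)}$, so
\[
|f'(\xi)|\le\Big(1+\frac1{1+\gamma}\Big)K^{1/(1+\gamma)}f(\xi)^{\gamma/(1+\gamma)}\le 2\,\|f\|_{C^{1,\gamma}}^{1/(1+\gamma)}\,f(\xi)^{\gamma/(1+\gamma)},
\]
the last step using $\gamma\ge0$, which is exactly \eqref{eq:f'bis}.

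I do not expect a genuine obstacle here: the argument is elementary. The one point that needs attention — and the reason I would calibrate the increment by the full norm $K=\|f\|_{C^{1,\gamma}}$ rather than by the Hölder seminorm of $f'$ — is that the comparison point $\xi\pm h$ must stay within the injectivity scale of $\mathbb T$; taking $h=(f(\xi)/K)^{1/(1+\gamma)}\le1$ makes this automatic and avoids any case split according to the size of $f(\xi)$. In the intended application $f$ will be a nonnegative coordinate function of a patch boundary parametrization (nonnegative since the patch lies in $D=\Rm\times\Rm^+$), which is what makes $0\le f(\xi\pm h)$ available and lets \eqref{eq:f'bis}, together with \eqref{eq:temp}, control the term $T_3$ in \eqref{eq:temp_i0}.
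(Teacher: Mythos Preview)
Your proof is correct and follows essentially the same Glaeser-type argument as the paper: both use nonnegativity of $f$ at a nearby point together with the H\"older bound on $f'$ to control $|f'(\xi)|$ in terms of $f(\xi)$. The only cosmetic difference is that the paper chooses the step size $\xi-\xi' = (f'(\xi)/2K)^{1/\gamma}$ in terms of $f'(\xi)$ and then bounds $f(\xi)\ge f(\xi)-f(\xi')$ from below, whereas you write the Taylor expansion, obtain $|f'(\xi)|\le f(\xi)/h + \tfrac{K}{1+\gamma}h^\gamma$, and optimize by choosing $h=(f(\xi)/K)^{1/(1+\gamma)}$; both lead to the same inequality with the same constant $2$.
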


We present the proof in Section~\ref{sec:nonnegative}.  Note that the  power of $f(\xi)$ is sharp,
and that Sobolev embedding and
\eqref{eq:f'bis} show for $0\le f\in H^3(\mathbb{T})$, $\xi\in\mathbb{T}$, and a universal $C<\infty$,
\begin{equation}
|f'(\xi)| \leq C \|f\|_{H^3}^{1/(1+\gamma)} f(\xi)^{\gamma/(1+\gamma)}.
\label{eq:f'_sqrtfbis}
\end{equation}

Lemma \ref{lemma:f'bis} with $f(\eta)=\z^2(\eta)\ge 0$  (together with $|\xi-\eta|\le\pi$) now yields
\begin{equation}
\begin{split}
\label{eq:diff_derivative}
|\partial_\xi \z(\xi) - \partial_\xi \bar{\z}(\eta)|
&\leq  |\partial_\xi \z(\xi) - \partial_\xi \z(\eta)| +  |\partial_\xi \z(\eta) - \partial_\xi \bar\z(\eta)|\\
&= |\partial_\xi \z(\xi) - \partial_\xi \z(\eta)| + 2|\partial_\xi \z^2(\eta)|\\
&\leq \|Z\|_{C^2}\, |\xi-\eta| + 2\|Z\|_{C^2}^{1/2}  \sqrt{\z^2( \eta)}\\
&\leq C (\|Z\|_{C^2}+1) \left(\sqrt{|\xi-\eta|} +   \sqrt{|\z(\xi) - \bar\z( \eta)|}\right)\\
&\leq 2C (\|Z\|_{C^2}+1) F^{1/2} |\z(\xi) - \bar\z( \eta)|^{1/2}.
\end{split}
\end{equation}
Then
\begin{equation}
\label{eq:temp6}
\begin{split}
T_3 (\xi,\eta)&\leq 2C F^{1/2} (\|Z\|_{C^2}+1)  |\z(\xi) - \bar\z( \eta)|^{-2\alpha-1/2}\\
&\leq 2 C F^{1+2\alpha} (\|Z\|_{C^2}+1)  |\xi-\eta|^{-2\alpha-1/2},
\end{split}
\end{equation}
which is integrable in $\eta$ (uniformly in $\xi$) when $\alpha<\tfrac 14$.  Plugging this into \eqref{eq:temp_i0} yields
\[
|I_0| \leq C(\alpha) F^{1+2\alpha} (\|\Z\|_{C^2}+1) \|\Z\|_{H^3}^2
\]
for all $\alpha<\tfrac14$.

For $j=1$ we notice that
\[
\begin{split}
I_1 &= -2\alpha  \int_{\mathbb{T}^2} \partial_\xi^3 \z(\xi) \cdot \left(\partial_\xi^3 \z(\xi) - \partial_\xi^3 \bar{\z}(\xi-\eta)\right)  \frac{(\partial_\xi \z(\xi) - \partial_\xi \bar{\z}( \xi-\eta))\cdot (\z(\xi)  - \bar{\z}(\xi-\eta))}{|\z(\xi)  - \bar{\z}(\xi-\eta)|^{2+2\alpha}}d\eta d\xi\\
&= -2\alpha  \int_{\mathbb{T}^2} \partial_\xi^3 \bar{\z}(\xi-\eta) \cdot \left(\partial_\xi^3 \bar{\z}(\xi-\eta) - \partial_\xi^3 \z(\xi)\right)  \frac{(\partial_\xi \z(\xi) - \partial_\xi \bar{\z}( \xi-\eta))\cdot (\z(\xi)  - \bar{\z}(\xi-\eta))}{|\z(\xi)  - \bar{\z}(\xi-\eta)|^{2+2\alpha}}d\eta d\xi,
\end{split}
\]
where we used a change of variables (switching $\xi$ and $\xi-\eta$) and $\bar u\cdot \bar v=u\cdot v$.  Thus
\[
I_1=-\alpha  \int_{\mathbb{T}^2} \left|\partial_\xi^3 \z(\xi) - \partial_\xi^3 \bar{\z}(\xi-\eta)\right|^2  \frac{(\partial_\xi \z(\xi) - \partial_\xi \bar{\z}( \xi-\eta))\cdot (\z(\xi)  - \bar{\z}(\xi-\eta))}{|\z(\xi)  - \bar{\z}(\xi-\eta)|^{2+2\alpha}}d\eta d\xi.
\]
So $|I_1|$ is bounded by twice the second line of \eqref{eq:temp_i0}, and it obeys the same bound as $|I_0|$.

The estimates for $I_2$ and $I_3$ will be slightly more involved.
For $j=2$ we have
\begin{equation}
\begin{split}
|I_2| &=\left| \int_{\mathbb{T}^2} \partial_\xi^3 \z(\xi) \cdot
\big(\partial_\xi^2 \z(\xi) - \partial_\xi^2 \bar{\z}( \xi-\eta)\big) \partial_\xi^{2} \big(|\z(\xi)  - \bar{\z}(\xi-\eta)|^{-2\alpha}\big) d\eta d\xi\right|
\\
&\leq 2\|\Z\|_{C^2} \, \|\Z\|_{H^3}\, \Big\| \underbrace{\int_{\mathbb{T}}  \big|\partial_\xi^{2} \big(|\z(\xi)  - \bar{\z}(\xi-\eta)|^{-2\alpha}\big) \big| d\eta}_{T_4(\xi)} \Big\|_{L^2},
\label{eq:I_3}
\end{split}
\end{equation}
so we need to bound $\|T_4\|_{L^2}$.  A simple change of variables and $\bar z=(z^1,-z^2)$ yield
\begin{equation}
\begin{split}
|T_4| &\leq C(\alpha) \bigg[ \int_{\mathbb{T}}\frac{|\partial_\xi \z(\xi) - \partial_\xi \bar \z(\eta)|^2}{|\z(\xi)-\bar{\z}(\eta)|^{2+2\alpha}} d\eta
+  \int_{\mathbb{T}}\frac{|\partial_\xi^2 \z(\xi) - \partial_\xi^2 \bar\z(\eta)|}{|\z(\xi)-\bar{\z}(\eta)|^{1+2\alpha}} d\eta\bigg]
\\[0.2cm]
&\leq C(\alpha) \bigg[ \int_{\mathbb{T}}\frac{|\partial_\xi \z(\xi) - \partial_\xi \z(\eta)|^2}{|\z(\xi)-\bar{\z}(\eta)|^{2+2\alpha}} d\eta + \underbrace{\int_{\mathbb{T}}\frac{|\partial_\xi \z^2(\eta)|^2}{|\z(\xi)-\bar{\z}(\eta)|^{2+2\alpha}} d\eta}_{T_5(\xi)}\\
&\quad+ \int_{\mathbb{T}}\frac{|\partial_\xi^2 \z(\xi) - \partial_\xi^2 \z(\eta)|}{|\z(\xi)-\bar{\z}(\eta)|^{1+2\alpha}} d\eta + \underbrace{\int_{\mathbb{T}} \frac{|\partial_\xi^2 \z^2(\eta)|}{|\z(\xi)-\bar{\z}(\eta)|^{1+2\alpha}}  d\eta}_{T_6(\xi)}\bigg].
\label{eq:T_4_breakdown}
\end{split}
\end{equation}
The first and third of the last four integrals can be controlled in the same
way as in \cite{g}.  Indeed, the numerator in the first term is bounded by $\|\Z\|_{C^2}^2\,|\xi-\eta|^2$, so the integral  is bounded uniformly in $\xi$ by $C(\alpha)F^{2+2\alpha} \|\Z\|_{C^2}^2$ due to $2\alpha<1$ (its $L^2$-norm, as a function of $\xi$, then satisfies the same bound).  As for the third term, let us change the $\eta$ variable back to $\xi-\eta$, so that the numerator equals $| \eta \int_0^1 \partial_\xi^3 z(\xi - s\eta) ds|$.  The Minkowski inequality for integrals then shows that that term's $L^2$-norm is bounded by
\[
  \left\| F^{1+2\alpha} \int_{\mathbb{T}\times[0,1]}  |\partial_\xi^3 \z(\xi - s\eta)|  \frac{ ds d\eta}{ |\eta|^{2\alpha} } \right\|_{L^2(\mathbb{T})}
 \leq F^{1+2\alpha}  \int_{\mathbb{T}\times[0,1]}  \left(\int_{\mathbb{T}} \left|\partial_\xi^3 \z(\xi - s\eta)\right|^2 d\xi \right)^{1/2}   \frac{ ds d\eta}{ |\eta|^{2\alpha} },
\]
that is, by $C(\alpha) F^{1+2\alpha} \|Z\|_{H^3}$ for  all $\alpha<\tfrac 12$.

To deal with $T_5$ and $T_6$, let us first define their regularizations
\[
T_5^\epsilon(\xi) := \int_{\mathbb{T}}\frac{|\partial_\xi \z^2(\eta)|^2}{(|\z(\xi)-\bar{\z}(\eta)|+\epsilon)^{2+2\alpha}} d\eta
\qquad\text{and}\qquad T_6^\epsilon(\xi) := \int_{\mathbb{T}} \frac{|\partial_\xi^2 \z^2(\eta)|}{(|\z(\xi)-\bar{\z}(\eta)|+\epsilon)^{1+2\alpha}}  d\eta.
\]
We will show that  $\|T_5^\epsilon\|_{L^2}$ and
$\|T_6^\epsilon\|_{L^2}$ are uniformly bounded as $0<\epsilon\to 0$, so that  the
monotone convergence theorem then yields the same bounds for
$\|T_5\|_{L^2}$ and $\|T_6\|_{L^2}$.

Let us start with $T_5^\epsilon$.  From \eqref{eq:temp} and $|\z(\xi)-\bar{\z}(\eta)|\geq \z^2(\eta)\ge 0$ we have
\begin{equation*}
T_5^\epsilon(\xi) \leq C(\alpha) \int_{\mathbb{T}} \left(F^{-1}|\xi - \eta| \right)^{-\frac{11}{12}-2\alpha} \underbrace{\frac{|\partial_\xi \z^2(\eta)|^2}{(\z^2(\eta)+\epsilon)^{\frac{13}{12}}} }_{T_7^\epsilon(\eta)} d\eta
\end{equation*}
Young's inequality for convolutions now yields
\[
\|T_5^\epsilon\|_{L^2} \leq C(\alpha)  F^{\frac{11}{12}+2\alpha} \left\|\xi^{-\frac{11}{12}-2\alpha}\right\|_{L^1} \|T_7^\epsilon\|_{L^2},
\]
with the $L^1$-norm finite provided $\alpha<\frac{1}{24}$.
The following will help us estimate $\|T_7^\epsilon\|_{L^2}$.

\begin{lemma}\label{cor:sobolevbis}
There is $C<\infty$ such that if $\beta\in[0,\frac{1}{6}]$ and $0< f\in H^3(\mathbb{T})$, then
\begin{equation}
\int_{\mathbb{T}} \frac{|f'(\xi)|^n}{f(\xi)^{ \beta+n/2}} d\xi \leq C^n \|f\|_{H^3}^{\tfrac{n}{2}-\beta}
\label{eq:sobolev_1bis}
\end{equation}
for any $n\ge 1$, as well as
\begin{equation}
\int_{\mathbb{T}} \frac{f''(\xi)^2}{f(\xi)^{\beta}} d\xi \leq C \|f\|_{H^3}^{2-\beta}.
\label{eq:sobolev_2bis}
\end{equation}
\end{lemma}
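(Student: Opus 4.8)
The plan is to deduce both \eqref{eq:sobolev_1bis} and \eqref{eq:sobolev_2bis} from the single case "$n=2$", which after clearing powers amounts to the estimate
\[
\int_{\mathbb{T}} \frac{f'(\xi)^2}{f(\xi)^{1+\beta}}\, d\xi \;\le\; C\,\|f\|_{H^3}^{\,1-\beta} \qquad\text{for all } \beta\in[0,\tfrac13] \text{ and } 0<f\in H^3(\mathbb{T});
\]
call this (K). The three ingredients I will use freely are: \eqref{eq:f'_sqrtfbis} with $\gamma=1$, i.e.\ $|f'|\le C\|f\|_{H^3}^{1/2}f^{1/2}$ (hence $f'^2\le C\|f\|_{H^3}f$); the Lipschitz bound \eqref{eq:f'bis} with $\gamma=1$ for $\sqrt f$; and the Sobolev embeddings $\|f''\|_{L^\infty}+\|f\|_{L^\infty}\le C\|f\|_{H^3}$ and $\|f'''\|_{L^2}\le\|f\|_{H^3}$.

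Granting (K), the reductions go as follows. For \eqref{eq:sobolev_1bis} I peel off $n-1$ copies of $|f'|$ using $|f'|\le C\|f\|_{H^3}^{1/2}f^{1/2}$,
\[
\int_{\mathbb{T}} \frac{|f'|^n}{f^{\beta+n/2}} \le \big(C\|f\|_{H^3}^{1/2}\big)^{n-1}\int_{\mathbb{T}} \frac{|f'|}{f^{1/2+\beta}},
\]
and then bound $\int_{\mathbb{T}} |f'| f^{-1/2-\beta}$ by Cauchy--Schwarz by $(2\pi)^{1/2}\big(\int_{\mathbb{T}} f'^2 f^{-1-2\beta}\big)^{1/2}$, which is $\le (2\pi C)^{1/2}\|f\|_{H^3}^{(1-2\beta)/2}$ by (K) applied at index $2\beta\le\tfrac13$; multiplying out gives $C^n\|f\|_{H^3}^{n/2-\beta}$. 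For \eqref{eq:sobolev_2bis} I integrate by parts twice (boundary terms vanish by periodicity),
\[
\int_{\mathbb{T}} \frac{f''^2}{f^{\beta}} = -\int_{\mathbb{T}} \frac{f'f'''}{f^{\beta}} + \beta\int_{\mathbb{T}} \frac{f'^2 f''}{f^{1+\beta}},
\]
bound the first term by $\|f'''\|_{L^2}\big(\int_{\mathbb{T}} f'^2 f^{-2\beta}\big)^{1/2}$ and use $f'^2\le C\|f\|_{H^3}f$ with $2\beta<1$ to get $\int_{\mathbb{T}} f'^2 f^{-2\beta}\le C\|f\|_{H^3}\|f\|_{L^\infty}^{1-2\beta}\le C\|f\|_{H^3}^{2-2\beta}$, and bound the second term by $\|f''\|_{L^\infty}\int_{\mathbb{T}} f'^2 f^{-1-\beta}\le C\|f\|_{H^3}\cdot C\|f\|_{H^3}^{1-\beta}$ by (K); both are $\le C\|f\|_{H^3}^{2-\beta}$.

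It remains to prove (K). For $\beta=0$ it is immediate from $f'^2\le C\|f\|_{H^3}f$. For $\beta\in(0,\tfrac13]$ I substitute $h:=\sqrt f\ge0$, so that $f'^2 f^{-1-\beta}=4\,h'^2 h^{-2\beta}$ and, by \eqref{eq:f'bis} with $\gamma=1$, $h$ is Lipschitz with $\|h'\|_{L^\infty}\le\|f\|_{C^{1,1}}^{1/2}$. Hence $\int_{\mathbb{T}} f'^2 f^{-1-\beta}=4\int_{\mathbb{T}} h'^2 h^{-2\beta}\le 4\|h'\|_{L^\infty}\int_{\mathbb{T}} |h'|\,h^{-2\beta}$, and the last integral I rewrite by the one-dimensional coarea (Banach indicatrix) identity as $\int_0^{\|h\|_{L^\infty}} t^{-2\beta}\,\#\{h=t\}\,dt$. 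Splitting the $t$-integral at a threshold $\tau$: on $t\ge\tau$ one uses $t^{-2\beta}\le\tau^{-2\beta}$ together with $\int_0^{\infty}\#\{h=t\}\,dt=\operatorname{Var}_{\mathbb{T}}(h)\le 2\pi\|h'\|_{L^\infty}$; on $t<\tau$ one must control how densely $\sqrt f$ can dip below the small level $\tau$ near the local minima of $f$, in terms of $\|f''\|_{L^\infty}$ and $\|f'''\|_{L^2}$, by Taylor expanding $f$ about each near-minimum. Optimizing $\tau$, bounding $\|f\|_{C^{1,1}}$ by a suitable power of $\|f\|_{H^3}$ via Sobolev/Gagliardo--Nirenberg, and combining the two ranges yields (K); the numerology forces $\beta\le\tfrac13$ here, which is exactly why \eqref{eq:sobolev_1bis}--\eqref{eq:sobolev_2bis} end up restricted to $\beta\le\tfrac16$ after the index-doubling Cauchy--Schwarz step.

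The heart of the matter — and the step I expect to be by far the hardest — is this last level-set estimate in the proof of (K), i.e.\ bounding $\int_0^{\|h\|_{L^\infty}} t^{-2\beta}\,\#\{h=t\}\,dt$. Both the positivity of $f$ and the full $H^3$ regularity are genuinely needed there: the crude bound $f'^2\le C\|f\|_{H^3}f$ alone is too lossy near flat minima — for $f(\xi)=\epsilon+\xi^4$ it overestimates $f'^2 f^{-1-\beta}$ by a factor $\sim\xi^{-2}$ near $0$ and so yields a non-integrable majorant — so one must extract the extra cancellation coming from $f'''\in L^2$, which is precisely what pins down the admissible range of $\beta$.
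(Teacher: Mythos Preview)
Your reductions are essentially the paper's: peel off $n-1$ copies of $|f'|$ via \eqref{eq:f'_sqrtfbis} to reduce \eqref{eq:sobolev_1bis} to an $n=1$ type integral, and integrate by parts to reduce \eqref{eq:sobolev_2bis} to the $n=2$ case. But the Cauchy--Schwarz step you insert after peeling is an unnecessary detour. It converts the natural $n=1$ integral $\int |f'|f^{-1/2-\beta}$ into (K) at index $2\beta$, forcing you to prove (K) on the larger range $[0,\tfrac13]$ instead of $[0,\tfrac16]$. The paper simply stops at $\int |f'|f^{-1/2-\beta}$ and proves that directly; this is Lemma~\ref{lemma:bv}. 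Note also that after your own substitution $h=\sqrt f$ and the peeling $\int h'^2 h^{-2\beta}\le\|h'\|_\infty\int|h'|h^{-2\beta}$, the remaining integral $\int|h'|h^{-2\beta}$ is \emph{exactly} $\tfrac12\int|f'|f^{-1/2-\beta}$, so your coarea formulation is just a rewriting of Lemma~\ref{lemma:bv} --- there is no genuine reduction here, only a change of viewpoint.

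The genuine gap is the one you flag yourself: the level-set estimate $\int_0^\tau t^{-2\beta}\#\{h=t\}\,dt$ is not proved, and ``Taylor expand about each near-minimum'' is not an argument, because the number of local minima of an $H^3$ function is not a priori controlled. The paper's resolution is short and worth knowing. It recognizes $\int|f'|f^{-1/2-\beta}$ as (a constant times) the total variation of $f^{1/2-\beta}$, hence as $\sup\sum_i\big|f(\xi_i)^{1/2-\beta}-f(\xi_{i-1})^{1/2-\beta}\big|$ over partitions by local extrema $\xi_0<\cdots<\xi_n$. Concavity of $s\mapsto s^{1/2-\beta}$ and H\"older's inequality with $p=2(1/2-\beta)^{-1}$ reduce this to bounding $\sum_i h_i^2 d_i^{-5}$, where $h_i=f(\xi_i)-f(\xi_{i-1})$ and $d_i=\xi_i-\xi_{i-1}$. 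Since $f'(\xi_{i-1})=f'(\xi_i)=0$, Rolle's theorem and the mean value inequality give $\max_{[\xi_{i-1},\xi_i]}|f''|\ge 2h_i d_i^{-2}$ and $\int_{\xi_{i-1}}^{\xi_i}f'''^2\ge d_i^{-1}(\max|f''|)^2\ge 4h_i^2 d_i^{-5}$; summing yields $\sum h_i^2 d_i^{-5}\le\tfrac14\|f'''\|_{L^2}^2$. This is the ``extra cancellation coming from $f'''\in L^2$'' you allude to, packaged cleanly without any level-set bookkeeping; the H\"older numerology is precisely what produces the constraint $\beta\le\tfrac16$.
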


The proof is also presented in Section~\ref{sec:nonnegative}.
Now \eqref{eq:sobolev_1bis} with $n=4$ and $\beta=\tfrac 16$ yields
\[
\|T_7^\epsilon\|_{L^2}^2 = \int \frac{|\partial_\xi
  \z^2(\xi)|^4}{(\z^2(\xi)+\epsilon)^{\frac{13}{6}}} d\xi \leq
C^4 \|\Z\|_{H^3}^{\frac{11}{6}},
\]
so that
\[
\|T_5^\epsilon\|_{L^2} \leq C(\alpha)  F^{\frac{11}{12}+2\alpha} \|\Z\|_{H^3}^{\frac{11}{12}}
\]
for all $\alpha\in(0,\tfrac 1{24})$ and $\epsilon>0$.  Thus the same bound holds for $\|T_5\|_{L^2}$.

An almost identical argument for $T_6^\epsilon$ gives
\[
\|T_6^\epsilon\|_{L^2} \leq C(\alpha)  F^{\frac{11}{12}+2\alpha} \|\xi^{-\frac{11}{12}-2\alpha}\|_{L^1} \|T_8^\epsilon\|_{L^2},
\]
with $T_8^\epsilon(\xi):=|\partial_\xi^2 \z^2(\xi)|(\z^2(\xi)+\epsilon)^{-1/12}$.
From \eqref{eq:sobolev_2bis} we again obtain
\[
\|T_8^\epsilon\|_{L^2}^2 \leq C \|\Z\|_{H^3}^{\frac{11}{6}},
\]
which yields  for $\|T_6\|_{L^2}$ the same estimate as for $\|T_5\|_{L^2}$.  We therefore have
\begin{equation} \label{3.1s}
\|T_4\|_{L^2} \le  C(\alpha) F^{2+2\alpha} (\|Z\|_{C^2}+1)(\|Z\|_{H^3}+1)
\end{equation}
(also using $\|Z\|_{C^2}\le C\|Z\|_{H^3}$), so that \eqref{eq:I_3} finally yields for all $\alpha<\tfrac 1{24}$,
\[
|I_2| \leq C(\alpha)  F^{2+2\alpha}(\|\Z\|_{C^2}+1)^2(\|\Z\|_{H^3}+1) \|\Z\|_{H^3}.
\]

Finally, for $j=3$ we obtain after differentiating inside $I_3$ and changing variables,
\begin{equation}
\begin{split}
|I_3| \leq C(\alpha)\Big[ &\int_{\mathbb{T}^2}
|\partial_\xi \z(\xi) - \partial_\xi \bar{\z}(\eta)|  \frac{\left|\partial_\xi^3 \z(\xi)\right|^2 +\left| \partial_\xi^3 \bar{\z}(\eta)\right|^2}{|\z(\xi)  - \bar{\z}(\eta)|^{1+2\alpha}}d\eta d\xi\\
&+\int_{\mathbb{T}^2} |\partial_\xi^3 \z(\xi)| \,
\frac{|\partial_\xi^2 \z(\xi) - \partial_\xi^2 \bar{\z}(\eta)| \,|\partial_\xi \z(\xi) - \partial_\xi \bar{\z}(\eta)|^2}{|\z(\xi)  - \bar{\z}(\eta)|^{2+2\alpha}}d\eta d\xi\\
&+ \int_{\mathbb{T}^2} |\partial_\xi^3 \z(\xi) | \,
\frac{|\partial_\xi \z(\xi) - \partial_\xi \bar{\z}(\eta)|^4}{|\z(\xi)  - \bar{\z}(\eta)|^{3+2\alpha}}d\eta d\xi \Big].
\end{split}
\end{equation}
The first integral already appeared in \eqref{eq:temp_i0} and hence obeys the same bound as $I_0$. We then apply \eqref{eq:diff_derivative} squared to each of the other two integrals (removing $|\partial_\xi \z(\xi) - \partial_\xi \bar{\z}(\eta)|^2$ from the numerator and $|\z(\xi)-\bar \z(\eta)|$ from the denominator)  and find out that they are bounded by $\|\Z\|_{H^3}F(\|\Z\|_{C^2}+1)^2$ times the $L^2$-norm of the expression in the middle of \eqref{eq:T_4_breakdown}.  That latter norm is bounded by the right-hand side of \eqref{3.1s}, so that in the end we obtain
\[
|I_3| \leq C(\alpha) F^{3+2\alpha}(\|\Z\|_{C^2}+1)^3 (\|\Z\|_{H^3}+1) \|\Z\|_{H^3}
\]
for all $\alpha<\tfrac 1{24}$.

Thus for $i= k$, the integral on the right-hand side of \eqref{eq:third_derivative}  is bounded by
\begin{equation} \label{3.1t}
C(\alpha)  F^{3+2\alpha}(\|\Z\|_{C^2}+1)^3(\|\Z\|_{H^3}+1) \|\Z\|_{H^3}.
\end{equation}
This, the analogous estimate for $i=k$ and $m=1$ from the beginning of Step 2, the estimate from Step 1, and Lemma \ref{lemma:S}  now yield for any $\alpha\in(0,\tfrac 1{24})$ and $\Theta:=\sum_{k=1}^N |\theta_k|$,
\begin{equation}
\frac{d}{dt}\| \Z(t)\|_{H^3} \leq C(\alpha)N\Theta \left( \delta[\Z(t)]^{-1}+F[\Z(t)] \right)^{3+2\alpha} (\|\Z(t)\|_{C^2}+1)^3 (\|\Z(t)\|_{H^3}+1).
\label{evol_h3}
\end{equation}

\subsubsection*{The evolution of $F[\Z(t)]$}

For any $k=1,\dots, N$ and any $\xi, \lambda \in \mathbb{T}$ with $ \lambda \neq 0$, we have (again dropping $t$ from $z_k$)
\begin{equation}
\label{eq:F_growth}
\frac{d}{dt} \frac{|\lambda|}{|z_k(\xi)-z_k(\xi-\lambda)|} \leq \frac{|\lambda| \left |\partial_t z_k(\xi) -\partial_t z_k(\xi-\lambda)\right |}{|z_k(\xi)-z_k(\xi-\lambda)|^2}
\leq  F[Z(t)]^2 \frac{\left |\partial_t z_k(\xi) -\partial_t z_k(\xi-\lambda)\right |}{|\lambda|}.
\end{equation}
Using \eqref{eq:contour2} and the mean value theorem, we can estimate
\begin{equation*}
\begin{split}
\left |\partial_t z_k(\xi)  -\partial_t z_k(\xi-\lambda)\right | & \leq  \sum_{i=1}^N \sum_{m=1}^2 \frac{|\theta_i|}{2\alpha}
\int_{\mathbb{T}} |\lambda| \sup_{\xi\in \mathbb{T}} \left\{ \left| \partial_\xi \left(
 \frac{\partial_\xi \z_k(\xi) - \partial_\xi y_i^m(\xi - \eta)}{|\z_k(\xi)  - y_i^m(\xi-\eta)|^{2\alpha}} \right) \right| \right\} d\eta\\
 & \hskip -26mm \leq  \sum_{i=1}^N \sum_{m=1}^2 \frac{|\theta_i|   }{2\alpha} |\lambda|
\int_{\mathbb{T}}    \sup_{\xi \in \mathbb{T}}
\left\{ \frac{2\|Z(t)\|_{C^2}}{|\z_k(\xi)  - y_i^m(\xi-\eta)|^{2\alpha}} +  \frac{2\alpha\left |\partial_\xi \z_k(\xi) - \partial_\xi y_i^m(\xi - \eta)\right|^2}{|\z_k(\xi)  - y_i^m(\xi-\eta)|^{1+2\alpha}} \right\} d\eta
 \\
 & \hskip -26mm \leq C(\alpha)\Theta   |\lambda| \left( \delta[Z(t)]^{-1}+F[Z(t)] \right)^{1+2\alpha} (\|Z(t)\|_{C^2}+1)^2  ,
 \end{split}
\end{equation*}
where in the last inequality we used \eqref{eq:diff_derivative} to control the last term  on the second line for $i=k$ and $m=2$. Plugging this into \eqref{eq:F_growth} now yields
\begin{equation}
\frac{d}{dt} F[Z(t)] \leq C(\alpha)\Theta  \left( \delta[Z(t)]^{-1}+F[Z(t)] \right)^{3+2\alpha} (\|Z(t)\|_{C^2}+1)^2.
\label{evol_F}
\end{equation}

Finally, this, \eqref{evol_h3}, \eqref{3.11a}, and \eqref{3.51} imply for $\alpha\in(0,\tfrac 1{24})$,
\begin{equation}
\frac{d}{dt}\vertiii{\Z(t)} \leq C(\alpha)N\Theta   \vertiii{\Z(t)}^{7+2\alpha}.
\label{eq:apriori}
\end{equation}

\subsection{\hbox{Local $H^3$ well-posedness for the contour equation and small $\alpha$}}
\label{sec:lwp_h3}
\subsubsection*{Uniqueness of solutions}

Let $W=(w_1,\dots,w_N)=\Z-\tilde \Z$ for classical solutions $\Z$ and $\tilde \Z$ to
\eqref{eq:contour2} on some time interval $[0,T]$, with $\sup_{t\in[0,T]}(\vertiii{Z(t)}+\vertiii{\tilde Z(t)})<\infty$ and $\Z(0)=\tilde\Z(0)$  (here we require that $\partial_t Z$ is continuous in $(\xi,t)$ for classical solutions). Then for any  $k=1,\dots, N$ and $t\in[0,T]$  we have (with the argument $t$ again dropped)
\begin{equation}
\label{eq:temp2}
\begin{split}
\frac d{dt}\|w_k\|_{L^2}^2 &=
2 \int_{\mathbb{T}} w_k(\xi) \cdot \partial_t w_k(\xi) d\xi \\
&=  \sum_{i=1}^N \sum_{m=1}^2  \frac{\theta_i}{\alpha} \int_{\mathbb{T}^2} w_k(\xi) \cdot \left( \frac{\partial_\xi z_k(\xi) - \partial_\xi y_i^m(\xi - \eta)}{| z_k(\xi) -  y_i^m(\xi - \eta)|^{2\alpha}} -  \frac{\partial_\xi \tilde{z}_k(\xi) - \partial_\xi \tilde{y}_i^m(\xi - \eta)}{| \tilde{z}_k(\xi) -  \tilde{y}_i^m(\xi - \eta)|^{2\alpha}}\right) d\eta d\xi.
\end{split}
\end{equation}
The last integral equals $A_{k,i}^m + B_{k,i}^m$, where with $w_i^1 := w_i$ and $w_i^2 := \bar w_i $,
\[
A_{k,i}^m := \int_{\mathbb{T}^2} w_k(\xi) \cdot (\partial_\xi z_k(\xi) - \partial_\xi y_i^m(\xi - \eta)) \left( \frac{1}{| z_k(\xi) -  y_i^m(\xi - \eta)|^{2\alpha}} -  \frac{1}{| \tilde{z}_k(\xi) -  \tilde{y}_i^m(\xi - \eta)|^{2\alpha}}\right) d\eta d\xi,
\]
\[
B_{k,i}^m := \int_{\mathbb{T}^2} w_k(\xi) \cdot    \frac{\partial_\xi w_k(\xi) - \partial_\xi w_i^m(\xi - \eta)}{| \tilde{z}_k(\xi) -  \tilde{y}_i^m(\xi - \eta)|^{2\alpha}} d\eta d\xi.
\]

Let us first estimate $A_{k,i}^m$.
When $i\neq k$, the term inside the parentheses is easily bounded by
$C(\alpha) \min\{\delta[Z],\delta[\tilde Z]\}^{-1-2\alpha} (|w_k(\xi)|+|w_i(\xi - \eta)|)$, so
\[
|A_{k,i}^m| \leq C(\alpha) (\vertiii{Z}+ \vertiii{\tilde Z})^{2+2\alpha} \|W\|_{L^2}^2.
\]
For $i=k$ and $m=1$, $A_{k,i}^m$ is controlled the same way as in \cite[p. 13-14]{g}, yielding
\[
|A_{k,k}^1| \leq C(\alpha) (\vertiii{Z}+ \vertiii{\tilde Z})^{2+2\alpha} \|W\|_{L^2}^2
\]
for $\alpha<\tfrac 12$.
Finally, for $i=k$ and $m=2$, the following almost identical computation, \eqref{eq:diff_derivative}, and
$|x^{2\alpha}-1| \leq |x-1|$ for  $x\ge 0$ yield the same bound  for $\alpha< \tfrac 14$:
\begin{equation}
\label{Akk2}
\begin{split}
|A_{k,k}^2|
&\leq  \vertiii{Z}^{\tfrac{3}{2}} \vertiii{\tilde Z}^{2\alpha} \int_{\mathbb{T}^2} |w_k(\xi)|  | z_k(\xi) -  \bar{z}_k(\xi - \eta)|^{1/2} \left| \left(\frac{| \tilde{z}_k(\xi) -  \bar{\tilde{z}}_k(\xi - \eta)|}{| z_k(\xi) -  \bar{z}_k(\xi - \eta)|}\right)^{2\alpha} - 1 \right| |\eta|^{-2\alpha}  d\eta d\xi\\
&\leq  \vertiii{Z}^{\tfrac{3}{2}} \vertiii{\tilde Z}^{2\alpha}  \int_{\mathbb{T}^2} |w_k(\xi)|  | z_k(\xi) -  \bar{z}_k(\xi - \eta)|^{1/2}  \left| \frac{| \tilde{z}_k(\xi) -  \bar{\tilde{z}}_k(\xi - \eta)|}{| z_k(\xi) -  \bar{z}_k(\xi - \eta)|} - 1 \right| |\eta|^{-2\alpha} d\eta d\xi\\
&\leq  \vertiii{Z}^{2} \vertiii{\tilde Z}^{2\alpha}  \int_{\mathbb{T}^2} |w_k(\xi)| \, \Big| | \tilde{z}_k(\xi) -  \bar{\tilde{z}}_k(\xi - \eta)| - | z_k(\xi) -  \bar{z}_k(\xi - \eta)| \Big| \, |\eta|^{-\frac 12 -2\alpha} d\eta d\xi\\
&\leq \vertiii{Z}^{2} \vertiii{\tilde Z}^{2\alpha}   \int_{\mathbb{T}^2} |w_k(\xi)|  \Big( |w_k(\xi)| + |w_k(\xi - \eta)| \Big)  |\eta|^{-\frac{1}{2}-2\alpha} d\eta d\xi\\
&\leq  C(\alpha) (\vertiii{Z}+ \vertiii{\tilde Z})^{2+2\alpha} \|W\|_{L^2}^2.
\end{split}
\end{equation}

Next we control $B_{k,i}^m$. When $i\neq k$, we split it into two integrals: with $\partial_\xi w_k(\xi)$ and with $\partial_\xi w_i^m(\xi - \eta)$, respectively.  After integrating these by parts in $\xi$ and in $\eta$, respectively (similarly to \eqref{eq:I01} and \eqref{eq:I02}),  we obtain
\[
|B_{k,i}^m| \leq C(\alpha) \delta[\tilde Z]^{-1-2\alpha} \|\tilde Z\|_{C_2} \|W\|_{L^2}^2 \leq C(\alpha) \vertiii{\tilde Z}^{2+2\alpha}  \|W\|_{L^2}^2.
\]
For $i=k$, we symmetrize the integrand similarly to the equation before \eqref{eq:temp_i0} and obtain
\[
\begin{split}
B_{k,k}^m &= \frac{1}{4} \int_{\mathbb{T}^2}   \frac{\partial_\xi (| w_k(\xi) - w_k^m(\xi - \eta)|^2) }{| \tilde{z}_k(\xi) -  \tilde{y}_k^m(\xi - \eta)|^{2\alpha}} d\eta d\xi.
\end{split}
\]
Integration by parts now yields
\[
\begin{split}
|B_{k,k}^m| &\le \frac{\alpha}{2} \int_{\mathbb{T}^2}  | w_k(\xi) - w_k^m(\xi - \eta)|^2 \frac{ | \partial_\xi \tilde{z}_k(\xi) - \partial_\xi \tilde{y}_k^m(\xi - \eta)| }{| \tilde{z}_k(\xi) -  \tilde{y}_k^m(\xi - \eta)|^{1+2\alpha}} d\eta d\xi \\
&\le \alpha \int_{\mathbb{T}^2}  \Big( | w_k(\xi)|^2 + |w_k^m(\eta)|^2 \Big) \underbrace{ \frac{ | \partial_\xi \tilde{z}_k(\xi) - \partial_\xi \tilde{y}_k^m(\eta)| }{| \tilde{z}_k(\xi) -  \tilde{y}_k^m(\eta)|^{1+2\alpha}}}_{\tilde T_k^m(\xi,\eta)} d\eta d\xi \\[-0.5cm]
&\leq 2\alpha \|W\|_{L^2}^2 \max_{\xi\in\mathbb{T}} \int_{\mathbb{T}} \tilde T_k^m(\xi,\eta) d\eta.
\end{split}
\]
The bounds on $T_3$  from the discussion after \eqref{eq:temp_i0} (for both $m=1,2$) equally apply to $\tilde T_k^m$ and yield for $m=1$ and $\alpha<\tfrac 12$, as well as for $m=2$ and $\alpha<\tfrac 14$,
\[
|B_{k,k}^m| \leq C(\alpha) \vertiii{\tilde Z}^{2+2\alpha} \|W\|_{L^2}^2.
\]

Combining all the obtained bounds on $A_{k,i}^m$ and $B_{k,i}^m$ now yields
\begin{equation}
\label{eq:l2diff}
\frac{d}{dt}\|W( t)\|_{L^2}^2 \leq  C(\alpha)N\Theta (\vertiii {Z(t)}+\vertiii {\tilde \Z (t)})^{2+2\alpha} \|W( t)\|_{L^2}^2
\end{equation}
for $\alpha<\tfrac 14$.  Gronwall's inequality then shows $\|W(t)\|_{L^2}=0$ for $t\in[0,T]$, hence $Z=\tilde Z$.

\subsubsection*{Existence of  solutions}

Similarly to \cite[Chapter 3]{mb}, once we have the a priori control \eqref{eq:apriori} on the growth of $\vertiii{\Z(t)}$,  solutions to the contour equation \eqref {eq:contour2} will be obtained as limits of solutions to an appropriate family of mollified equations.  We will need to be careful, however, that the solutions of the latter do not exit $D$.

Consider any initial condition $Z_0 = \{\z_{0k}\}_{k=1}^N$ with $z_{0k}:\mathbb{T}\to \bar D$ for all $k=1,\dots, N$ and $M:=\vertiii{\Z_0}<\infty$ (then also $M\ge 2$). We let $\phi_\eps(\xi):=\eps^{-1}\phi(\eps^{-1}\xi)$ for some mollifier $\phi$ which is smooth, even, non-negative on $\mathbb T$, supported in $[-1,1]$, and satisfies $\int_{\mathbb T} \phi(\xi)d\xi=1$. For  $k=1,\dots, N$, we regularize \eqref{eq:contour2} to
\begin{equation}
\partial_t \z^\epsilon_k(\xi,t) = \sum_{i=1}^N \sum_{m=1}^2 \frac{\theta_i}{2\alpha}\,  \phi_\epsilon * \int_{\mathbb{T}}  \frac{\partial_\xi   (\phi_\epsilon * \z^\epsilon_k)(\xi,t) -  \partial_\xi(\phi_\epsilon * y_i^{m,\epsilon})(\xi - \eta, t)}{|(\phi_\epsilon*\z^\epsilon_k)(\xi,t)  - (\phi_\epsilon*y_i^{m,\epsilon})(\xi-\eta, t)|^{2\alpha}}  d\eta +  \tilde C(\alpha)\Theta \epsilon M^{3+2\alpha} e_2,
\label{eq:contour2_regular}
\end{equation}
with $e_2 := (0,1)$, a large constant $\tilde C(\alpha)>0$  to  be chosen later, and initial condition
\[
Z^\eps(0)= \{ z_k^\eps(\cdot,0)+\eps e_2\}_{k=1}^N  = \{ \phi_\eps* z_{0k}+\eps e_2\}_{k=1}^N = \phi_\epsilon *Z_0 +\eps e_2.
\]
The convolutions are all taken in the first variable only,
and the last term in \eqref{eq:contour2_regular} will ensure containment in $D$.

\noindent \textbf{Step 1.}  We now prepare the setup for an application of the Picard theorem to find a solution of \eqref{eq:contour2_regular}.  Consider the Banach space $B:= H^3(\mathbb{T})^N$ with the norm $\|\cdot \|_{H^3}$ and let $h[Z]:=\inf_{1\le k\le N\,\&\,\xi\in\mathbb{T}} z_k^2(\xi)$ be the infimum of the $x_2$-coordinates for $Z\in B$. Then
\[
O^A := \{Z \in B: \vertiii{Z} < A \text{ and } h[Z] > 0\}
\]
 (with $A > 2$) and its closure (in $B$) $\overline{O^A}$ satisfy the following claims.

\begin{lemma}
\label{lemma:open}
Each $O^A$ is an open set in $B$.
\end{lemma}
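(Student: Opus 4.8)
The plan is to show that every $Z\in O^A$ admits an $H^3$-ball around it still contained in $O^A$, by proving a quantitative one-sided continuity bound for each of the functionals $\|\cdot\|_{H^3}$, $\delta[\cdot]^{-1}$, $F[\cdot]$ constituting $\vertiii{\cdot}$, together with (Lipschitz) continuity of $h[\cdot]$. Fix $Z\in O^A$ and set $\varepsilon_0:=A-\vertiii{Z}>0$, $h_0:=h[Z]>0$, $\delta_0:=\delta[Z]>0$ and $F_0:=F[Z]\in[1,\infty)$. Recall from \eqref{H3_def} that the norm on $B$ is $\|Z\|_{H^3}=\big(\sum_k \|z_k\|_{L^\infty}^2+\|\partial_\xi^3 z_k\|_{L^2}^2\big)^{1/2}$, which is equivalent to the standard $H^3(\mathbb{T})^N$-norm and in particular, via the embedding $H^3(\mathbb{T})\hookrightarrow C^1(\mathbb{T})$, dominates $\|f\|_{L^\infty}+\|f'\|_{L^\infty}$ componentwise up to a fixed constant $C_S<\infty$. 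I would then estimate $\vertiii{Z+W}$ and $h[Z+W]$ for $W\in B$ with $\|W\|_{H^3}$ small.

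Three of the four bounds are elementary. The triangle inequality gives $\|Z+W\|_{H^3}\le\|Z\|_{H^3}+\|W\|_{H^3}$. Since $h[Z+W]\ge h_0-\|W\|_{L^\infty}\ge h_0-C_S\|W\|_{H^3}$, we have $h[Z+W]>0$ once $\|W\|_{H^3}<h_0/C_S$. For $i\ne k$ and all $\xi,\eta\in\mathbb{T}$, $|(z_i+w_i)(\xi)-(z_k+w_k)(\eta)|\ge|z_i(\xi)-z_k(\eta)|-2C_S\|W\|_{H^3}$, and minimizing over $i\ne k$, over the compact $\mathbb{T}^2$, and against $1$ yields $\delta[Z+W]\ge\delta_0-2C_S\|W\|_{H^3}$ (trivially true when $N=1$, where $\delta\equiv1$); hence $\delta[Z+W]^{-1}\le\delta_0^{-1}+4C_S\delta_0^{-2}\|W\|_{H^3}$ whenever $\|W\|_{H^3}\le\delta_0/(4C_S)$.

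The bound on $F$ is the step to be careful with, and the only one that genuinely uses the $H^3$ (rather than merely $C^0$) topology. For any $k$ and $\xi,\eta\in\mathbb{T}$ with $\eta\ne0$, the $C^1$ control of $w_k$ gives $|w_k(\xi)-w_k(\xi-\eta)|\le\|w_k'\|_{L^\infty}|\eta|\le C_S\|W\|_{H^3}|\eta|$, while \eqref{F_def} gives $|z_k(\xi)-z_k(\xi-\eta)|\ge F_0^{-1}|\eta|$; hence
\[
|(z_k+w_k)(\xi)-(z_k+w_k)(\xi-\eta)|\ge\big(F_0^{-1}-C_S\|W\|_{H^3}\big)|\eta|,
\]
so that for $\|W\|_{H^3}\le(2C_SF_0)^{-1}$ one obtains $F[Z+W]\le\big(F_0^{-1}-C_S\|W\|_{H^3}\big)^{-1}\le F_0+2C_SF_0^2\|W\|_{H^3}$. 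The essential point here is that the linear-in-$|\eta|$ lower bound on $|z_k(\xi)-z_k(\xi-\eta)|$ encoded in $F_0$ must be matched by a linear-in-$|\eta|$ upper bound on the increment $|w_k(\xi)-w_k(\xi-\eta)|$, which is exactly what control of $\|w_k'\|_{L^\infty}$ by $\|W\|_{H^3}$ furnishes — a uniform bound on $w_k$ alone would be useless for small $|\eta|$. Combining the four estimates, for $\|W\|_{H^3}$ below a threshold $r_0$ depending only on $C_S,\delta_0,F_0,h_0$ we get $h[Z+W]>0$ and $\vertiii{Z+W}\le\vertiii{Z}+\big(1+4C_S\delta_0^{-2}+2C_SF_0^2\big)\|W\|_{H^3}$, which is $<A$ provided also $\|W\|_{H^3}<\varepsilon_0\big(1+4C_S\delta_0^{-2}+2C_SF_0^2\big)^{-1}$. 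Taking $r$ to be the minimum of $r_0$ and this last quantity gives $B_{H^3}(Z,r)\subseteq O^A$, so $O^A$ is open.
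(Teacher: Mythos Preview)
Your proof is correct and follows essentially the same approach as the paper's: both rely on the Sobolev embedding $H^3(\mathbb{T})\hookrightarrow C^1(\mathbb{T})$ to control the arc-chord functional $F$, together with the $L^\infty$ bound for $\delta$ and $h$. The paper packages the $F$-estimate as a two-sided Lipschitz bound $|F[Z]^{-1}-F[\tilde Z]^{-1}|\le \|Z-\tilde Z\|_{C^1}$ (via the inequality for the infimum of difference quotients), whereas you write the equivalent one-sided lower bound $|(z_k+w_k)(\xi)-(z_k+w_k)(\xi-\eta)|\ge(F_0^{-1}-C_S\|W\|_{H^3})|\eta|$ directly; these are the same observation.
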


\begin{proof}
This follows from $\|Z-\tilde Z\|_{L^\infty} \leq C \|Z-\tilde Z\|_{H^3}$  and
\begin{equation}
\begin{split}
\left| F[Z]^{-1} - F[\tilde Z]^{-1} \right| &\le  \left|  \inf_{\substack{\xi, \eta \in \mathbb{T}\\ 1\leq k\leq N}} \frac{|z_k(\xi)-z_k(\eta)|}{|\xi - \eta|} -  \inf_{\substack{\xi, \eta \in \mathbb{T}\\ 1\leq k\leq N}} \frac{|\tilde z_k(\xi)-\tilde z_k(\eta)|}{|\xi - \eta|} \right| \\
& \leq \sup_{\substack{\xi, \eta \in \mathbb{T}\\ 1\leq k\leq N}}    \frac{\left|(z_k(\xi)-\tilde z_k(\xi))-(z_k(\eta)-\tilde z_k(\eta))\right|}{|\xi - \eta|} \\
&\leq \|Z-\tilde Z\|_{C^1} \leq C \|Z-\tilde Z\|_{H^3},
\label{F_diff}
\end{split}
\end{equation}
for some universal $C>0$.
\end{proof}

Notice that 
\[ \{Z \in B: \vertiii{Z} < A \text{ and } h[Z] \ge 0\} \subseteq \overline{O^A} \subseteq \{Z \in B: \vertiii{Z} \le A \text{ and } h[Z] \ge 0\}  \]
Indeed, the second inclusion follows from the proof of Lemma~\ref{lemma:open}. To see the first inclusion, notice that any $Z$ with
$\vertiii{Z} < A$ and $h[Z]=0$ can be approximated by $Z + \sigma e_2 \in O^A$ with $\sigma>0$, which converges to $Z$ in $B$ as $\sigma \rightarrow 0.$


\begin{lemma}\label{lem:mollify}
 If $Z \in \overline{O^A}$ and $\epsilon \in (0, c_0 A^{-2})$ (with a universal $c_0 >0$), then $\phi_\epsilon * Z \in  \overline{O^{2A}}$.
 \end{lemma}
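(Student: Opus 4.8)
The plan is to reduce the assertion, via the chain of inclusions stated just before the lemma, to checking two things for $Z\in\overline{O^A}$ and $\epsilon<c_0A^{-2}$ (with a universal $c_0>0$ to be fixed at the end): that $h[\phi_\epsilon*Z]\ge 0$ and that $\vertiii{\phi_\epsilon*Z}<2A$. The first is immediate, because $h[Z]\ge 0$ means each component $z_k^2$ is non-negative on $\mathbb T$, and convolution with the non-negative unit-mass kernel $\phi_\epsilon$ preserves this. For the second I would split $\vertiii{\cdot}=\|\cdot\|_{H^3}+\delta[\cdot]^{-1}+F[\cdot]$ and treat the three pieces separately. The $H^3$ piece is harmless: Young's convolution inequality, $\|\phi_\epsilon\|_{L^1}=1$, and $\partial_\xi^3(\phi_\epsilon*z_k)=\phi_\epsilon*\partial_\xi^3 z_k$ give $\|\phi_\epsilon*Z\|_{H^3}\le\|Z\|_{H^3}\le A$. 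So everything reduces to bounding how much mollification can shrink $\delta$ and enlarge $F$.

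The key quantitative input I would establish first is $\|Z-\phi_\epsilon*Z\|_{C^1}\le C\epsilon A$ for a universal $C$. This follows from the elementary estimate $\|g-\phi_\epsilon*g\|_{L^\infty}\le\epsilon\|g'\|_{L^\infty}$ (using $\supp\phi_\epsilon\subseteq[-\epsilon,\epsilon]$ and $\int_{\mathbb T}\phi_\epsilon=1$) applied with $g=z_k$ and $g=\partial_\xi z_k$, together with the Sobolev bound $\|z_k\|_{C^2}\le C\|Z\|_{H^3}\le CA$. From this, the triangle inequality yields $\delta[\phi_\epsilon*Z]\ge\delta[Z]-2C\epsilon A$ (the case $N=1$ being trivial since then $\delta\equiv 1$), and the computation \eqref{F_diff} from the proof of Lemma~\ref{lemma:open}, applied to the pair $(Z,\phi_\epsilon*Z)$, yields $F[\phi_\epsilon*Z]^{-1}\ge F[Z]^{-1}-\|Z-\phi_\epsilon*Z\|_{C^1}\ge F[Z]^{-1}-C\epsilon A$.

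To conclude I would feed in the bounds $\delta[Z]^{-1}\le A$ and $F[Z]\le A$ twice each. First, choosing $c_0$ small enough that $2C\epsilon A\le\tfrac12\delta[Z]$ and $C\epsilon AF[Z]\le\tfrac12$ (both guaranteed by $\delta[Z]\ge A^{-1}$, $F[Z]\le A$, and $\epsilon<c_0A^{-2}$), one gets $\delta[\phi_\epsilon*Z]\ge\tfrac12\delta[Z]$ and $F[\phi_\epsilon*Z]\le 2F[Z]$, and then $\tfrac1{1-x}\le1+2x$ for $0\le x\le\tfrac12$ gives $\delta[\phi_\epsilon*Z]^{-1}\le\delta[Z]^{-1}+4C\epsilon A\,\delta[Z]^{-2}$ and $F[\phi_\epsilon*Z]\le F[Z]+2C\epsilon A\,F[Z]^2$. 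Second, bounding $\delta[Z]^{-2}\le A^2$ and $F[Z]^2\le A^2$ converts the two error terms into at most $4C\epsilon A^3$ and $2C\epsilon A^3$, each of which is $\le A/4$ once $c_0\le\tfrac1{16C}$. Adding up, $\vertiii{\phi_\epsilon*Z}\le\|Z\|_{H^3}+\delta[Z]^{-1}+F[Z]+\tfrac A2=\vertiii{Z}+\tfrac A2\le\tfrac32A<2A$, which finishes the proof.

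There is no genuine analytic obstacle here; the one point to watch is bookkeeping with the powers of $A$. The naive estimates show $\delta^{-1}$ and $F$ are perturbed by $O(\epsilon A)$ times $\delta[Z]^{-2}$, resp. $F[Z]^2$, which at face value would force $\epsilon\lesssim A^{-3}$. What rescues the claimed threshold $\epsilon<c_0A^{-2}$ is that one is free to compare these perturbations against $A$ itself — via a second use of $\delta[Z]^{-1}\le A$ and $F[Z]\le A$ — rather than against $\delta[Z]^{-1}$ or $F[Z]$; keeping track of this is the only slightly delicate part of the argument.
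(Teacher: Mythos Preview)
Your proposal is correct and follows essentially the same approach as the paper: both argue via $h[\phi_\epsilon*Z]\ge h[Z]\ge0$, $\|\phi_\epsilon*Z\|_{H^3}\le\|Z\|_{H^3}$, the $L^\infty$/$C^1$ mollifier bound $\|\phi_\epsilon*Z-Z\|_{C^1}\le C\epsilon A$, the triangle inequality for $\delta$, and the estimate \eqref{F_diff} for $F$. The only difference is that the paper stops one step earlier: once $\delta[\phi_\epsilon*Z]>\tfrac12\delta[Z]$ and $F[\phi_\epsilon*Z]<2F[Z]$ are established, it simply adds up to get $\vertiii{\phi_\epsilon*Z}<\|Z\|_{H^3}+2\delta[Z]^{-1}+2F[Z]\le 2\vertiii{Z}\le 2A$, so your more refined bookkeeping via $\tfrac1{1-x}\le 1+2x$ (yielding $\tfrac32A$) is unnecessary, and the ``slightly delicate'' issue you flag in the last paragraph does not actually arise.
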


\begin{proof} We obviously have $h[\phi_\epsilon *Z] \geq h[Z]$ and $\|\phi_\epsilon * Z\|_{H^3} \leq \|Z\|_{H^3}$ for all $\epsilon>0$. Since $\phi_\epsilon$ is supported in $[-\epsilon,\epsilon]$, we have also (with a universal $C >0$)
\[
\|\phi_\epsilon * Z - Z \|_{L^\infty} \le  \epsilon \|Z\|_{C^1}  \le  C \epsilon \|Z\|_{H^3}.
\]
Then $\delta[\phi_\epsilon * Z] \geq \delta[Z]-2C\epsilon A > \tfrac 12 \delta[Z]$ for $Z\in \overline{O^A}$ and  $\epsilon \in (0, \frac{1}{4CA^2})$.
Also, \eqref{F_diff}  yields
\begin{equation}
\left| F[\phi_\epsilon *Z]^{-1} - F[Z]^{-1} \right| \le \|\phi_\epsilon * Z - Z\|_{C^1} \leq  \epsilon \|Z\|_{C^2} \le  C\epsilon \|Z\|_{H^3},
\label{conv_diff_temp}
\end{equation}
so again
 $F[\phi_\epsilon * Z] < 2F[Z]$ for $Z\in \overline{O^A}$ and  $\epsilon \in (0, \frac{1}{2CA^2})$.
\end{proof}


Let us denote the right hand side of  \eqref{eq:contour2_regular} by $G_k^\epsilon[Z^\epsilon(t)]$. In general, for any $Z \in B$ with $\vertiii{Z}<\infty$ define
 \begin{equation}\label{Gdef125}
 G_k^\epsilon[Z] = \sum_{i=1}^N \sum_{m=1}^2 \frac{\theta_i}{2\alpha} \phi_\epsilon * H_{k,i}^{m}[\phi_\epsilon * Z] +  \tilde C(\alpha)\Theta \epsilon M^{3+2\alpha} e_2,
 \end{equation}
  where
 \begin{equation}
\label{def_H}
H_{k,i}^{m}[Z](\xi) := \int_{\mathbb{T}}  \frac{\partial_\xi   \z_k(\xi) -  \partial_\xi y_i^{m}(\xi - \eta)}{|\z_k(\xi)  - y_i^{m}(\xi-\eta)|^{2\alpha}}  d\eta.
\end{equation}
Note that the parameter $M$ in \eqref{Gdef125} is independent of $Z$ and is tied to the initial data for which we are trying to establish existence.

We have the following estimates for these operators.

 \begin{lemma}
 \label{diff_H}
There is $C(\alpha)>0$ such that for any $Z, \tilde Z \in \overline{O^{A}}$, any $k,i,m$, and $\alpha<\tfrac 14$,
 \begin{equation}
 \label{eq:diffH}
 \left\| H_{k,i}^m [Z] -H_{k,i}^m [\tilde Z] \right\|_{L^\infty} \leq C(\alpha) A^{2+2\alpha} \|Z-\tilde Z\|_{C^1}.
 \end{equation}
 \end{lemma}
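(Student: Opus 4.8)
The plan is to write the difference $H_{k,i}^m[Z]-H_{k,i}^m[\tilde Z]$ as a sum of two contributions and estimate each separately, exactly in the spirit of the $A_{k,i}^m+B_{k,i}^m$ splitting used in the uniqueness argument above. Namely, for fixed $\xi$ write the integrand difference as
\[
\frac{\partial_\xi z_k(\xi)-\partial_\xi y_i^m(\xi-\eta)}{|z_k(\xi)-y_i^m(\xi-\eta)|^{2\alpha}}-\frac{\partial_\xi \tilde z_k(\xi)-\partial_\xi \tilde y_i^m(\xi-\eta)}{|\tilde z_k(\xi)-\tilde y_i^m(\xi-\eta)|^{2\alpha}}
= P(\xi,\eta)+Q(\xi,\eta),
\]
where $P$ keeps the numerator $\partial_\xi z_k(\xi)-\partial_\xi y_i^m(\xi-\eta)$ and takes the difference of the two reciprocal denominators, and $Q$ keeps the denominator $|\tilde z_k(\xi)-\tilde y_i^m(\xi-\eta)|^{-2\alpha}$ and takes the difference of the numerators, i.e.\ $\partial_\xi w_k(\xi)-\partial_\xi w_i^m(\xi-\eta)$ with $W=Z-\tilde Z$. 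One then bounds $\sup_\xi\int_{\mathbb T}|P|\,d\eta$ and $\sup_\xi\int_{\mathbb T}|Q|\,d\eta$.

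For the $P$ term: the numerator is bounded by $2\|Z\|_{C^2}\le C A$. For the denominator difference, use $|a^{-2\alpha}-b^{-2\alpha}|\le 2\alpha\,\min(a,b)^{-1-2\alpha}|a-b|$ together with $\big|\,|z_k(\xi)-y_i^m(\xi-\eta)|-|\tilde z_k(\xi)-\tilde y_i^m(\xi-\eta)|\,\big|\le |w_k(\xi)|+|w_i^m(\xi-\eta)|\le 2\|W\|_{L^\infty}$, and bound $\min(\cdots)^{-1-2\alpha}$ below using $\delta[Z],\delta[\tilde Z]\ge A^{-1}$ when $i\ne k$, or using $F[Z],F[\tilde Z]\le A$ when $i=k$ (so the distance is $\ge A^{-1}|\eta|$, whose $-1-2\alpha$ power is integrable since $2\alpha<1$). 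This yields $\sup_\xi\int|P|\,d\eta\le C(\alpha)A^{2+2\alpha}\|W\|_{L^\infty}\le C(\alpha)A^{2+2\alpha}\|W\|_{C^1}$.

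For the $Q$ term the denominator $|\tilde z_k(\xi)-\tilde y_i^m(\xi-\eta)|^{-2\alpha}$ is, for $i\ne k$, bounded by $\delta[\tilde Z]^{-2\alpha}\le A^{2\alpha}$, and the numerator by $2\|W\|_{C^1}$, giving an $O(A^{2\alpha}\|W\|_{C^1})$ bound. For $i=k$ one cannot bound the numerator by $\|W\|_{C^1}$ directly against the singular denominator; instead integrate by parts, writing $\partial_\xi w_k^m(\xi-\eta)=-\frac{d}{d\eta}w_k^m(\xi-\eta)$, move the $\eta$-derivative onto $|\tilde z_k(\xi)-\tilde y_k^m(\xi-\eta)|^{-2\alpha}$, and similarly the $\partial_\xi$ on $w_k(\xi)$ can be handled by pairing — but here, since we only need an $L^\infty$ bound in $\xi$ (not the $L^2$-symmetrization of the uniqueness proof), it is cleanest to integrate by parts in $\eta$ on the $\partial_\xi w_k^m(\xi-\eta)$ piece only and estimate the $\partial_\xi w_k(\xi)$ piece by pulling it out of the $\eta$-integral. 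After the integration by parts the new denominator is $|\tilde z_k(\xi)-\tilde y_k^m(\xi-\eta)|^{-1-2\alpha}$ times $|\partial_\xi\tilde z_k-\partial_\xi\tilde y_k^m|$; for $m=2$ one invokes \eqref{eq:diff_derivative} applied to $\tilde Z$ to bound that factor by $C A^{1/2}|\tilde z_k(\xi)-\bar{\tilde z}_k(\xi-\eta)|^{1/2}$, which together with $|\tilde z_k(\xi)-\bar{\tilde z}_k(\xi-\eta)|\ge F[\tilde Z]^{-1}|\eta|\ge A^{-1}|\eta|$ produces an integrable singularity $|\eta|^{-1/2-2\alpha}$ for $\alpha<\tfrac14$. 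Collecting, $\sup_\xi\int|Q|\,d\eta\le C(\alpha)A^{1+2\alpha}\|W\|_{C^1}$. Adding the $P$ and $Q$ estimates gives \eqref{eq:diffH}.

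The main obstacle is the $i=k$, $m=2$ case of the $Q$ term, where the reflected curve $\bar{\tilde z}_k$ means the tangent-vector difference in the numerator does not vanish as $\eta\to0$; this is precisely the difficulty already encountered in the a priori estimates, and it is resolved the same way, by the estimate \eqref{eq:diff_derivative} (ultimately Lemma \ref{lemma:f'bis}), which trades the missing vanishing for a half power of the distance and restricts us to $\alpha<\tfrac14$. Everything else is a routine repetition of bounds already established in this section.
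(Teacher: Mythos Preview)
Your splitting into $P$ and $Q$ matches the paper's, but you have swapped which term is easy and which is hard, and this leads to a genuine error in your $P$ estimate.

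In the $P$ term with $i=k$, you bound the numerator by $2\|Z\|_{C^2}$ and the denominator difference by $2\alpha\,\min(a,b)^{-1-2\alpha}\cdot 2\|W\|_{L^\infty}$, then claim that $|\eta|^{-1-2\alpha}$ is integrable since $2\alpha<1$. This is false: $|\eta|^{-1-2\alpha}$ is not integrable near the origin for any $\alpha\ge 0$. The missing ingredient is that the numerator $|\partial_\xi z_k(\xi)-\partial_\xi y_k^m(\xi-\eta)|$ itself supplies cancellation. For $m=1$ it is $\le \|Z\|_{C^2}|\eta|$, reducing the singularity to $|\eta|^{-2\alpha}$. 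For $m=2$ the numerator does \emph{not} vanish as $\eta\to 0$, and this is precisely where \eqref{eq:diff_derivative} is needed: it gives $|\partial_\xi z_k(\xi)-\partial_\xi \bar z_k(\xi-\eta)|\le CA^{3/2}|z_k(\xi)-\bar z_k(\xi-\eta)|^{1/2}$, which (combined with $|x^{2\alpha}-1|\le|x-1|$ rather than the mean-value bound) cancels a half power from the denominator and produces the integrable $|\eta|^{-1/2-2\alpha}$ for $\alpha<\tfrac14$. This is exactly the paper's argument.

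Conversely, your $Q$ term is in fact trivial and needs none of the machinery you propose. The numerator $|\partial_\xi w_k(\xi)-\partial_\xi w_i^m(\xi-\eta)|$ is bounded by $2\|W\|_{C^1}$ uniformly in $\eta$, and the denominator $|\tilde z_k(\xi)-\tilde y_k^m(\xi-\eta)|^{-2\alpha}\le A^{2\alpha}|\eta|^{-2\alpha}$ is already integrable for $\alpha<\tfrac12$. No integration by parts, no \eqref{eq:diff_derivative} --- the paper dispatches this in one line. So the application of Lemma~\ref{lemma:f'bis} belongs in the $P$ term, not in $Q$.
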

 \begin{proof}
Let $w_k := z_k-\tilde z_k$, as well as $v_k^1:=w_k$  and $v_k^2:=\bar w_k$.
 Then
 \begin{equation*}
 \begin{split}
 \left| H_{k,i}^m [Z](\xi) -H_{k,i}^m [\tilde Z](\xi) \right| &\leq  \int_{\mathbb{T}} \frac{|\partial_\xi   w_k(\xi) -  \partial_\xi v_i^{m}(\xi - \eta)|}{|\tilde \z_k(\xi)  - \tilde y_i^{m}(\xi-\eta)|^{2\alpha}}   d\eta  \\
&+  \int_{\mathbb{T}}  \frac{|\partial_\xi   \z_k(\xi) -  \partial_\xi y_i^{m}(\xi - \eta)|}{|\tilde\z_k(\xi)  - \tilde{y}_i^{m}(\xi-\eta)|^{2\alpha}}  \left| \left( \frac{|\tilde \z_k(\xi)  - \tilde{y}_i^{m}(\xi-\eta)|}{|\z_k(\xi)  - y_i^{m}(\xi-\eta)|}\right)^{2\alpha} -1\right|d\eta \\
&\leq C(\alpha) A^{2\alpha} \|Z-\tilde Z\|_{C^1} +  C(\alpha)  A^{2+2\alpha} \|Z-\tilde Z\|_{L^\infty}\\
& \leq C(\alpha) A^{2+2\alpha} \|Z-\tilde Z\|_{C^1},
 \end{split}
 \end{equation*}
 where similarly to \eqref{Akk2}, we used  $|x^{2\alpha}-1| \leq |x-1|$ for  $x\ge 0$ and \eqref{eq:diff_derivative} (for $m=2$) to bound the second integral by
 \[
\int_{\mathbb{T}}  \frac{CA^{3/2} \|Z-\tilde Z\|_{L^\infty} }{|\tilde\z_k(\xi)  - \tilde{y}_i^{m}(\xi-\eta)|^{2\alpha}|\z_k(\xi)  - y_i^{m}(\xi-\eta)|^{1/2}} d\eta
\le  C(\alpha)  A^{2+2\alpha} \|Z-\tilde Z\|_{L^\infty}
 \]
for $\alpha<\tfrac 14$.
 \end{proof}


 \begin{lemma}
 \label{local_lip}
 If  $Z, \tilde Z \in \overline{O^A}$ and $\epsilon \in (0, c_0 A^{-2})$ (with $c_0 $ from Lemma \ref{lem:mollify}), then
\[
\|G_k^\epsilon[Z]-G_k^\epsilon[\tilde Z]\|_{C^n}\leq C(\alpha,n)\Theta \epsilon^{-n-1} A^{2+2\alpha}   \|Z-\tilde Z\|_{L^\infty}
\]
for any $k$, integer $n\ge 0$, and $\alpha\in(0,\tfrac 14)$.  In particular,  $G_k^\epsilon: \overline{O^A}\to B$ is Lipschitz.
\end{lemma}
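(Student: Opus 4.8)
The plan is to estimate $\|G_k^\epsilon[Z]-G_k^\epsilon[\tilde Z]\|_{C^n}$ directly from the definition \eqref{Gdef125}, exploiting that all the $\xi$-dependence of the integral operator enters through two layers of convolution with $\phi_\epsilon$, each of which costs one factor of $\epsilon^{-1}$ per derivative. First I would note that the last term in \eqref{Gdef125} is constant in $\xi$ and identical for $Z$ and $\tilde Z$, so it cancels in the difference. For the remaining sum, fix $k,i,m$ and write $G$ in terms of $H_{k,i}^m$; since $H_{k,i}^m[\phi_\epsilon*Z]$ need not be smooth a priori, but is $C^n$ for $n$ as large as we like once we convolve, I would move all derivatives onto the outer and inner mollifiers. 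Concretely, for $j\le n$,
\[
\partial_\xi^j\big(\phi_\epsilon * H_{k,i}^m[\phi_\epsilon*Z]\big) = (\partial_\xi^j\phi_\epsilon) * H_{k,i}^m[\phi_\epsilon*Z],
\]
so $\|\partial_\xi^j(\phi_\epsilon*H_{k,i}^m[\phi_\epsilon*Z])\|_{L^\infty}\le \|\partial_\xi^j\phi_\epsilon\|_{L^1}\|H_{k,i}^m[\phi_\epsilon*Z]\|_{L^\infty}\le C\epsilon^{-j}\|H_{k,i}^m[\phi_\epsilon*Z]\|_{L^\infty}$, and likewise for the difference
\[
\big\|\partial_\xi^j\big(\phi_\epsilon*(H_{k,i}^m[\phi_\epsilon*Z]-H_{k,i}^m[\phi_\epsilon*\tilde Z])\big)\big\|_{L^\infty}\le C\epsilon^{-j}\big\|H_{k,i}^m[\phi_\epsilon*Z]-H_{k,i}^m[\phi_\epsilon*\tilde Z]\big\|_{L^\infty}.
\]

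The second step is to control $\|H_{k,i}^m[\phi_\epsilon*Z]-H_{k,i}^m[\phi_\epsilon*\tilde Z]\|_{L^\infty}$ using Lemma \ref{diff_H}. Since $Z,\tilde Z\in\overline{O^A}$ and $\epsilon\in(0,c_0A^{-2})$, Lemma \ref{lem:mollify} gives $\phi_\epsilon*Z,\phi_\epsilon*\tilde Z\in\overline{O^{2A}}$, so \eqref{eq:diffH} applies with $2A$ in place of $A$:
\[
\big\|H_{k,i}^m[\phi_\epsilon*Z]-H_{k,i}^m[\phi_\epsilon*\tilde Z]\big\|_{L^\infty}\le C(\alpha)(2A)^{2+2\alpha}\|\phi_\epsilon*(Z-\tilde Z)\|_{C^1}.
\]
Now $\|\phi_\epsilon*(Z-\tilde Z)\|_{C^1}\le\|\partial_\xi\phi_\epsilon\|_{L^1}\|Z-\tilde Z\|_{L^\infty}+\|\phi_\epsilon\|_{L^1}\|Z-\tilde Z\|_{L^\infty}\le C\epsilon^{-1}\|Z-\tilde Z\|_{L^\infty}$ (for $\epsilon\le 1$), which converts the $C^1$-norm into $\epsilon^{-1}$ times an $L^\infty$-norm — this is exactly the extra $\epsilon^{-1}$ beyond the $\epsilon^{-n}$ from the outer mollifier derivatives, accounting for the $\epsilon^{-n-1}$ in the statement. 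Combining with the mollifier-derivative bound from the first step and summing over $i$ (which contributes the factor $\Theta=\sum|\theta_k|$ together with the $\frac{\theta_i}{2\alpha}$ prefactors), over $m\in\{1,2\}$, and over $j\le n$ gives
\[
\|G_k^\epsilon[Z]-G_k^\epsilon[\tilde Z]\|_{C^n}\le C(\alpha,n)\Theta\,\epsilon^{-n-1}A^{2+2\alpha}\|Z-\tilde Z\|_{L^\infty},
\]
as claimed. The final assertion that $G_k^\epsilon:\overline{O^A}\to B$ is Lipschitz follows by taking $n=3$ (so that $C^3(\mathbb T)\hookrightarrow H^3(\mathbb T)$, hence $\|G_k^\epsilon[Z]-G_k^\epsilon[\tilde Z]\|_{H^3}\le C\|G_k^\epsilon[Z]-G_k^\epsilon[\tilde Z]\|_{C^3}$) together with $\|Z-\tilde Z\|_{L^\infty}\le C\|Z-\tilde Z\|_{H^3}$.

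The only genuine subtlety — and the step I would be most careful about — is the legitimacy of pulling all $\xi$-derivatives off the inner integral $H_{k,i}^m[\phi_\epsilon*Z]$ and onto the outer mollifier $\phi_\epsilon$. This is where mollification does all the work: $H_{k,i}^m[\phi_\epsilon*Z]$ is merely bounded (and continuous, since $\phi_\epsilon*Z$ is smooth with $\delta[\phi_\epsilon*Z]>0$ and $F[\phi_\epsilon*Z]<\infty$ by Lemma \ref{lem:mollify}), but it need not have any $\xi$-regularity uniform in the relevant constants; however, since it only ever appears convolved against $\phi_\epsilon\in C^\infty$, the composite is automatically $C^\infty$, and the Young-type estimate $\|\partial_\xi^j\phi_\epsilon\|_{L^1}= \epsilon^{-j}\|\phi^{(j)}\|_{L^1}$ is exact. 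Everything else is a routine bookkeeping of constants — the bounds from Lemma \ref{diff_H}, the doubling from Lemma \ref{lem:mollify}, and the elementary convolution inequalities — none of which requires $\alpha$ small beyond the $\alpha<\tfrac14$ already imposed in Lemma \ref{diff_H}.
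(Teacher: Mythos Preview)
Your proof is correct and follows essentially the same approach as the paper's: move the $n$ derivatives onto the outer mollifier to pick up $\epsilon^{-n}$, apply Lemma~\ref{diff_H} (with $2A$ via Lemma~\ref{lem:mollify}) to control $\|H_{k,i}^m[\phi_\epsilon*Z]-H_{k,i}^m[\phi_\epsilon*\tilde Z]\|_{L^\infty}$ by $C(\alpha)A^{2+2\alpha}\|\phi_\epsilon*(Z-\tilde Z)\|_{C^1}$, and finally bound the $C^1$ norm by $C\epsilon^{-1}\|Z-\tilde Z\|_{L^\infty}$ using the inner mollifier. The paper's proof is simply a terser version of exactly these three steps.
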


\begin{proof}
It is easy to check that $G_k^\epsilon$ maps $\overline{O^A}$ to $B$ for any $\epsilon>0$. The properties of $\phi_\epsilon$ and
Lemma \ref{diff_H} now yield
\[
\begin{split}
\|G_k^\epsilon[Z]-G_k^\epsilon[\tilde Z]\|_{C^n} &\leq C(n) \epsilon^{-n} \sum_{i=1}^N \sum_{m=1}^2 \frac{\theta_i}{2\alpha} \|H_{k,i}^{m}[\phi_\epsilon * Z] -H_{k,i}^{m}[\phi_\epsilon * \tilde Z]  \|_{L^\infty}\\
&\leq C(\alpha,n)\Theta \epsilon^{-n} (2A)^{2+2\alpha}   \| \phi_\epsilon*Z - \phi_\epsilon * \tilde Z\|_{C^1}
\\
&\leq C(\alpha,n)\Theta \epsilon^{-n-1}  A^{2+2\alpha}   \|Z-\tilde Z\|_{L^\infty}.
\end{split}
\]
The last claim follows from taking $n=3$ and using $\|Z-\tilde Z\|_{L^\infty}\le \|Z-\tilde Z\|_{H^3}$.
\end{proof}

\noindent \textbf{Step 2.}    Let $\epsilon_0 := \min\{c_0  (4M)^{-2}$, 1\}, with $c_0 $ from Lemma \ref{lem:mollify} and $M = \vertiii{Z_0}\ge 1$. We then have $\vertiii{Z_0^\epsilon} \leq \vertiii{\phi_\epsilon*Z_0} + \epsilon < 3M$ for any $\epsilon \in (0,\epsilon_0)$, hence $Z_0^\epsilon \in O^{3M}$.
Also, Lemma \ref{local_lip} shows that $G^\epsilon_k$ is Lipschitz on $O^{4M}$ for any $k$ and $\epsilon \in (0, \epsilon_0)$.
Lemma~\ref{lemma:open} and Picard's Theorem applied in Banach space $B$ thus gives us a solution $Z^\epsilon(t)\in O^{4M}$ to \eqref{eq:contour2_regular} with initial data $Z_0^\epsilon$, on some short time interval $[0,t']$ and in the integral sense.  Then Lemma \ref{local_lip} with $n=0$ shows
\[
\sup_{t\in[0,t']}\|G_k^\epsilon[Z^\epsilon(t)]\|_{L^\infty} \le C(\alpha)\Theta \epsilon^{-1}  M^{3+2\alpha}
\]
for each $\eps>0$, so that $Z^\epsilon:[0,t']\to L^\infty(\mathbb T)$ is Lipschitz.  Another application of Lemma \ref{local_lip}, with $Z$ and $\tilde Z$ being $Z^\epsilon$ at different times shows that $\partial_\xi^n G_k^\epsilon[Z^\epsilon(\cdot)](\cdot)$ ($=\partial^\xi_n \partial_t z_k^\eps(\cdot,\cdot)$) is Lipschitz on $\mathbb T\times[0,t']$ for each $n\ge 0$.
Since $z_k^\epsilon(\xi,t)=z_k^\epsilon(\xi,0)+\int_0^t \partial_t z_k^\epsilon(\xi,s)ds$ and $Z^\epsilon(0)\in C^\infty$, we have that $Z^\epsilon \in C^{\infty,1}(\mathbb{T}\times [0,t'])$.

This $Z^\epsilon$ can then be continued in time as long as it stays in $O^{4M}$, and we let $t_\epsilon$ be the maximal such time.  We  have $Z^\epsilon \in C^{\infty,1}(\mathbb{T}\times [0,t_\epsilon])$ as above.  We will therefore be able to apply the a priori estimates from the previous sub-section to $Z^\epsilon$, and show that $t_\epsilon$ is bounded below by some  $T(\alpha,N,\Theta,M)>0$,  uniformly in $\epsilon \in (0,\epsilon_0)$ (for all small $\alpha>0$).

 Using $\int f(\psi*g)d\xi = \int (\psi*f)gd\xi$ for any even $\psi$, we now obtain (dropping $t$)
\begin{equation*}
\begin{split}
\frac{d}{dt}\|\partial_\xi^3 \z_k^\epsilon\|_{L^2}^2 =  \sum_{i=1}^N \sum_{m=1}^2 \frac{\theta_i}{\alpha} \int_{\mathbb{T}^2} \partial_\xi^3 (\phi_\epsilon* \z_k^\epsilon)(\xi) \cdot  \partial_\xi^3 
\left(\frac{\partial_\xi  (\phi_\epsilon* \z_k^\epsilon)(\xi) - \partial_\xi  (\phi_\epsilon* y_i^{m,\epsilon})(\xi-\eta)}{|(\phi_\epsilon*\z_k^\epsilon)(\xi)  - (\phi_\epsilon* y_i^{m,\epsilon})(\xi-\eta)|^{2\alpha}}
\right) d\eta d\xi
\end{split}
\end{equation*}
instead of \eqref{eq:third_derivative}.  The estimates from the previous sub-section thus apply to $\Z^\epsilon$ and lead to the analog of \eqref{evol_h3}.
 Namely, for $t\in[0,t_\epsilon]$ and  $\alpha\in(0,\tfrac 1{24})$ we obtain
\[
\frac{d}{dt}\|\Z^\epsilon(t)\|_{H^3} \leq C(\alpha)N\Theta  \vertiii{(\phi_\epsilon * \Z^\epsilon)( t)}^{7+2\alpha} \leq C(\alpha)N\Theta   (8M)^{7+2\alpha},
\]
where the last inequality holds due to Lemma \ref{lem:mollify} and $\epsilon<\epsilon_0$.  The estimates for
$\|\Z(t)\|_{L^\infty}$, $\delta[\Z(t)]^{-1}$, and $F[\Z(t)]$ also extend to $Z^\epsilon(t)$, with the first gaining an additional term due to the extra term in \eqref{eq:contour2_regular}:
\[
\frac{d}{dt} \|Z^\epsilon(t)\|_{L^\infty} \leq C(\alpha)\Theta  \vertiii{Z^\epsilon(t)}^{1+2\alpha} +  \tilde C(\alpha)\Theta \epsilon M^{3+2\alpha}.
\]
So for $\epsilon \in (0,\epsilon_0)$ (recall that $\eps_0\le 1\le M$) and $t\in[0,t_\epsilon]$ we have (with some $\bar C(\alpha)<\infty$ which also depends on our choice of $\tilde C(\alpha)$)
\begin{equation}\label{3.200}
\frac{d}{dt}\vertiii{\Z^\epsilon(t)} \leq \bar C(\alpha)N\Theta    M^{7+2\alpha}.
\end{equation}
Since $\vertiii{Z_0^\epsilon} < 3M$, it will follow that we have a uniform in $\epsilon \in (0,\epsilon_0)$ estimate $t_\epsilon> T(\alpha,N\Theta,M):= (\bar C(\alpha)N\Theta)^{-1}M^{-6-2\alpha}>0$,  as long as we can show that $h[Z^\epsilon(t)]=0$ cannot happen for $t\in(0,T(\alpha,N\Theta,M)]$.  This will be ensured by choosing $\tilde C(\alpha)$ large enough (keeping in mind that \eqref{3.200} yields $\vertiii{Z^\epsilon(t)}< 4M$ for these $t$).

Indeed, let us assume that $Z^\epsilon(t) \in \overline{O^{4M}}$ and $t$ is the minimal time for which we have $z_k^\epsilon(\xi,t)\in\partial D$ for some $k,\xi$.  Symmetry shows $(H_{k,i}^1[Z^\epsilon(t)](\xi) + H_{k,i}^2[Z^\epsilon(t)](\xi)) \cdot e_2=0$ for any $i$, thus (dropping $t$)
\[
\begin{split}
\partial_t z_k^\epsilon(\xi) \cdot e_2 &=\left( \sum_{i=1}^N \sum_{m=1}^2 \frac{\theta_i}{2\alpha} (\phi_\epsilon * H_{k,i}^m[\phi_\epsilon * Z^\epsilon])(\xi) \right)\cdot e_2 + \tilde C(\alpha)\Theta \epsilon M^{3+2\alpha} \\
&\geq -  \sum_{i=1}^N \sum_{m=1}^2  \frac{\theta_i}{2\alpha}   \left|(\phi_\epsilon * H_{k,i}^m[\phi_\epsilon * Z^\epsilon])(\xi) - H_{k,i}^m[\phi_\epsilon * Z^\epsilon](\xi) \right| \\
&\quad -  \sum_{i=1}^N \sum_{m=1}^2 \frac{\theta_i}{2\alpha}  \left|H_{k,i}^m[\phi_\epsilon * Z^\epsilon](\xi) - H_{k,i}^m[Z^\epsilon](\xi)  \right|+ \tilde C(\alpha)\Theta \epsilon M^{3+2\alpha}\\
&=: -T_1 -T_2 + \tilde C(\alpha)\Theta \epsilon M^{3+2\alpha}.
\end{split}
\]
We use $\|\phi_\epsilon * f - f\|_{L^\infty} \le  \epsilon \|f'\|_{L^\infty}$, \eqref{eq:diff_derivative}, and $\vertiii{\phi_\epsilon * Z^\epsilon}\leq 8M$ to bound
\[
\begin{split}
T_1 &\leq   \epsilon \sum_{i=1}^N \sum_{m=1}^2 \frac{\theta_i}{2\alpha}  \|\partial_\xi H_{k,i}^m[\phi_\epsilon * Z^\epsilon] \|_{L^\infty}\\
&\leq   \epsilon \sum_{i=1}^N \sum_{m=1}^2  \frac{\theta_i}{2\alpha} \int_\mathbb{T}\frac{\|\phi_\epsilon * Z^\epsilon\|_{C^2}}{|(\phi_\epsilon*z_k^\epsilon)(\xi) - (\phi_\epsilon*y_i^{m,\epsilon})(\xi-\eta)|^{2\alpha} } + \frac{|\partial_\xi (\phi_\epsilon*z_k^\epsilon)(\xi) -\partial_\xi (\phi_\epsilon*y_i^{m,\epsilon})(\xi-\eta)|^2 }{|(\phi_\epsilon*z_k^\epsilon)(\xi) - (\phi_\epsilon*y_i^{m,\epsilon})(\xi-\eta)|^{1+2\alpha} }  d\eta  \\
&\leq C(\alpha)\Theta \epsilon  M^{3+2\alpha}.
\end{split}
\]
On the other hand, Lemma \ref{diff_H} yields
\[
T_2 \leq C(\alpha)\Theta \|\phi_\epsilon * Z^\epsilon - Z^\epsilon\|_{C^1} \leq  C(\alpha)\Theta \epsilon  M^{2+2\alpha}  \|Z^\epsilon\|_{C^2} \leq  C(\alpha)\Theta \epsilon  M^{3+2\alpha}.
\]
Hence
\[
\partial_t z_k^\epsilon(\xi,t) \cdot e_2  \geq  \left( \tilde C(\alpha) -C(\alpha) \right) \Theta \epsilon  M^{3+2\alpha} ,
\]
which is positive if we choose $\tilde C(\alpha) >C(\alpha)$.  This yields a contradiction with our assumption that $t$ is the first time of touch, 
so this choice of $\tilde C(\alpha)$ indeed ensures $h[Z^\epsilon(t)]>0$  for $\eps\in(0,\eps_0)$ and  $t\le T(\alpha,N\Theta,M)$.
Thus $t_\epsilon> T(\alpha,N\Theta,M)$ for these $\epsilon$ (with $\alpha<\tfrac 1{24}$).

\noindent \textbf{Step 3.} To obtain local existence of solutions to \eqref{eq:contour2}, we take $\eps\to 0$ (with $\alpha<\tfrac 1{24}$).
Let $\eps, \tilde\eps \in (0,\eps_0),$ consider $Z:=Z^\eps$ and $\tilde Z:=Z^{\tilde\eps}$ solving \eqref{eq:contour2_regular}, and let $W:=Z-\tilde Z$.  We  have for $t\in[0,T]$ (with $T=T(\alpha,N\Theta,M)>0$ from Step 2, and dropping $t$ from the arguments)
\[
\frac d{dt}\|w_k\|_{L^2}^2 = 2 \int_{\mathbb{T}} w_k(\xi) \cdot \partial_t w_k(\xi) d\xi
= K_k +  \sum_{i=1}^N \sum_{m=1}^2  \frac{\theta_i}{\alpha} (I_{k,i}^m + J_{k,i}^m) ,
\]
where
\[
K_k := 2\tilde C(\alpha)\Theta (\epsilon-\tilde \epsilon) M^{3+2\alpha} \int_{\mathbb{T}} w_k(\xi) \cdot e_2 \,d\xi,
\]
while  (by again using  $\int f(\psi*g) d\xi= \int (\psi*f)g d\xi$ for even $\psi$)  $I_{k,i}^m$ equals
\begin{equation*}
\hskip -10mm
\begin{split}
\int_{\mathbb{T}^2} (\phi_{\tilde\eps}* w_k)(\xi) \cdot
\left( \frac{\partial_\xi (\phi_{\tilde\eps}* z_k)(\xi) - \partial_\xi (\phi_{\tilde\eps}* y_i^m)(\xi - \eta)}{| (\phi_{\tilde\eps}*z_k)(\xi) -  (\phi_{\tilde\eps}* y_i^m)(\xi - \eta)|^{2\alpha}} -
\frac{\partial_\xi (\phi_{\tilde\eps}*\tilde{z}_k)(\xi) - \partial_\xi (\phi_{\tilde\eps}*\tilde{y}_i^m)(\xi - \eta)}{| (\phi_{\tilde\eps}*\tilde{z}_k)(\xi) -  (\phi_{\tilde\eps}*\tilde{y}_i^m)(\xi - \eta)|^{2\alpha}}\right) d\eta d\xi
\end{split}
\end{equation*}
and $J_{k,i}^m$ equals
\begin{equation*}
\hskip -30mm
\begin{split}
\int_{\mathbb{T}} w_k(\xi) \cdot
\left( \phi_{\eps}* \int_{\mathbb{T}} \frac{\partial_\xi (\phi_{\eps}* z_k)(\xi) - \partial_\xi (\phi_{\eps}* y_i^m)(\xi - \eta)}{| (\phi_{\eps}*z_k)(\xi) -  (\phi_{\eps}* y_i^m)(\xi - \eta)|^{2\alpha}} d\eta
- \phi_{\tilde\eps}* \int_{\mathbb{T}} \frac{\partial_\xi (\phi_{\tilde\eps}* z_k)(\xi) - \partial_\xi (\phi_{\tilde\eps}* y_i^m)(\xi - \eta)}{| (\phi_{\tilde\eps}*z_k)(\xi) -  (\phi_{\tilde\eps}* y_i^m)(\xi - \eta)|^{2\alpha}} d\eta \right) d\xi.
\end{split}
\end{equation*}

We obviously have $|K_k|\le  C(\alpha)\Theta (\epsilon + \tilde\epsilon)  M^{3+2\alpha} \|W\|_{L^2}$ (with a new $C(\alpha)$).
As in the above uniqueness argument estimating the right hand side of \eqref{eq:temp2}, only with all functions mollified by $\phi_{\tilde\eps}$, we obtain
\[
|I_{k,i}^m| \leq C(\alpha) (\vertiii {\phi_{\tilde\eps}*Z}+\vertiii {\phi_{\tilde\eps}*\tilde \Z })^{2+2\alpha} \|\phi_{\tilde\eps}*W\|_{L^2}^2
\leq C(\alpha) M^{2+2\alpha} \|W\|_{L^2}^2
\]
for all $k,i,m,t$.
On the other hand, the bound
\[
\|\phi_\eps*f-\phi_{\tilde\eps}*g\|_{L^\infty}
\le \| (\phi_\eps-\phi_{\tilde\eps})*f\|_{L^\infty}  + \|\phi_{\tilde\eps}*(f-g)\|_{L^\infty}
\le  (\eps+\tilde\eps) \|f'\|_{L^\infty} + \|f-g\|_{L^\infty}
\]
can be repeatedly used to show that the term in the parentheses in the definition of $J_{k,i}^m$ is bounded uniformly in $\xi$ by
\[
 C(\alpha)(\eps+\tilde\eps) \left( \|Z\|_{C^1}\|Z\|_{C^2}\vertiii {\phi_{\eps}* \Z }^{1+2\alpha}
+  \| \Z\|_{C^2} \vertiii {\phi_{\eps}* \Z }^{2\alpha}
+ \|Z\|_{C^1}\| \Z\|_{C^2} (\vertiii {\phi_{\eps}* \Z } + \vertiii {\phi_{\tilde \eps}* \Z })^{1+2\alpha} \right)
\]
(here we also used \eqref{eq:diff_derivative} for $\phi_\eps*z_k$).  Hence
\[
|J_{k,i}^m| \leq C(\alpha)(\eps+\tilde\eps) M^{3+2\alpha} \|W\|_{L^2}
\]
for all $k,i,m,t$,
so we obtain
\[
\frac d{dt} \|W(t)\|_{L^2} \le C(\alpha) N\Theta M^{3+2\alpha}(\eps+\tilde\eps+ \|W(t)\|_{L^2})
\]
for all $t\in[0,T]$.
Since also $\|W(0)\|_{L^2}\le C(\eps+\tilde\eps) (\|Z_0\|_{C^1}+1)\le CM(\eps+\tilde\eps)$, we get for any $\epsilon, \tilde \epsilon \in (0,\epsilon_0)$,
\[
\sup_{t\in[0,T]}\|Z^{\epsilon}( t) - Z^{\tilde \epsilon}(t)\|_{L^2} \leq  C(\alpha,N\Theta,M) (\eps+\tilde\eps).
\]

Hence $Z^\eps$ converges in $L^\infty([0,T];L^2(\mathbb T)^N)$ to some $Z$ as $\eps\to 0$.  This and the estimate $\sup_{t\in[0,T]} \vertiii{Z^\epsilon(t)} \leq 4M$ for all  $\epsilon \in (0,\epsilon_0)$ then show that  $\sup_{t\in[0,T]} \vertiii{Z( t)} \leq 4M$ and
\[
\lim_{\epsilon\to 0} \sup_{t\in [0,T]} \|Z^{\epsilon}( t) - Z( t)\|_{H^s} = 0
\]
for $s<3$.  We also obtain the same convergence in $C^2$.  This and $\sup_{t\in[0,T]} \vertiii{Z^\epsilon(t)} \leq 4M$ yield that the integrands in \eqref{eq:contour2_regular} converge  to the integrands in \eqref{eq:contour2} as $\eps\to 0$, uniformly on compact subsets of  $\mathbb T\times(\mathbb T\setminus\{0\})\times[0,T]$ (i.e., with $\eta\neq 0$; note that that the denominators in \eqref{eq:contour2} are uniformly bounded below by $C(M)^{-1}\eta^{2\alpha}$ due to $\sup_{t\in[0,T]} \vertiii{Z( t)} \leq 4M$). Since
the integrands are also uniformly bounded above by $C(M)\eta^{-2\alpha}$ for all small $\eps$ (and $2\alpha<1$), it follows that as $\eps\to 0$, the integrals in \eqref{eq:contour2_regular} converge to those in  \eqref{eq:contour2} uniformly on $\mathbb T\times[0,T]$.  Lemma \ref{diff_H} applied to $Z(t)$ and its translate in $\xi$, together with $\sup_{t\in[0,T]} \vertiii{Z( t)} \leq 4M$, shows that the integrals in  \eqref{eq:contour2} are Lipschitz in $\xi$, uniformly in $t\in[0,T]$.  Hence the integrals in \eqref{eq:contour2_regular} convolved with $\phi_\eps$ also converge to them uniformly on $\mathbb T\times[0,T]$ as $\eps\to 0$.  Thus $\partial_t \z^\epsilon_k$ (which is continuous) converges to the right-hand side of \eqref{eq:contour2} as $\eps\to 0$, uniformly on $\mathbb T\times[0,T]$.  The latter is then also  continuous on $\mathbb T\times[0,T]$.  But since $\z^\epsilon_k\to z_k$ as $\eps\to 0$, we see that $\partial_t z_k$ exists
and equals the (continuous) right-hand side of \eqref{eq:contour2}.  Hence $Z$ is a classical solution to \eqref{eq:contour2} and obviously $Z(0)=Z_0$.

The above proves the following local regularity result for the contour equation \eqref{eq:contour2} corresponding to the half-plane case $D=\Rm\times\Rm^+$.

\begin{theorem}
  Let $\theta_1,\dots,\theta_N\in\mathbb R\setminus\{0\}$ and $\Z_0=\{\z_{0k}\}_{k=1}^N\in H^3(\mathbb{T}, \bar D)^N$ be a collection of (counter-clockwise) initial parameterizations of patch boundaries with $\vertiii{\Z_0}<  \infty$ (and $\vertiii{\cdot}$ from \eqref{eq:def_norm}).
  For any $\alpha\in (0,\tfrac{1}{24})$,  there is $T=T(\alpha,N\sum_{k=1}^N|\theta_k|,\vertiii{\Z_0})>0$ such that
  there exists a unique
solution $\Z = \{\z_k\}_{k=1}^N$ to \eqref{eq:contour2}  in $L^\infty([0,T];H^3(\mathbb T,\bar D)^N)\cap C^1([0,T];C(\mathbb T,\bar D)^N)$ with $\Z(0)=\Z_0$
and $\sup_{t\in[0,T]}\vertiii{\Z(t)} < \infty$.
This $T$ can be chosen to be decreasing in the last two arguments and so that $\sup_{t\in[0,T]}\vertiii{Z(t)} \le 4 \vertiii{\Z_0}$.
\label{thm:local}
\end{theorem}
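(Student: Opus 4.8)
The plan is to read the theorem off from three ingredients already assembled above: the a priori estimate \eqref{eq:apriori} on $\tfrac{d}{dt}\vertiii{\Z(t)}$, the $L^2$-difference bound \eqref{eq:l2diff}, and the mollification scheme \eqref{eq:contour2_regular}. Uniqueness is immediate: if $\Z,\tilde\Z$ are two solutions with the claimed regularity, the same initial data, and $\sup_{t\in[0,T]}(\vertiii{\Z(t)}+\vertiii{\tilde\Z(t)})<\infty$, then \eqref{eq:l2diff} holds for all $\alpha<\tfrac14$ (in particular for $\alpha\in(0,\tfrac1{24})$), and Gronwall applied to $\|W(t)\|_{L^2}^2$ with $W=\Z-\tilde\Z$, $W(0)=0$, forces $W\equiv0$ on $[0,T]$.

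For existence I would set $M:=\vertiii{\Z_0}$ (so $M\ge2$), fix $\epsilon_0$ and the drift constant $\tilde C(\alpha)$ as in Step~2 of Section~\ref{sec:lwp_h3}, and for each $\epsilon\in(0,\epsilon_0)$ solve \eqref{eq:contour2_regular} by Picard's theorem in $B=H^3(\mathbb T)^N$: this is legitimate since $O^{4M}$ is open in $B$ (Lemma~\ref{lemma:open}), $\Z_0^\epsilon\in O^{3M}$, and $G_k^\epsilon$ is Lipschitz on $\overline{O^{4M}}$ (Lemma~\ref{local_lip}). The Picard solution is automatically $C^{\infty,1}$ on $\mathbb T\times[0,t_\epsilon]$, so the a priori estimates of Section~\ref{sec:apriori} apply to $\Z^\epsilon$ verbatim (after moving the mollifiers across the $L^2$ pairing via $\int f(\psi*g)=\int(\psi*f)g$ for even $\psi$), and combined with Lemma~\ref{lem:mollify} they yield $\tfrac{d}{dt}\vertiii{\Z^\epsilon(t)}\le \bar C(\alpha)N\Theta M^{7+2\alpha}$ as in \eqref{3.200} for as long as $\Z^\epsilon(t)\in O^{4M}$. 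Since $\vertiii{\Z_0^\epsilon}<3M$, this propagates $\vertiii{\Z^\epsilon(t)}<4M$ on a time interval of length $T=T(\alpha,N\Theta,M):=(\bar C(\alpha)N\Theta)^{-1}M^{-6-2\alpha}$, which is manifestly decreasing in $N\Theta$ and in $M$, and gives $\sup_{t\in[0,T]}\vertiii{\Z^\epsilon(t)}\le4M$.

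The one genuinely delicate point — and the reason for the extra term $\tilde C(\alpha)\Theta\epsilon M^{3+2\alpha}e_2$ in \eqref{eq:contour2_regular} — is keeping $\Z^\epsilon$ inside $\bar D$, i.e.\ showing $h[\Z^\epsilon(t)]>0$ on $[0,T]$, and this is the main obstacle. I would argue by contradiction at a hypothetical first touching time $t$: the reflection symmetry of the kernel makes the $e_2$-component of $(H_{k,i}^1+H_{k,i}^2)[\phi_\epsilon*\Z^\epsilon](\xi)$ vanish whenever $z_k^\epsilon(\xi,t)\in\partial D$, while the errors introduced by the two convolutions are each $O(\epsilon\Theta M^{3+2\alpha})$ (using $\|\phi_\epsilon*f-f\|_{L^\infty}\le\epsilon\|f'\|_{L^\infty}$, the estimate \eqref{eq:diff_derivative}, Lemma~\ref{diff_H}, and $\vertiii{\phi_\epsilon*\Z^\epsilon}\le8M$), so $\partial_t z_k^\epsilon(\xi,t)\cdot e_2\ge(\tilde C(\alpha)-C(\alpha))\Theta\epsilon M^{3+2\alpha}>0$ once $\tilde C(\alpha)$ is taken larger than the constant $C(\alpha)$ produced by those error bounds; this contradicts $t$ being a first touch. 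Hence $h[\Z^\epsilon(t)]>0$ and $t_\epsilon>T$ uniformly in $\epsilon\in(0,\epsilon_0)$.

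Finally I would let $\epsilon\to0$. A Gronwall bound for $\tfrac{d}{dt}\|\Z^\epsilon-\Z^{\tilde\epsilon}\|_{L^2}$, obtained by reusing the uniqueness estimates with all functions mollified, together with crude bounds on the difference of the two drift terms and on the difference of differently-mollified integrals via $\|\phi_\epsilon*f-\phi_{\tilde\epsilon}*g\|_{L^\infty}\le(\epsilon+\tilde\epsilon)\|f'\|_{L^\infty}+\|f-g\|_{L^\infty}$ and $\|W(0)\|_{L^2}\le CM(\epsilon+\tilde\epsilon)$, shows $(\Z^\epsilon)$ is Cauchy in $L^\infty([0,T];L^2(\mathbb T)^N)$. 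Interpolating the limit against $\sup_t\vertiii{\Z^\epsilon(t)}\le4M$ upgrades this to convergence in $C([0,T];H^s(\mathbb T)^N\cap C^2(\mathbb T)^N)$ for every $s<3$; the limit $\Z$ is $\bar D$-valued, satisfies $\sup_t\vertiii{\Z(t)}\le4M$, and — since the integrands in \eqref{eq:contour2} are bounded above by $C(M)|\eta|^{-2\alpha}$ and below by $C(M)^{-1}|\eta|^{2\alpha}$ with $2\alpha<1$ — one checks that $\partial_t z_k^\epsilon$ converges uniformly on $\mathbb T\times[0,T]$ to the continuous right-hand side of \eqref{eq:contour2}, so that $\Z$ is a classical solution with $\Z(0)=\Z_0$ lying in $L^\infty([0,T];H^3(\mathbb T,\bar D)^N)\cap C^1([0,T];C(\mathbb T,\bar D)^N)$. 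Everything outside the $\partial D$-containment argument is bookkeeping layered on top of \eqref{eq:apriori} and the uniqueness computation already in hand.
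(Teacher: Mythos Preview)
Your proposal is correct and follows the paper's proof essentially step for step. One small slip: in the containment argument, the reflection symmetry makes $(H_{k,i}^1+H_{k,i}^2)[\Z^\epsilon](\xi)\cdot e_2=0$ at a touching point, not $(H_{k,i}^1+H_{k,i}^2)[\phi_\epsilon*\Z^\epsilon](\xi)\cdot e_2$, since it is $z_k^\epsilon(\xi,t)$ that lies on $\partial D$ while $(\phi_\epsilon*z_k^\epsilon)(\xi,t)$ generally does not; this is precisely why two error terms $T_1=|\phi_\epsilon*H[\phi_\epsilon*\Z^\epsilon]-H[\phi_\epsilon*\Z^\epsilon]|$ and $T_2=|H[\phi_\epsilon*\Z^\epsilon]-H[\Z^\epsilon]|$ arise, the second handled by Lemma~\ref{diff_H} as you indicate.
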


We note that by using an argument similar to that in \cite[Chapter 3]{mb}, we could prove that $Z\in C([0,T];H^3(\mathbb T,\bar D)^N)$, 
but we will not need this.

\subsection{Lemmas on non-negative functions in $H^3(\mathbb{T})$}\label{sec:nonnegative}

We now prove the results about nonnegative $H^3$ functions, used in the proof of local well-posedness for the patch equation in $H^3$.



\begin{proof}[Proof of Lemma~\ref{lemma:f'bis}]
This is obvious for $\gamma=0$ so let us consider $\gamma\in(0,1]$.  Reflection $\xi\mapsto-\xi$ shows that it suffices to consider $\xi\in\mathbb{T}$ such that $f'(\xi)>0$.
For such $\xi$ let
\[
\xi' := \xi- \left(\dfrac{f'(\xi)}{2\|f\|_{C^{1,\gamma}}} \right)^{1/\gamma}.
\]
Then for any $\eta\in[\xi', \xi]$ we have
\[
|f'(\eta) - f'(\xi)| \leq \|f\|_{C^{1,\gamma}}\, |\xi-\eta|^\gamma \leq \frac{f'(\xi)}{2}.
\]
It follows that $f'(\eta)\ge \tfrac 12 f'(\xi)$ for all $\eta\in[\xi',\xi]$, so
\begin{equation*}
f(\xi) \geq f(\xi)-f(\xi') \geq \frac{f'(\xi)}{2} (\xi-\xi')  = \frac{f'(\xi)^{1+1/\gamma}}{2^{1+1/\gamma}\|f\|_{C^{1,\gamma}}^{1/\gamma}}.
\end{equation*}
The result follows.
\end{proof}



Let us now prove Lemma~\ref{cor:sobolevbis}.  We start with showing that if $0<f\in H^3(\mathbb{T})$, then
$f^{\frac{1}{2}-\beta}\in W^{1,1}(\mathbb{T})$ for small $\beta\ge 0$. We were unable to find this result in the literature.

\begin{lemma}\label{lemma:bv}
 If  $\beta\in[0,\tfrac 16]$ and $0< f\in H^3(\mathbb{T})$, then
\begin{equation}
\int_{\mathbb{T}} \frac{|f'(\xi)|}{f(\xi)^{\beta+1/2}} d\xi \leq 10 \|f\|_{H^3}^{\tfrac{1}{2}-\beta}.
\label{eq:bv}
\end{equation}
\end{lemma}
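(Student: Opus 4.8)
The plan is to reduce the left‑hand side to a sum over the maximal intervals on which $f$ is monotone, estimate each term using that $f'$ vanishes at the endpoints of such an interval, and then close everything with a single application of H\"older's inequality whose exponents force exactly the hypothesis $\beta\le\tfrac16$. Since $f\in H^3(\mathbb{T})\subset C^2(\mathbb{T})$ and $f>0$, the open set $\{\xi\in\mathbb{T}:f'(\xi)\neq0\}$ is an (at most countable) disjoint union of open intervals $(a_k,b_k)$ on each of which $f$ is strictly monotone, with $f'(a_k)=f'(b_k)=0$; moreover $\sum_k\ell_k\le2\pi$ and $\sum_k\|f'''\|_{L^2((a_k,b_k))}^2\le\|f'''\|_{L^2(\mathbb{T})}^2$, where $\ell_k:=b_k-a_k$. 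Since $f'\equiv0$ off $\bigcup_k(a_k,b_k)$ and $\tfrac{|f'|}{f^{\beta+1/2}}=\pm\tfrac{1}{1/2-\beta}\tfrac{d}{d\xi}f^{1/2-\beta}$ on each $(a_k,b_k)$,
\[
\int_{\mathbb{T}}\frac{|f'|}{f^{\beta+1/2}}\,d\xi=\frac{1}{1/2-\beta}\sum_k\big|f(b_k)^{1/2-\beta}-f(a_k)^{1/2-\beta}\big|\le\frac{1}{1/2-\beta}\sum_k\big|f(b_k)-f(a_k)\big|^{1/2-\beta},
\]
the last step by subadditivity of $t\mapsto t^{1/2-\beta}$ on $[0,\infty)$ (valid since $1/2-\beta\in(0,1]$ and $f>0$).

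Next I would establish the per‑interval bound $\big|f(b_k)-f(a_k)\big|\le\tfrac{\ell_k^{5/2}}{2\pi}\,\|f'''\|_{L^2((a_k,b_k))}$. Indeed, monotonicity gives $|f(b_k)-f(a_k)|=\int_{a_k}^{b_k}|f'|\le\ell_k\|f'\|_{L^\infty([a_k,b_k])}$; since $f'(a_k)=f'(b_k)=0$ we have $\|f'\|_{L^\infty([a_k,b_k])}\le\tfrac12\int_{a_k}^{b_k}|f''|\le\tfrac12\sqrt{\ell_k}\,\|f''\|_{L^2((a_k,b_k))}$ by Cauchy--Schwarz; and finally $\int_{a_k}^{b_k}f''=f'(b_k)-f'(a_k)=0$, so the Poincar\'e--Wirtinger inequality on $[a_k,b_k]$ gives $\|f''\|_{L^2((a_k,b_k))}\le\tfrac{\ell_k}{\pi}\|f'''\|_{L^2((a_k,b_k))}$. (This is the one place the third‑derivative bound $\|f'''\|_{L^2}\le\|f\|_{H^3}$ enters.)

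Combining the two displays and applying H\"older's inequality for series with the conjugate exponents $r=\tfrac{4}{3+2\beta}$, $s=\tfrac{4}{1-2\beta}$,
\[
\int_{\mathbb{T}}\frac{|f'|}{f^{\beta+1/2}}\,d\xi\le\frac{(2\pi)^{-(1/2-\beta)}}{1/2-\beta}\Big(\sum_k\ell_k^{\frac52(1/2-\beta)r}\Big)^{1/r}\Big(\sum_k\|f'''\|_{L^2((a_k,b_k))}^{(1/2-\beta)s}\Big)^{1/s}.
\]
The choice of $s$ makes $(1/2-\beta)s=2$, so the second factor is at most $\|f'''\|_{L^2(\mathbb{T})}^{2/s}=\|f'''\|_{L^2(\mathbb{T})}^{1/2-\beta}$. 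In the first factor $\tfrac52(1/2-\beta)r=\rho$ with $\rho:=\tfrac{5(1-2\beta)}{3+2\beta}$, and $\rho\ge1$ exactly when $\beta\le\tfrac16$; hence $\sum_k\ell_k^{\rho}\le(\sup_k\ell_k)^{\rho-1}\sum_k\ell_k\le(2\pi)^{\rho}$, so the first factor is at most $(2\pi)^{\rho/r}=(2\pi)^{\frac52(1/2-\beta)}$. Altogether
\[
\int_{\mathbb{T}}\frac{|f'|}{f^{\beta+1/2}}\,d\xi\le\frac{1}{1/2-\beta}\big((2\pi)^{3/2}\big)^{1/2-\beta}\|f\|_{H^3}^{1/2-\beta},
\]
and since $p\mapsto\tfrac1p\big((2\pi)^{3/2}\big)^{p}$ is convex on $[\tfrac13,\tfrac12]$, its maximum there is attained at an endpoint, which is below $10$ (in fact below $8$), yielding \eqref{eq:bv}.

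The only genuine obstacle is getting the two H\"older exponents to close simultaneously: one needs $(1/2-\beta)s=2$ to recover $\|f'''\|_{L^2}$ exactly, after which the power $\rho$ of $\ell_k$ in the complementary sum must be $\ge1$ for the elementary bound $\sum_k\ell_k^{\rho}\le(2\pi)^{\rho}$, and $\rho\ge1$ is precisely $\beta\le\tfrac16$. Everything else is routine bookkeeping; in particular, decomposing along the components of $\{f'\neq0\}$ rather than along "consecutive critical points" sidesteps any issue with $f$ possibly having infinitely many critical points, and the case $f'\equiv0$ is trivial.
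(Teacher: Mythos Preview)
Your proof is correct and follows essentially the same route as the paper: decompose into intervals of monotonicity, use concavity of $s^{1/2-\beta}$ to get $\sum_k|h_k|^{1/2-\beta}$, establish the scaling $|h_k|\lesssim \ell_k^{5/2}\|f'''\|_{L^2(I_k)}$, and close with H\"older's inequality whose exponents force exactly $\beta\le\tfrac16$. Your implementation is in fact a bit cleaner than the paper's in two places: working with the connected components of $\{f'\neq0\}$ rather than a finite sequence of local extrema avoids any worry about infinitely many critical points, and deriving the $\ell_k^{5/2}$ bound via Poincar\'e--Wirtinger (using $\int_{a_k}^{b_k}f''=0$) is more systematic than the paper's chain of pointwise maximum estimates, though both yield the same power of $\ell_k$.
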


\begin{proof}
  The integral on the left side of (\ref{eq:bv}) is, up to a constant,
  the $BV$-norm of $f^{\frac{1}{2}-\beta}$, which we will bound as
  follows. Let $\xi_0\leq \xi_1\leq \ldots\leq \xi_{n-1}\leq \xi_n = \xi_0+2\pi$
  be a (finite) sequence of local extrema of $f$. If for any such
sequence we can show that
\[
\sum_{i=1}^n
  \left|f(\xi_i)^{\frac{1}{2}-\beta} -f(\xi_{i-1})^{\frac{1}{2}-\beta} \right|
\]
is bounded above by the right hand side of \eqref{eq:bv}, we will be
done.

Using first $f>0$ and concavity of the function $s^{1/2-\beta}$ on $(0,\infty)$, and then H\"older's inequality with $p = 2(\tfrac{1}{2}-\beta)^{-1}$ and $\frac{1}{q} =1-\frac{1}{p} = \frac{3}{4} + \frac{\beta}{2}$, we obtain
\begin{equation}
\begin{split}
\sum_{i=1}^n \Big|f(\xi_i)^{\tfrac{1}{2}-\beta}
-f(\xi_{i-1})^{\tfrac{1}{2}-\beta}\Big|
& \leq \sum_{i=1}^n |h_i|^{\tfrac{1}{2}-\beta} \\
&= \sum_{i=1}^n \left( |h_i|^{\tfrac{1}{2}-\beta} d_i^{-\tfrac{5}{2}(\tfrac{1}{2}-\beta)}\right) d_i^{\,\tfrac{5}{2}(\tfrac{1}{2}-\beta)}\\
&\leq \left( \sum_{i=1}^n h_i^2 d_i^{-5}\right)^{\tfrac{1}{p}} \,\left( \sum_{i=1}^n d_i^{\,\tfrac{5}{2}(\tfrac{1}{2}-\beta)q}\right)^{\tfrac{1}{q}},
\end{split}
\end{equation}
where $h_i := f(\xi_i)-f(\xi_{i-1})$ and $d_i := \xi_i - \xi_{i-1}$.
Since $\sum_{i=1}^n d_i = 2\pi$, the second sum in the last expression is bounded above
by $(2\pi)^{\frac{5}{2}(\frac{1}{2}-\beta)q}$ as long as
$\frac{5}{2}(\frac{1}{2}-\beta)q\geq 1$, which is equivalent
to~$\beta\leq \frac{1}{6}$. Since $\beta \geq 0,$ we have $\frac{5}{2}(\frac{1}{2}-\beta)\le \tfrac 54$, so it suffices to prove that
\begin{equation}
\sum_{i=1}^n h_i^2 d_i^{-5} \leq  \left[ 10(2\pi)^{-5/4} \right]^p \|f\|_{H^3}^2.
\label{eq:h2d-5}
\end{equation}

Since $\max_{\xi\in[\xi_{i-1},\xi_i]} |f'(\xi)| \ge h_id_i^{-1}$ and $f'(\xi_i)=0=f'(\xi_{i-1})$, we have
\[
\max_{\xi\in[\xi_{i-1},\xi_i]} |f''(\xi)| \ge 2h_id_i^{-2}.
\]
H\" older inequality and Rolle's theorem for $f'$ (i.e., $f''(\xi)=0$ for some $\xi\in [\xi_{i-1},\xi_i]$) yield
\[
\int_{\xi_{i-1}}^{\xi_i} f'''(\xi)^2 d\xi \ge \frac 1{d_i}  \left( \int_{\xi_{i-1}}^{\xi_i} |f'''(\xi)| d\xi \right)^2
\ge \frac 1{d_i} \left( \max_{\xi\in[\xi_{i-1},\xi_i]} |f''(\xi)| \right)^2 \ge \frac{4h_i^2}{d_i^5}.
\]
Summing this up over $i=1,\dots,n$ and using that $\tfrac 14<  \left[ 10(2\pi)^{-5/4} \right]^p$ gives \eqref{eq:h2d-5}.
\end{proof}




\begin{proof} [Proof of Lemma~\ref{cor:sobolevbis}]
Multiplying $(n-1)$-st power of  \eqref{eq:f'_sqrtfbis} with $\gamma=1$  by $|f'(\xi)| f(\xi)^{-\beta-n/2}$, then integrating and using Lemma \ref{lemma:bv} yields with a universal $C<\infty$,
\[
\int_{\mathbb{T}} \frac{|f'(\xi)|^n}{f(\xi)^{ \beta+n/2}} dx
\leq C^n\|f\|_{H^3}^{\tfrac{n-1}{2}} \int_{\mathbb{T}} \frac{|f'(\xi)|}{f(\xi)^{ \beta+1/2}} d\xi
\le 3C^n \|f\|_{H^3}^{\tfrac{n}{2}-\beta}.
\]
This is \eqref{eq:sobolev_1bis}.
As for \eqref{eq:sobolev_2bis},  we obtain via integration by parts
\begin{equation*}
\begin{split}
\int_{\mathbb{T}} \frac{f''(\xi)^2}{f(\xi)^{\beta}} dx  &\leq \left|\int_{\mathbb{T}}  \frac{f'(\xi) f'''(\xi)}{f(\xi)^{\beta}} d\xi\right|  + \beta\left|\int_{\mathbb{T}} \frac{ f'(\xi)^2 f''(\xi)}{f(\xi)^{\beta+1}} d\xi\right|\\
&\leq \|f\|_{H^3} \|f' f^{-\beta}\|_{L^2} + \beta \|f\|_{C^2} \left|\int_{\mathbb{T}} \frac{f'(\xi)^2}{f(\xi)^{\beta+1}} d\xi\right|.
\end{split}
\end{equation*}
For any $\beta \in [0,\frac{1}{6}]$ we have $\|f'f^{-\beta}\|_{L^\infty} \leq C\|f\|_{H^3}^{1-\beta}$  by \eqref{eq:f'_sqrtfbis} with $\gamma:=\beta(1-\beta)^{-1}$, and $\|(f')^2f^{-\beta-1}\|_{L^1} \leq C\|f\|_{H^3}^{1-\beta}$ by \eqref{eq:sobolev_1bis} with $n=2$.
Sobolev embedding now yields \eqref{eq:sobolev_2bis}.
\end{proof}

\section{Estimates on velocity fields and  $C^{1,\gamma}$ patches}
\label{sec:regularity}



In this section, we prove some basic estimates on the fluid velocities for general $\omega$, as well as on the geometry of $C^{1,\gamma}$ patches (we will always consider $\gamma\in(0,1]$).  Naturally, the latter also apply in the case of the more regular $H^3$ patches.

\begin{lemma}\label{lemma:uniform_u_bound}
For $D=\Rm\times\Rm^+$, $\alpha\in(0,\tfrac 12)$, and $u(\cdot,t)$ from \eqref{eq:velocity_law} with $\omega(\cdot,t) \in L^1(D) \cap L^\infty(D)$, we have
\begin{equation}\label{uLinfty}
\|u(\cdot,t)\|_{L^\infty} \leq \frac{2\pi}{1-2\alpha} \|\omega(\cdot, t)\|_{L^\infty}+ 2\|\omega(\cdot,t)\|_{L^1}
\end{equation}
and
\begin{equation}\label{uHold}
\|u(\cdot,t)\|_{C^{1-2\alpha}} \leq  \frac {8\pi}{\alpha(1-2\alpha)} \|\omega(\cdot, t)\|_{L^\infty} + 2\|\omega(\cdot,t)\|_{L^1}.
\end{equation}
Furthermore, if $\omega$ is weak-$*$ continuous as a function from some time interval $[a,b]$ to $L^\infty(D)$, and is supported inside some fixed compact subset of $\bar D$ for every $t\in [a,b]$, then $u$ is continuous on $\bar D \times [a,b]$.
\end{lemma}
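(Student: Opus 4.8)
The plan is to prove Lemma~\ref{lemma:uniform_u_bound} in three parts: the $L^\infty$ bound \eqref{uLinfty}, the H\"older bound \eqref{uHold}, and the continuity statement. For all three, the basic structure is the same: split the velocity kernel into a near-field part (inside a unit ball around $x$), where the singularity $|x-y|^{-1-2\alpha}$ is controlled using $\|\omega\|_{L^\infty}$ and integrability (since $1+2\alpha<2$), and a far-field part (outside the unit ball), where the kernel is bounded by $1$ and one simply pays $\|\omega\|_{L^1}$. The reflected kernel $(x-\bar y)^\perp |x-\bar y|^{-2-2\alpha}$ is handled identically, and in fact more easily, since for $x\in\bar D$ and $y\in D$ one has $|x-\bar y|\ge |x-y|$, so it never contributes a worse singularity.

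\textbf{Part 1: the $L^\infty$ bound.} First I would estimate
\[
|u(x,t)| \le \int_D \left( \frac{1}{|x-y|^{1+2\alpha}} + \frac{1}{|x-\bar y|^{1+2\alpha}} \right) |\omega(y,t)|\, dy.
\]
Since $|x-\bar y| \ge |x-y|$, both terms are bounded by $2\int_D |x-y|^{-1-2\alpha} |\omega(y,t)|\,dy$. Splitting at $|x-y|=1$: on $\{|x-y|<1\}$ I bound $|\omega|$ by $\|\omega(\cdot,t)\|_{L^\infty}$ and compute $\int_{|z|<1} |z|^{-1-2\alpha}\,dz = \frac{2\pi}{1-2\alpha}$ in polar coordinates; on $\{|x-y|\ge 1\}$ the kernel is at most $1$, giving $\|\omega(\cdot,t)\|_{L^1}$. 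Combining and accounting for the factor $2$ gives \eqref{uLinfty} (with a little room in the constants).

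\textbf{Part 2: the H\"older bound.} For $x_1,x_2\in\bar D$ I would estimate $|u(x_1,t)-u(x_2,t)|$. Let $r := |x_1-x_2|$; if $r\ge 1$ the bound follows from \eqref{uLinfty}, so assume $r<1$. Split the integral over $\{y: |x_1-y|<2r\}$ (the ``local'' region) and its complement. On the local region, bound each of $|u(x_i,t)-\text{contribution}|$ using the $|z|^{-1-2\alpha}$ kernel directly as in Part~1, picking up $C\|\omega\|_{L^\infty} r^{1-2\alpha}$ from $\int_{|z|<Cr}|z|^{-1-2\alpha}dz \le \frac{C}{1-2\alpha} r^{1-2\alpha}$. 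On the far region, use the mean value theorem on the kernel: $\big| K(x_1-y) - K(x_2-y)\big| \le C |x_1-x_2|\, |x_1-y|^{-2-2\alpha}$ (valid since $|x_2-y|\ge \frac12|x_1-y|$ there), where $K(z)=z^\perp|z|^{-2-2\alpha}$; likewise for the reflected kernel, using that $z\mapsto \bar z$ is an isometry and $|x_i - \bar y|\ge |x_1-y|/2$ on the far region. Then split the far-region integral again at $|x_1-y|=1$: for $2r\le |x_1-y|<1$ use $\|\omega\|_{L^\infty}$ and $\int_{2r\le|z|<1} |z|^{-2-2\alpha}dz \le \frac{C}{2\alpha} r^{-2\alpha}$, contributing $C\|\omega\|_{L^\infty} \frac{r}{2\alpha} r^{-2\alpha} = \frac{C}{2\alpha}\|\omega\|_{L^\infty} r^{1-2\alpha}$; for $|x_1-y|\ge 1$ use that $|x_1-y|^{-2-2\alpha}\le 1$ and pay $r\|\omega\|_{L^1}\le \|\omega\|_{L^1} r^{1-2\alpha}$. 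Collecting terms gives \eqref{uHold}. The main bookkeeping obstacle here is tracking the constants to land on $\frac{8\pi}{\alpha(1-2\alpha)}$; the $\alpha^{-1}$ blow-up comes precisely from the $\int r^{-2\alpha}$ in the far/intermediate region.

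\textbf{Part 3: continuity.} Here I would fix a compact $K\subseteq\bar D$ containing all the supports $\supp\omega(\cdot,t)$ for $t\in[a,b]$, and let $L := \sup_{t\in[a,b]}\|\omega(\cdot,t)\|_{L^\infty}<\infty$ (finite by weak-$*$ continuity and uniform boundedness, or simply assumed via the hypothesis; note $\|\omega(\cdot,t)\|_{L^1}\le |K| L$ as well). The estimate \eqref{uHold} already gives equicontinuity in $x$, uniformly in $t$, so it suffices to show $t\mapsto u(x,t)$ is continuous for each fixed $x$. For that, fix $x$ and $\eps>0$, choose $\rho>0$ small so that $\int_{|x-y|<\rho}\big(|x-y|^{-1-2\alpha} + |x-\bar y|^{-1-2\alpha}\big)\cdot 2L\,dy < \eps$ (possible since $1+2\alpha<2$), and split $u(x,t)$ into the integral over $B(x,\rho)$, which is $<\eps$ uniformly in $t$, and the integral over $D\setminus B(x,\rho)$. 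On the latter region the kernel $y\mapsto (x-y)^\perp|x-y|^{-2-2\alpha} - (x-\bar y)^\perp |x-\bar y|^{-2-2\alpha}$ is a fixed bounded function (bounded by $C(\rho)$ and supported, after multiplying by $\chi_K$, in the compact set $K$), hence a fixed element of $L^1(D)$; weak-$*$ continuity of $t\mapsto\omega(\cdot,t)$ in $L^\infty$ then gives continuity of $\int_{D\setminus B(x,\rho)}(\cdots)\omega(y,t)\,dy$ in $t$. Thus $\limsup_{s\to t}|u(x,s)-u(x,t)| \le 2\eps$, and $\eps$ is arbitrary. Combining pointwise-in-$x$ continuity with the uniform-in-$t$ equicontinuity in $x$ gives joint continuity on $\bar D\times[a,b]$.

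I expect Part~2 (the H\"older estimate) to be the main technical obstacle, not conceptually but in the careful splitting into three regions ($|x_1-y|<2r$, $2r\le |x_1-y|<1$, $|x_1-y|\ge 1$) and the mean-value-theorem bound on the kernel difference, and in verifying that the reflected-kernel terms never make things worse — this uses repeatedly that $x\mapsto\bar x$ is an isometry of $\Rm^2$ and that $|x-\bar y|\ge|x-y|$ for $x\in\bar D$, $y\in D$. The continuity part is routine once one isolates the singularity by the small ball $B(x,\rho)$ and uses weak-$*$ convergence against the fixed $L^1$ kernel on the complement.
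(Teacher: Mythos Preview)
Your approach is correct and closely parallels the paper's, with one difference worth noting. The paper first rewrites $u$ via the odd extension $\eta$ of $\omega(\cdot,t)$ across $\partial D$, so that \eqref{eq:velocity_law} becomes the single convolution $u(x,t)=\int_{\mathbb R^2}\frac{(x-y)^\perp}{|x-y|^{2+2\alpha}}\eta(y)\,dy$. The split at $|x-y|=1$ then gives \emph{exactly} $\frac{2\pi}{1-2\alpha}\|\eta\|_{L^\infty}+\|\eta\|_{L^1}=\frac{2\pi}{1-2\alpha}\|\omega\|_{L^\infty}+2\|\omega\|_{L^1}$. Your route --- bounding the reflected kernel by the direct one via $|x-\bar y|\ge|x-y|$ --- picks up an extra factor of $2$ on the near-field term, yielding $\frac{4\pi}{1-2\alpha}\|\omega\|_{L^\infty}$ rather than $\frac{2\pi}{1-2\alpha}\|\omega\|_{L^\infty}$; so your ``little room in the constants'' goes the wrong way, though of course the bound is still of the right form. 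For Part~2 the paper likewise works with $\eta$ on $\mathbb R^2$ and uses only a single split at $|x-y|=2r$ (the far region is handled entirely with $\|\eta\|_{L^\infty}$, integrating $rs^{-1-2\alpha}$ from $2r$ to $\infty$); your three-region split also works but again inflates the constants slightly. Your Part~3 is essentially the paper's argument; in fact the $\rho$-cutoff is unnecessary, since the kernel times $\chi_K$ is already in $L^1(D)$ and weak-$*$ continuity applies directly.
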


\begin{proof}
Let  $\eta:\mathbb{R}^2\to\mathbb{R}$ be the odd extension of $\omega(\cdot,t)$ to
the whole plane.
The velocity law \eqref{eq:velocity_law} for $x\in D$  then becomes
\begin{equation}\label{genulaw}
u(x,t) =  \int_{\mathbb{R}^2} \frac{(x-y)^\perp}{|x-y|^{2+2\alpha}} \eta(y) dy,
\end{equation}
and \eqref{uLinfty} follows from
\begin{equation*}
\begin{split}
|u(x,t)| &\leq  \int_{|x-y|\leq 1} \frac{|\eta(y)|}{|x-y|^{1+2\alpha}}dy
+ \int_{|x-y|> 1} \frac{|\eta(y)|}{|x-y|^{1+2\alpha}} dy\\
&\leq  \|\eta\|_{L^\infty} \int_{|x-y|\leq 1}
\frac{1}{|x-y|^{1+2\alpha}}  dy +
\|\eta\|_{L^1} \\
& \leq \frac{2\pi}{1-2\alpha} \|\omega(\cdot,t)\|_{L^\infty}+ 2\|\omega(\cdot,t)\|_{L^1}.
\end{split}
\end{equation*}

To prove \eqref{uHold}, consider any $x,z \in \bar D$ with $r:=|x-z|$.
Then
\begin{equation*}
\begin{split}
|u(x,t)-u(z,t)| \leq & \int_{B(x,2r)} \frac{1}{|x-y|^{1+2\alpha}} \eta(y)\,dy + \int_{B(x,2r)} \frac{1}{|z-y|^{1+2\alpha}} \eta(y)\,dy \\
& + \int_{\Rm^2 \setminus B(x,2r)} \left| \frac{(x-y)^\perp}{|x-y|^{2+2\alpha}} - \frac{(z-y)^\perp}{|z-y|^{2+2\alpha}} \right| \eta(y)\,dy  \\
\le & 4\pi\|\eta\|_{L^\infty} \int_0^{3r} s^{-2\alpha}\,ds + 32\|\eta\|_{L^\infty}\int_{2r}^\infty r s^{-1-2\alpha}\,ds \\
\leq & \left( \frac {12\pi}{1-2\alpha}+\frac {32}{2\alpha} \right) \|\eta\|_{L^\infty} |x-z|^{1-2\alpha}.
\end{split}
\end{equation*}
Combining this with \eqref{uLinfty} yields \eqref{uHold}.


It remains to prove the last claim.  Since the kernel in \eqref{genulaw} is $L^1$ on any compact subset of $\bar D$, the assumptions show that $u$ is continuous in $t\in [a,b]$  for any fixed $x\in\bar D$.  The claim now follows from  uniform continuity of $u$ in $x\in\bar D$, see \eqref{uHold}.
\end{proof}

\it Remark. \rm As is clear from the proof, the lemma remains valid in the more general case where
$u$ is given by \eqref{genulaw} with $\omega(y,t)$ in place of $\eta(y)$ and $\omega$ satisfies the hypotheses of the lemma with $D$ replaced by $\mathbb{R}^2$.



The remaining results in this section hold for a single time $t$, so we drop the time variable from the notation.

\begin{lemma}
\label{lem:du_crude}
For $\alpha\in(0,\tfrac 12)$ and $\omega \in L^\infty (\mathbb{R}^2) \cap L^1 (\mathbb{R}^2)$, let
\begin{equation} \label {4.1}
v(x) := \int_{\mathbb{R}^2} \frac{(x-y)^\perp}{|x-y|^{2+2\alpha}} \omega(y) dy.
\end{equation}
Assume that $\omega \equiv c$ in $B(x,d)$, for some $x\in \mathbb{R}^2$, $d>0$, and $c\in\mathbb R$. Then
\[
|\nabla u(x)| \leq C(\alpha) \|\omega\|_{\infty} d^{-2\alpha},
\]
and more generally, for $D^n$ any spatial derivative of order $n$ we have
\[
|D^n u(x)| \leq C(\alpha,n) \|\omega\|_{\infty} d^{1-2\alpha-n}.
\]
\end{lemma}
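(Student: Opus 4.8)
The plan is to localize: near $x$, split the field $u$ from \eqref{4.1} into the part produced by the ball $B(x,d)$ on which $\omega$ equals the constant $c$, plus the rest, and then bound every spatial derivative at $x$ of each piece separately. Write $K(w):=w^{\perp}|w|^{-2-2\alpha}$ for the kernel, and note first that $|c|\le\|\omega\|_{\infty}$ since $B(x,d)$ has positive measure. For $x'$ in the ball $B(x,d/2)$ I would write
\[
u(x') \;=\; c\int_{B(x,d)}K(x'-y)\,dy \;+\; \int_{\mathbb{R}^2\setminus B(x,d)}K(x'-y)\,\omega(y)\,dy \;=:\; u_1(x')+u_2(x'),
\]
both integrals being absolutely convergent (the first because the kernel is locally $L^1$, as $1+2\alpha<2$; the second because on its domain $|x'-y|$ stays bounded below and $\omega\in L^1\cap L^\infty$).

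For $u_2$ the plan is simply to differentiate under the integral sign: when $y\notin B(x,d)$ and $|x'-x|<d/2$ one has $|x'-y|\ge|x-y|/2\ge d/2$, so every $D^n_{x'}K(x'-y)$ is dominated, uniformly in such $x'$, by $C(\alpha,n)\,|x-y|^{-1-2\alpha-n}$, which is integrable over $\mathbb{R}^2\setminus B(x,d)$ precisely because $n\ge1$ makes the exponent exceed $2$. This gives $D^n u_2(x)=\int_{\mathbb{R}^2\setminus B(x,d)}(D^nK)(x-y)\,\omega(y)\,dy$, and a polar-coordinates computation yields $|D^n u_2(x)|\le C(\alpha,n)\|\omega\|_{\infty}\,d^{1-2\alpha-n}$.

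The step I expect to be the main obstacle is $u_1$, whose integrand is singular at $y=x'\in B(x,d)$, so one cannot simply move derivatives onto the kernel — indeed $D^nK$ fails to be locally integrable for $n\ge1$. The idea is to use the divergence structure: from $\nabla|w|^{-2\alpha}=-2\alpha|w|^{-2-2\alpha}w$ and the chain rule, $K(x'-y)=\tfrac1{2\alpha}\,\nabla^{\perp}_y\big(|x'-y|^{-2\alpha}\big)$; applying the divergence theorem on $B(x,d)\setminus B(x',\rho)$ (where $|x'-y|^{-2\alpha}$ is smooth) and letting $\rho\to0$, the inner-boundary term is $O(\rho^{1-2\alpha})\to0$ (this is where $\alpha<\tfrac12$ enters), leaving
\[
u_1(x') \;=\; \frac{c}{2\alpha}\int_{\partial B(x,d)}|x'-y|^{-2\alpha}\,n(y)^{\perp}\,d\sigma(y)
\]
for every $x'$ with $|x'-x|<d$. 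This is an integral over a fixed circle whose integrand is $C^\infty$ in $x'$ on $B(x,d/2)$; differentiating $n$ times under the integral and using $\big|D^n_{x'}|x'-y|^{-2\alpha}\big|\le C(\alpha,n)|x'-y|^{-2\alpha-n}\le C(\alpha,n)d^{-2\alpha-n}$ for $y\in\partial B(x,d)$, together with $|\partial B(x,d)|=2\pi d$, gives $|D^n u_1(x)|\le C(\alpha,n)|c|\,d^{1-2\alpha-n}\le C(\alpha,n)\|\omega\|_{\infty}\,d^{1-2\alpha-n}$. (Equivalently, rescaling $y=x+dz$ and using homogeneity of $K$ exhibits $u_1$ as $c\,d^{1-2\alpha}$ times a fixed function of $(x'-x)/d$ that is smooth near the origin with bounds depending only on $\alpha,n$ — the same reason the potential of a uniform density is smooth inside the body.) Adding the two bounds gives $|D^n u(x)|\le C(\alpha,n)\|\omega\|_{\infty}\,d^{1-2\alpha-n}$ for all $n\ge1$, and taking $n=1$ (so $d^{1-2\alpha-1}=d^{-2\alpha}$) is exactly the first displayed inequality of the lemma.
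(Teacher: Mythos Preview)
Your proof is correct, but the route differs from the paper's. The paper never splits $u$ into inside/outside pieces; instead it introduces a smooth radial cutoff $\phi(|x'-y|/d)$ vanishing on $[0,\tfrac13]$ and equal to $1$ on $[\tfrac12,\infty)$, sets $g(x')=\int K(x'-y)\phi(|x'-y|/d)\,\omega(y)\,dy$, and observes that $g\equiv v$ on $B(x,\tfrac d2)$ because the difference $v-g$ is the constant $c$ integrated against $K(x'-y)[1-\phi(|x'-y|/d)]$, which vanishes by the mean-zero property of $K$ on circles. One then differentiates $g$ directly: the cutoff kills the singularity, so every derivative is an absolutely convergent integral and the bound drops out from a single polar-coordinates estimate. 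Your argument replaces this cancellation observation with the explicit potential-theoretic identity $K=\tfrac1{2\alpha}\nabla_y^\perp|x'-y|^{-2\alpha}$ and the divergence theorem, turning $u_1$ into a boundary integral over $\partial B(x,d)$. The paper's version is a bit shorter and works for any odd kernel of the right homogeneity; yours is more explicit and makes the smoothness of $u_1$ transparent without invoking any cancellation. Both yield the same constants up to harmless factors.
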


\begin{proof}
Let $\phi:\mathbb{R}^+ \to [0,1]$ be a smooth  function with $\phi \equiv 0$ on $[0,\frac{1}{3}]$, $\phi\equiv 1$ on $[\frac{1}{2},\infty)$, and $0\leq \phi' \leq 10$. Let
\[
g(x) := \int_{\mathbb{R}^2} \frac{(x-y)^\perp}{|x-y|^{2+2\alpha}} \phi\left( \frac{|x-y|}{d}\right)\omega(y) dy.
\]
Then  $g\equiv v$ on $B(x,\tfrac d2)$ due to $\omega\equiv c$ in $B(x,d)$ and the mean zero property of the kernel. Hence
\[
\begin{split}
|\nabla v(x)| &= |\nabla g(x)| \leq C(\alpha)\|\omega\|_\infty \int_{\mathbb{R}^2} \left[\frac{1}{|x-y|^{2+2\alpha}} \phi\left( \frac{|x-y|}{d}\right) + \frac{1}{d|x-y|^{1+2\alpha}} \phi'\left( \frac{|x-y|}{d}\right)\right] dy\\
&\leq C(\alpha)\|\omega\|_\infty \left(\int_{d/3}^\infty r^{-(1+2\alpha)} dr + \int_{d/3}^{d/2} d^{-1} r^{-2\alpha}  dr \right)\\
&\leq C(\alpha) \|\omega\|_{\infty} d^{-2\alpha}.
\end{split}
\]
The proof of the higher derivatives case is analogous.
\end{proof}

Let us now turn to $C^{1,\gamma}$ patches.

\begin{definition} \label{D.4.1a}
For a bounded open  $\Omega\subseteq \mathbb{R}^2$ whose boundary  is a simple closed $C^{1,\gamma}$ curve with arc-length $|\partial\Omega|=:2\pi L$, let  $\vertiii{\Omega}_{1,\gamma} := \|z\|_{C^{1,\gamma}} + F[z]$, where $z$ is any constant speed parametrization of $\partial \Omega$ from Definition \ref{D.1.0} and $F[z] := \max\{\sup_{\xi,\eta \in \mathbb{T}, \xi \neq \eta} \frac{|\xi - \eta|}{|z(\xi)-z(\eta)|},1\}$.  We also denote by $n_p$ the outer unit normal vector for $\Omega$ at $P\in\partial \Omega$.
\end{definition}

{\it Remark.}  We clearly have $z'(\xi)=-Ln_{z(\xi)}^\perp$.

\begin{lemma}
\label{lem:regularity}
Let $\Omega\subseteq \mathbb{R}^2$ be as in Definition \ref{D.4.1a}, with
$\vertiii{\Omega}_{1,\gamma}\leq A$ for some $A\geq 1$. Let $R := L^{\frac{1}{\gamma}}(4A)^{-\frac{1}{\gamma}-1}$  and consider any $P \in \partial \Omega$.  Then we have:
\begin{enumerate}[(a)]
\item $\partial \Omega \cap B(P,R)$ is a simply connected curve.
\item In the coordinate system $(w_1,w_2)$ centered at $P$ and with axes $n_P^\perp$ and $n_P$,  the set $\partial \Omega \cap B(P,R)$ is a graph $w_2=f(w_1)$ with $|f(w_1)|\le 4L^{-1-\gamma}A |w_1|^{1+\gamma}$.
\item For any $Q \in \partial \Omega \cap B(P,R)$, we have $|n_P-n_Q| \leq 2L^{-1-\gamma}A|P-Q|^{\gamma}$.
\item If also $\Omega \subseteq D$, then for $P=(p_1, p_2) \in \partial \Omega$ we have $|n_P\cdot (1,0)| \leq 2L^{-1}A^{\frac{1}{1+\gamma}} p_2^{\frac{\gamma}{1+\gamma}}.$
\end{enumerate}
\end{lemma}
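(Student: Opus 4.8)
The plan is to prove the four parts of Lemma~\ref{lem:regularity} in order, exploiting the constant speed parametrization $z:\mathbb T\to\partial\Omega$ with $|z'|\equiv L$, $\|z\|_{C^{1,\gamma}}\le A$, and $F[z]\le A$. The key elementary fact is that since $|z'(\xi)|\equiv L$ and $z'\in C^{0,\gamma}$ with $[z']_{C^{0,\gamma}}\le A$, for any $\xi,\eta\in\mathbb T$ we have $|z'(\xi)-z'(\eta)|\le A|\xi-\eta|^\gamma$, and hence $z'(\xi)\cdot z'(\eta)=L^2-\tfrac12|z'(\xi)-z'(\eta)|^2\ge L^2-\tfrac12 A^2|\xi-\eta|^2$; in particular, for $|\xi-\eta|^\gamma\le L/(2A)$ the two tangent vectors make an angle less than $90^\circ$ (indeed a small angle), which will force the relevant arc of $\partial\Omega$ to be a graph over the tangent line. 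Note $R=L^{1/\gamma}(4A)^{-1/\gamma-1}$ is chosen precisely so that the arc-length of $\partial\Omega\cap B(P,R)$ is controlled: by $F[z]\le A$, if $z(\xi_0)=P$ then $|z(\xi)-P|\ge L A^{-1}\,\mathrm{dist}_{\mathbb T}(\xi,\xi_0)$ (using $|z(\xi)-z(\eta)|\ge A^{-1}|\xi-\eta|$ together with $|z'|\equiv L$ — more precisely $|z(\xi)-z(\eta)|\ge L A^{-1}|\xi-\eta|$ after rescaling by arc length), so $z(\xi)\in B(P,R)$ forces $\mathrm{dist}_{\mathbb T}(\xi,\xi_0)\le RA/L =: \rho$, and one checks $\rho^\gamma\le L/(4A)$.

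For part (a): I would show $\partial\Omega\cap B(P,R)=z(I)$ for the single arc $I=\{\xi:\mathrm{dist}_{\mathbb T}(\xi,\xi_0)<\rho\}$ — the inclusion $\supseteq$ needs that $z(\xi)\in B(P,R)$ for $\xi\in I$, which follows from $|z(\xi)-P|\le L\,\mathrm{dist}_{\mathbb T}(\xi,\xi_0)\le L\rho=RA\le$ hmm, this requires $RA\le R$, false — so instead one should argue: points $z(\xi)$ with $\mathrm{dist}_{\mathbb T}(\xi,\xi_0)\ge\rho$ lie outside $B(P,R)$, so $\partial\Omega\cap B(P,R)\subseteq z(I)$, and $z(I)$ is a connected (simply connected as a curve) arc containing $P$; one then notes $z|_I$ is injective since $z$ is injective on $\mathbb T$, giving (a). For part (b): set up the frame $(w_1,w_2)=((\cdot)\cdot n_P^\perp,(\cdot)\cdot n_P)$ at $P$; using the Remark $z'(\xi)=-Ln_{z(\xi)}^\perp$ and the $C^{0,\gamma}$ bound on $n$ via (c) (which I would prove first, or in parallel), the tangent direction stays within a cone of half-angle $\lesssim L^{-1-\gamma}A\,(\text{arc length})^\gamma$ of $n_P^\perp$ throughout the arc, so $w_1$ is strictly monotone along the arc, making it a graph $w_2=f(w_1)$; then $f(0)=0$, $f'(0)=0$, and $|f'(w_1)|\le \tan(\text{angle})\lesssim L^{-1-\gamma}A|w_1|^\gamma$ (using that arc length from $P$ is comparable to $|w_1|$, in fact $\ge|w_1|$), and integrating gives $|f(w_1)|\le 4L^{-1-\gamma}A|w_1|^{1+\gamma}$ after tracking constants. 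Part (c) is the cleanest: for $Q=z(\eta)\in\partial\Omega\cap B(P,R)$ with $P=z(\xi_0)$, $|n_P-n_Q|=L^{-1}|z'(\xi_0)-z'(\eta)|\le L^{-1}A\,\mathrm{dist}_{\mathbb T}(\xi_0,\eta)^\gamma$, and $\mathrm{dist}_{\mathbb T}(\xi_0,\eta)\le L^{-1}A|P-Q|$ via $F[z]\le A$ (and arc-length normalization), yielding $|n_P-n_Q|\le L^{-1}A\,(L^{-1}A|P-Q|)^\gamma$; since $|P-Q|\le 2R$ one can check $(L^{-1}A)^{1+\gamma}(2R)^\gamma$ absorbs into the stated $2L^{-1-\gamma}A|P-Q|^\gamma$ — actually the bound $|n_P-n_Q|\le L^{-1-\gamma}A^{1+\gamma}|P-Q|^\gamma$ is immediate and is $\le 2L^{-1-\gamma}A\cdot A^\gamma|P-Q|^\gamma$, so I will simply carry the $A^{1+\gamma}$ and note $A^{1+\gamma}\le 2A\cdot A^\gamma$; I will re-examine constants to match the statement, which may require using $|P-Q|\ge L A^{-1}\mathrm{dist}_{\mathbb T}$ in the sharp direction.

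For part (d): when $\Omega\subseteq D$, write $P=(p_1,p_2)=z(\xi_0)$ and apply Lemma~\ref{lemma:f'bis} to $f:=z^2\ge 0$ (the second component), which lies in $C^{1,\gamma}(\mathbb T)$ with $\|z^2\|_{C^{1,\gamma}}\le\|z\|_{C^{1,\gamma}}\le A$: this gives $|(z^2)'(\xi_0)|\le 2A^{1/(1+\gamma)}z^2(\xi_0)^{\gamma/(1+\gamma)}=2A^{1/(1+\gamma)}p_2^{\gamma/(1+\gamma)}$. Since $n_P\cdot(1,0)=-L^{-1}z'(\xi_0)\cdot(0,1)^\perp$-type identity — concretely $z'=-Ln_P^\perp$ so $n_P^\perp=-L^{-1}z'$, hence $n_P=-L^{-1}(z')^\perp$ and $n_P\cdot(1,0)=-L^{-1}(z'(\xi_0))^\perp\cdot(1,0)=-L^{-1}(z^2)'(\xi_0)$ (up to sign, since $(a,b)^\perp=(b,-a)$), giving $|n_P\cdot(1,0)|=L^{-1}|(z^2)'(\xi_0)|\le 2L^{-1}A^{1/(1+\gamma)}p_2^{\gamma/(1+\gamma)}$, exactly the claim. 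The main obstacle I anticipate is part (b): getting a graph representation requires simultaneously (i) controlling the arc length of $\partial\Omega\cap B(P,R)$ by $R$ up to the constant $A/L$ (done via $F[z]$), and (ii) converting the $C^{0,\gamma}$ control on the tangent into a cone condition with the arc-length parameter replaced by the geometric coordinate $|w_1|$ — this needs the monotonicity of $w_1$ along the arc, which is mildly circular with the cone condition and must be bootstrapped carefully (first on a slightly smaller scale where monotonicity is clear, then propagated). Tracking the precise numerical constants ($4$ in (b), $2$ in (c) and (d)) against the definition of $R$ will be the fussiest part, but is routine once the structure above is in place.
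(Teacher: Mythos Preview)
Your approach is essentially the same as the paper's, and parts (a), (b), (d) will go through as you outline; part (d) via Lemma~\ref{lemma:f'bis} applied to $z^2\ge 0$ is exactly what the paper does (the paper just inlines the argument). Two concrete points to fix. First, in (c) your use of $F[z]\le A$ to bound $|\xi_0-\xi_1|\le A|P-Q|$ yields $|n_P-n_Q|\le L^{-1}A^{1+\gamma}|P-Q|^\gamma$, which does \emph{not} match the stated constant $2L^{-1-\gamma}A$; instead use the local bound already obtained in (a)/(b), namely $-w_1'(\xi)\ge L/2$ on the relevant arc, which gives $|P-Q|\ge|w_1(\xi_1)-w_1(\xi_0)|\ge\tfrac{L}{2}|\xi_1-\xi_0|$ and hence $|\xi_0-\xi_1|\le (2/L)|P-Q|$; plugging this into $|n_P-n_Q|\le L^{-1}A|\xi_0-\xi_1|^\gamma$ gives the claimed $2L^{-1-\gamma}A|P-Q|^\gamma$ directly. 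Second, your ``bootstrapping'' concern in (b) is a non-issue for the same reason: part (a) already shows that for every $\xi$ with $|\xi-\xi_0|\le(L/(2A))^{1/\gamma}$ (which contains all $\xi$ with $z(\xi)\in B(P,R)$, by $F[z]\le A$) one has $-z'(\xi)\cdot n_P^\perp\ge L/2$, so $w_1$ is strictly monotone on the entire arc from the outset; there is nothing circular, and you can compute $|dw_2/dw_1|=|w_2'(\xi)/w_1'(\xi)|\le 2L^{-1}A|\xi-\xi_0|^\gamma\le 2L^{-1}A(2|w_1|/L)^\gamma$ and integrate.
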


\begin{proof}
(a) Let $\xi_0\in\partial\Omega$ be arbitrary and let $P := z(\xi_0)$.  Since $\|z\|_{C^{1,\gamma}} \leq A$, we have $-z'(\xi) \cdot n_P^\perp \geq \frac{L}{2}$ for all $\xi$ such that $|\xi-\xi_0| \leq \left(\frac{L}{2A}\right)^{1/\gamma}$. Moreover, if $|\xi-\xi_0| > \left(\frac{L}{2A}\right)^{1/\gamma}$, then
$z(\xi) \not \in B(P,R)$ due to  $F[z]\leq A$ and the definition of $R$.

(b)
Let us write $z(\xi)=P+w_1(\xi) n_P^\perp + w_2(\xi) n_P$. Then for any $z(\xi)\in B(P,R)$, the discussion above gives $-w_1'(\xi) \geq \tfrac L2$ and thus $|w_1(\xi)| \geq \tfrac L2 |\xi-\xi_0|$, where $P=z(\xi_0)$.  Since $w_2'(\xi_0) = z'(\xi_0)\cdot n_P = 0$, we also have $|w_2'(\xi)| = |w_2'(\xi) - w_2'(\xi_0)| \leq A|\xi - \xi_0|^\gamma$. Hence when $z\in B(P,R)$ (i.e., when $w_1(\xi)^2+w_2(\xi)^2\leq R^2$), we have
\[
\left|\frac{dw_2}{dw_1}\right| = \left| \frac{w_2'(\xi)}{w_1'(\xi)} \right| \leq \frac{A|\xi-\xi_0|^\gamma}{L/2} \leq \frac{A|2w_1(\xi)/L|^\gamma}{L/2}\leq 4L^{-1-\gamma}A|w_1|^\gamma.
\]
Integrating this inequality gives $|w_2| \leq 4(1+\gamma)^{-1}L^{-1-\gamma}A|w_1|^{1+\gamma}$.

(c) Let $\xi_1\in \mathbb{T}$ be such that $Q = z(\xi_1)$. From $n_P^\perp =- \tfrac 1L z'(\xi_0)$ and $n_Q^\perp =- \tfrac 1L  z'(\xi_1)$ we obtain
\[
|n_P - n_Q| = |n_P^\perp - n_Q^\perp| = \frac{|z'(\xi_0) - z'(\xi_1)|}{L} \leq  \frac{A |\xi_0 - \xi_1|^\gamma}{L} \leq  \frac{A (2|P-Q|/L)^\gamma}{L}\leq 2L^{-1-\gamma}A|P-Q|^\gamma,
\]
where we also used $|z(\xi_1)-z(\xi_0)| \geq \tfrac L2 |\xi_1 - \xi_0|$, due to $-w_1'(\xi) \geq \tfrac L2$ when $z(\xi) \in B(P,R).$

(d) Let again $P = z(\xi_0)$, so that we need to show $|z_2'(\xi_0)| \le 2A^{\frac{1}{1+\gamma}} p_2^\frac{\gamma}{1+\gamma}$.  We have $|z_2'(\xi)-z_2'(\xi_0)|\le \tfrac 12|z_2'(\xi_0)|$ when $|\xi - \xi_0| \leq \left(\frac{|z_2'(\xi_0)|}{2A}\right)^{1/\gamma}$, so $\xi_\pm := \xi_0 \pm \left(\frac{|z_2'(\xi_0)|}{2A}\right)^{1/\gamma}$ satisfy
\[
0\le \min \{z_2(\xi_+),z_2(\xi_-)\} \le  z_2(\xi_0) -  \left(\frac{|z_2'(\xi_0)|}{2A}\right)^{1/\gamma}  \frac{|z_2'(\xi_0)|}{2},
\]
which is $|z_2'(\xi_0)| \le 2A^{\frac{1}{1+\gamma}} p_2^\frac{\gamma}{1+\gamma}$.
\end{proof}

\section{Local regularity for the patch equation and small~$\alpha$}
\label{sec:patch}

In this section we will prove that the solution of the contour equation which we constructed in Section \ref{sec:lwp} yields a (local) $H^3$ patch solution to \eqref{sqg}-\eqref{eq:velocity_law}, and that the latter is also the unique $H^3$ patch solution to \eqref{sqg}-\eqref{eq:velocity_law}.  This is achieved in Corollary \ref{C.5.7}  and  Theorem \ref{thm:unique_patch}, which together with the remark after Corollary \ref{C.5.7} prove Theorem \ref{T.1.1}.
We consider here the half-plane case $D=\Rm\times\Rm^+$, but the arguments are identical for the whole plane $D=\Rm^2$.

\subsection{Contour equation solution is a patch solution}
\label{sec:contpatch}

We start with using the results from the previous section to show that the solution of the contour equation is also a patch solution to \eqref{sqg}-\eqref{eq:velocity_law}.  The main result of this sub-section is the following proposition.

\begin{proposition} \label{P.5.1}
Consider the setting of Theorem \ref{thm:local} and let $\Omega_k(t)$ be the interior of the contour $z_k(\cdot,t)$.  Then $\omega(\cdot,t):=\sum_{k=1}^N \theta_k \chi_{\Omega_k(t)}$ is an $H^3$ patch solution to \eqref{sqg}-\eqref{eq:velocity_law} on $[0,T]$.
\end{proposition}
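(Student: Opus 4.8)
The plan is to verify that $\omega(\cdot,t):=\sum_{k=1}^N \theta_k \chi_{\Omega_k(t)}$, with $\Omega_k(t)$ the interior of the contour $z_k(\cdot,t)$ from Theorem~\ref{thm:local}, satisfies all the requirements of Definition~\ref{D.1.1}. First I would check the structural hypotheses in the first paragraph of that definition: since $\sup_{t\in[0,T]}\vertiii{Z(t)}<\infty$, the bound $F[Z(t)]<\infty$ forces each $z_k(\cdot,t)$ to be injective (a simple closed curve), and $\delta[Z(t)]^{-1}<\infty$ keeps the $N$ curves pairwise disjoint; hence each $\Omega_k(t)$ is a well-defined bounded open set. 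Continuity of $\partial\Omega_k(t)$ in $t$ with respect to $d_H$ follows from $Z\in C^1([0,T];C(\mathbb T,\bar D)^N)$, since uniform closeness of parametrizations gives Hausdorff closeness of their images. The regularity clause $\sup_{t\in[0,T']}\|\Omega_k(t)\|_{H^3}<\infty$ needed for an $H^3$ patch solution follows from $\sup_{t}\|z_k(\cdot,t)\|_{H^3}<\infty$ together with the remark after Definition~\ref{D.1.0} (using $F[Z(t)]$ bounded to control the geometry) and Lemma~\ref{lem:regularity}; one may need a short argument that the constant-speed reparametrization preserves the $H^3$ bound, which is standard given a uniform lower bound on $|z_k'|$ (supplied by $F[Z(t)]^{-1}$).

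The heart of the matter is verifying the limit~\eqref{1.3}, i.e.\ that $\partial\Omega(t)$ moves with velocity $u(\cdot,t)$ given by \eqref{eq:velocity_law}. The contour equation \eqref{eq:contour2} says $\partial_t z_k(\xi,t)$ equals the normal velocity $u_n$ (from the derivation \eqref{derivation_v}--\eqref{eq:contour_u}) plus a purely tangential correction — the term proportional to $\partial_\xi z_k(\xi,t)$ in \eqref{eq:contour}. So the key claim is: $\partial_t z_k(\xi,t) = u(z_k(\xi,t),t) + \lambda_k(\xi,t)\,\partial_\xi z_k(\xi,t)$ for some scalar $\lambda_k$. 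To prove this one shows that $u(z_k(\xi,t),t)\cdot n(z_k(\xi,t),t)$ indeed equals the integral expression \eqref{eq:contour_u}; this is exactly the computation \eqref{derivation_v}, which requires justifying the divergence-theorem step rigorously for $H^3$ (hence $C^{1,1}$) patch boundaries — the kernel $|x-y|^{-2\alpha}$ is integrable and has an integrable gradient near $\partial\Omega$ since $\alpha<\tfrac12$, so this is legitimate. Once $\partial_t z_k - u(z_k,\cdot)$ is known to be parallel to the tangent $\partial_\xi z_k$, a standard lemma on interface motion (the tangential component of the parametrization velocity does not affect the evolution of the image curve, cf.\ the area-of-rectangle argument already used in the ``Proof of Remark 3'') gives \eqref{1.3} with $u$. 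I would phrase this as: for $x=z_k(\xi,t)\in\partial\Omega(t)$, $X^h_{u(\cdot,t)}[\partial\Omega(t)]$ and $\{z_k(\xi,t+h)\}_\xi = \partial\Omega(t+h)$ differ, to first order in $h$, only by a tangential displacement, so their Hausdorff distance is $o(h)$; uniform $C^2$ bounds on $z_k$ and the uniform continuity of $u$ (Lemma~\ref{lemma:uniform_u_bound}) make the $o(h)$ uniform.

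The main obstacle, I expect, is the matching of the two velocity laws — reconciling \eqref{eq:contour2}, which is written purely in terms of the parametrizations $z_i$, with \eqref{eq:velocity_law}, which involves the area integral $\int_{\Omega_i}$ over the (a priori only continuously-varying, not yet known-to-be-transported) sets $\Omega_i(t)$. Concretely one must be sure that the $\Omega_i(t)$ appearing in $u$ are genuinely the interiors enclosed by $z_i(\cdot,t)$ (orientation: the $z_i$ are counter-clockwise, so the interior is the bounded component, consistent with the sign conventions fixed after \eqref{eq:contour2}), and that the reduction of the area integral to a boundary integral via $\nabla_y\cdot[\,\cdot\,]$ in \eqref{derivation_v} is valid with the boundary parametrized by $z_i$. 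This needs the $C^{1,1}$ regularity and a careful treatment of the (integrable) singularity; it is exactly the kind of step the authors flag as ``require[ing] several non-trivial new arguments'' because of the half-plane reflection terms, and I would allot most of the work to making \eqref{derivation_v}--\eqref{eq:contour_u} rigorous at the $H^3$ level. The final bookkeeping — that $|\Omega_k(t)|$ is constant, that boundaries stay disjoint on $[0,T]$, and that the solution thus produced is genuinely defined on a half-open interval $[0,T')\supseteq[0,T]$ as required by the ``on $[0,T]$'' convention — follows from the uniform-in-$t$ bounds already in hand plus Remark~3 after Definition~\ref{D.1.1}.
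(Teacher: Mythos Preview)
Your proposal is essentially correct and follows the same overall strategy as the paper: check the structural hypotheses of Definition~\ref{D.1.1}, invoke the uniform $\vertiii{Z(t)}$ bound for disjointness and simplicity of the curves, and then verify \eqref{1.3} via the key identity $\partial_t z_k(\xi,t)=u(z_k(\xi,t),t)+\lambda_k(\xi,t)\,\partial_\xi z_k(\xi,t)$.

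Two points of comparison are worth noting. First, your emphasis is misplaced: you flag the rigorous justification of the divergence-theorem computation \eqref{derivation_v}--\eqref{eq:contour_u} as the ``main obstacle'', but for $H^3\subset C^{1,1}$ boundaries this is routine (the kernel $|x-y|^{-2\alpha}$ and its gradient are locally integrable since $2\alpha<1$), and the paper takes it for granted. The real work is in the verification of \eqref{1.3}. Second, the paper's execution of that step is cleaner than your ``tangential displacement is $o(h)$ in Hausdorff distance'' sketch: it shows $\lambda_k$ (called $\beta_k$ there) is continuous, then solves the scalar ODE $\zeta'(t)=-\beta_k(\zeta(t),t)$ to produce, for each boundary point $x$, an explicit trajectory $\Psi_{x,\tau}(t):=z_k(\zeta(t),t)\in\partial\Omega_k(t)$ satisfying $\frac{d}{dt}\Psi_{x,\tau}=u(\Psi_{x,\tau},t)$. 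This gives a pointwise correspondence between $\partial\Omega(t+h)$ and $X^h_{u(\cdot,t)}[\partial\Omega(t)]$ with error bounded by $h$ times the modulus of continuity of $u$, so \eqref{1.3} follows directly. Your approach can be made to work too (using $C^2$ bounds on $z_k$ and the $C^{1-2\alpha}$ continuity of $u$ from Lemma~\ref{lemma:uniform_u_bound}), but the ODE reparametrization avoids having to track the second-order Taylor remainders. Finally, the ``short argument that the constant-speed reparametrization preserves the $H^3$ bound'' you allude to is exactly Lemma~\ref{lem:arcl_bd}, which the paper proves separately and invokes here.
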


Since $z_k(\cdot,t)$ need not be a constant speed parametrization of $\partial\Omega_k(t)$, we first need to obtain a bound on the latter.

\begin{lemma}\label{lem:arcl_bd}
Let $Z=(z_1,\dots,z_N):\mathbb T\to (\Rm^2)^N$ and assume $\vertiii{Z}<\infty$, with $\vertiii{\cdot}$ from \eqref{eq:def_norm} (thus the $z_k$ are pairwise disjoint simple closed curves).  Let $\Omega_k$ be the interior of the curve $z_k$ and let $y_k$ be any constant speed  parametrization of $\partial\Omega_k$ from Definition \ref{D.1.0}.  There is a universal constant $C\ge 1$ such that $Y=(y_1,\dots,y_N)$ satisfies
 \[
 \vertiii{Y} \leq C\vertiii{Z}^8.
 \]
 \end{lemma}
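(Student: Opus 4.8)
The plan is to estimate separately each of the three pieces of $\vertiii{Y}=\|Y\|_{H^3}+\delta[Y]^{-1}+F[Y]$. Since $y_k$ and $z_k$ parametrize the same simple closed curve $\partial\Omega_k$, their images agree, so $\|y_k\|_{L^\infty}=\|z_k\|_{L^\infty}\le\|Z\|_{H^3}\le\vertiii{Z}$ and $\delta[Y]=\delta[Z]$, i.e.\ $\delta[Y]^{-1}\le\vertiii{Z}$; these require no work. For $F[Y]$ I would prove a chord--arc estimate for $\partial\Omega_k$. Letting $\eta\to0$ in the definition of $F[Z]$ gives the pointwise bound $|z_k'|\ge F[Z]^{-1}$ everywhere; moreover, for $P=z_k(\xi_1)$ and $Q=z_k(\xi_2)$ with $|\xi_1-\xi_2|\le\pi$, the definition of $F[Z]$ yields $|P-Q|\ge F[Z]^{-1}|\xi_1-\xi_2|$, while an arc of $\partial\Omega_k$ joining $P$ and $Q$ has length at most $\|z_k'\|_{L^\infty}|\xi_1-\xi_2|$, so the arc--length distance $d_{\partial\Omega_k}(P,Q)$ is at most $\|z_k'\|_{L^\infty}F[Z]|P-Q|$. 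Feeding in $P=y_k(\theta_1)$, $Q=y_k(\theta_2)$ with $|\theta_1-\theta_2|\le\pi$, and using $|y_k'|\equiv L_k:=|\partial\Omega_k|/(2\pi)$ together with $L_k\ge F[Z]^{-1}$ (the perimeter of $\partial\Omega_k$ is at least twice its diameter, which is at least $\pi F[Z]^{-1}$ by the same argument), we get $F[Y]\le\max_k\|z_k'\|_{L^\infty}F[Z]/L_k\le C\vertiii{Z}^3$, also using $\|z_k'\|_{L^\infty}\le C\vertiii{Z}$ by Sobolev embedding.

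The heart of the matter is $\|\partial_\theta^3 y_k\|_{L^2}$. Write $y_k=z_k\circ\phi_k$, where $\phi_k:\mathbb{T}\to\mathbb{T}$ is the increasing reparametrization singled out by the constant--speed condition, so $\phi_k'=L_k|z_k'\circ\phi_k|^{-1}$. The lower bound $|z_k'|\ge F[Z]^{-1}$ makes $\phi_k'$ and $(\phi_k')^{-1}$ bounded by a fixed power of $\vertiii{Z}$; differentiating $\phi_k'=L_k g^{-1}$ (with $g:=|z_k'\circ\phi_k|$) expresses $\phi_k''$ and $\phi_k'''$ explicitly in terms of $L_k$, $g$, $(z_k'\cdot z_k'')\circ\phi_k$ and $(|z_k''|^2+z_k'\cdot z_k''')\circ\phi_k$, with denominators $g^4$ and $g^5,g^7$. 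Inserting these into the chain rule
\[
\partial_\theta^3 y_k=z_k'''(\phi_k)\,(\phi_k')^3+3\,z_k''(\phi_k)\,\phi_k'\phi_k''+z_k'(\phi_k)\,\phi_k'''
\]
makes every term a product of $z_k^{(j)}(\phi_k)$ with a rational expression in $g$ and $L_k$. I would bound each term by (i) multiplying out, using $|z_k'(\phi_k)|=g$ to absorb one factor $g^{-1}$ in the last two terms; (ii) taking the $L^2(d\theta)$ norm and changing variables via $\xi=\phi_k(\theta)$, whose Jacobian $d\theta=|z_k'(\xi)|L_k^{-1}d\xi$ trades one more factor $g^{-1}$ for $|z_k'(\xi)|$ and brings in $L_k^{-1}$; (iii) estimating the resulting $\xi$--integrals of the form $\int|z_k''|^4|z_k'|^{-7}$, $\int|z_k'''|^2|z_k'|^{-5}$, etc., using $\|z_k''\|_{L^\infty},\|z_k'''\|_{L^2}\le C\vertiii{Z}$, $|z_k'|\ge F[Z]^{-1}\ge\vertiii{Z}^{-1}$ and $L_k\le C\vertiii{Z}$. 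Keeping track of the exponents yields $\|\partial_\theta^3 y_k\|_{L^2}\le C\vertiii{Z}^8$, and the same expansion shows $\partial_\theta^3 y_k\in L^2$, so the $y_k$ are genuinely $H^3$. Combining everything gives $\vertiii{Y}\le C\vertiii{Z}^8+\vertiii{Z}+C\vertiii{Z}^3\le C\vertiii{Z}^8$, since $\vertiii{Z}\ge1$.

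The main obstacle is step (iii) of the $\partial_\theta^3 y_k$ bound. A term--by--term $L^\infty$ control of $\phi_k'$, $\phi_k''$, $\phi_k'''$ followed by naive multiplication wastes too many powers, because $\phi_k''$ and $\phi_k'''$ carry up to four and five negative powers of $|z_k'|$, while $|z_k'|$ is only bounded below by $F[Z]^{-1}$. Reaching the claimed power requires exploiting two cancellations that are visible only when the factors are kept together: the prefactor $z_k'(\phi_k)=g$ produced by the chain rule in the last two terms of the expansion, and the extra power of $g$ gained from the change--of--variables Jacobian $|z_k'|/L_k$ (equivalently, the set where $|z_k'|$ is small has correspondingly small measure). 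One should also note that $z_k'''$ enters $\phi_k'''$ only through the combination $(z_k'\cdot z_k''')/g$, which is bounded by $|z_k'''|\in L^2$, and the change of variables keeps this in $L^2$; so no regularity of the patch boundary beyond $H^3$ is needed.
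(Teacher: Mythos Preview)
Your proposal is correct and is essentially the paper's own argument: the paper also dispatches $\delta[Y]$ and $\|y_k\|_{L^\infty}$ trivially, bounds $F[Y]\le C\vertiii{Z}^3$ via the same chord--arc reasoning, and then computes the third derivative of $y_k$ by the chain rule and estimates its $L^2$ norm by the same change of variables, arriving at the same integrals $\int|z'''|^2|z'|^{-5}$ and $\int|z''|^4|z'|^{-7}$ (times powers of $|\partial\Omega_k|$) that give $\|y_k'''\|_{L^2}\le C\vertiii{Z}^8$. The only cosmetic difference is that the paper differentiates the identity $y_k(f_k(\xi))=z_k(\xi)$ (so works with $f_k=\phi_k^{-1}$) rather than differentiating $y_k=z_k\circ\phi_k$ directly, but the two computations are equivalent.
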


\begin{proof}Since all constant speed parametrizations of $\partial \Omega_k$ are translations of each other on $\mathbb{T}$ (and such translation does not affect $\vertiii{Y}$), it suffices to prove the result for one of them. We will therefore assume $Y(0) = Z(0)$.  We can also assume without loss that $z_k$ is a counter-clockwise parametrization of $\partial\Omega_k$.

We obviously have $\delta[Y] = \delta[Z]$ and $\|y_k\|_{L^\infty} = \|z_k\|_{L^\infty}$ for each $k$.  Since $y_k$ and $z_k$ are both counter-clockwise parametrizations of $\partial \Omega_k$, with $|y_k'(\xi)|\equiv \frac 1{2\pi}|\partial \Omega_k|$, there is a bijection $f_k:\mathbb{T}\to\mathbb{T}$ with $f_k(0) = 0$ such that  $y_k(f_k(\xi)) \equiv z_k(\xi)$.  Then for $\xi \in \mathbb{T}$,
\begin{equation}
\label{eq:fk}
f_k'(\xi) = \frac{2\pi |z_k'(\xi)|}{|\partial \Omega_k|}.
\end{equation}

To simplify notation, let us now drop the index $k$, and denote $y=(y_1, y_2)$ and $z=(z_1, z_2)$.  For any distinct $\eta_1,\eta_2\in\mathbb{T}$, there are distinct $\xi_1, \xi_2\in \mathbb{T}$ such that $\eta_1 = f(\xi_1)$ and $\eta_2 = f(\xi_2)$. Then
\[
\frac{|\eta_1-\eta_2|}{|y(\eta_1)-y(\eta_2)|}
=   \frac{|f(\xi_1) - f(\xi_2)|}{|z(\xi_1) - z(\xi_2)|}
\leq \frac{|f(\xi_1) - f(\xi_2)|}{|\xi_1-\xi_2|} F[Z]  \leq  \|f'\|_{L^\infty} \vertiii{Z}.
\]
Since we have $\|z\|_{C^1}\le C\vertiii{Z}$ (with a universal $C<\infty$, which may change later) and $|\partial\Omega|\ge 2|z(\pi)-z(0)|\ge \frac{2\pi}{F[Z]}$, it follows from \eqref{eq:fk} that $\|f'\|_{L^\infty}  \le C \vertiii{Z}^2$, yielding $F[Y]\le C\vertiii{Z}^3$.

Since $\vertiii{Z}\ge 1$ by definition, it thus suffices to show $\|y'''\|_{L^2}\le C\|z\|_{H^3} \vertiii{Z}^7$.
 A direct computation yields
\[
y_1'(f(\xi)) = \frac{|\partial \Omega|}{2\pi} \frac{z_1'(\xi)}{|z'(\xi)|},
\]
\[
y_1''(f(\xi)) = \frac{|\partial \Omega|^2}{(2\pi)^2} \left( \frac{z_1''(\xi) }{|z'(\xi)|^2} - \frac{z_1'(\xi) [z_1''(\xi) z_1'(\xi) + z_2''(\xi) z_2'(\xi)]}{|z'(\xi)|^4}\right),
\]

\[
y_1'''(f(\xi)) = \frac{|\partial \Omega|^3}{(2\pi)^3} \left( \frac{z_1'''}{|z'|^3} - \frac{z_1''' (z_1')^2 + z_2''' z_1' z_2' +4 (z_1'')^2 z_1' +3 z_1'' z_2'' z_2' + (z_2'')^2 z_1'}{|z'|^5} + \frac{4z_1'[z_1''z_1' + z_2'' z_2']^2}{|z'|^7}\right),
\]
where for convenience we dropped $\xi$ in the last expression. Therefore,
\[
|y_1'''(f(\xi))| \leq C |\partial \Omega|^3 \left( \frac{|z'''(\xi)|}{|z'(\xi)|^3} + \frac{|z''(\xi)|^2}{|z'(\xi)|^4}\right).
\]
This gives the following estimate on $\|y_1'''\|_{L^2}$ (recall that $f$ is a bijection):
\[
\begin{split}
\|y_1'''\|_{L^2}^2 &= \int_{\mathbb{T}} [y_1'''(f(\xi))]^2 f'(\xi) d\xi \\
&\leq \int_{\mathbb{T}} C |\partial \Omega|^6 \left( \frac{|z'''(\xi)|^2 }{|z'(\xi)|^6} + \frac{|z''(\xi)|^4}{|z'(\xi)|^8}\right) \frac{|z'(\xi)|}{|\partial \Omega|} d\xi \\
&\leq C |\partial \Omega|^5 \left( \frac {\|z\|_{H^3}^2} {\min_{\xi \in \mathbb{T}} |z'(\xi)|^{5}} + \frac {\|z\|_{C^2}^4} {\min_{\xi \in \mathbb{T}} |z'(\xi)|^{7}} \right).
\end{split}
\]
Since $|\partial \Omega| \leq \frac 1{2\pi}\|z'\|_{C^1} \leq C\vertiii{Z}$ and $\min_{\xi \in \mathbb{T}} |z'(\xi)| \geq \frac 1{F[Z]} \geq \vertiii{Z}^{-1}$, it follows that $\|y_1'''\|_{L^2} \leq C\|z\|_{H^3}\vertiii{Z}^7$.  Since the same estimate holds for $y_2$, the proof is finished.
\end{proof}

\begin{proof}[Proof of Proposition \ref{P.5.1}]
First note that by Theorem \ref{thm:local}, the boundaries $\partial\Omega_k(t)$ are pairwise disjoint simple closed curves in $\bar D$ for each $t\in[0,T]$, which also have parametrizations $z_k(\cdot,t)$ that are uniformly-in-time bounded in $H^3$.  Due to Lemma \ref{lem:arcl_bd}, the latter then also holds for their constant speed parametrizations.  Lemma \ref{lemma:S} shows that each $\partial\Omega_k$ is continuous in time with respect to Hausdorff distance, so it remains to show \eqref{1.3}.

The derivation of \eqref{eq:contour2} shows that its  right-hand side $S_k[Z(t)](\xi)$ satisfies
\[
S_k[Z(t)](\xi)=u(z_k(\xi,t),t)+\beta_k(\xi,t) \partial_\xi z_k(\xi,t)
\]
for some $\beta_k(\xi,t)\in\Rm$, and Theorem \ref{thm:local} shows that $S_k[Z(\cdot)](\cdot)$ is continuous on $\mathbb T\times[0,T]$.  Since $z_k$ and $u$ are also continuous (the latter by Lemma \ref{lemma:uniform_u_bound}) and $\partial_\xi z_k(\xi,t)\ge F[Z(t)]^{-1}>0$, we will have that $\beta_k$ is continuous if we show that $\partial_\xi z_k$ is.  But the latter holds because $\sup_{t\in[0,T]}\|z_k(\cdot,t)\|_{C^2}<\infty$ and $z_k$ is continuous in $t\in[0,T]$.

This means that for each $\tau\in[0,T]$ and $x\in\partial\Omega_k(\tau)$, the ODE $\zeta'(t)=-\beta_k(\zeta(t),t)$ has a solution $\zeta:[0,T]\to\mathbb T$ with $\zeta(\tau)=\xi$, where $\xi\in\mathbb T$ is such that $z_k(\xi,\tau)=x$.
Then $\Psi_{x,\tau}(t):=z_k(\zeta(t),t)\in\partial\Omega_k(t)$ solves
\[
\frac d{dt} \Psi_{x,\tau}(t)=u(\Psi_{x,\tau}(t),t) \qquad\text{for $t\in[0,T]$ and}\qquad  \Psi_{x,\tau}(\tau)=x.
\]
Given any $t\in(0,T)$, for any small $h\in\Rm$ and $x\in\partial\Omega(t+h)$, let $y_{x,t,h}:=\Psi_{x,t+h}(t)\in\partial\Omega(t)$ and $\tilde y_{x,t,h}:=y_{x,t,h}+hu(y_{x,t,h},t)\in X_{u(\cdot,t)}^h[\partial\Omega(t)]$.  Then
\[
\sup_{x\in\partial\Omega(t+h)} |x-\tilde y_{x,t,h}|\le h w \left(2 |h|\|u\|_{L^\infty} \right)
\]
for all small $h$, where $w$ is the modulus of continuity of $u$ on some neighborhood of $\partial\Omega(t)\times\{t\}$.  Note that $w$ satisfies $\lim_{s\searrow 0} w(s)=0$ because $u$ is continuous and $\partial\Omega(t)$ is compact, which (together with \eqref{uLinfty})  yields
\[
\lim_{h\to 0} \sup_{x\in\partial\Omega(t+h)} \frac{{\rm dist} \left(x,  X_{u(\cdot,t)}^h[\partial\Omega(t)] \right)}h=0.
\]
Similarly, for small $h\in\Rm$ and $x\in X_{u(\cdot,t)}^h[\partial\Omega(t)]$, there is $y_{x,t,h}\in \partial\Omega(t)$ such that $x=y_{x,t,h}+hu(y_{x,t,h},t)$ and we let $\tilde y_{x,t,h}:=\Psi_{y_{x,t,h},t}(t+h)\in\partial\Omega(t+h)$.  Then again
\[
\sup_{x\in X_{u(\cdot,t)}^h[\partial\Omega(t)]} |x-\tilde y_{x,t,h}|\le h w \left(2 |h|\|u\|_{L^\infty} \right),
\]
so we obtain
\[
\lim_{h\to 0} \sup_{x\in X_{u(\cdot,t)}^h[\partial\Omega(t)]} \frac{{\rm dist} \left(x,  \partial\Omega(t+h) \right)}h=0.
\]
This proves \eqref{1.3}, and the proof is finished.
\end{proof}

\subsection{Independence from initial contour parametrization}

Next, we will show that the solution obtained in Proposition \ref{P.5.1} is independent of the chosen contour parametrization $Z_0$ for a given initial value $\omega(\cdot,0)$.
The main result of this sub-section is Corollary \ref{C.5.7}.

Hence, let us consider two families $\Omega(t) = \{\Omega_k(t)\}_{k=1}^N$ and $\tilde \Omega(t) = \{\tilde \Omega_k(t)\}_{k=1}^N$ of sets as in Definition~\ref{D.1.1}, but with $\Omega(t)$ now the sequence (rather than the union) of the $\Omega_k(t)$.  This notation will be more convenient in what follows.  We also drop the argument $t$ where we discuss results concerning a fixed time.

We start with an estimate on the area of the symmetric difference $\Omega\triangle\tilde\Omega := (\Omega \setminus \tilde \Omega) \cup (\tilde \Omega \setminus \Omega)$.  Recall the functional $\vertiii{\cdot}$ from \eqref{eq:def_norm}.

\begin{lemma}
\label{lem:l2_symm}
Let $\Omega = \{\Omega_k\}_{k=1}^N$ and $\tilde \Omega = \{\tilde \Omega_k\}_{k=1}^N$  be two families of bounded open subsets of  $\Rm^2$ whose boundaries are simple closed curves,
and let  $Z$ and $\tilde Z$ be some parametrizations of $\partial\Omega$ and $\partial\tilde \Omega$, respectively (that is, $Z=\{z_k\}_{k=1}^N$, with $z_k:\mathbb T\to \Rm^2$ a parametrization of $\partial\Omega_k$, and similarly for $\tilde Z$).  There exists a universal constant $C<\infty$ such that
\begin{equation}
\label{eq:gooal}
|\Omega \triangle \tilde \Omega| := \sum_{k=1}^N |\Omega_k \triangle \tilde\Omega_k| \leq C (\vertiii{Z}+\vertiii{\tilde Z}) \sum_{k=1}^N\|z_k-\tilde z_k\|_{L^2}.
\end{equation}
\end{lemma}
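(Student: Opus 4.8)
The plan is to reduce the multi-patch estimate to a single-patch one (the sum over $k$ is trivial once we bound each term), and then to estimate $|\Omega_k\triangle\tilde\Omega_k|$ by an integral over $\mathbb T$ of the ``sweep'' between the two boundary curves $z_k$ and $\tilde z_k$. First I would fix $k$ and drop the index, writing $z,\tilde z:\mathbb T\to\Rm^2$ for the two parametrizations of $\partial\Omega$ and $\partial\tilde\Omega$. The key geometric observation is that the symmetric difference $\Omega\triangle\tilde\Omega$ is contained in the region swept out by the family of segments $\{(1-s)z(\xi)+s\tilde z(\xi):s\in[0,1]\}$ as $\xi$ ranges over $\mathbb T$: indeed any point of $\Omega\triangle\tilde\Omega$ lies ``between'' the two curves in the sense that a generic ray from it crosses $\partial\Omega$ and $\partial\tilde\Omega$ a different (parity of) number of times, and one can connect the two curves by this linear homotopy. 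Concretely, consider the map $\Psi:\mathbb T\times[0,1]\to\Rm^2$ given by $\Psi(\xi,s):=(1-s)z(\xi)+s\tilde z(\xi)$; I claim $\Omega\triangle\tilde\Omega\subseteq\Psi(\mathbb T\times[0,1])$, which follows from a degree/winding-number argument since a point outside the image of $\Psi$ has the same winding number with respect to $z$ and $\tilde z$.

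Granting this containment, the area bound follows from the area formula (change of variables) applied to $\Psi$:
\[
|\Omega\triangle\tilde\Omega|\le |\Psi(\mathbb T\times[0,1])|\le \int_{\mathbb T}\int_0^1 \left|\det D\Psi(\xi,s)\right|\,ds\,d\xi,
\]
where $D\Psi$ has columns $\partial_\xi\Psi=(1-s)z'(\xi)+s\tilde z'(\xi)$ and $\partial_s\Psi=\tilde z(\xi)-z(\xi)$. Hence
\[
\left|\det D\Psi(\xi,s)\right|\le \big((1-s)|z'(\xi)|+s|\tilde z'(\xi)|\big)\,|z(\xi)-\tilde z(\xi)| \le \big(\|z'\|_{L^\infty}+\|\tilde z'\|_{L^\infty}\big)|z(\xi)-\tilde z(\xi)|.
\]
Integrating in $s$ and then applying Cauchy--Schwarz in $\xi$ over $\mathbb T$ (which has finite measure $2\pi$) gives
\[
|\Omega\triangle\tilde\Omega|\le \big(\|z'\|_{L^\infty}+\|\tilde z'\|_{L^\infty}\big)\int_{\mathbb T}|z(\xi)-\tilde z(\xi)|\,d\xi \le \sqrt{2\pi}\,\big(\|z'\|_{L^\infty}+\|\tilde z'\|_{L^\infty}\big)\|z-\tilde z\|_{L^2}.
\]
Since $\|z'\|_{L^\infty}\le C\|z\|_{H^3}\le C\vertiii{Z}$ (and likewise for $\tilde z$, via the Sobolev embedding $H^3(\mathbb T)\hookrightarrow C^2(\mathbb T)$ and the definition of $\vertiii{\cdot}$), summing over $k=1,\dots,N$ yields \eqref{eq:gooal} with a universal constant.

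The main obstacle is the geometric containment claim $\Omega\triangle\tilde\Omega\subseteq\Psi(\mathbb T\times[0,1])$, which is where the hypotheses that the boundaries are simple closed curves matter. I would argue it via the winding number: for $p\notin\Psi(\mathbb T\times[0,1])$, the curves $z(\cdot)-p$ and $\tilde z(\cdot)-p$ are homotopic in $\Rm^2\setminus\{0\}$ through $\Psi(\cdot,s)-p$, so they have the same winding number about $0$; by the Jordan curve theorem this winding number is $\pm 1$ if $p$ is inside the respective region and $0$ if outside, hence $p\in\Omega\iff p\in\tilde\Omega$, i.e.\ $p\notin\Omega\triangle\tilde\Omega$. (One should note the winding numbers of $z$ and $\tilde z$ have the same sign, which can be arranged since both are, say, counter-clockwise, or absorbed into the argument since only the parity/nonvanishing is used.) A small technical point is that $\Psi$ need not be injective, so one only gets an inequality $|\Psi(\mathbb T\times[0,1])|\le\int\int|\det D\Psi|$ from the area formula rather than an equality — but an inequality is all that is needed. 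Everything else is a routine computation.
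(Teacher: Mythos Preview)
Your proposal is correct and follows essentially the same approach as the paper: the containment $\Omega\triangle\tilde\Omega\subseteq\Psi(\mathbb T\times[0,1])$ is established via the same winding-number/homotopy argument, and the area of the swept region is then bounded by $(\|z'\|_{L^\infty}+\|\tilde z'\|_{L^\infty})\|z-\tilde z\|_{L^1}$ followed by Cauchy--Schwarz. The only cosmetic difference is that you invoke the area formula for the Lipschitz map $\Psi$ directly, whereas the paper obtains the same bound as a limit of areas of quadrilaterals $Q(\xi;h)$ with vertices $z(\xi),z(\xi+h),\tilde z(\xi+h),\tilde z(\xi)$; in the limit these are the same computation.
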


\begin{proof}
Let $A:=\vertiii{Z}+\vertiii{\tilde Z}$.  It obviously suffices to assume $A<\infty$, and to prove for each $k\in \{1,\dots, N\}$ that $|\Omega_k \triangle \tilde\Omega_k| \leq C A \|z_k-\tilde z_k\|_{L^1}$ (with some universal  $C$). We will now do this, dropping the index $k$ in the following.


We claim that  for any $x\in \Omega \triangle \tilde \Omega$, there exists some $(\xi,s) \in \mathbb{T}\times [0,1]$, such that $x= (1-s)z(\xi) + s\tilde z(\xi)$.  This is obvious for $x\in\partial\Omega\cup\partial\tilde\Omega$, so assume that $x\not\in\partial\Omega\cup\partial\tilde\Omega$.  Let $\Gamma_s(\xi) := (1-s) z(\xi) + s\tilde z(\xi)$ for $(\xi,s) \in \mathbb{T}\times [0,1]$, so that
$\Gamma_s:\mathbb T\to \Rm^2$ is a closed curve for each fixed $s\in [0,1]$, with $\Gamma_0 = z$ and $\Gamma_1 =  \tilde z$. Since $\Gamma_0$ and $\Gamma_1$ have different winding numbers with respect to  $x$ and $\Gamma$ is continuous in $(s,\xi)$, we must have $x\in\Gamma_s(\mathbb T)$ for some $s\in(0,1)$.

Consider now the quadrilateral $Q(\xi;h)$ with vertices at $z(\xi), z(\xi+h), \tilde z(\xi+h), \tilde z(\xi)$. The above discussion and both $z,\tilde z$ being $H^3$ yield
\begin{equation}
\label{eq:area_sum}
|\Omega\triangle\tilde\Omega| \leq \left| \{(1-s)z(\xi)+s\tilde z(\xi) \,:\, (\xi,s)\in\mathbb{T}\times[0,1] \}\right|
 \leq \lim_{n\to\infty} \sum_{j=0}^{n-1} \left|Q\left(\frac{2\pi j}{n}; \frac{2\pi}{n} \right)\right|.
\end{equation}
We also have, with  $C$ such that $\|f\|_{C^1}\leq C \|f\|_{H^3}$ for each $f\in H^3(\mathbb T)$,
\[
\begin{split}
|Q(\xi; h)| &\leq \max\left\{|z(\xi)-z(\xi+h)|,|\tilde z(\xi)-\tilde z(\xi+h)|\right\} \max\left\{|z(\xi)-\tilde z(\xi)|, | z(\xi+ h) - \tilde z(\xi+h)|\right\}\\
&\leq C A h \left(|z(\xi)-\tilde z(\xi)| + 2CA h\right).
\end{split}
\]
Hence the limit in \eqref{eq:area_sum} is bounded above by $C A \|z-\tilde z\|_{L^1}$, and an application of H\"older inequality leads to \eqref{eq:gooal}.
\end{proof}

\begin{definition}
For two families $\Omega = \{\Omega_k\}_{k=1}^N$ and $\tilde \Omega = \{\tilde \Omega_k\}_{k=1}^N$ of subsets of $\Rm^2$,
we define the {\it Hausdorff distance} of their boundaries to be
\[
d_H(\partial\Omega, \partial \tilde\Omega) := \max_{1\leq k\leq N}\max \left\{  \sup_{x\in \partial \Omega_k}\inf_{y\in \partial \tilde\Omega_k} |x-y|, \sup_{x\in \partial\tilde \Omega_k}\inf_{y\in \partial\Omega_k} |x-y| \right\}.
\]
\end{definition}

Next we prove that if we solve the contour equation \eqref{eq:contour2} with two families of $H^3$ initial curves which parametrize the same simple closed curves $\partial \Omega(0) := \{\partial \Omega_{k}(0)\}_{k=1}^N$, then the solutions parametrize the same curves $\partial \Omega(t) := \{\partial \Omega_{k}(t)\}_{k=1}^N$ throughout their common interval of existence.

\begin{proposition}
\label{prop:equal}
Let $\alpha \in (0,\frac{1}{24})$, let $\theta_1,\dots,\theta_N\in\Rm$, and let $Z,\tilde Z$ be both as $Z$ in Theorem \ref{thm:local}, with  initial conditions $Z_0,\tilde Z_0$, respectively.  Let $T'>0$ be the smaller of their maximal times of existence and for $t\in[0,T')$, let  $\Omega_k(t)$ and $\tilde \Omega_k(t)$ be the interiors of the contours $z_k(\cdot,t)$ and $\tilde z_k(\cdot,t)$, respectively.  If $\Omega_k(0)=\tilde \Omega_k(0)$ for each $k$, then $\Omega_k(t)=\tilde \Omega_k(t)$ for each $k$ and $t\in[0,T')$.
%
%
%
\end{proposition}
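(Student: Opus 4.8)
The plan is to show that the two families of contours, starting from parametrizations of the same initial patches, give rise to the same patches for as long as both exist. The natural measure of discrepancy is the total area of the symmetric differences, $a(t) := \sum_{k=1}^N |\Omega_k(t)\triangle\tilde\Omega_k(t)|$, and the goal is a Gronwall inequality $a'(t)\le C(\alpha, N, \Theta, M) \, a(t)$ together with $a(0)=0$. Here $M$ is a common bound on $\sup_{t\in[0,T']}(\vertiii{Z(t)}+\vertiii{\tilde Z(t)})$, which exists by Theorem~\ref{thm:local} after shrinking $T'$ if necessary (this is harmless since the claimed conclusion is local and can then be bootstrapped). First I would record that, by Proposition~\ref{P.5.1}, both $\omega(\cdot,t):=\sum_k\theta_k\chi_{\Omega_k(t)}$ and $\tilde\omega(\cdot,t):=\sum_k\theta_k\chi_{\tilde\Omega_k(t)}$ are $H^3$ patch solutions to \eqref{sqg}-\eqref{eq:velocity_law}, with associated velocity fields $u$ and $\tilde u$ given by \eqref{eq:velocity_law}; by Remark~3 after Definition~\ref{D.1.1} they are also weak solutions in the sense of \eqref{1.6}.

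The core estimate I would pursue is a bound on $\frac{d}{dt} a(t)$. Using the weak formulation \eqref{1.6} with a test function that is $1$ on a neighborhood of the relevant patch boundaries (exactly as in the proof of Remark~3, where one shows $|\Omega_k(t)|$ is conserved), or alternatively differentiating $a(t)$ directly via the transport structure, the rate of change of $|\Omega_k(t)\triangle\tilde\Omega_k(t)|$ is controlled by a boundary integral of the normal velocities against the indicator of the symmetric difference; the key point is that $|\Omega_k(t)\triangle\tilde\Omega_k(t)|$ changes only through the part of $\partial\Omega_k(t)\cup\partial\tilde\Omega_k(t)$ where the two regions disagree, and the velocities move those boundaries. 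One then needs a Lipschitz-type bound $\|u(\cdot,t)-\tilde u(\cdot,t)\|_{L^\infty}\le C(M)\,a(t)^{\gamma_0}$ for some $\gamma_0>0$: since $u-\tilde u$ is the Biot–Savart operator \eqref{genulaw} applied to the odd extension of $\omega-\tilde\omega$, and $\omega-\tilde\omega$ is bounded by $\Theta$ and supported on $\Omega\triangle\tilde\Omega$ (a set of area $a(t)$, contained in a fixed compact set by the $\vertiii{\cdot}\le M$ bound), splitting the kernel into $|x-y|\le\rho$ and $|x-y|>\rho$ and optimizing in $\rho$ gives $\|u-\tilde u\|_{L^\infty}\le C(\alpha)\Theta\, a(t)^{(1-2\alpha)/2}$ or similar — this is the same elementary computation as in Lemma~\ref{lemma:uniform_u_bound}. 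Combined with the crude geometric bound that the boundary arc-lengths are controlled by $M$ (via Lemma~\ref{lem:arcl_bd}) and the curves stay $F$-chord-arc regular, one gets $a'(t)\le C(\alpha,N,\Theta,M)\, a(t)^{(1-2\alpha)/2}$.

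This gives a slight wrinkle: an inequality of the form $a'\le C a^{\gamma_0}$ with $\gamma_0<1$ does \emph{not} by itself force $a\equiv 0$ from $a(0)=0$. So the actual main obstacle — and the step I expect to require the most care — is upgrading this to a genuine Gronwall estimate. The right way is to reparametrize: for each fixed $t$, let $y_k(\cdot,t)$ and $\tilde y_k(\cdot,t)$ be the \emph{constant speed} parametrizations of $\partial\Omega_k(t)$ and $\partial\tilde\Omega_k(t)$ with matching base points, and apply Lemma~\ref{lem:l2_symm} to get $a(t)\le C M \sum_k\|y_k(\cdot,t)-\tilde y_k(\cdot,t)\|_{L^2}$. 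Conversely, one bounds $\sum_k\|y_k-\tilde y_k\|_{L^2}^2$ in terms of $a(t)$ and the common regularity bound $M$: because both boundaries are uniformly $C^{1,\gamma}$ with chord-arc constant $\le M$, the Hausdorff distance $d_H(\partial\Omega(t),\partial\tilde\Omega(t))$ is comparable to $a(t)^{1/2}$ up to $M$-dependent constants (using Lemma~\ref{lem:regularity} to convert area of the symmetric difference near a boundary point into normal displacement), and the $L^2$ distance of constant-speed parametrizations is in turn controlled by $d_H$ times an $M$-dependent factor, since the parametrizations are uniformly bi-Lipschitz in the arc-length variable. Feeding the resulting two-sided comparison $c(M)^{-1}\|Y-\tilde Y\|_{L^2}\le a(t)^{1/2}\le c(M)\|Y-\tilde Y\|_{L^2}$ back into the estimate for $a'(t)$, and also differentiating $\sum_k\|y_k-\tilde y_k\|_{L^2}^2$ directly using that both constant-speed parametrizations move with normal velocity $u\cdot n$ resp.\ $\tilde u\cdot n$ (whose difference is $O_M(a(t)^{1/2})=O_M(\|Y-\tilde Y\|_{L^2})$), one arrives at $\frac{d}{dt}\|Y(t)-\tilde Y(t)\|_{L^2}^2\le C(\alpha,N,\Theta,M)\|Y(t)-\tilde Y(t)\|_{L^2}^2$. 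Since $\|Y(0)-\tilde Y(0)\|_{L^2}=0$ (the initial patches, hence their constant-speed parametrizations with matching base points, coincide), Gronwall gives $\|Y(t)-\tilde Y(t)\|_{L^2}\equiv 0$ on $[0,T')$, hence $\Omega_k(t)=\tilde\Omega_k(t)$ for all $k$ and $t$, as claimed.
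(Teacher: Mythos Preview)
Your proposal has a genuine gap at exactly the step you flag as requiring the most care, and the attempted fix does not work.

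First, two concrete errors. For nearby uniformly $C^{1,\gamma}$ simple closed curves, the symmetric difference is a thin strip of width $\sim d_H$ and length $\sim L$, so $a\sim L\,d_H$; hence $d_H$ is comparable to $a$, not to $a^{1/2}$ as you claim. Second, the constant-speed parametrizations $y_k(\cdot,t)$ do \emph{not} evolve by ``normal velocity $u\cdot n$'': maintaining $|\partial_\xi y_k|\equiv \tfrac{|\partial\Omega_k(t)|}{2\pi}$ forces a nonlocal tangential reparametrization term in $\partial_t y_k$, so $Y,\tilde Y$ do not satisfy any equation to which you can apply an $L^2$ energy estimate. Even granting both points, the only velocity bound available is H\"older in the geometric distance: by your own computation $\|u-\tilde u\|_{L^\infty}\lesssim a^{(1-2\alpha)/2}$, and the sharper estimate in Lemma~\ref{lem:udiff} gives at best $\lesssim d_H^{1-2\alpha}$. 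Feeding either into $\tfrac{d}{dt}\|Y-\tilde Y\|_{L^2}^2$ yields a \emph{sublinear} right-hand side in $\|Y-\tilde Y\|_{L^2}$, and you are back to the Osgood failure you correctly identified for $a$. No amount of switching between $a$, $d_H$, and $\|Y-\tilde Y\|_{L^2}$ cures this, since they are mutually linearly comparable.

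The paper avoids this obstruction entirely by a regularization argument. One mollifies the Biot--Savart kernel, replacing $|x-y|^{-2-2\alpha}$ by $(|x-y|^2+\beta^2)^{-1-\alpha}$, and solves the corresponding contour equation \eqref{eq:ct_eps} with the \emph{same} initial data $Z_0$ and $\tilde Z_0$. For each fixed $\beta>0$ the regularized velocity $u^\beta$ is smooth, so the associated weak solution is unique by Yudovich-type theory; hence the patches from $Z^\beta$ and $\tilde Z^\beta$ coincide for every $\beta>0$. The point is then to show $\sup_t d_H(\partial\Omega^\beta(t),\partial\Omega(t))\to 0$ as $\beta\to 0$ (and likewise for $\tilde\Omega$). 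This follows from an $L^2$ estimate $\sup_t\|Z^\beta(t)-Z(t)\|_{L^2}\le C\beta$: here one \emph{can} run a Gronwall, because $Z^\beta$ and $Z$ have identical initial data and both satisfy (nearly) the same contour equation, so the stability estimate \eqref{eq:l2diff} applies with an extra source term of size $O(\beta)$ coming from the kernel difference (bounded via $|x^{-2\alpha}-(x^2+\beta^2)^{-\alpha}|\le \beta x^{-1-2\alpha}$). Lemma~\ref{lem:l2_symm} converts this into an area bound, and the uniform $H^3$ control upgrades it to Hausdorff convergence. The key conceptual point you are missing is that the linear $L^2$ Gronwall \eqref{eq:l2diff} is only available for two solutions of the \emph{contour equation} with the same initial parametrization --- which $Z$ and $\tilde Z$ are not, but $Z$ and $Z^\beta$ are.
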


{\it Remark.}  Here   $T'$ is largest such that $\sup_{t\in[0,T]} (\vertiii{Z(t)}+\vertiii{\tilde  Z(t)})<\infty$ for each $T<T'$.

\begin{proof}
Due to the uniqueness claim in Theorem \ref{thm:local}, it suffices to prove this for the smaller of the $T>0$ (from the theorem) for $Z$ and $\tilde Z$, instead of for $T'$.  We then have $\sup_{t\in[0,T]} (\vertiii{Z(t)}+\vertiii{\tilde  Z(t)})\le 4 (\vertiii{Z(0)}+\vertiii{\tilde Z(0)})=:A$.

Our strategy here is to first prove the claim for a family of regularized equations, and then show that the solutions of the latter converge to those of the original equation (in an appropriate sense) as their parameter $\beta\to 0$.

Specifically, for any $\beta>0$, we regularize the Biot-Savart law in \eqref{eq:velocity_law} to
\begin{equation}
u^\beta(x, t) =  \int_D \left(\frac{(x-y)^\perp}{\left(|x-y|^2+\beta^2\right)^{1+\alpha}} -
\frac{(x-\bar y)^\perp}{\left(|x-\bar y|^2+\beta^2\right)^{1+\alpha}}\right) \omega(y,t) dy
\label{eq:velocity_law_eps}
\end{equation}
for $x \in \bar D$.  For \eqref{sqg} with $u=u^\beta$, following the same derivation as in Section \ref{sec:derivation}, we obtain the contour equation
\begin{equation}
\partial_t \z_k(\xi,t) = \sum_{i=1}^N \sum_{m=1}^2 \frac{\theta_i}{2\alpha}
\int_{\mathbb{T}}
 \frac{\partial_\xi \z_k(\xi,t)
- \partial_\xi y_i^m(\xi - \eta, t)}{\left(|\z_k(\xi,t)  - y_i^m(\xi-\eta, t)|^{2}+\beta^2\right)^{\alpha}}  d\eta
\label{eq:ct_eps}
\end{equation}
instead of \eqref{eq:contour2}.  Then  the same derivation as in Section \ref{sec:apriori} again yields the a priori estimate \eqref{eq:apriori} for the solutions to \eqref{eq:ct_eps}, with the same ($\beta$-independent) constant.
We can then use the same arguments as in Section \ref{sec:lwp_h3} to show that there exist unique (local) solutions $Z^\beta$ and $\tilde Z^\beta$ to \eqref{eq:ct_eps} with initial data $Z(0)$ and $\tilde Z(0)$, respectively,  and they again satisfy $\sup_{t\in[0,T]} (\vertiii{Z^\beta(t)}+\vertiii{\tilde  Z^\beta(t)})\le A$, for the above ($\beta$-independent) time $T$.

If now $\Omega_k^\beta(t)$ and $\tilde \Omega_k^\beta(t)$ are the interiors of the contours $z_k^\beta(\cdot,t)$ and $\tilde z_k^\beta(\cdot,t)$, respectively, then we can show as in Proposition \ref{P.5.1} that $\omega^\beta(\cdot,t):=\sum_{k=1}^N \theta_k \chi_{\Omega_k^\beta(t)}$ and $\tilde\omega^\beta(\cdot,t):=\sum_{k=1}^N \theta_k \chi_{\tilde\Omega_k^\beta(t)}$ are $H^3$ patch solutions to \eqref{sqg} with $u=u^\beta$ on $[0,T]$.
(Note also that since $u^\beta$ is smooth,
$\Phi_t(x)$ from \eqref{eq:alpha} is uniquely defined for any $(x,t)\in \bar D\times[0,T]$.)

One can now apply  standard estimates  for the 2D Euler equation (see, e.g., \cite[Theorems 8.1 and 8.2]{mb})
to \eqref{sqg} with the (smooth) velocity $u=u^\beta$ to show that there exists a unique weak solution
to it on $\Rm^2\times [0,\infty)$ with initial data $\omega^\beta(\cdot,0)$ ($=\tilde \omega^\beta(\cdot,0)$) extended oddly to $\Rm\times\Rm^-$.  Since $H^3$ patch solutions are also weak solutions (see Remark 3 after Definition \ref{D.1.1}), and obviously remain such when extended oddly to $x\in \Rm\times\Rm^-$, it follows that for any $\beta>0$ we have $\Omega_k^\beta(t)=\tilde \Omega_k^\beta(t)$ for each $k$ and $t\in[0,T]$.  (See the remark after this proof for an alternative argument.)

Next, we claim that with $\partial \Omega^\beta(t) := \{ \partial \Omega_k^\beta(t)\}_{k=1}^N$ we have
\begin{equation}
\label{eq:dh_beta}
\lim_{\beta\to 0} \sup_{t\in[0,T]}d_H(\partial \Omega^\beta(t), \partial \Omega(t)) = 0.
\end{equation}
Since the same result then holds with $\tilde \Omega$ in place of $\Omega$, this proves the proposition.
The key step in showing \eqref{eq:dh_beta} is the estimate
\begin{equation}
\label{eq:beta_diff}
\sup_{t\in[0,T]} \|Z^\beta(t) - Z(t)\|_{L^2} \leq  C(\alpha, N \Theta, A,T)\beta,
\end{equation}
with $\Theta:=\sum_{k=1}^N |\theta_k|$.
Then Lemma \ref{lem:l2_symm} yields $\sup_{t\in[0,T]} |\Omega^\beta(t)\triangle  \Omega(t)| \leq C(\alpha, N, \Theta, A) \beta$ for each $\beta>0$,  which together with the uniform $H^3$ bound on $Z,Z^\beta$ implies \eqref{eq:dh_beta}.  It therefore remains to prove \eqref{eq:beta_diff}.

We let $W:=Z- Z^\beta$ so that (after dropping the argument $t$)
\[
\frac d{dt}\|w_k\|_{L^2}^2 = 2 \int_{\mathbb{T}} w_k(\xi) \cdot \partial_t w_k(\xi) d\xi
= G_k +  \sum_{i=1}^N \sum_{m=1}^2  \frac{\theta_i}{\alpha} S_{k,i}^m,
\]
where $G_k$ is the right hand side of \eqref{eq:temp2} with $\tilde Z$ is replaced by $Z^\beta$, while  $S_{k,i}^m$ equals
\[
\hspace*{-0.5cm}
\int_{\mathbb{T}^2}w_k(\xi)\cdot \left[\partial_\xi \z_k^\beta(\xi)
- \partial_\xi y_i^{m,\beta}(\xi - \eta)\right]
\left[ \frac{1}{|\z_k^\beta(\xi)  - y_i^{m,\beta}(\xi-\eta)|^{2\alpha}} - \frac{1}{(|\z_k^\beta(\xi)  - y_i^{m,\beta}(\xi-\eta)|^{2}+\beta^2)^{\alpha}}  \right] d\eta d\xi.
\]
Note that the same derivation as in the uniqueness part of Section \ref{sec:lwp_h3} yields
\[
G_k \leq  C(\alpha)\Theta (\vertiii {Z(t)}+\vertiii { \Z^\beta (t)})^{2+2\alpha} \|W( t)\|_{L^2}^2 \leq C(\alpha)\Theta  A^{2+2\alpha}  \|W( t)\|_{L^2}^2.
\]

To control $S_{k,i}^m$, we first use that for any $x,\beta > 0$ and $\alpha \in [0,1]$ we have
\begin{equation}
\label{eq:temp4}
\left| \frac{1}{x^{2\alpha}} - \frac{1}{(x^2+\beta^2)^\alpha} \right| = x^{-2\alpha} \left| 1- \left(1+\left(\frac{\beta}{x}\right)^2\right)^{-\alpha} \right| \leq x^{-1-2\alpha} \beta,
\end{equation}
where in the last inequality we used that $|1-(1+b^2)^{-\alpha}| \leq b$ for $b>0$ and $\alpha \in [0,1]$.  Applying \eqref{eq:temp4} now yields
\begin{equation}
\label{eq:temp5}
\begin{split}
|S_{k,i}^m| &\leq  \beta \int_{\mathbb{T}^2} | w_k(\xi)| \frac{|\partial_\xi \z_k^\beta(\xi)
- \partial_\xi y_i^{m,\beta}(\xi - \eta)|}{|\z_k^\beta(\xi) - y_i^{m,\beta}(\xi - \eta)|^{1+2\alpha}} d\eta d\xi \\
& \leq \sqrt{2\pi}\beta \|w_k\|_{L^2} \underbrace{ \sup_{\xi\in \mathbb{T}} \int_{\mathbb{T}} \frac{|\partial_\xi \z_k^\beta(\xi)
- \partial_\xi y_i^{m,\beta}(\xi - \eta)|}{|\z_k^\beta(\xi) - y_i^{m,\beta}(\xi - \eta)|^{1+2\alpha}}d\eta}_{=: J}.
\end{split}
\end{equation}
For $i\neq k$, we immediately have $J \leq 4\pi \|Z^\beta\|_{C^1} \delta[Z^\beta]^{-1-2\alpha}$.
For $i=k$ and $m=1$, we have
\[
J \leq \int_{\mathbb{T}} \|Z^\beta\|_{C^2} F[Z^\beta]^{1+2\alpha} \eta^{-2\alpha} d\eta \leq C(\alpha) \|Z^\beta\|_{C^2} F[Z^\beta]^{1+2\alpha}
\]
for $\alpha<\frac 12$.  When $i=k$ and $m=2$, then the integrand in $J$ is the same as $T_3$ in \eqref{eq:temp_i0}, only with $\eta$ replaced by $\xi-\eta$.  Hence the same bound as in \eqref{eq:temp6} gives
\[
J \leq  \int_{\mathbb{T}} C (\|Z^\beta\|_{C^2}+1) F[Z^\beta]^{1+2\alpha}\eta^{-2\alpha-1/2}d\eta \leq C(\alpha) (\|Z^\beta\|_{C^2}+1) F[Z^\beta]^{1+2\alpha}
\]
for $\alpha<\frac 14$.
It now follows that
\[
|S_{k,i}^m|  \leq C(\alpha)  \vertiii{Z^\beta}^{2+2\alpha} \beta \|w_k\|_{L^2} \leq C(\alpha) A^{2+2\alpha} \beta \|w_k\|_{L^2}  .
\]

Combining the estimates for $G_k$ and $S_{k,i}^m$ gives
\[
\frac{d}{dt}\|W(t)\|_{L^2} \leq C(\alpha)N\Theta A^{2+2\alpha} (\|W(t)\|_{L^2} + \beta)
\]
for $t\in[0,T]$.
Solving this differential inequality with initial condition  $\|W(0)\|_{L^2} = 0$ yields $\|W(t)\|_{L^2} \leq (e^{C(\alpha)N\Theta A^{2+2\alpha} t }-1)\beta$ for $t\in[0,T]$, which implies \eqref{eq:beta_diff}.
\end{proof}

{\it Remark.}  We note that one can in fact prove $\Omega_k^\beta(t)=\tilde \Omega_k^\beta(t)$ (even uniqueness of $C^1$ patch solutions to \eqref{sqg} with $u=u^\beta$) for any $\beta>0$ without a reference to weak solutions.  Indeed, Lemma \ref{lem:udiff} below holds with power 1 (instead of $1-2\alpha$) in \eqref{eq:u_goal}, which follows from the first paragraph of its proof because $u^\beta$ is clearly smooth and \eqref{eq:utildeu} now holds with $(|x-y|^2+\beta^2)^{-\alpha-1/2}$ ($\le \beta^{-1-2\alpha}$) inside the integral. (The constant in \eqref{eq:u_goal} then becomes $C=C(\alpha,\beta, \sum_{k=1}^N|\theta_k|, |\partial\Omega(t)|)<\infty$.)  The argument in Lemma \ref{lem:onestep} below then yields $d'(t)\le Cd(t)$ for $ d(t):=d_H(\partial \Omega(t),  \partial \tilde \Omega(t))$ (as long as $d(t)\le 1$), so $d(0)=0$ implies $d(t)=0$ for all $t\in[0,T]$.

\begin{definition} \label{D.5.6}
For a family $\Omega = \{\Omega_k\}_{k=1}^N$ of bounded open subsets of $D$ whose boundaries $\partial\Omega_k$ are pairwise disjoint simple closed $H^3$ curves (i.e., $\|\Omega_k\|_{H^3}<\infty$ for each $k$), let us define $\vertiii{\Omega}_{H^3} := \vertiii{Z}$, where  $Z = \{z_k\}_{k=1}^N$ and each  $z_k$ is  a constant speed  parametrization of $\partial \Omega_k$ as in Definition \ref{D.1.0}.
\end{definition}

{\it Remarks.}  1. Since  $z_k\in H^3(\mathbb T)$ has constant speed and $\mathbb T$ is compact, it is not difficult to see that 
any $\Omega$ as in the definition  satisfies $\vertiii{\Omega}_{H^3}<\infty$. Indeed, looking at \eqref{eq:def_norm}, the only term that is not clearly finite due to the assumptions of the definition 
is $F[Z]$ from \eqref{F_def}.  It is clear that the constant speed of parametrization and $\|\Omega_k\|_{H^3}<\infty$  show that there exists $r>0$ such that if $|\eta|\in(0,r),$ then 
$\eta|z_k(\xi+\eta)-z_k(\xi)|^{-1} \leq 2L_k^{-1}$ for all $\xi,k.$ For $|\eta|\ge r$ and any $\xi,k$, the expression on the right hand side of \eqref{F_def} is bounded due to its continuity, compactness
of $\mathbb T$, and the assumption that all the $\partial \Omega_k$ are simple closed curves. 

2.  Similarly,  any $H^3$ patch solution $\omega(\cdot,t)=\sum_{k=1}^N \theta_k\chi_{\Omega_k(t)}$ to \eqref{sqg}-\eqref{eq:velocity_law} on $[0,T)$ must satisfy $\sup_{t\in[0,T']} \vertiii{\Omega(t)}_{H^3}<\infty$ for any $T'<T$
 (due to continuity of $\Omega$ in time and compactness of $\mathbb T\times[0,T']$).

Here is a corollary that summarizes the previous results in this section.

\begin{corollary} \label{C.5.7}
Let $\alpha\in(0,\frac 1{24})$, let $\theta_1,\dots,\theta_N\in\Rm\setminus\{0\}$, and let $\Omega(0) = \{\Omega_k(0)\}_{k=1}^N$ be as in Definition \ref{D.5.6}.
There exists $T_\omega>0$ and an $H^3$ patch solution $\omega(\cdot,t)=\sum_{k=1}^N \theta_k\chi_{\Omega_k(t)}$ to \eqref{sqg}-\eqref{eq:velocity_law} on $[0,T_\omega)$ which satisfies the following.
\begin{enumerate}[(a)]
\item
For any $T'\in[0,T_\omega)$ and any parametrization $ Z(T')$ of $\partial\Omega(T')$ with $\vertiii{ Z(T')}<\infty$, let $T=T(\alpha,N\sum_{k=1}^N|\theta_k|,\vertiii{Z(T')})>0$ be from Theorem \ref{thm:local}.  Then the corresponding $H^3$ patch solution  to \eqref{sqg}-\eqref{eq:velocity_law}  on $[T',T'+T]$ from Proposition \ref{P.5.1}, with initial value $Z(T')$ at  time $T'$, is equal to $\omega$ on the time interval $[T',T'+T]$.
\item
There is a universal $C\ge 1$ such that with $T=T(\alpha,N\sum_{k=1}^N|\theta_k|,\vertiii{\Omega(0)}_{H^3})>0$ from Theorem \ref{thm:local} we have $T_\omega\ge T$ and $\sup_{t\in[0,T]} \vertiii{\Omega(t)}_{H^3}\le C \vertiii{\Omega(0)}_{H^3}^8$.
\item
If $T_\omega<\infty$, then $\lim_{t\nearrow T_\omega} \vertiii{\Omega(t)}_{H^3}=\infty$.
\end{enumerate}
\end{corollary}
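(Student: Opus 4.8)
The plan is to construct $\omega$ by iterating the local solvability of Theorem~\ref{thm:local} together with Proposition~\ref{P.5.1}, using Proposition~\ref{prop:equal} to make the iteration consistent. Write $\Theta:=\sum_{k=1}^N|\theta_k|$. First I would take $Z_0$ to be the constant speed parametrization of $\partial\Omega(0)$, so that $\vertiii{Z_0}=\vertiii{\Omega(0)}_{H^3}<\infty$ by Remark~1 after Definition~\ref{D.5.6}. Theorem~\ref{thm:local} then yields a contour solution $Z$ on $[0,T]$ with $T=T(\alpha,N\Theta,\vertiii{\Omega(0)}_{H^3})>0$ and $\sup_{t\in[0,T]}\vertiii{Z(t)}\le 4\vertiii{\Omega(0)}_{H^3}$, and Proposition~\ref{P.5.1} shows that $\omega(\cdot,t):=\sum_{k=1}^N\theta_k\chi_{\Omega_k(t)}$, with $\Omega_k(t)$ the interior of $z_k(\cdot,t)$, is an $H^3$ patch solution on $[0,T]$. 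Applying Lemma~\ref{lem:arcl_bd} at each $t\in[0,T]$ (to the contour $Z(t)$ and its constant speed reparametrization) gives $\vertiii{\Omega(t)}_{H^3}\le C\vertiii{Z(t)}^8\le C\vertiii{\Omega(0)}_{H^3}^8$ after absorbing the factor $4^8$ into $C$; this is the bound claimed in (b), and it also shows $\vertiii{\Omega(t)}_{H^3}$ stays finite on this interval, which is what allows restarting.

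Next I would extend $\omega$ to a maximal interval $[0,T_\omega)$. Whenever $\omega$ has been defined on $[0,\tau]$ as an $H^3$ patch solution with $\vertiii{\Omega(\tau)}_{H^3}<\infty$ (true at $\tau=T$ by the above, and inductively at each step), I restart: let $Z(\tau)$ be the constant speed parametrization of $\partial\Omega(\tau)$ and apply Theorem~\ref{thm:local} and Proposition~\ref{P.5.1} to get an $H^3$ patch solution on $[\tau,\tau+T_1]$ with $T_1=T(\alpha,N\Theta,\vertiii{\Omega(\tau)}_{H^3})>0$, and glue it to $\omega$ at $\tau$. The glued $\omega$ is single-valued because the sequence of boundaries produced by the iteration is determined by $\partial\Omega(0)$: Theorem~\ref{thm:local} determines each contour solution up to an irrelevant reparametrization not affecting interiors, and comparing two iteration schemes (with different restart times) via Proposition~\ref{prop:equal} shows they yield the same $\partial\Omega(t)$ wherever both are defined. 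The glued $\omega$ is a genuine $H^3$ patch solution: away from joining times this is Proposition~\ref{P.5.1}, and at a joining time $\tau$ the verification of \eqref{1.3} is as in the proof of Proposition~\ref{P.5.1}, using the one-sided trajectory maps of the two pieces meeting at $\tau$ for $h>0$ and $h<0$ respectively, which is legitimate because $\omega$, and hence $u(\cdot,\tau)$, is the same on both sides. Let $T_\omega$ be the supremum of times reached by this procedure; then $T_\omega\ge T$ by the first step. Part~(a) follows from the same Proposition~\ref{prop:equal} argument: for $T'\in[0,T_\omega)$ and any parametrization $Z(T')$ of $\partial\Omega(T')$ with $\vertiii{Z(T')}<\infty$, the patch solution of Proposition~\ref{P.5.1} built on $[T',T'+T]$ with $T=T(\alpha,N\Theta,\vertiii{Z(T')})$ has the same interiors as $\omega$ on their common interval, so if $T_\omega<\infty$ it cannot extend past $T_\omega$, forcing $T'+T\le T_\omega$, and the two coincide on $[T',T'+T]$.

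For part~(c), suppose $T_\omega<\infty$. Since $\vertiii{\Omega(t)}_{H^3}\ge 1$ for all $t$, it suffices to show $\liminf_{t\nearrow T_\omega}\vertiii{\Omega(t)}_{H^3}=\infty$, which then forces the full limit to be $\infty$. If instead $\liminf_{t\nearrow T_\omega}\vertiii{\Omega(t)}_{H^3}=:M<\infty$, choose $t_n\nearrow T_\omega$ with $\vertiii{\Omega(t_n)}_{H^3}\le M+1$. Restarting at $t_n$ from the constant speed parametrization of $\partial\Omega(t_n)$, whose $\vertiii{\cdot}$ equals $\vertiii{\Omega(t_n)}_{H^3}\le M+1$, Theorem~\ref{thm:local} and Proposition~\ref{P.5.1} give an $H^3$ patch solution on $[t_n,t_n+\delta)$ with $\delta:=T(\alpha,N\Theta,M+1)>0$ independent of $n$. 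By the gluing argument above this extends $\omega$, and for $n$ large enough that $t_n+\delta>T_\omega$ it extends it past $T_\omega$, contradicting the definition of $T_\omega$.

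The main obstacle I expect is the bookkeeping in the extension step: making precise that the iteratively glued $\omega$ is a well-defined, single-valued $H^3$ patch solution on $[0,T_\omega)$ rather than merely a concatenation of contour-equation solutions --- in particular verifying \eqref{1.3} at the joining times and checking that $\vertiii{\Omega(t)}_{H^3}$ remains finite at every intermediate time so that each restart is legitimate. Everything else reduces to direct applications of Theorem~\ref{thm:local}, Proposition~\ref{P.5.1}, Proposition~\ref{prop:equal}, and Lemma~\ref{lem:arcl_bd}.
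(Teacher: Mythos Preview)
Your proposal is correct and follows essentially the same approach as the paper: construct $\omega$ by iterating Theorem~\ref{thm:local} and Proposition~\ref{P.5.1} with constant-speed restarts, use Lemma~\ref{lem:arcl_bd} for the norm bound in (b), and invoke Proposition~\ref{prop:equal} for parametrization-independence in (a). Your treatment is in fact slightly more explicit than the paper's in two places: you spell out why \eqref{1.3} holds at the joining times (the paper leaves this implicit), and for (c) you give the standard $\liminf$/uniform-restart argument, whereas the paper simply observes that $T_\omega=\sum_j T_j$ with $T_j=T(\alpha,N\Theta,\vertiii{\Omega(\sum_{i<j}T_i)}_{H^3})$ forces either $T_\omega=\infty$ or blow-up of the norm.
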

Basically, what the Corollary says is that the patch solution that we can obtain using the contour equation is unique. We cannot obtain different patch solutions by changing the parametrization of the initial data or at any other time. 
Also this solution may cease to exist only if its norm $\vertiii{\Omega(t)}_{H^3}$ blows up. On the other hand, the corollary does not rule out existence of other $H^3$ patch solutions with the same initial data, obtained not from the contour equation but in some 
other way. We will eliminate this possibility in the next section. 
\begin{proof}
Let $Z_0$ be a constant speed parametrization of $\partial\Omega(0)$ (which satisfies $\vertiii{Z_0}=\vertiii{\Omega(0)}_{H^3}<\infty$ due to Remark 1 after Definition \ref{D.5.6})
and consider the solution $\omega$ from Proposition \ref{P.5.1} on $[0,T_0]$, with $T_0:=T(\alpha,N\sum_{k=1}^N|\theta_k|,\vertiii{\Omega(0)}_{H^3})>0$ from Theorem \ref{thm:local}.  Then \hbox{$\sup_{t\in[0,T_0]} \vertiii{\Omega(t)}_{H^3}\le C(4\vertiii{\Omega(0)}_{H^3})^8$,} with $C$ from Lemma \ref{lem:arcl_bd},  so (b) holds with $T:=T_0$.  We can also extend $\omega$ up to time $T_0+T_1$, with $T_1:=T(\alpha,N\sum_{k=1}^N|\theta_k|,\vertiii{\Omega(T_0)}_{H^3})>0$, by  using Proposition \ref{P.5.1} with initial condition a constant speed parametrization of $\partial\Omega(T_0)$ at time $T_0$.  We can continue this way to obtain $T_\omega:=\sum_{j=0}^\infty T_j$, which then must satisfy either $\lim_{t\nearrow T_\omega} \vertiii{\Omega(t)}_{H^3}=\infty$ or $T_\omega=\infty$.   This proves (c), while (a) follows from Proposition \ref{prop:equal} (note also that $T$ in (a) must be less than $T_\omega-T'$ because $\sup_{t\in[T',\min\{T'+T,T_\omega\})} \vertiii{\Omega(t)}_{H^3
 }<\infty$ by Proposition \ref{P.5.1}).
\end{proof}

{\it Remark.} 
Now is the natural time to prove the last statement of Theorem~\ref{T.1.1} describing precisely how the blow up may manifest itself. 
Let us assume that $T_\omega<\infty$, and define $\partial\Omega(T_\omega):=\lim_{t\nearrow T_\omega} \partial\Omega(t)$  (the limit is taken with respect to Hausdorff distance and exists due to \eqref{uLinfty}).
Let us also assume that  $\min_{k\neq i}{\rm dist}(\partial\Omega_k(T_\omega),\partial\Omega_i(T_\omega))>0$ and that for each $k$, the limit $\lim_{t\nearrow T_\omega}  \|\Omega_k(t)\|_{H^3}$ is either finite or does not exist.  Then there must be some $k$ and $t_j\nearrow T_\omega$ such that for any constant speed parametrization $z_j$ of some $\partial \Omega_k(t_j)$ we have
\[
A:=\sup_j  \|z_j\|_{H^3}<\infty \qquad\text{and}\qquad \lim_{j\to\infty} \sup_{\xi,\eta\in \mathbb{T}, \eta\neq 0}\dfrac{|\eta|}{|\z_j(\xi) - \z_j(\xi-\eta)|} =\infty.
\]
The first of these statements, together with $|\partial\Omega_k(t)|$ being bounded below uniformly in $t$, due to $|\Omega_k(t)|$ being constant in time, and
along with the constant speed property of $z_j$, implies that there is $r=r(A)$ such that
\[
\sup_{j} \sup_{\xi,\eta\in \mathbb{T}, |\eta|\in(0,r)}\dfrac{|\eta|}{|\z_j(\xi) - \z_j(\xi-\eta)|} <\infty.
\]
Thus there must be two sequences of points $x_j,y_j\in\partial\Omega_k(t_j)$ with $\lim_{j\to\infty} |x_j-y_j|=0$ but distance of $x_j$ and $y_j$ along $\partial\Omega_k(t_j)$ uniformly bounded below by a positive number.  Continuity of $\partial\Omega_k$ in time (and its compactness) then implies that $\partial\Omega_k(T_\omega)$ cannot be a simple closed curve. This proves the last statement in Theorem \ref{T.1.1}.

\subsection{Uniqueness of $H^3$ patch solutions}

We will now prove (local) uniqueness of $H^3$ patch solutions to \eqref{sqg}-\eqref{eq:velocity_law}.  Hence the unique solution for a given initial value $\omega(\cdot,0)$ is the one from Corollary \ref{C.5.7}.  The main result of this sub-section is Theorem \ref{thm:unique_patch}.

The next lemma is a simple geometric result, concerning two $H^3$ patches whose boundaries are close to each other in Hausdorff distance.  It will be used in the following lemma to estimate the difference of the velocities from \eqref{eq:velocity_law} corresponding to two sets of $H^3$ patches whose boundaries are close to each other in Hausdorff distance.  As before, we denote by $n_P$ the outer unit normal vector for $\Omega$ at $P\in \partial \Omega$.

\begin{lemma}
\label{lem:func_diff}
Let $\Omega, \tilde \Omega \subseteq \mathbb{R}^2$ be two bounded open sets whose boundaries are simple closed curves, and let $\vertiii{\Omega}_{H^3}+\vertiii{\tilde \Omega}_{H^3}\le A$ for some $A\ge 1$.
Let $R := (4 C_0 A)^{-3}$, where $C_0\geq 1$ is a universal constant such that $\|f\|_{C_2} \leq C_0\|f\|_{H^3}$ for each $f\in H^3 (\mathbb{T})$, and let $P\in\partial\Omega$.
If $d_H(\partial\Omega, \partial\tilde\Omega) \le  \frac R{20}$,  then in the coordinate system $(w_1, w_2)$ centered at $P$ and with axes $n_P^{\perp}$ and $n_P$, both $\partial \Omega \cap B(P,R)$ and $\partial \tilde \Omega \cap B(P,\frac{19}{20}R)$ can be represented as graphs $w_2 = f(w_1)$ and $w_2 = g(w_1)$, respectively, and we have $|f'(w_1)|\leq 1$, $|g'(w_1)|\leq 1$, and $|f(w_1) - g(w_1)| \leq 2 d_H(\partial\Omega, \partial\tilde\Omega)$ for $|w_1| \leq \frac R2$.
\end{lemma}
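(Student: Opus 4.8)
The plan is to first produce the graph representation for $\partial\Omega$ near $P$ using Lemma~\ref{lem:regularity}, then transfer it to $\partial\tilde\Omega$ via the Hausdorff-closeness hypothesis, and finally estimate the vertical gap between the two graphs. First I would apply Lemma~\ref{lem:regularity}(a)--(b) to $\Omega$: with $L$ the arc-length parameter scaling for $\partial\Omega$ and $\vertiii{\Omega}_{1,1}\le A$, the radius there is $R_0 = L(4A)^{-2}$, and since $\|\Omega\|_{H^3}\le A$ controls $L$ from above and below (via $|\partial\Omega|\le \|z'\|_{C^1}\cdot 2\pi\le C_0 A$ and $L\ge F[z]^{-1}\ge A^{-1}$), one checks $R = (4C_0A)^{-3}\le R_0$, so $\partial\Omega\cap B(P,R)$ is a connected graph $w_2=f(w_1)$ over an interval containing $[-\tfrac{R}{2},\tfrac{R}{2}]$, with $|f'(w_1)|\le 4L^{-1-\gamma}A|w_1|^\gamma\le 1$ for $|w_1|\le R/2$ (here $\gamma=1$; the bound $4L^{-2}A(R/2)\le 1$ follows from the smallness of $R$). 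This also gives $f(0)=0$ and $f'(0)=0$ since the $w$-axes are the tangent/normal frame at $P$.

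Next I would get the graph for $\tilde\Omega$. Every point of $\partial\tilde\Omega\cap B(P,\tfrac{19}{20}R)$ lies within $d_H:=d_H(\partial\Omega,\partial\tilde\Omega)\le R/20$ of some point of $\partial\Omega$, hence within $B(P,R)$; so that portion of $\partial\tilde\Omega$ is sandwiched in a thin neighborhood of the graph of $f$. Applying Lemma~\ref{lem:regularity}(a)--(b) to $\tilde\Omega$ at any point $P'\in\partial\tilde\Omega\cap B(P,\tfrac{19}{20}R)$ (again $R$ is small enough relative to $\tilde\Omega$'s parameters) shows $\partial\tilde\Omega$ is locally a graph in the frame at $P'$ with slope $\le 1$; by Lemma~\ref{lem:regularity}(c), $|n_P-n_{P'}|\le 2L^{-2}A|P-P'|\le$ a small constant, so the frame at $P'$ is close to the $(w_1,w_2)$-frame, and the local slope of $\partial\tilde\Omega$ in the $(w_1,w_2)$-coordinates stays $\le 1$ (after shrinking constants in $R$ if needed — this is why the exponent $3$ and the factor $4C_0$ are chosen with room to spare). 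Since $\partial\tilde\Omega\cap B(P,\tfrac{19}{20}R)$ has slope bounded by $1$ everywhere and is connected (same argument as part (a)), it is a single graph $w_2=g(w_1)$ over an interval containing $[-\tfrac R2,\tfrac R2]$, with $|g'|\le 1$.

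Finally, for the gap estimate: fix $|w_1|\le R/2$ and let $p=(w_1,f(w_1))\in\partial\Omega$. There is $q\in\partial\tilde\Omega$ with $|p-q|\le d_H$, and since $|p|<R$ and $d_H\le R/20$ we have $q\in B(P,\tfrac{19}{20}R)$, so $q=(q_1,g(q_1))$ with $|q_1-w_1|\le d_H$ and $|g(q_1)-f(w_1)|\le d_H$. Then $|g(w_1)-f(w_1)|\le |g(w_1)-g(q_1)|+|g(q_1)-f(w_1)|\le \|g'\|_\infty|w_1-q_1| + d_H\le d_H + d_H = 2d_H$, using $|g'|\le 1$. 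This is the claimed bound.

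The main obstacle is the middle step: verifying cleanly that $\partial\tilde\Omega\cap B(P,\tfrac{19}{20}R)$ is a \emph{single} graph with slope $\le 1$ in the \emph{fixed} frame centered at $P$, rather than in its own moving frame. This requires combining the connectedness statement of Lemma~\ref{lem:regularity}(a) applied to $\tilde\Omega$ with the normal-vector oscillation bound of Lemma~\ref{lem:regularity}(c), and checking that the universal constant $C_0$ together with the exponent $3$ in $R=(4C_0A)^{-3}$ leave enough slack to absorb all the resulting geometric constants (the tilt between frames, the radius loss from $R$ to $\tfrac{19}{20}R$, and the slope-doubling when changing frames). Everything else is routine interval arithmetic.
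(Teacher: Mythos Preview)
Your outline for the first step (graph representation of $\partial\Omega$ near $P$) and the last step (the gap estimate $|f(w_1)-g(w_1)|\le 2d_H$ via $|g'|\le 1$) are fine and essentially match the paper's argument. The gap is in the middle step, and it is exactly the obstacle you flagged but did not resolve correctly.

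You write ``by Lemma~\ref{lem:regularity}(c), $|n_P-n_{P'}|\le 2L^{-2}A|P-P'|$'' for $P'\in\partial\tilde\Omega$. But Lemma~\ref{lem:regularity}(c) compares normals at two points of the \emph{same} boundary; it says nothing about the normal of $\tilde\Omega$ at $P'$ relative to the normal of $\Omega$ at $P$. Regularity of each curve separately cannot by itself force their normals to be close: two $H^3$ curves can be $d_H$-close yet have normals pointing in nearly opposite directions at nearby points (think of two near-tangent circles). What rules this out here is precisely the global Hausdorff hypothesis, and one must actually use it. The paper does this by a short contradiction argument: pick $\tilde P\in\partial\tilde\Omega$ with $|\tilde P-P|\le h$, note that Lemma~\ref{lem:regularity}(a,c) applied separately to $\Omega$ and $\tilde\Omega$ confines the normals along $\partial\Omega\cap B(P,R)$ and $\partial\tilde\Omega\cap B(\tilde P,R)$ to small cones around $n_P$ and $\tilde n_{\tilde P}$ respectively; then if the angle between $n_P$ and $\tilde n_{\tilde P}$ exceeded $\pi/6$, one could exhibit a point $P'\in\partial\Omega\cap\partial B(P,R/2)$ at distance strictly greater than $h$ from $\partial\tilde\Omega$, contradicting $d_H=h$. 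Once $\tilde n_{\tilde P}\cdot n_P\ge\cos(\pi/6)$ is established, the slope of $\partial\tilde\Omega$ in the fixed $(w_1,w_2)$-frame is bounded by $\tan(\pi/6+\arcsin\tfrac{1}{32})<1$, and the rest follows. Your proposed route through Lemma~\ref{lem:regularity}(c) alone cannot produce this angle bound; you need to insert an argument of this type.
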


\begin{proof}
Let $h:=d_H(\partial\Omega, \partial\tilde\Omega)$,
 let $\tilde P \in \partial \tilde \Omega$ be such that $|\tilde P-P| = {\rm dist}(P, \partial\tilde \Omega) \leq h$, and denote by $\tilde n_{\tilde P}$ the outer unit normal for $\tilde \Omega$ at $\tilde P$. (If  $\tilde P$ is not unique, we pick one such point.)  By Lemma \ref{lem:regularity}(a,c) with $\gamma=1$ and the definition of $R$ (note that if either $\partial\Omega$ or $\partial\tilde \Omega$ has arc-length $2\pi L$ and a constant speed parametrization $z$, then $L\ge \tfrac 1\pi |z(\pi)-z(0)|\ge F[z]^{-1}\ge \frac 1A$),  both $\partial \Omega \cap B(P,R)$ and $\partial \tilde \Omega \cap B(\tilde P,R)$ are simply connected curves whose (outer to $\Omega$ and $\tilde\Omega$) unit normal vectors lie in $B(n_P,\frac 1{32})$ and in $B(\tilde n_{\tilde P},\frac 1{32})$, respectively.

This implies that $\tilde n_{\tilde P} \cdot n_P \geq \cos \frac \pi 6$. Indeed, otherwise we could find $P' \in \partial \Omega\cap \partial B(P, \frac R2)$ such that ${\rm dist}(P', \partial \tilde \Omega) \geq \frac{R}{2} \sin(\frac{\pi}{6} - 2\cdot \arcsin\frac{1}{32}) - h > h$ (since we assume $h \leq \frac R{20}$), contradicting
  $d_H(\partial \Omega, \partial \tilde \Omega) = h$.

From $|\tilde P-P|\le h\le\frac R{20}$ and $\tilde n_{\tilde P} \cdot n_P \geq \cos \frac \pi 6$ (together with the normal vectors of $\partial \Omega \cap B(P,R)$ and $\partial \tilde \Omega \cap B(\tilde P,R)$ lying in $B(n_P,\frac 1{32})$ and in $B(\tilde n_{\tilde P},\frac 1{32})$, respectively), we have that in the coordinate system $(w_1, w_2)$, both $\partial \Omega \cap B(P,R)$ and $\partial \tilde \Omega \cap  B(P,\frac {19}{20}R)$
are graphs  $w_2 = f(w_1)$ and $w_2 = g(w_1)$, respectively, with $|f'(w_1)| \leq \tan\arcsin \frac 1{32}<1$ and $|g'(w_1)| \leq \tan(\frac \pi 6+\arcsin \frac 1{32}) < 1$.  Since $19^2>12^2+13^2$, it follows that the domains of $f,g$ both contain $[-\frac{12}{20}R,\frac{12}{20}R]$.

If now $|f(w_1) - g(w_1)| > 2 h$ for some $|w_1| \leq \frac R2$, then  $Q:=(w_1,f(w_1))\in\partial\Omega\cap[-\frac R2,\frac R2]^2$ has ${\rm dist}(Q, \Rm^2\setminus [-\frac {12}{20}R,\frac {12}{20}R]^2)\ge 2h$ and also ${\rm dist}(Q, \partial\tilde\Omega\cap [-\frac {12}{20}R,\frac {12}{20}R]^2)\ge \sqrt 2h$ (the latter because $|g'(w_1)|\le 1$ for $|w_1|\le\frac{12}{20}R$).  This again contradicts $d_H(\partial \Omega, \partial \tilde \Omega) = h$.
%
%
%
\end{proof}

\begin{lemma}
\label{lem:udiff}
Let $\alpha \in (0,\frac{1}{2})$, let $\theta_1,\dots\theta_N\in\Rm$, let $\Omega = \{\Omega_k\}_{k=1}^N$ and $\tilde \Omega = \{\tilde \Omega_k\}_{k=1}^N$ be as in Definition~\ref{D.5.6}, and let $u$ and $\tilde u$ be the velocity fields from \eqref{eq:velocity_law} corresponding to $\omega:= \sum_{k=1}^N \theta_k \chi_{\Omega_k}$ and $\tilde \omega:= \sum_{k=1}^N \theta_k \chi_{\tilde \Omega_k}$, respectively.  Let also $\vertiii{\Omega}_{H^3}+\vertiii{\tilde \Omega}_{H^3}\le A$ for some $A\ge 1$.
There exists a constant $C=C(\alpha, \sum_{k=1}^N|\theta_k|, A)<\infty$ such that if $d_H(\partial \Omega, \partial \tilde \Omega) \leq 1$, then for any $x,\tilde x \in \bar D$
we have
\begin{equation}
\label{eq:u_goal}
|u(x) - \tilde u(\tilde x)| \leq C \max\{|x-\tilde x|,d_H(\partial \Omega, \partial \tilde \Omega)\}^{1-2\alpha}.
\end{equation}
\end{lemma}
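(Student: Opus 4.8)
The plan is to first reduce to the case $x=\tilde x$, then to bound $|u(x)-\tilde u(x)|$ by splitting the difference of the defining integrals in \eqref{eq:velocity_law} into a near-field piece (points $y$ within a controlled distance of $\partial\Omega\cup\partial\tilde\Omega$, where $\omega$ and $\tilde\omega$ may differ) and a far-field piece. For the reduction: by \eqref{uHold} in Lemma~\ref{lemma:uniform_u_bound} (with $\|\omega\|_{L^\infty}\le\Theta$ and $\|\omega\|_{L^1}\le\Theta|\Omega|$, the area being controlled since $\vertiii{\Omega}_{H^3}\le A$ bounds the diameters), $u$ is $C^{1-2\alpha}$ with norm $\le C(\alpha,\Theta,A)$, so $|u(x)-u(\tilde x)|\le C|x-\tilde x|^{1-2\alpha}$; hence it suffices to prove $|u(x)-\tilde u(x)|\le C\, d_H(\partial\Omega,\partial\tilde\Omega)^{1-2\alpha}$ for every $x\in\bar D$, and we write $h:=d_H(\partial\Omega,\partial\tilde\Omega)\le 1$ and may assume $h$ small (for $h$ bounded below the estimate is trivial from \eqref{uLinfty}).

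Next, using the odd reflection trick from the proof of Lemma~\ref{lemma:uniform_u_bound}, rewrite $u(x)-\tilde u(x)=\int_{\Rm^2}\frac{(x-y)^\perp}{|x-y|^{2+2\alpha}}(\eta(y)-\tilde\eta(y))\,dy$, where $\eta,\tilde\eta$ are the odd extensions of $\omega,\tilde\omega$; the integrand is supported on $(\Omega\triangle\tilde\Omega)$ together with its reflection, and $|\eta-\tilde\eta|\le\Theta$ pointwise. The key geometric input is Lemma~\ref{lem:func_diff}: near any $P\in\partial\Omega$, in suitable coordinates both boundaries are graphs of functions $f,g$ with $|f'|,|g'|\le1$ and $|f-g|\le 2h$ on $|w_1|\le R/2$ (with $R=R(A)$ a fixed constant), and the symmetric difference near $P$ is sandwiched between these graphs. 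Cover $\partial\Omega$ by finitely many such balls $B(P_j,R/2)$ — the number of them bounded in terms of $A$ — so that $\Omega\triangle\tilde\Omega$ is contained in a $2h$-neighborhood of $\partial\Omega$ of the form $\bigcup_j\{|w_1|\le R/2,\ |w_2-f_j(w_1)|\le 2h\}$, plus possibly a region far from $\partial\Omega$ which, by the winding-number argument as in Lemma~\ref{lem:l2_symm}, must also lie between the graphs and is thus already accounted for. Then split $\int = \int_{|x-y|\le 2h}+\int_{|x-y|>2h}$. On the near part, $\int_{|x-y|\le 2h}|x-y|^{-1-2\alpha}\Theta\,dy \le C\Theta h^{1-2\alpha}$ directly. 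On the far part, $|x-y|>2h$: the integrand is bounded by $C|x-y|^{-1-2\alpha}$, and we integrate over the strip-like set $\Omega\triangle\tilde\Omega\cap\{|x-y|>2h\}$; using that this set, within each coordinate patch, is $\{|w_1|\le R/2,\ g_j(w_1)\wedge f_j(w_1)\le w_2\le g_j(w_1)\vee f_j(w_1)\}$ with vertical thickness $\le 2h$ and $|f_j'|\le1$, Fubini in the $w_1$ variable gives $\int \le C\Theta h\int_{h}^{R} s^{-1-2\alpha}\,ds\le C\Theta h\cdot h^{-2\alpha}=C\Theta h^{1-2\alpha}$ (the extra integrability at scale $\ge h$ is exactly what the thin-strip structure buys us, and $2\alpha<1$ is not even needed here, only for the near part). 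The reflected set $\{\bar y:y\in\Omega\triangle\tilde\Omega\}$ is handled identically since $|x-\bar y|\ge|x-y|$ when $x\in\bar D$ is irrelevant — one just applies the same estimate with $\bar x$ in place of $x$, or notes the kernel there is the reflected one. Summing the finitely many patches and both field pieces yields \eqref{eq:u_goal}.

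The main obstacle is the far-field estimate: one must exploit that $\Omega\triangle\tilde\Omega$ is not merely small in area but is a \emph{thin strip} of uniform width $\lesssim h$ hugging a $C^{1,1}$ (indeed $H^3$) curve of bounded length and curvature, so that the singular kernel $|x-y|^{-1-2\alpha}$, which is only barely integrable in the plane, becomes integrated against a one-dimensional-like measure and produces the borderline exponent $1-2\alpha$ rather than, say, $h\cdot h^{-2\alpha}$ blowing up or $h^{2-2\alpha}$. Making this rigorous requires the coordinate-patch decomposition of Lemma~\ref{lem:func_diff} together with a uniform-in-$A$ bound on the number of patches and on $R$, plus care that the ``far'' part of $\Omega\triangle\tilde\Omega$ (away from $\partial\Omega$, where the two winding numbers differ) is genuinely trapped between the two graphs and hence still thin; this last point is where the winding-number argument from Lemma~\ref{lem:l2_symm} is reused.
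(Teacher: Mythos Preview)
Your reduction to $\tilde x=x$ via \eqref{uHold} and the subsequent bound $|u(x)-\tilde u(x)|\le 2\Theta\sum_k\int_{\Omega_k\triangle\tilde\Omega_k}|x-y|^{-1-2\alpha}\,dy$ match the paper exactly, as does the idea of exploiting the thin-strip structure of $\Omega_k\triangle\tilde\Omega_k$ supplied by Lemma~\ref{lem:func_diff}. The difference is organizational but it matters: the paper does \emph{not} cover $\partial\Omega_k$ by many coordinate patches. Instead it picks the single point $P\in\partial\Omega_k$ nearest to $x$, splits $I_k$ into $I_k^1$ over $B(P,R/2)$ and a remainder $I_k^2$, and handles $I_k^2$ trivially via $|x-y|\ge R/4$ together with the area bound $|\Omega_k\triangle\tilde\Omega_k|\le CAh$. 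In the one patch centered at $P$, the point $x$ sits on the $w_2$-axis at $(0,\pm d_k)$, and the split is by $|w_1|\gtrless h$, not by $|x-y|\gtrless 2h$.

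Your far-field step has a real gap as written. You assert that ``Fubini in the $w_1$ variable gives $\int\le C\Theta h\int_h^R s^{-1-2\alpha}\,ds$'' within each coordinate patch, but in a patch centered at some $P_j$ that is \emph{not} the nearest boundary point to $x$, the coordinate $w_1$ bears no simple relation to $|x-y|$, so there is nothing forcing the inner integral to be $\lesssim h\,|w_1|^{-1-2\alpha}$. The bound you want is true, but it follows either from the paper's device (one patch, with $x$ on the normal so that $|x-y|\ge|w_1|$) or from a careful co-area/layer-cake argument over the arc-length parametrization --- not from a bare Fubini in local coordinates. Note also that even in the ``good'' patch, the paper's $|w_1|<h$ region is \emph{not} handled by your near-field ball $|x-y|\le 2h$ (e.g.\ $w_1=0$, $w_2-d_k=3h$ lies in your far region but has $|w_1|<h$); the paper instead uses the product bound $(w_1^2+(w_2-d_k)^2)^{-\frac12-\alpha}\le|w_1|^{-\frac12-\alpha}|w_2-d_k|^{-\frac12-\alpha}$ there. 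Finally, your appeal to Lemma~\ref{lem:l2_symm} for trapping $\Omega\triangle\tilde\Omega$ near $\partial\Omega$ is the wrong reference: that lemma gives an area bound, whereas the containment you need follows from the normal-alignment $\tilde n_{\tilde P}\cdot n_P\ge\cos\frac\pi6$ established inside the proof of Lemma~\ref{lem:func_diff}.
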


\begin{proof}
First note that \eqref{uHold} shows that it is sufficient to consider $\tilde x=x$.  From \eqref{uLinfty} it follows that it further suffices to restrict the proof to the case $h:=d_H(\partial \Omega, \partial \tilde \Omega)\le \frac R{20}$, with $R=R(A)$ from Lemma \ref{lem:func_diff}.
We then have
\begin{equation}
\label{eq:utildeu}
|u(x) - \tilde u(x)| \leq \sum_{k=1}^N 2|\theta_k|\underbrace{\int_{\Omega_k\triangle \tilde\Omega_k} |x-y|^{-1-2\alpha} dy}_{=:I_k},
\end{equation}
so it finally suffices to show $I_k\le C(\alpha,A) h^{1-2\alpha}$ for each $k$ and some $C(\alpha,A)<\infty$.

Let $P\in \partial \Omega_k$ be such that $|x-P| = d(x,\partial \Omega_k) =: d_k$.
Let us first assume that $d_k\geq \frac R4$. Since $\partial \Omega_k$ and $\partial \tilde \Omega_k$ both have arc-length bounded by $CA$ (for some universal $C<\infty$) and $A\ge 1$, we have $|\Omega_k \triangle \tilde\Omega_k| \leq CAh$ (with a different universal $C$).
Thus
\[
d(x,\Omega_k \triangle \tilde\Omega_k) \geq \frac{R}{4} - h \geq \frac{R}{5}
\]
because $h\leq \frac R{20}$. Since then also $h\le 1$, this indeed yields
\begin{equation}
\label{eq:tempIk}
I_k \leq  |\Omega_k \triangle \tilde\Omega_k|  \left(\frac{R}{5}\right)^{-1-2\alpha} \leq C(\alpha, A) h \leq C(\alpha,  A) h^{1-2\alpha}.
\end{equation}

Let us now assume that $d_k < \frac R4$, and split $I_k$ into
\[
I_k = \underbrace{\int_{(\Omega_k\triangle \tilde\Omega_k) \cap B(P,R/2)} |x-y|^{-1-2\alpha} dy}_{=: I_k^1} + \underbrace{\int_{(\Omega_k\triangle \tilde\Omega_k) \cap (D\setminus B(P,R/2)) } |x-y|^{-1-2\alpha} dy}_{=: I_k^2}.
\]
If $y \notin B(P,\frac R2)$, then $|x-y| \geq |y-P| - |P-x| \geq \frac{R}{2} - \frac{R}{4} \geq \frac{R}{4}$.  Hence $I_k^2$ can be bounded as $I_k$ in \eqref{eq:tempIk}, yielding $I_k^2 \leq C(\alpha,  A) h^{1-2\alpha}$.

To bound $I_k^1$, we apply Lemma~\ref{lem:func_diff} to $\Omega_k$ and $\tilde \Omega_k$. Thus in the coordinate system $(w_1, w_2)$ centered at $P$ and with axes $n_P^\perp$ and $n_P$ (the latter being the outer unit normal for $\Omega_k$ at $P$), both $\partial \Omega_k \cap B(P,R)$ and $\partial \tilde\Omega_k \cap B(P,\frac{19}{20}R)$ are graphs  $w_2 = f(w_1)$ and $w_2 = g(w_1)$, respectively, such that $|f(w_1) - g(w_1)| \leq 2h$ for $|w_1| \leq \frac R2$.
In this new coordinate system, $x$ is either $(0,d_k)$ or $(0,-d_k)$, and we can assume the former without loss.  Then
\[
\begin{split}
I_k^1 &\leq \int_{-R/2}^{R/2} \underbrace{\left |\int_{f(w_1)}^{g(w_1)} \left(w_1^2 + (w_2-d_k)^2\right)^{-\frac{1}{2}-\alpha} dw_2 \right|}_{=:T(w_1)} dw_1.
\end{split}
\]
For $|w_1| \geq h$ we have
\[
T(w_1) \leq |g(w_1)-f(w_1)| |w_1|^{-1-2\alpha} \leq 2h |w_1|^{-1-2\alpha},
\]
whereas for $|w_1| < h$ we have
\[
\begin{split}
T(w_1) &\leq \left| \int_{f(w_1)}^{g(w_1)} |w_1|^{-\frac{1}{2}-\alpha} |w_2-d_k|^{-\frac{1}{2}-\alpha} dw_2 \right| \leq 2|w_1|^{-\frac{1}{2}-\alpha} \int_0^{h} s^{-\frac{1}{2}-\alpha}ds
\le \frac 4{1-2\alpha} |w_1|^{-\frac{1}{2}-\alpha} h^{\frac{1}{2}-\alpha}.
\end{split}
\]
It follows that
\[
I_k^1 \leq  2\int_h^{R/2} 2h w_1^{-1-2\alpha} dw_1   
+ 2\int_{0}^h \frac 4{1-2\alpha} w_1^{-\frac{1}{2}-\alpha} h^{\frac{1}{2}-\alpha} dw_1  \leq C(\alpha, A) h^{1-2\alpha}.
\]
%
%
So we again have $I_k \leq C(\alpha, A) h^{1-2\alpha}$, and the proof is finished.
\end{proof}

We will now use Lemma \ref{lem:udiff} to show that the Hausdorff distance of two $H^3$ patch solutions with the same initial data grows (for a short time) at most as $t^{\frac{1}{2\alpha}}$.

\begin{lemma}
\label{lem:onestep}
Let $\alpha \in (0,\frac{1}{2})$, let $\omega(\cdot,t)= \sum_{k=1}^N \theta_k \chi_{\Omega_k(t)}$ and $\tilde \omega(\cdot,t) = \sum_{k=1}^N \theta_k \chi_{\tilde \Omega_k(t)}$ be two $H^3$ patch solutions to  \eqref{sqg}-\eqref{eq:velocity_law} on $[0,T]$, and let ${\rm sup}_{t \in [0,T]} ( \vertiii{\Omega(t)}_{H^3}+ \vertiii{\tilde \Omega(t)}_{H^3}) \leq A$ for some $A\geq 1$, where $ \Omega(t) := \{  \Omega_k(t)\}_{k=1}^N$ and $\tilde \Omega(t) := \{ \tilde \Omega_k(t)\}_{k=1}^N$. There is a constant $C=C(\alpha, \sum_{k=1}^N|\theta_k|, A)<\infty$ such that if  $\omega(\cdot,0) = \tilde \omega(\cdot,0)$, then for all $t\in[0,\min\{C^{-2\alpha},T\}]$ we have
\begin{equation}
\label{eq:d_bd}
 d_H\left(\partial \Omega(t),  \partial \tilde \Omega(t)\right) \leq C t^{1/{2\alpha}}.
\end{equation}
\end{lemma}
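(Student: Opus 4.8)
The plan is to turn the bound of Lemma~\ref{lem:udiff} into a differential inequality for $d(t):=d_H(\partial\Omega(t),\partial\tilde\Omega(t))$ and integrate it by a nonlinear Gronwall argument, using a bootstrap to stay in the regime $d\le 1$ where Lemma~\ref{lem:udiff} applies. Since $\omega(\cdot,0)=\tilde\omega(\cdot,0)$, after relabeling the $\tilde\Omega_k$ we may assume $\Omega_k(0)=\tilde\Omega_k(0)$ for all $k$, so $d(0)=0$; moreover $d$ is continuous on $[0,T]$ because each $\partial\Omega_k$ and $\partial\tilde\Omega_k$ is continuous in time with respect to $d_H$ and $d_H$ obeys the triangle inequality. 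I would also first record two uniform bounds depending only on $\alpha$, $\sum_k|\theta_k|$ and $A$: the velocities satisfy $\|u(\cdot,t)\|_{L^\infty}+\|\tilde u(\cdot,t)\|_{L^\infty}\le U$ by \eqref{uLinfty} (using that $\|\omega(\cdot,t)\|_{L^1}$ is controlled through the conserved areas $|\Omega_k(t)|=|\Omega_k(0)|$, which are bounded in terms of $A$), and the boundaries $\partial\Omega_k(t)$, as well as the $\partial\tilde\Omega_k(t)$, are pairwise separated by at least $A^{-1}$ uniformly in $t$ (since $\vertiii{\Omega(t)}_{H^3}\le A$ controls $\delta[\cdot]^{-1}$).

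Next, I would fix $t$ with $d(t)\le 1$ and let $C_1$ denote the constant $C$ of Lemma~\ref{lem:udiff}. For small $h>0$, property \eqref{1.3} applied to both solutions — which localizes to each component $\partial\Omega_k$, legitimately for $h$ small since by the separation $A^{-1}$ and the bound $U$ the sets $X^h_{u(\cdot,t)}[\partial\Omega_k(t)]$ and $\partial\Omega_k(t+h)$ lie in pairwise disjoint neighborhoods of the $\partial\Omega_k(t)$ — gives, for any $\varepsilon>0$ and all sufficiently small $h$,
\[
d(t+h)\le\max_{k}d_H\!\left(X^h_{u(\cdot,t)}[\partial\Omega_k(t)],\,X^h_{\tilde u(\cdot,t)}[\partial\tilde\Omega_k(t)]\right)+\varepsilon h.
\]
For $x\in\partial\Omega_k(t)$ I pick $\tilde x\in\partial\tilde\Omega_k(t)$ with $|x-\tilde x|=\mathrm{dist}(x,\partial\tilde\Omega_k(t))\le d(t)$; Lemma~\ref{lem:udiff}, whose hypothesis $d_H(\partial\Omega,\partial\tilde\Omega)\le 1$ is exactly $d(t)\le 1$, then yields
\[
\bigl|(x+hu(x,t))-(\tilde x+h\tilde u(\tilde x,t))\bigr|\le|x-\tilde x|+hC_1\max\{|x-\tilde x|,d(t)\}^{1-2\alpha}\le d(t)+hC_1 d(t)^{1-2\alpha},
\]
and symmetrically with the roles reversed, so $d_H\!\left(X^h_u[\partial\Omega_k(t)],X^h_{\tilde u}[\partial\tilde\Omega_k(t)]\right)\le d(t)+hC_1 d(t)^{1-2\alpha}$. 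Letting $h\to 0^+$ and then $\varepsilon\to 0$ produces the Dini estimate
\[
D^+d(t):=\limsup_{h\to 0^+}\frac{d(t+h)-d(t)}{h}\le C_1\,d(t)^{1-2\alpha}\qquad\text{whenever }d(t)\le 1.
\]

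Finally I would integrate. Set $T_*:=\sup\{t\in[0,T]:d(s)\le 1\text{ for all }s\in[0,t]\}$, which is positive since $d$ is continuous with $d(0)=0$. For $t<T_*$ the Dini bound holds on $[0,t]$, so continuity of $d$ and $d(0)=0$ give $d(t)\le C_1\int_0^t d(s)^{1-2\alpha}\,ds$. Writing $m(t):=\sup_{s\le t}d(s)$ (continuous, nondecreasing, $m(0)=0$) and $\phi(t):=C_1\int_0^t m(s)^{1-2\alpha}\,ds$, one has $m\le\phi$ on $[0,t]$ and $\phi'=C_1 m^{1-2\alpha}\le C_1\phi^{1-2\alpha}$, hence on $\{\phi>0\}$ one gets $(\phi^{2\alpha})'=2\alpha C_1 (m/\phi)^{1-2\alpha}\le 2\alpha C_1$; since $\phi^{2\alpha}$ is continuous and vanishes at $0$, integrating this on $(0,t]$ yields $\phi(t)^{2\alpha}\le 2\alpha C_1 t$, i.e. $d(t)\le m(t)\le\phi(t)\le(2\alpha C_1 t)^{1/(2\alpha)}$ for $t<T_*$. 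Now put $C:=\max\{(2\alpha C_1)^{1/(2\alpha)},1\}$. If $T_*<\min\{C^{-2\alpha},T\}$, continuity and maximality of $T_*$ force $d(T_*)=1$, contradicting $d(T_*)=\lim_{t\nearrow T_*}d(t)\le CT_*^{1/(2\alpha)}<C(C^{-2\alpha})^{1/(2\alpha)}=1$; hence $T_*\ge\min\{C^{-2\alpha},T\}$ and \eqref{eq:d_bd} follows for all $t\in[0,\min\{C^{-2\alpha},T\}]$ (at the endpoint by continuity). The step I expect to be most delicate is the passage from \eqref{1.3}, which controls the motion of the union $\partial\Omega(t)$ only to first order in $h$, to the componentwise Dini estimate: it requires the uniform separation of the patches and the velocity bound to decouple the components and careful bookkeeping of the $o(h)$ errors before taking $h\to 0$; the application of Lemma~\ref{lem:udiff} and the degenerate Gronwall argument at $d(0)=0$ are then routine.
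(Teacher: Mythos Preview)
Your proposal is correct and follows essentially the same route as the paper: use Lemma~\ref{lem:udiff} together with \eqref{1.3} to derive the differential inequality $d'(t)\le C d(t)^{1-2\alpha}$ in the regime $d\le 1$, integrate it from $d(0)=0$, and close with a bootstrap. The only technical differences are that the paper first establishes that $d$ is Lipschitz (using \eqref{1.3} and \eqref{uLinfty}) and then works with its a.e.\ derivative, whereas you work directly with the upper Dini derivative; and that you are more explicit about decoupling the components when passing from the union form of \eqref{1.3} to the componentwise $d_H$ used in this section --- a point the paper handles only implicitly via the separation $\delta[\cdot]\ge A^{-1}$.
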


\begin{proof}
Let $u$ and $\tilde u$ be the velocity fields from \eqref{eq:velocity_law} corresponding to $\omega$ and $\tilde \omega$, respectively.  Let $C=C(\alpha, \sum_{k=1}^N|\theta_k|, A)$ be the constant from Lemma \ref{lem:udiff} and let $d(t) := d_H(\partial \Omega(t),  \partial \tilde \Omega(t))$.

We first claim that $d$ is Lipschitz on $[0,T]$, with some constant $\tilde C=\tilde C(\alpha, \sum_{k=1}^N|\theta_k|, A)<\infty$ which is  three times the right-hand side of \eqref{uLinfty}.  It is obviously sufficient to prove this for any $[a,b]\subseteq (0,T)$.  From \eqref{1.3} and \eqref{uLinfty} we have that for each $t\in[a,b]$ there is $h_t>0$ such that  $|d(t+h)-d(t)|\le \tilde Ch$ whenever $|h|<h_t$.  Thus we also have $|d(t)-d(s)|\le \tilde C|t-s|$ whenever $(t-h_t,t+h_t)\cap(s-h_s,s+h_s)\neq\emptyset$.  Since there is a finite sub-cover of $[a,b]$ from $\{(t-h_t,t+h_t)\}_{t\in[a,b]}$, it follows that $d$ is indeed $\tilde C$-Lipschitz on $[a,b]$.  It follows that $d$ is differentiable almost everywhere on $[0,T]$ and $d(t)-d(0)=\int_0^t d'(s)ds$ for $t\in[0,T]$.

Consider any $t>0$ such that $d(t)\in(0, 1]$ and $d'(t)$ exists.  Then \eqref{1.3} shows that for small $h>0$ and any $x\in\partial\Omega(t+h)$, there is $y_x\in \partial\Omega(t)$ such that $|y_x+hu(y_x,t)-x|\le o(h)$, with $o(h)$ uniform in $x$.  Then there are also $\tilde y_x\in \partial\tilde\Omega(t)$ and $\tilde x\in\partial\tilde\Omega(t+h)$ such that $|\tilde y_x-y_x|\le d(t)$ and $|\tilde y_x+h\tilde u(\tilde y_x,t)-\tilde x|\le o(h)$ (with a new uniform $o(h)$).  This and Lemma~\ref{lem:udiff} applied to $y_x$ and $\tilde y_x$ show that
\[
|\tilde x -x|\le d(t) + Cd(t)^{1-2\alpha}h+2o(h)
\]
 Since $o(h)$ is uniform in $x\in\partial\Omega(t+h)$, and since the same argument applies to $\partial\Omega$ and $\partial\tilde\Omega$ swapped, we obtain $d'(t)\le Cd(t)^{1-2\alpha}$ for each $t$ such that $d(t)\in(0,1]$ and $d'(t)$ exists.  Integrating this differential inequality (recall that $d$ is Lipschitz and $d(0)=0$) yields $d(t)\le (4\alpha Ct)^{1/2\alpha}$ on any time interval $[0,T']$ such that $\sup_{t\in[0,T']} d(t)\le 1$.  Hence the theorem holds with (the new) $C$ being (the old) $(4\alpha C)^{1/2\alpha}$.
\end{proof}

The next lemma, which is our last ingredient for the proof of uniqueness, says that the boundaries of two $H^3$ patches $\Omega,\tilde \Omega$ have constant speed parametrizations which differ (in $L^\infty$) by no more than $O(d_H(\partial\Omega, \partial\tilde\Omega))$, with the constant depending on $\vertiii{\Omega}_{H^3}+ \vertiii{\tilde \Omega}_{H^3}$.  

\begin{lemma}
\label{lem:arcl_diff}
Let $\Omega, \tilde \Omega \subseteq \mathbb{R}^2$ be two bounded open sets whose boundaries are simple closed curves, and let $\vertiii{\Omega}_{H^3}+\vertiii{\tilde \Omega}_{H^3}\le A$ for some $A\ge 1$.  There is a universal constant $C<\infty$ such that if $d_H(\partial\Omega, \partial\tilde\Omega) \leq 1$, then there exist some constant speed parametrizations $z$ and $\tilde z$ of $\partial\Omega$ and $\partial \tilde \Omega$, respectively, such that
\begin{equation}
\label{eq:goal_arcl}
\|z-\tilde z\|_{L^\infty} \leq CA^7 d_H(\partial\Omega, \partial\tilde\Omega).
\end{equation}
\end{lemma}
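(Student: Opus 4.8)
\emph{The plan is to reduce the problem to one patch at a time and then match the two constant-speed parametrizations pointwise via the graph representation from Lemma~\ref{lem:func_diff}.} Fix $k$ and write $\Omega=\Omega_k$, $\tilde\Omega=\tilde\Omega_k$; let $h:=d_H(\partial\Omega,\partial\tilde\Omega)$. By Lemma~\ref{lem:udiff}'s preliminary remarks (or directly from the definitions), each of $\partial\Omega,\partial\tilde\Omega$ has arc-length comparable to $A$, and in fact $|\partial\Omega|,|\partial\tilde\Omega|\ge 2\pi/A$; denote by $2\pi L$ and $2\pi\tilde L$ their arc-lengths. First I would handle the trivial case $h\ge R/40$ with $R=R(A)=(4C_0A)^{-3}$ from Lemma~\ref{lem:func_diff}: then $\|z-\tilde z\|_{L^\infty}\le \|z\|_{L^\infty}+\|\tilde z\|_{L^\infty}\le CA\le CA\cdot(40h/R)\le CA^4 h$, which is even stronger than \eqref{eq:goal_arcl}. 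So assume $h<R/40$ from now on.

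\emph{Next I construct the matching.} Fix a constant-speed parametrization $z:\mathbb T\to\partial\Omega$ with $|z'|\equiv L$. For each $\xi$, apply Lemma~\ref{lem:func_diff} at the point $P=z(\xi)\in\partial\Omega$: since $h\le R/20$, in the $(w_1,w_2)$ coordinates centered at $P$ with axes $n_P^\perp,n_P$, the arc $\partial\tilde\Omega\cap B(P,\tfrac{19}{20}R)$ is a graph $w_2=g(w_1)$ with $|g'|\le 1$ and $|g(0)-f(0)|=|g(0)|\le 2h$ (here $f$ is the graph for $\partial\Omega$, with $f(0)=0$). Define $\Psi(\xi)\in\partial\tilde\Omega$ to be the point $(w_1,g(w_1))$ closest to $P$ along this graph; since $|g'|\le 1$ and $|g(0)|\le2h$, one gets $|\Psi(\xi)-z(\xi)|\le \sqrt2\,|g(0)|\le 3h$ and $\Psi(\xi)$ lies within $3h$ of $P$. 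A standard argument using continuity of $n_{z(\xi)}$ in $\xi$ (Lemma~\ref{lem:regularity}(c) with $\gamma=1$) and the fact that $h$ is small shows that $\Psi:\mathbb T\to\partial\tilde\Omega$ is continuous, injective, and surjective, hence a homeomorphism preserving orientation. It therefore is \emph{some} parametrization of $\partial\tilde\Omega$ with $\|\Psi-z\|_{L^\infty}\le 3h$. The one remaining task is to replace $\Psi$ by a genuine constant-speed parametrization $\tilde z$ without destroying this bound too much.

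\emph{Reparametrizing $\Psi$ to constant speed.} Let $s\mapsto \tilde\zeta(s)$ be the constant-speed parametrization of $\partial\tilde\Omega$, so $\tilde\zeta=\Psi\circ\varphi$ for an increasing homeomorphism $\varphi:\mathbb T\to\mathbb T$. We want to bound $\|\tilde\zeta\circ\varphi^{-1}-\tilde z\|$ — equivalently, to choose $\tilde z:=\tilde\zeta$ and estimate $\|z-\tilde z\circ\psi\|$ for the correct shift; more simply, set $\tilde z:=\tilde\zeta$ composed with the unique rigid rotation of $\mathbb T$ making $\tilde z(0)$ the point of $\partial\tilde\Omega$ closest to $z(0)$. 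Then $\|z-\tilde z\|_{L^\infty}$ is controlled by $3h$ plus the maximal displacement along $\partial\tilde\Omega$ between $\Psi(\xi)$ and $\tilde z(\xi)$, i.e. by the arc-length distortion of $\Psi$. The key quantitative input is that $\Psi$ is $CA^2$-bi-Lipschitz as a map of arc-length: indeed $|z'|\equiv L\ge A^{-1}$, while $\frac{d}{d\xi}\big(\text{arclength of }\Psi\big)$ is controlled using $|g'|\le 1$ together with $|\tilde z'|$ being constant and $\|\partial\Omega\|_{C^1},\|\partial\tilde\Omega\|_{C^1}\le CA$, giving that the arc-length parameters of $z$ and of $\Psi$ differ by a $CA^2$-bi-Lipschitz change of variable. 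Since the \emph{total} arc-lengths $2\pi L$ and $2\pi\tilde L$ are both $\le CA$ and both $\ge 2\pi/A$, such a bi-Lipschitz reparametrization moves points by at most $CA^{\text{(some power)}}\cdot$(discrepancy), and tracking the exponents carefully — the worst factor coming from $\tilde L^{-1}\le A$ appearing when converting arc-length back to the $\mathbb T$-parameter, combined with the $CA^2$ bi-Lipschitz constant and another $CA$ from $\|\tilde z'\|=\tilde L\le CA$ — yields $\|z-\tilde z\|_{L^\infty}\le CA^7 h$. The bookkeeping of these powers of $A$ is routine but must be done; I expect \emph{this exponent chase, together with verifying that $\Psi$ is a genuine homeomorphism} (not merely locally well-defined), to be the main obstacle, since the local graph from Lemma~\ref{lem:func_diff} only gives information in a ball of radius $\sim R(A)$ around each point and one must patch these together consistently around the whole closed curve. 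Finally, taking the maximum over $k=1,\dots,N$ gives \eqref{eq:goal_arcl}.
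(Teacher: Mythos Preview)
There is a genuine gap in the reparametrization step. Knowing that $\Psi$ is $CA^2$-bi-Lipschitz in arc-length is far too crude: it only tells you that corresponding arcs have lengths comparable up to a \emph{multiplicative} factor $CA^2$, which would allow the total arc-lengths $|\partial\Omega|$ and $|\partial\tilde\Omega|$ to differ by an amount of order $A$ (not of order $h$). When you then pass to the constant-speed parametrization $\tilde z$, the displacement $|\Psi(\xi)-\tilde z(\xi)|$ is controlled by the \emph{additive} discrepancy between the arc-length functions, and a mere bi-Lipschitz bound gives no smallness here. Your ``discrepancy'' in the final bookkeeping is never identified, and there is no mechanism in your argument that produces a factor of $h$ at this stage.

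What the paper actually proves (and what you are missing) is a \emph{near-isometry} estimate for the correspondence: for nearby $P,Q\in\partial\Omega$,
\[
\left|\frac{|F(P)-F(Q)|}{|P-Q|}-1\right|\le C A^6 h,
\]
where $F(P)$ is the point of $\partial\tilde\Omega$ on the normal line through $P$. This is obtained by first bounding $|f''|,|g''|\le C A^3$ (using that constant-speed $H^3$ parametrizations are $C^2$ and $|z'|\ge A^{-1}$), then interpolating $|f-g|\le 2h$ against $|f''-g''|\le CA^3$ to get $|f'-g'|\le CA^{3/2}\sqrt h$, and finally combining these to compare chord lengths. The near-isometry estimate integrates to $|\tilde L(F(P),F(Q))-L(P,Q)|\le CA^7 h$ for all $P,Q$, which is exactly the additive arc-length control needed to pin down $\tilde z$. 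Your sketch never invokes the $C^2$ bound on the boundaries, and without it the argument cannot close.
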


We postpone the proof of Lemma~\ref{lem:arcl_diff} until after the following theorem, which is our main uniqueness result.

\begin{theorem}
\label{thm:unique_patch}
Let $\alpha\in(0,\frac 1{24})$, let $\theta_1,\dots,\theta_N\in\Rm\setminus\{0\}$, and let  $\omega(\cdot,t)= \sum_{k=1}^N \theta_k \chi_{\Omega_k(t)}$ and $\tilde \omega(\cdot,t) = \sum_{k=1}^N \theta_k \chi_{\tilde \Omega_k(t)}$ be two $H^3$ patch solutions to  \eqref{sqg}-\eqref{eq:velocity_law}  on some interval $[0,T)$.  If $\omega(\cdot,0)=\tilde\omega(\cdot,0)$, then $\omega(\cdot, t) = \tilde \omega(\cdot, t)$ for all $t\in [0,T)$.
\end{theorem}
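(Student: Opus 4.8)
The plan is to show that any $H^3$ patch solution $\omega$ coincides with the canonical patch solution $\hat\omega$ produced from the contour equation in Corollary~\ref{C.5.7} (applied with initial value $\omega(\cdot,0)$); applying this also to $\tilde\omega$ gives $\omega=\tilde\omega$. First I would reduce to a short time interval: the set $S:=\{t\in[0,T):\Omega_k(t)=\tilde\Omega_k(t)\text{ for all }k\}$ is nonempty ($0\in S$) and closed in $[0,T)$ (continuity of $\partial\Omega_k,\partial\tilde\Omega_k$ in Hausdorff distance, together with the fact that two equal simple closed curves have the same interior), so by a standard connectedness argument it suffices to prove that each $\tau\in S$ has a right-neighborhood contained in $S$. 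After a time translation this becomes the statement that two $H^3$ patch solutions with the same initial data agree on some interval $[0,\epsilon)$ whose length depends only on $\alpha$, $N\sum_{k}|\theta_k|$ and the quantity $\vertiii{\Omega(0)}_{H^3}+\vertiii{\tilde\Omega(0)}_{H^3}$, which is finite by Remark~2 after Definition~\ref{D.5.6}. On such an interval all of $\omega,\tilde\omega,\hat\omega$ have uniformly bounded $\vertiii{\cdot}_{H^3}$ (for $\hat\omega$ by Corollary~\ref{C.5.7}(b)), and I fix a bound $A$.

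The heart of the matter is to produce, for the given $H^3$ patch solution $\omega$, a family $Z=\{z_k(\cdot,t)\}$ of parametrizations of $\partial\Omega_k(t)$ that solves the contour equation \eqref{eq:contour2} with $z_k(\cdot,0)$ equal to some chosen parametrization $z_{0k}$ of $\partial\Omega_k(0)$. Once this is done, Theorem~\ref{thm:local} shows $Z$ is the unique contour solution with that initial datum, Proposition~\ref{prop:equal} shows the curves it parametrizes do not depend on the choice of $z_{0k}$ and hence coincide with the $\partial\hat\Omega_k(t)$, so $\omega=\hat\omega$. To build $Z$ I would start from a reference family $p_k(\cdot,t)$ of parametrizations of $\partial\Omega_k(t)$ that is under control, e.g. the constant-speed parametrization of Definition~\ref{D.1.0} (uniformly bounded in $H^3$ since $\|\Omega_k(t)\|_{H^3}$ is, and Lipschitz in $t$ by \eqref{1.3} and Lemma~\ref{lemma:uniform_u_bound}), with the phase pinned continuously in $t$. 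Since $p_k(\cdot,t)$ moves with $\partial\Omega_k(t)$, one has $\partial_t p_k=u(p_k,t)+c_k(\xi,t)\,\partial_\xi p_k$ for a scalar $c_k$ fixed by the constant-speed constraint; one then seeks a reparametrization $z_k=p_k(\phi_k(\cdot,t),t)$ making $Z$ solve \eqref{eq:contour2}, which by matching tangential components is equivalent to the coupled transport system $\partial_t\phi_k=\beta_k[Z](\xi,t)\,\partial_\xi\phi_k-c_k(\phi_k,t)$, where $\beta_k[Z]$ is the tangential coefficient of $S_k[Z]-u(z_k(\cdot,t),t)$ (with $S_k$ the contour nonlinearity, whose normal component at $z_k(\xi,t)$ is already $u\cdot n$ by the derivation in Section~\ref{sec:derivation}). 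I would solve this system for the $\phi_k$ by a fixed-point / characteristics argument, using the Lipschitz bounds for $S_k$ on bounded sets of $H^3$ curves from Section~\ref{sec:lwp} (Lemma~\ref{diff_H} and the a priori estimates of Section~\ref{sec:apriori}) together with Lemma~\ref{lem:udiff}, and check that the resulting $\phi_k(\cdot,t)$ is a diffeomorphism of $\mathbb T$ for a time depending only on $\alpha,N\sum_k|\theta_k|,A$, so that $z_k$ is a genuine $H^3$ parametrization of $\partial\Omega_k(t)$.

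The main obstacle is exactly this reparametrization step, and the difficulty is structural. The velocity $u$ is only $C^{1-2\alpha}$ (Lemma~\ref{lemma:uniform_u_bound}), which is reflected in the fact that the crude comparisons one is first tempted to try — comparing the Lagrangian flows of $\omega$ and $\hat\omega$ (available via Proposition~\ref{P.1.3} since $\alpha<\tfrac14$), or running a Gr\"onwall estimate directly on $d(t):=d_H(\partial\Omega(t),\partial\hat\Omega(t))$ — only close as $d'(t)\le C\,d(t)^{1-2\alpha}$, which with $d(0)=0$ gives nothing better than $d(t)\le Ct^{1/2\alpha}$; this is precisely Lemma~\ref{lem:onestep}, and it is \emph{not} enough to conclude uniqueness. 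The only place the extra cancellation needed to close a genuine Gr\"onwall is available is the contour equation, where the tangential term together with the symmetrizations of Section~\ref{sec:lwp} makes the $L^2$-difference estimate \eqref{eq:l2diff} close as $\tfrac{d}{dt}\|W\|_{L^2}^2\le C\|W\|_{L^2}^2$ — which is why one is essentially forced to reduce the patch solution to a contour solution rather than argue purely geometrically. The delicate points are then: (i) the tangential coefficients $c_k$ and $\beta_k[Z]$ inherit only limited (H\"older, not Lipschitz) regularity in $\xi$ from $u$, so the transport system for the $\phi_k$ must be handled in a function space adapted to this, exploiting that the curves $z_k$ it produces are nevertheless $H^3$ (so Theorem~\ref{thm:local}'s uniqueness applies to them); (ii) one must verify the resulting $z_k$ genuinely parametrizes $\partial\Omega_k(t)$ (immediate from $z_k$ being a reparametrization of $p_k$) and that the time of existence of the reparametrization is uniform in the relevant parameters. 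The quantitative inputs Lemma~\ref{lem:onestep} (crude closeness of the boundaries) and Lemma~\ref{lem:arcl_diff} (closeness of constant-speed parametrizations, controlled by $d_H$) are used to initialize this construction and to ensure it runs on an interval depending only on $\alpha,N\sum_k|\theta_k|,A$, after which the short-time statement, hence the theorem, follows.
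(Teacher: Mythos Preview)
Your overall reduction (compare $\omega$ with the canonical contour solution $\hat\omega$ from Corollary~\ref{C.5.7}, then do the same for $\tilde\omega$, and work locally in time) is correct and matches the paper. But the core step you propose --- constructing a reparametrization $\phi_k$ so that $z_k=p_k(\phi_k(\cdot,t),t)$ solves the contour equation --- has a genuine gap. First, the definition of an $H^3$ patch solution only gives \eqref{1.3}, a Hausdorff-distance statement; it does not give you a time-differentiable parametrization $p_k(\cdot,t)$, so the identity $\partial_t p_k=u(p_k,t)+c_k\,\partial_\xi p_k$ is not available a priori. Second, even granting this, the coefficient $c_k$ (and hence the transport equation for $\phi_k$) is only $C^{1-2\alpha}$ in the $\phi$-variable because $u$ is only $C^{1-2\alpha}$; your ``fixed-point / characteristics'' step would need existence and uniqueness of a diffeomorphism solving a quasilinear transport equation with merely H\"older coefficients, which is exactly the regularity obstruction you flagged but did not overcome. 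You acknowledge this as ``delicate'' but never actually close it, so the argument as written does not go through.

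The paper bypasses this difficulty entirely and does \emph{not} attempt to realize $\omega$ as a contour solution. Instead, it observes that the crude bound $d_H(\partial\Omega(t),\partial\Omega^{s}(t))\le C(t-s)^{1/2\alpha}$ from Lemma~\ref{lem:onestep} is in fact sufficient, once combined with the $L^2$ stability \eqref{eq:l2diff} of the contour equation, via a telescoping trick. One partitions $[0,T_1]$ into $J$ subintervals with endpoints $s_j=jT_1/J$ and, for each $j$, launches the canonical contour solution $\omega^{s_j}$ from $\omega(\cdot,s_j)$. Lemma~\ref{lem:onestep} on $[s_{j-1},s_j]$ gives $d_H(\partial\Omega^{s_j}(s_j),\partial\Omega^{s_{j-1}}(s_j))\le CJ^{-1/2\alpha}$; Lemma~\ref{lem:arcl_diff} turns this into an $L^2$-bound on constant-speed parametrizations, and then \eqref{eq:l2diff} propagates that bound forward to time $T_1$ with only an exponential (bounded) factor. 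Lemma~\ref{lem:l2_symm} converts the $L^2$-bound into $|\Omega^{s_j}(T_1)\triangle\Omega^{s_{j-1}}(T_1)|\le \bar C J^{-1/2\alpha}$. Telescoping over $j=1,\dots,J$ yields $|\Omega(T_1)\triangle\hat\Omega(T_1)|\le \bar C J^{1-1/2\alpha}\to 0$ as $J\to\infty$, since $2\alpha<1$. This is the missing idea: the superlinear short-time rate $t^{1/2\alpha}$, while useless for a single Gr\"onwall, sums favorably over many short steps when combined with the genuine Gr\"onwall \eqref{eq:l2diff} available for contour solutions.
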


Let us first provide an overview of the argument (see also Figure \ref{fig}). Corollary~\ref{C.5.7} shows that there is a unique patch solution (with a given initial data) which can be obtained via the contour equation. 
It is then sufficient to prove the result with $\tilde \omega$ being this solution.
We will assume that there
exists another $H^3$ patch solution $\omega\neq\tilde\omega$
and arrive at a contradiction. The idea is to use a sequence of auxiliary $H^3$ patch solutions $\{\omega^{s_j}\}_{j=1}^J$, 
obtained via Corollary~\ref{C.5.7} with initial data $\omega^{s_j}(\cdot,s_j)=\omega(\cdot,s_j)$, where $s_j=\frac jJ T_1$ and $T_1\le 1$ is a fixed time. The first key step will be to apply Lemma~\ref{lem:onestep} to show that $\omega(\cdot,s_j)$ ($=\omega^{s_j}(\cdot,s_j)$) and $\omega^{s_{j-1}}(\cdot,s_j)$ are $J^{-1/2\alpha}$ close (in Hausdorff distance of their patch boundaries, and hence their constant speed parametrizations are also $J^{-1/2\alpha}$ close due to Lemma~\ref{lem:arcl_diff}). Next, since the $\omega^{s_j}$ were obtained via the contour equation, 
the $L^2$ stability estimate \eqref{eq:l2diff} applies to them up to time $T_1$ and allows us to show that $\omega^{s_j}(\cdot,T_1)$ and $\omega^{s_{j-1}}(\cdot,T_1)$ are  $J^{-1/2\alpha}$ close as well. The latter is in terms of the $L^2$ distance of some parametrizations of the curves, but Lemma~\ref{lem:l2_symm} allows us to transfer this into the same estimate for the area of the symmetric difference of the corresponding patches.  After telescoping the latter, we find that the   area of such a symmetric difference corresponding to $\omega(\cdot,T_1)$ and $\tilde \omega(\cdot,T_1)$ is bounded above by $O(J^{1-1/2\alpha})$. Taking $J\to\infty$ and using $2\alpha<1$ (and then applying this argument to arbitrary $T_1$), we find that $\omega=\tilde\omega$, which is a contradiction with our hypothesis.  
\begin{figure}[htbp]
\begin{center}
\includegraphics[scale=1.3]{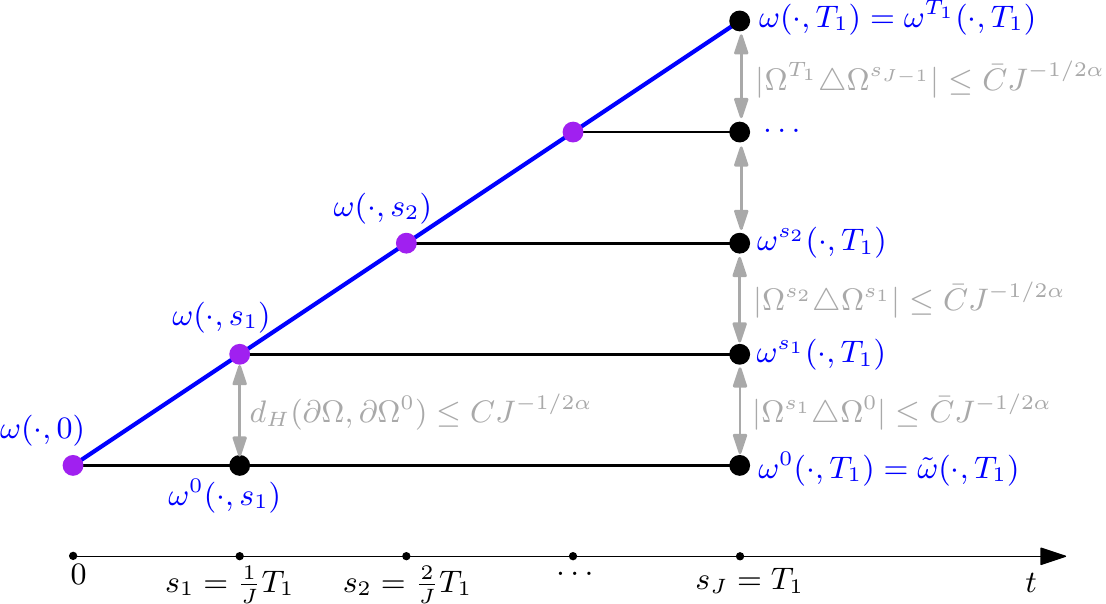}
\caption{An abstract phase space illustration of the proof of Theorem \ref{thm:unique_patch}. \label{fig}}
\end{center}
\end{figure}

\begin{proof}
%
%

%
Obviously, it suffices to prove the result in the case of $\tilde \omega$ being the solution from Corollary~\ref{C.5.7}.
Assume the contrary and let $T':=\inf \{t\in(0,T) \,:\, \omega(\cdot, t) \neq \tilde \omega(\cdot, t)\}<T$.  Without loss we can assume that $T'=0$.

With the notation from Definition \ref{D.5.6}, let $B:=\sup_{t\in[0,T/2]} \vertiii{\Omega(t)}_{H^3}$ ($\ge 1$), and for each $s\in[0,\frac T2]$, let $\omega^s(\cdot,t) = \sum_{k=1}^N \theta_k \chi_{\Omega^s_k(t)}$ be the (unique) solution from Corollary~\ref{C.5.7} with initial condition $\omega^s(\cdot,s):=\omega(\cdot,s)$ at time $s$.  In particular, $\omega^0=\tilde\omega$.  Let $A:=2CB^8\ge B$ (where $C\ge 1$ is from from Corollary \ref{C.5.7}(b)), then let $T_0:=T(\alpha,N\sum_{k=1}^N|\theta_k|,A)>0$ (which is decreasing in the last variable) be from Corollary \ref{C.5.7}(b)  and consider any   $T_1\in(0, \min\{T_0,\tfrac T2,1\}]$.  Thus Corollary \ref{C.5.7}(b) shows that  $\omega^s$ exists on $[s,T_1]$ for each $s\in[0,T_1]$ and satisfies $\sup_{t\in[s,T_1]} \vertiii{\Omega^s(t)}_{H^3}\le \frac A2$.

For any $J\in\mathbb N$, let $s_j:=\frac jJ T_1$ for $j=0,\dots,J$.  Let $C=C(\alpha, \sum_{k=1}^N|\theta_k|, A)<\infty$ be from Lemma \ref{lem:onestep} and also larger than the universal $C$ in Lemmas \ref{lem:l2_symm} and \ref{lem:arcl_diff}, and consider any $J\ge C^{2\alpha}$.  Then Lemma \ref{lem:onestep} applied to $\omega$ and $\omega^{s_{j-1}}$ (with starting time $s_{j-1}$) and $s_j-s_{j-1}= \frac {T_1}J\le \frac 1J$ imply
\[
 d_H\left(\partial \Omega(s_{j}),  \partial \Omega^{s_{j-1}}(s_{j})\right) \leq C J^{-1/{2\alpha}}
\]
for $j=1,\dots,J$. Since $\Omega(s_{j})=\Omega^{s_j}(s_{j})$, it follows from Lemma \ref{lem:arcl_diff} that the families $\partial\Omega^{s_j}(s_{j})$ and $\partial \Omega^{s_{j-1}}(s_{j})$ have constant speed parametrizations $Z_j(s_j)$ and $\tilde Z_j(s_j)$ satisfying
\[
\|Z_j(s_j)-\tilde Z_j(s_j)\|_{L^2} \le \sqrt{2\pi N} \|Z_j(s_j)-\tilde Z_j(s_j)\|_{L^\infty} \le \sqrt{2\pi N} C^2A^7 J^{-1/{2\alpha}}.
\]
We have
\[
\vertiii{Z_j(s_j)}+\vertiii{\tilde Z_j(s_j)}=\vertiii{\Omega^{s_j}(s_j)}_{H^3}+\vertiii{\Omega^{s_{j-1}}(s_j)}_{H^3}\le A,
\]
so that Theorem \ref{thm:local} yields solutions $Z_j$ and $\tilde Z_j$ to \eqref{eq:contour2} on the time interval $[s_j,s_j+T_0]\supseteq[s_j,T_1]$ and with initial data $Z_j(s_j)$ and $\tilde Z_j(s_j)$, respectively.  Theorem \ref{thm:local}  also shows that $\sup_{t\in[s_j,T_1]} (\vertiii{Z_j(t)}+\vertiii{\tilde Z_j(t)})\le 4A$, and then \eqref{eq:l2diff} with $W:=Z_j-\tilde Z_j$ yields
\[
\|Z_j(T_1)-\tilde Z_j(T_1)\|_{L^2} \le e^{C(\alpha) N\sum_{k=1}^N|\theta_k| (4A)^3T_1}\|Z_j(s_j)-\tilde Z_j(s_j)\|_{L^2}\le \tilde C J^{-1/{2\alpha}},
\]
where $\tilde C:= \sqrt{2\pi N} C^2A^7e^{C(\alpha) N\sum_{k=1}^N|\theta_k| (4A)^3T_1}$.  This and Lemma \ref{lem:l2_symm} show that with $\bar C:=C4A\sqrt N \tilde C$  we have
\[
|\Omega^{s_j}(T_1) \triangle \Omega^{s_{j-1}}(T_1)| \le \bar C J^{-1/{2\alpha}}.
\]
This holds for $j=1,\dots,J$, hence we obtain by telescoping,
\[
|\Omega(T_1) \triangle \tilde \Omega(T_1)|=|\Omega^{T_1}(T_1) \triangle \Omega^{0}(T_1)| \le \bar C J^{1-1/{2\alpha}}.
\]
Since $\bar C$ is independent of $J$ and $2\alpha<1$, we take $J\to\infty$ to get $|\Omega(T_1) \triangle \tilde \Omega(T_1)|=0$.  Hence $\omega(\cdot,T_1)=\tilde\omega(\cdot,T_1)$ for each $T_1\in(0, \min\{T_0,\tfrac T2,1\}]$, which is a contradiction with our hypothesis $\inf \{t\in(0,T) \,:\, \omega(\cdot, t) \neq \tilde \omega(\cdot, t)\}=0$.
\end{proof}

\begin{proof}[Proof of Lemma \ref{lem:arcl_diff}]
Let $C_0$ and $R := (4 C_0 A)^{-3}$ be from Lemma \ref{lem:func_diff}.
We can assume without loss that $h:=d_H(\partial\Omega, \partial\tilde\Omega) \le \frac{R^2}{4}$, because otherwise
the result holds with any $C\ge 8(4C_0)^6$ due to $\Omega,\tilde\Omega\subseteq B(0,A)$.
%

Since $ \frac{R^2}{4} < \frac{R}{20}$ due to $R\leq \frac 1{4^3}$, we can apply Lemma \ref{lem:func_diff} to $\Omega$ and $\tilde \Omega$. It shows that in the coordinate system $(w_1, w_2)$ centered at any given $P\in\partial\Omega$ and with axes $n_P^\perp$ and $n_P$, both $\partial \Omega \cap B(P,R)$ and $\partial \tilde \Omega \cap B(P,\frac{19}{20}R)$ are graphs $w_2 = f(w_1)$ and $w_2 = g(w_1)$, respectively, such that for any $|w_1| \leq \frac R2$ we have $|f'(w_1)|\leq 1$, $|g'(w_1)|\leq 1$, and
\begin{equation}\label{eq:3eps}
|f(w_1) - g(w_1)| \leq 2h.
\end{equation}

We also claim that $|f''(w_1)| \leq 2C_0 A^3$ and $|g''(w_1)| \leq 2C_0 A^3$ for all $|w_1| \leq \frac{R}{2}$.  Indeed, let $y(\xi)$ be $z(\xi)$ in the new coordinates $(w_1,w_2)$.   Then for any $\xi\in\mathbb T$ such that $y(\xi)\in B(0,R)$ (i.e., $z(\xi)\in B(P,R)$), we have $f(y_1(\xi)) = y_2(\xi)$.  Thus $f'(y_1(\xi))= \frac{y_2'(\xi)}{y_1'(\xi)}$ and
\[
\begin{split}
f''(y_1(\xi)) &= \frac{\left(y_2'(\xi)/y_1'(\xi)\right)'}{y_1'(\xi)} = \frac{y_2''(\xi) y_1'(\xi) - y_1''(\xi) y_2'(\xi)}{y_1'(\xi)^3} = \frac{y_2''(\xi) - y_1''(\xi) f'(y_1(\xi))}{y_1'(\xi)^{2}}.
\end{split}
\]
If, in addition, $|y_1(\xi)| \leq \frac{R}{2}$, then we have $y_1'(\xi)\geq \frac{1}{\sqrt{2}A}$ because $|y'(\xi)| \geq \frac{1}{A}$ (due to $F[y]=F[z]\le A$) and $|\frac{y_2'(\xi)}{y_1'(\xi)}| =|f'(y_1(\xi))|\leq 1$.  This, $|f'(y_1(\xi))|\leq 1$, and $\|y''\|_{L^\infty}=\|z''\|_{L^\infty}\leq C_0 A$ now yield $|f''(y_1(\xi))| \leq 2C_0 A^3$.  The bound for $g$ is obtained identically.

Next, we claim that for $|w_1| \leq \frac{R}{2}$ we have
\begin{equation}
\label{eq:dfdg}
|f'(w_1) - g'(w_1)| \leq 8 C_0 A^{3/2} \sqrt h.
\end{equation}
If this is violated for some $|w_1^0| \leq \frac{R}{2}$ (without loss we can assume $w_1^0\le 0$ as well as $f'(w_1^0) - g'(w_1^0) > 8 C_0 \sqrt{ A^3 h} $),  the estimate $|f''-g''| \leq 4C_0 A^3$ on $[-\frac{R}{2}, \frac{R}{2}]$ yields
\[
f'(w_1)-g'(w_1) > 4 C_0 A^{3/2} \sqrt h
\]
for  all $w_1 \in [w_1^0, w_1^1]$, where $w_1^1 := w_1^0 + A^{-3/2} \sqrt h$ ($\le \frac R2$ because $w_1^0\le 0$, $A\ge 1$,  and $h\le \frac{R^2}{4}$).  Then $C_0\ge 1$ shows
\[
f(w_1^1) - g(w_1^1) > f(w_1^0)-g(w_1^0) +4 C_0 A^{3/2} \sqrt h  A^{-3/2} \sqrt h \geq -2h + 4C_0h\ge 2h,
\]
contradicting \eqref{eq:3eps}.  Thus \eqref{eq:dfdg} holds.

For any $P \in \partial \Omega$, let $F(P)\in\partial \tilde \Omega\cap B(P,\frac{19}{20}R)$ be such that $(F(P)-P)\cdot n_P^\perp = 0$.  Lemma~\ref{lem:func_diff} shows that such $F(P)$ exists and is unique, $|F(P)-P| \leq 2h$, and $F(P)$ is continuous in $P$ (the latter because of continuity of $f'$ and the bound $|g'|\le 1$ on $[-\frac R2,\frac R2]$).

In addition,  $F$ is injective. Indeed, assume that $F(P) = F(Q)=:S$ for some distinct $P,Q\in \partial \Omega$, and also without loss that $|S-P|\ge |S-Q|$.  Then $|P-Q|\le 2|P-S|\le 4h<\tfrac R2$, so Lemma \ref{lem:regularity}(c) with $\gamma=1$ (together with $L\geq \frac{1}{A}$, as before) yields
\begin{equation}
\label{eq:nqnp}
\sin\angle PSQ = n_Q \cdot n_P^{\perp} = (n_Q-n_P)\cdot n_P^{\perp} \leq |n_Q-n_P| \leq 2C_0A^3|P-Q|.
\end{equation}
We also have $\angle PSQ\le\frac\pi 4$ (due to $|f'|\le 1$ on $[-\frac R2,\frac R2]$ in Lemma \ref{lem:func_diff}), so $|S-P|\ge |S-Q|$ and $|f'|\le 1$ imply $\angle PQS\in [\frac{3\pi}8,\frac{3\pi}4]$. The law of sines now yields
\[
|S-P| = \frac{|P-Q| \sin\angle PQS}{\sin\angle PSQ}  \geq \frac{\sin \angle PQS}{2C_0 A^3} \geq \frac{1}{4 C_0A^3} >R,
\]
a contradiction with $|S-P|=|F(P)-P| \leq 2h < R/4$.  Hence $F:\partial \Omega\to \partial \tilde \Omega$ is injective.  Since it is also continuous and $\partial\Omega,\partial\tilde\Omega$ are both simple closed curves, $F$ is a bijection.

 Next, we claim that  for any distinct $P,Q \in \partial \Omega$ with $|P-Q| \leq h$, we have with $C_1:=300C_0^2$,
 \begin{equation}
 \label{eq:gooool}
\left| \frac{|F(P)-F(Q)|}{|P-Q|} -1 \right| \leq C_1A^6 h.
 \end{equation}
 Without loss assume that $P$ is the origin and $n_P=(0,1)$ (so that $(F(P))_1=0$).  Let $f,g$ be from Lemma \ref{lem:func_diff} and recall that we proved above that  $|f''| \leq 2C_0 A^3$ on $[-\frac{R}{2},\frac{R}{2}]$.  This and $f'(0)=0$ yield
 $|f'|\le 2C_0 A^3|Q|$ on $[-|Q|,|Q|]$, hence
 $ \frac{|Q_2|}{|Q_1|}\leq 2C_0 A^3|Q|$
 and
 $\frac {|(n_Q)_1|}{|(n_Q)_2|}\leq 2C_0 A^3|Q|$.  Since $|Q_1|\le h$ and $|(n_Q)_2|\le 1$, it follows that
 \begin{equation}\label{qaux827}
(1-2C_0A^3h)|Q|\le  |Q_1|\le |Q| \qquad\text{and}\qquad |(n_Q)_1|\le 2C_0A^3|Q|.
 \end{equation}
 This and $|F(Q)-Q|\le 2h$ yield
 \[
 |(F(Q)-Q)_1|=|F(Q)-Q||(n_Q)_1| \le 4C_0A^3h|Q|.
 \]
By using $(F(P))_1=0$, an elementary inequality $||a|-|b||\leq |a-c|+||c|-|b||$, and the first bound in \eqref{qaux827}, we obtain 
\begin{equation} \label {5.99}
\big| |(F(P)-F(Q))_1| -|Q| \big| \le  |(F(Q)-Q)_1|+\big| |Q_1|-|Q| \big| \le 6C_0A^3h|Q|.
\end{equation}
 From $f'(0)=0$ and \eqref{eq:dfdg} we also have $|g'(0)|\le 8C_0A^{3/2}\sqrt h$, which together with $|g''| \leq 2C_0 A^3$ on $[-\frac{R}{2},\frac{R}{2}]$ (proved above) yields $|g'| \leq 18C_0 A^3\sqrt h$ on $[-5h,5h]$.  Since
\[
|F(P)-F(Q)|\le |F(P)-P|+|P-Q|+|Q-F(Q)|\le 2h+h+2h=5h,
\]
it follows that $\frac{|(F(P)-F(Q))_2|}{|(F(P)-F(Q))_1|}\le 18C_0A^3\sqrt h$.
Since $6C_0A^3h\le \frac 1{10}$ (due to $h\le \frac{R^2}4$, the definition of $R$, and $C_0,A\ge 1$), it follows from this and \eqref{5.99} that
\[
|(F(P)-F(Q))_2| \le 20C_0A^3\sqrt h|Q|.
\]
But this and \eqref{5.99} now yield (also using $6C_0A^3h\le \frac 1{10}$ and $\sqrt{1+b}\le 1+ \frac b2$ for $b\ge 0$)
\[
\big| |F(P)-F(Q)|-|Q| \big| \le \left|\left(1+13C_0A^3h+400 C_0^2A^6h\right)^{1/2}-1 \right||Q| \le 207C_0^2A^6h |Q|,
\]
so \eqref{eq:gooool} follows because $P$ is the origin.

%

For $P,Q \in \partial \Omega$ (or $\partial \tilde \Omega$), we now define $L(P,Q)$ (or $\tilde L(P,Q)$) to be the arc-length along $\partial\Omega$ (or $\partial\tilde \Omega$) from $P$ to $Q$, in the counter-clockwise direction. For any $P,Q \in \partial \Omega$, one can obtain $L(P,Q)$ as the  limit as $J\to\infty$  of lengths of polygonal paths $P=P_0\to P_1\to\cdots\to P_J$, with  each $P_{j+1}$ lying on the arc $P_jQ$ of  $\partial\Omega$ and all the segment lengths $|P_{j+1}-P_j|$ less than some $l_J$ which satisfies $\lim_{J\to\infty} l_J=0$.
   Then $\tilde L(F(P),F(Q))$ is the limit of the lengths of the paths $F(P)=F(P_0)\to F(P_1)\to\cdots\to F(P_J)=F(Q)$ because $|P_{j+1}-P_j|\le 2l_J$ for all large $J$, due to \eqref{eq:gooool} and $h\le\frac{R^2}4=\frac 1{4^7C_0^6A^6}<\frac 1{300C_0^2A^6}$.

It follows then from \eqref{eq:gooool} that for any $P,Q\in\partial\Omega$ we have
\[
\frac{\tilde L(F(P),F(Q))}{L(P,Q)} \in [1-C_1A^6 h, 1+C_1A^6 h].
\]
If $z$ is a constant speed parametrization of $\partial\Omega$, then
\[
L(P,Q)\le |\partial\Omega|=\|z'\|_{L^1}\le 2\pi \|z'\|_{L^\infty}\le 2\pi \|z\|_{C^2}\le 2\pi C_0 \|z\|_{H^3},
\]
which yields (with $C_2:=2\pi C_0C_1$)
\[
|\tilde L(F(P),F(Q)) - L(P,Q)| \leq C_1 A^6  h |L(P,Q)| \leq C_2 A^7 h.
\]
In particular, we have $\left||\partial \tilde \Omega| - |\partial \Omega| \right|\leq C_2 A^7h$.

 Finally, fix $z$ above and let $\tilde z$ be the (unique) constant speed parametrization of $\partial \tilde \Omega$ satisfying $\tilde z(0) = F(z(0))$. Then for any $\xi\in [0,2\pi)$, we have
\begin{equation*}
\begin{split}
|z(\xi) - \tilde z(\xi)| &\leq |z(\xi) - F(z(\xi))| + |F(z(\xi)) - \tilde z(\xi)|\\
&\leq 2h + |\tilde L(F(z(\xi)),\tilde z(0)) - \tilde L(\tilde z(0), \tilde z(\xi))|\\
&\leq 2h + |\tilde L(F(z(0)), F(z(\xi))) - L(z(0), z(\xi))| + | L(z(0), z(\xi))- \tilde L(\tilde z(0), \tilde z(\xi))|\\
&\leq 2h + C_2A^7 h + \frac{\xi}{2\pi} \left||\partial \tilde \Omega| - |\partial \Omega| \right|\\
&\leq (2+2C_2) A^7 h,
\end{split}
\end{equation*}
which yields \eqref{eq:goal_arcl}.
\end{proof}

\section{Proofs of Proposition \ref{P.1.3} and Theorem \ref{T.1.7}}\label{sec:prop13}


Let us start with some estimates on fluid velocities generated by $C^{1,\gamma}$ patches.  These results apply at a fixed time, hence we drop the argument $t$ in them.  And again, while we consider here the half-plane case $D=\Rm\times\Rm^+$, the arguments are identical for the whole plane $D=\Rm^2$.

We first consider the setting from Definition \ref{D.4.1a}, and will assume that $L \geq 1$ (note that since the area of each evolving patch stays constant, we only need to choose it to be at least $\pi$ initially so that the arc-length of the patch boundary will always be at least $2\pi$).
This is done to simplify our estimates but can be replaced by $L\ge \frac 1A$ for some $A<\infty$.  Since  $L\ge \tfrac 1\pi |z(\pi)-z(0)|\ge F[z]^{-1}\ge \vertiii{\Omega}_{1,\gamma}^{-1}$, this assumption can even be omitted (at the expense of changing the constants) because the results below assume $\vertiii{\Omega}_{1,\gamma}\le A$.

The following is a crucial bound on the gradient of the component of $v$ from \eqref{4.1} normal to $\partial\Omega$, that is, on $\nabla(v(x)\cdot n_P)=\nabla v(x)n_P$, with $x=P+rn_P$ for some small $r$.

\begin{lemma}
\label{lem:onepatch}
For $\gamma>\frac{2\alpha}{1-2\alpha}$, let $\Omega\subseteq \mathbb{R}^2$ be as in Definition \ref{D.4.1a}, with $L \geq 1$ and $\vertiii{\Omega}_{1,\gamma}\leq A$ for some $A\geq 1$.   Let also $R := (4A)^{-\frac{1}{\gamma}-1}$ and $v$ be given by \eqref{4.1}
with $\omega(x) = \chi_{\Omega}(x).$  Then for any $P \in \partial \Omega$ and any $x=P+rn_P$ with $|r|\in(0,\tfrac R2)$, we have $|\nabla v(x) n_P| \leq C(\alpha,\gamma)A$.
\end{lemma}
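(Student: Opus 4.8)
The plan is to work in the local frame at $P$ (with axes $n_P^\perp$ and $n_P$), in which $v(x)\cdot n_P$ is just the second component $v^2(x)$, and to split $\Omega$ at the scale $R$: write $v^2(x)=\int_{\Omega\cap B(P,R)}K^2(x-y)\,dy+\int_{\Omega\setminus B(P,R)}K^2(x-y)\,dy$, where $K^2(z)=-z_1|z|^{-2-2\alpha}$ is the $n_P$-component of the kernel in \eqref{4.1}. For the far piece, since $x=P+rn_P$ lies in $B(P,R/2)$ and hence at distance $\ge R/2$ from $\Omega\setminus B(P,R)$, one may differentiate under the integral to get $\nabla_x\int_{\Omega\setminus B(P,R)}K^2(x-y)\,dy=\int_{\Omega\setminus B(P,R)}\nabla_xK^2(x-y)\,dy$, and the crucial point is to estimate this as an \emph{area} integral (not a boundary integral), so that $|\nabla_xK^2(x-y)|\le C|x-y|^{-2-2\alpha}$ and the tail bound $\int_{|z|\ge R/2}|z|^{-2-2\alpha}\,dz=C(\alpha)R^{-2\alpha}$ — which converges at infinity and therefore does not see $|\Omega|$ or $|\partial\Omega|$ — gives $C(\alpha)R^{-2\alpha}$. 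With $R=(4A)^{-1/\gamma-1}$ this equals $C(\alpha)(4A)^{2\alpha(1/\gamma+1)}$, and here the hypothesis $\gamma>\tfrac{2\alpha}{1-2\alpha}$ enters: it is exactly the condition $2\alpha(1+\tfrac1\gamma)\le1$, which makes $R^{-2\alpha}\le C(\alpha)A$.

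For the near piece I would use the same algebraic identity as in the derivation of the contour equation \eqref{derivation_v}: $K^2(x-y)=-\tfrac1{2\alpha}\partial_{y_1}|x-y|^{-2\alpha}$, so that the divergence theorem (legitimate since $|x-y|^{-2\alpha}\in W^{1,1}$ when $\alpha<\tfrac12$) turns $\int_{\Omega\cap B(P,R)}K^2(x-y)\,dy$ into a line integral over $\partial(\Omega\cap B(P,R))=(\partial\Omega\cap B(P,R))\cup(\Omega\cap\partial B(P,R))$. Differentiating in $x$,
\[
\nabla_x \int_{\Omega\cap B(P,R)} K^2(x-y)\,dy=\int_{\partial\Omega\cap B(P,R)}\frac{(x-y)\,(n(y)\cdot n_P^\perp)}{|x-y|^{2+2\alpha}}\,d\sigma(y)+\int_{\Omega\cap\partial B(P,R)}\frac{(x-y)\,\nu_1(y)}{|x-y|^{2+2\alpha}}\,d\sigma(y),
\]
where $n$ is the outer normal to $\Omega$ and $\nu(y)=(y-P)/R$. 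On $\Omega\cap\partial B(P,R)$ one has $|x-y|\ge R/2$ and length $\le2\pi R$, so the second integral is again $\le C(\alpha)R^{-2\alpha}\le C(\alpha)A$. For the first integral, since $n_P\cdot n_P^\perp=0$ it is bounded by $\int_{\partial\Omega\cap B(P,R)}|n(y)-n_P|\,|x-y|^{-1-2\alpha}\,d\sigma(y)$; here Lemma \ref{lem:regularity} (with $L\ge1$, so that the present $R$ is no larger than the one there) makes $\partial\Omega\cap B(P,R)$ a graph $w_2=f(w_1)$ with $|f'|\le(4A)^{-\gamma}<1$ and $|n(y)-n_P|\le2A|y-P|^\gamma$, whence, parametrizing by arc-length $\sigma$ from $P$, $|x-y|^2=w_1^2+(r-f(w_1))^2\ge w_1^2\ge(\sigma/2)^2$, the near arc has length $\le CR$, and the integral is $\le C A\int_0^{CR}\sigma^{\gamma-1-2\alpha}\,d\sigma\le C(\alpha,\gamma)A\,R^{\gamma-2\alpha}\le C(\alpha,\gamma)A$, the convergence at $\sigma=0$ using $\gamma>2\alpha$ (implied by $\gamma>\tfrac{2\alpha}{1-2\alpha}$).

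The main obstacle, and the point I would highlight, is the choice of representation for each piece. A uniform bound fails for $\nabla v$ itself, so one must work with the $n_P$-component, and the two pieces call for opposite treatments: the near piece needs the cancellation encoded by the $\partial_{y_1}$-primitive (equivalently, by the fact that the tangent half-plane contributes nothing to $v\cdot n_P$), combined with the $C^{1,\gamma}$ decay $|n(y)-n_P|\lesssim A|y-P|^\gamma$; the far piece must stay an area integral, because converting it to a boundary integral would reintroduce a factor $|\partial\Omega|\lesssim A$ (and the chord–arc constant $\lesssim A^2$) and spoil the linear dependence. Checking that $\gamma>\tfrac{2\alpha}{1-2\alpha}$ is precisely what is needed to absorb the intermediate-scale terms $R^{-2\alpha}$ into $C(\alpha,\gamma)A$ is the other place where care is required.
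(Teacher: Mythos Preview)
Your proof is correct, but it takes a genuinely different route from the paper's. The paper introduces the strip $S_P:=\{y:(y-P)\cdot n_P\in(-R,0)\}$, observes that $v_{S_P}\cdot n_P\equiv 0$ by symmetry, and then bounds the \emph{full} Jacobian of $v-v_{S_P}$ by the single area integral $\int_{\Omega\triangle S_P}|x-y|^{-2-2\alpha}\,dy$. Near $P$ the set $\Omega\triangle S_P$ lies between the curves $w_2=\pm 4A|w_1|^{1+\gamma}$ (Lemma~\ref{lem:regularity}(b)), so the near piece is $\le C(\alpha,\gamma)A\int_0^R w_1^{\gamma-1-2\alpha}\,dw_1$, and the far piece is the same tail $C(\alpha)R^{-2\alpha}$ you obtain. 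Everything stays an area integral; no divergence theorem is used.

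Your argument replaces the ``subtract a reference set'' trick by an explicit integration by parts on the near piece, so that the cancellation appears as $n(y)\cdot n_P^\perp=(n(y)-n_P)\cdot n_P^\perp$ together with the Lemma~\ref{lem:regularity}(c) bound $|n(y)-n_P|\le 2A|y-P|^\gamma$. This is a clean way to see \emph{why} the normal component is better behaved --- the $C^{1,\gamma}$ regularity of the normal is used directly rather than being hidden in the thinness of $\Omega\triangle S_P$ --- and it yields exactly the same integral $\int_0^{CR}\sigma^{\gamma-1-2\alpha}\,d\sigma$. The paper's version is slightly shorter (one does not have to track the extra arc $\Omega\cap\partial B(P,R)$ or justify differentiating the boundary integral), while yours has the advantage of making the mechanism transparent and of not needing the principal-value definition of $v_{S_P}$ for the unbounded strip.
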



\begin{proof}
Let $S_P := \{y\in \mathbb{R}^2: (y-P)\cdot n_P\in(-R,0)\}$, and let $v_{S_P}$ be given by \eqref{4.1} with $\omega=\chi_{S_P}$, evaluated as principal value.
By symmetry we have $v_{S_P} \cdot n_P \equiv 0$.  Thus
\[
\begin{split}
|\nabla v(x) n_P| &= |\nabla (v(x)-v_{S_P}(x)) n_P| \leq  |\nabla (v(x)-v_{S_P}(x)) |\leq C(\alpha) \int_{\Omega\triangle S_P} \frac{1}{|x-y|^{2+2\alpha}} dy\\
& \leq C(\alpha) \left(\underbrace{\int_{(\Omega\triangle S_P)\cap B(P,R)} \frac{1}{|x-y|^{2+2\alpha}} dy}_{=:I_1} + \underbrace{\int_{\mathbb{R}^2 \setminus B(P,R)} \frac{1}{|x-y|^{2+2\alpha}} dy}_{=:I_2}\right),
\end{split}
\]
where as before $A\triangle B:=(A\setminus B)\cup (B\setminus A)$.
Using $|x-P|<\tfrac R2$, we obtain
\begin{equation}
\label{outside}
I_2 \leq 2\pi \int_{R/2}^\infty r^{-(1+2\alpha)}  dr \leq C(\alpha) R^{-2\alpha} \leq C(\alpha) A,
\end{equation}
where in the last step we used the definition of $R$, $A\ge 1$, and $2\alpha\frac{1+\gamma}{\gamma} < 1$.

To control $I_1$, we change coordinates to $(w_1, w_2)$ from Lemma \ref{lem:regularity}(b), which then implies that  $(\Omega \triangle S_P)\cap B(P,R)$ lies between the curves $w_2 = \pm 4Aw_1^{1+\gamma}$. Hence
\[
I_1 \leq 2\int_0^R w_1^{-(2+2\alpha)} 8Aw_1^{1+\gamma} dw_1 \leq C(\alpha, \gamma) A R^{\gamma-2\alpha} \leq C(\alpha, \gamma) A,
\]
where we first used that the $w_1$ coordinate of $x$ is 0, and then that $\gamma>2\alpha$ and $R<1$.
\end{proof}

{\it Remarks.}
 1. The estimate on $\nabla v(x) n_P$ holds not only on the line normal to $\partial \Omega$ at $P,$ but also non-tangentially.
Given any $\sigma >0,$ it is easy to see that we can replace the condition on $x$ in the statement of Lemma \ref{lem:onepatch} with $|x-P| < c(R,\sigma)$ (for some $c(R,\sigma)>0$) and $(x-P) \cdot n_P \geq \sigma |x-P|$, with the conclusion being
$|\nabla v(x)n_P| \leq C(\alpha,\gamma, \sigma)A$.

2. Note that $\nabla v$ is in general not defined at $P\in \partial \Omega$ due to a lack of regularity in the tangential component $v(x) \cdot n^\perp_P$ of $v$ at $P$.
However, the argument in the proof of Lemma~\ref{lem:onepatch} can be used to show that the normal component $v(x) \cdot n_P$ is sufficiently regular at $P$, and $\nabla (v(P) \cdot n_P)$ can in fact be defined.  We will make this more precise later.


Lemma~\ref{lem:onepatch} and Lemma \ref{lem:du_crude} now yield the following.

\begin{corollary}
\label{cor:du}
Let $\Omega$ and $v$ satisfy the hypotheses of Lemma \ref{lem:onepatch}.  Then for any $x\not \in \partial \Omega$ and any $P\in\partial\Omega$ such that  $|x-P|={\rm dist}(x,\partial\Omega)=:d(x)$ we have $|\nabla v(x) \frac{x-P}{|x-P|}| \leq C(\alpha, \gamma) A$ and $| (v(x) - v(P))\cdot \frac{x-P}{|x-P|} | \leq C(\alpha, \gamma) A |x-P|$.
\end{corollary}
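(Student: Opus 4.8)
The plan is to reduce everything to the two preceding results: Lemma~\ref{lem:onepatch} handles points near $\partial\Omega$ and Lemma~\ref{lem:du_crude} handles points far from $\partial\Omega$, once one observes that the direction $\nu:=\frac{x-P}{|x-P|}$ is forced to be normal to $\partial\Omega$ at $P$.

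First I would note that since $P$ minimizes $|x-\cdot|$ over the $C^1$ curve $\partial\Omega$, differentiating a local parametrization of $\partial\Omega$ at $P$ shows that $x-P$ is orthogonal to the tangent at $P$, hence $\nu=\pm n_P$; write $d(x):=|x-P|$, so $x=P+d(x)\nu$. For the gradient bound I would split into two cases. If $d(x)<\tfrac R2$, then $x$ has the form $P+rn_P$ with $|r|=d(x)\in(0,\tfrac R2)$, so Lemma~\ref{lem:onepatch} gives $|\nabla v(x)n_P|\le C(\alpha,\gamma)A$, hence $|\nabla v(x)\nu|\le C(\alpha,\gamma)A$. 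If $d(x)\ge\tfrac R2$, then $B(x,\tfrac R2)\cap\partial\Omega=\emptyset$, so $\omega=\chi_\Omega$ is constant on $B(x,\tfrac R2)$ and Lemma~\ref{lem:du_crude} with $d=\tfrac R2$ gives $|\nabla v(x)|\le C(\alpha)(\tfrac R2)^{-2\alpha}$; since $R=(4A)^{-1/\gamma-1}$ and the hypothesis $\gamma>\frac{2\alpha}{1-2\alpha}$ is precisely $2\alpha\tfrac{1+\gamma}{\gamma}<1$, together with $A\ge1$ this is $\le C(\alpha,\gamma)A$. This proves the first estimate.

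For the second estimate I would integrate $\nabla v$ along the segment from $P$ to $x$. Set $\ell(s):=P+s(x-P)$ for $s\in[0,1]$. Because $P$ realizes $\mathrm{dist}(x,\partial\Omega)$, the triangle inequality used in both directions gives $\mathrm{dist}(\ell(s),\partial\Omega)=s\,d(x)$ for every $s\in[0,1]$; in particular the open segment avoids $\partial\Omega$. Using that $v$ is continuous on $\mathbb{R}^2$ (the bound \eqref{uHold} applies to \eqref{4.1} by the remark after Lemma~\ref{lemma:uniform_u_bound}) and smooth off $\partial\Omega$ (Lemma~\ref{lem:du_crude}), the scalar function $g(s):=(v(\ell(s))-v(P))\cdot\nu$ is continuous on $[0,1]$ and $C^1$ on $(0,1)$, with $g(0)=0$ and $g'(s)=d(x)\,(\nabla v(\ell(s))\nu)\cdot\nu$. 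For each $s\in(0,1)$: if $s\,d(x)<\tfrac R2$ I would apply Lemma~\ref{lem:onepatch} at $\ell(s)=P+(s\,d(x))\nu$ (which is of the form $P+rn_P$ with $|r|\in(0,\tfrac R2)$), while if $s\,d(x)\ge\tfrac R2$ I would apply Lemma~\ref{lem:du_crude} with ball radius $\tfrac R2$, using $\mathrm{dist}(\ell(s),\partial\Omega)=s\,d(x)\ge\tfrac R2$; in both regimes $|\nabla v(\ell(s))\nu|\le C(\alpha,\gamma)A$, so $|g'(s)|\le C(\alpha,\gamma)A\,d(x)$ uniformly on $(0,1)$. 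Hence $g$ is Lipschitz on $[0,1]$, so $g(1)=\int_0^1 g'(s)\,ds$ and $|(v(x)-v(P))\cdot\nu|=|g(1)|\le C(\alpha,\gamma)A\,|x-P|$.

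The case distinctions and the constant bookkeeping with $R$ are routine; the only point requiring a little care is the endpoint $s=0$, where $\nabla v$ need not be defined at $P\in\partial\Omega$ — this is handled by continuity of $v$ (so $g$ extends continuously to $s=0$) together with the uniform bound on $|g'|$ on $(0,1)$, which makes $g$ Lipschitz on $[0,1]$ and the fundamental theorem of calculus applicable. The one genuinely useful observation is the identity $\mathrm{dist}(\ell(s),\partial\Omega)=s\,d(x)$, which is what lets Lemmas~\ref{lem:onepatch} and~\ref{lem:du_crude} between them cover the whole segment.
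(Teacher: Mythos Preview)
Your proof is correct and follows essentially the same approach as the paper's own proof: observe that the closest-point direction is $\pm n_P$, split the gradient bound into the near case (Lemma~\ref{lem:onepatch}) and the far case (Lemma~\ref{lem:du_crude}, using $R^{-2\alpha}\le C(\alpha,\gamma)A$), and then integrate along the segment from $P$ to $x$ using that $P$ remains the closest boundary point to every point on that segment. The only cosmetic difference is that the paper, having established the first claim, simply invokes it at each $y_s$ on the segment, whereas you re-run the near/far dichotomy at each $\ell(s)$; the content is identical.
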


\begin{proof}
Notice that $\frac{x-P}{|x-P|}\in\{n_P,-n_P\}$. If $d(x) < \tfrac R2$, then the first claim follows from Lemma \ref{lem:onepatch}. Otherwise, Lemma \ref{lem:du_crude} yields
$
|\nabla v(x) n_{P}| \leq |\nabla v(x)| \leq C(\alpha) R^{-2\alpha} \leq C(\alpha) A
$
because $A\ge 1$ and $2\alpha\frac{1+\gamma}{\gamma} < 1$.

To prove the second claim, note that if $y_s := x + s (P-x)$ for $s\in[0,1]$, then ${\rm dist}(y_s,P)={\rm dist}(y_s,\partial\Omega)$. Hence the first claim yields $|\nabla (v(y_s) \cdot n_{P})| \leq C(\alpha, \gamma) A$ for $s\in[0,1)$.  Integrating this in $s\in[0,1)$ and using continuity of $u$ yields the second claim.
\end{proof}

We now extend Lemma~\ref{lem:onepatch} and Corollary \ref{cor:du} to the case of $N$ patches $\Omega_k\subseteq D$ with disjoint boundaries.
We let $2\pi L_k:=|\partial\Omega_k|$, and if $P\in\partial\Omega_k$ (such $k$ is then unique), we denote by $n_P$ the outer unit normal vector for $\Omega_k$ at $P$.  We will again assume that $|L_k|\ge 1$, and also that $|\theta_k|\le 1$, both of which are not essential but simplify our formulas.
 Finally, recall that $\partial\Omega:=\bigcup_{k=1}^N \partial\Omega_k$.

\begin{proposition}
\label{lem:multiple_patch}
For $\gamma>\frac{2\alpha}{1-2\alpha}$, some $A\geq 1$,  and $k=1,\dots,N$, let $\Omega_k\subseteq D$ be as in Definition~\ref{D.4.1a}, with $L_k \geq 1$ and $\vertiii{\Omega_k}_{1,\gamma}\leq A$.   Assume  also $\textup{dist}(\partial\Omega_i,\partial\Omega_k) \geq \frac{1}{A}$ for all $i\neq k$ and let $R := (4A)^{-\frac{1}{\gamma}-1}$. Finally,  let $u$ be given by \eqref{eq:velocity_law} where $\omega = \sum_{k=1}^N \theta_k \chi_{\Omega_k}$ and $|\theta_k|\le 1$.
Then for any $P \in  \partial\Omega$ and any $x=P+rn_P\in\bar D$ with $|r|\in(0,\tfrac R2)$, we have $|\nabla u(x) n_P| \leq C(\alpha,\gamma)A$.
\end{proposition}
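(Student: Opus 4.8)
The plan is to isolate the patch whose boundary contains $P$. Write $u=\sum_{k=1}^N\theta_k v_k$, where $v_k$ is the half--plane velocity \eqref{eq:velocity_law} generated by $\chi_{\Omega_k}$; equivalently, $v_k$ is the whole--plane velocity \eqref{4.1} of the odd extension $\chi_{\Omega_k}-\chi_{\bar\Omega_k}$, with $\bar\Omega_k$ the reflection of $\Omega_k$ across $\partial D$. Let $k_0$ be the unique index with $P\in\partial\Omega_{k_0}$.

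I would first dispose of the terms $k\ne k_0$. Since $|r|<\tfrac R2\le\tfrac1{2A}$ and $\mathrm{dist}(\partial\Omega_{k_0},\partial\Omega_k)\ge\tfrac1A$, we get $\mathrm{dist}(x,\partial\Omega_k)\ge\tfrac1{2A}$, and, using $|\bar P-z|\ge|P-z|$ for every $z\in\bar D$, also $\mathrm{dist}(x,\partial\bar\Omega_k)\ge\tfrac1{2A}$. As $x\notin\bar\Omega_k$, the density $\chi_{\Omega_k}-\chi_{\bar\Omega_k}$ is constant on $B(x,\tfrac1{2A})$, so Lemma \ref{lem:du_crude} gives $|\nabla v_k(x)|\le C(\alpha)A^{2\alpha}\le C(\alpha)A$; these at most $N-1$ terms (with $|\theta_k|\le1$) therefore contribute at most $C(\alpha,N)A$ to $|\nabla u(x)n_P|$.

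For $k=k_0$, split $v_{k_0}=w^+-w^-$, where $w^\pm$ are the whole--plane velocities \eqref{4.1} of $\chi_{\Omega_{k_0}}$ and $\chi_{\bar\Omega_{k_0}}$. The bound $|\nabla w^+(x)n_P|\le C(\alpha,\gamma)A$ is exactly Lemma \ref{lem:onepatch} applied to $\Omega_{k_0}$ (which has $\vertiii{\Omega_{k_0}}_{1,\gamma}\le A$, $L_{k_0}\ge1$, and the same $R$) at $x=P+rn_P$. If $\mathrm{dist}(x,\partial\bar\Omega_{k_0})\ge\tfrac R2$, then $\chi_{\bar\Omega_{k_0}}$ is constant on $B(x,\tfrac R2)$ and Lemma \ref{lem:du_crude} gives $|\nabla w^-(x)|\le C(\alpha)(\tfrac R2)^{-2\alpha}\le C(\alpha,\gamma)A$, since $R^{-2\alpha}=(4A)^{2\alpha(1+\gamma)/\gamma}\le 4A$ holds precisely because $\gamma>\tfrac{2\alpha}{1-2\alpha}$. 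Together with the $k\ne k_0$ bound this settles this case.

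The main obstacle is the case $\mathrm{dist}(x,\partial\bar\Omega_{k_0})<\tfrac R2$, in which $\nabla w^-$ genuinely blows up near $x$. The key observations are that $x$ and $P$ must then be near $\partial D$ --- from $x\in\bar D$ and $\bar\Omega_{k_0}\subseteq\{y_2<0\}$ one has $\mathrm{dist}(x,\partial\bar\Omega_{k_0})\ge x_2$, hence $x_2<\tfrac R2$ and $P_2\le x_2+|r|<R$ --- so that, by Lemma \ref{lem:regularity}(d), both $n_P$ and the outer normal $n_{Q^-}$ of $\bar\Omega_{k_0}$ at the point $Q^-\in\partial\bar\Omega_{k_0}$ nearest to $x$ are almost vertical, with $|(n_P)_1|\le 2A^{1/(1+\gamma)}P_2^{\gamma/(1+\gamma)}$ and $|(n_{Q^-})_1|\le 2A^{1/(1+\gamma)}|Q^-_2|^{\gamma/(1+\gamma)}$. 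One then writes $\nabla w^-(x)n_P=\pm\nabla w^-(x)n_{Q^-}+\nabla w^-(x)(n_P\mp n_{Q^-})$, with the sign chosen so that $n_P\mp n_{Q^-}$ is the small vector; the first term is $\le C(\alpha,\gamma)A$ by Corollary \ref{cor:du} (since $x$ lies on the normal ray to $\bar\Omega_{k_0}$ at $Q^-$, at distance $<\tfrac R2$), while the second is estimated by $|\nabla w^-(x)|\le C(\alpha)\,\mathrm{dist}(x,\partial\bar\Omega_{k_0})^{-2\alpha}$ (Lemma \ref{lem:du_crude}) times $|n_P\mp n_{Q^-}|\le CA^{1/(1+\gamma)}(P_2^{\gamma/(1+\gamma)}+|Q^-_2|^{\gamma/(1+\gamma)})$. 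The remaining point, and the part requiring real work, is the pair of lower bounds $\mathrm{dist}(x,\partial\bar\Omega_{k_0})\ge|Q^-_2|$ (immediate from heights) and $\mathrm{dist}(x,\partial\bar\Omega_{k_0})\ge cA^{-1/(1+\gamma)}P_2$: the latter says $\partial\Omega_{k_0}$ cannot drop from height $P_2$ to heights below $\tfrac12P_2$ along a short sub--arc, and follows from Lemma \ref{lem:regularity}(b) together with the bound on $|(n_P)_1|$ after reflecting $x$ and $Q^-$ back to the upper half--plane. With these, both resulting terms have the form $\mathrm{dist}^{-2\alpha}\cdot(\text{height})^{\gamma/(1+\gamma)}$, which is bounded by a power of $A$ exactly because $\tfrac{\gamma}{1+\gamma}\ge 2\alpha$, i.e.\ again because $\gamma>\tfrac{2\alpha}{1-2\alpha}$. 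Collecting everything gives $|\nabla u(x)n_P|\le C(\alpha,\gamma)A$ (with a constant that also depends on $N$ unless one assumes $N\le A$).
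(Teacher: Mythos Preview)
Your proof follows the same architecture as the paper's: separate the far patches ($k\neq k_0$) via Lemma~\ref{lem:du_crude}, handle $\Omega_{k_0}$ via Lemma~\ref{lem:onepatch}, and for the reflected patch $\bar\Omega_{k_0}$ write $n_P$ as $\pm n_{Q^-}$ plus a small error, controlling the first contribution via Corollary~\ref{cor:du} and the second via Lemma~\ref{lem:du_crude} times the size of the error. The one substantive difference is how you show the error $|n_P\mp n_{Q^-}|$ is small. The paper introduces the reflected point $\bar Q_x\in\partial\Omega_{k_0}$ and writes $n_P+n_{Q_x}=(n_P-n_{\bar Q_x})+(n_{\bar Q_x}+n_{Q_x})$: the first piece is $\le 2A|P-\bar Q_x|^\gamma$ by Lemma~\ref{lem:regularity}(c), and the second equals $2|(n_{\bar Q_x})_1|$ and is handled by Lemma~\ref{lem:regularity}(d). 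This avoids any sign choice and, crucially, bypasses your separate lower bound $d\ge cP_2$: the paper only needs $|P-\bar Q_x|\le 3d$, obtained from $|x-P|\le 2\,{\rm dist}(x,\partial\Omega_{k_0})\le 2|x-\bar Q_x|\le 2d$ (the first inequality is the use of Lemma~\ref{lem:regularity}(b)) and $|x-\bar Q_x|\le|x-Q_x|=d$.

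Your route works too, but your justification of $d\ge cA^{-1/(1+\gamma)}P_2$ is the weak point: the heuristic ``$\partial\Omega_{k_0}$ cannot drop from height $P_2$ to below $\tfrac12 P_2$ along a short sub-arc'' is not quite the right mechanism. The clean argument is exactly the paper's: from $|x-P|\le 2d$ and $|x-\bar Q^-|\le d$ you get $|P-\bar Q^-|\le 3d$, and since $(\bar Q^-)_2=|Q^-_2|\le d$ this gives $P_2\le (\bar Q^-)_2+|P-\bar Q^-|\le 4d$, stronger than what you claimed and with no $A$-loss. Your observation that the constant carries a factor of $N$ (or $\|\omega\|_\infty$) from the $k\neq k_0$ terms is correct; the paper suppresses this in the statement of the proposition but does track it when the result is applied in Lemma~\ref{L.5.2}.
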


\begin{proof}
Denote by $\tilde \Omega_k$ the reflection of $\Omega_k$ with respect to the $x_1$-axis.
Since $P\in \partial \Omega_k$ for some $k$, $\min_{i\neq k}\textup{dist}(\partial\Omega_i,\partial\Omega_k) \geq \frac{1}{A} > R$, and $x\in B(P,\frac R2)\cap\bar D$,  we have ${\rm dist}(x,\Omega_i)>\frac R2$ and ${\rm dist}(x,\tilde \Omega_i)>\frac R2$ for all $i\neq k$. Due to Lemma \ref{lem:du_crude}, the total contribution to $|\nabla u(x)|$ from all the $\Omega_i$ and $\tilde\Omega_i$ with $i\neq k$ is bounded by $C(\alpha)R^{-2\alpha}$, and hence also by $C(\alpha)A$, by the definition of $R$, $A\ge 1$, and  $\frac{1+\gamma}{\gamma}2\alpha<1$.

Moreover, the contribution to $|\nabla u(x) n_{P}|$ from $\Omega_k$ is bounded by $C(\alpha,\gamma)A$ due to  Lemma \ref{lem:onepatch}. Thus it suffices to bound the contribution from $\tilde\Omega_k$.  Let
\[
\tilde v(x) := \int_{\mathbb{R}^2} \frac{(x-y)^\perp} {|x-y|^{2+2\alpha}} \chi_{\tilde\Omega_k}(y) dy,
\]
so that it suffices to show that $|\nabla \tilde v(x) n_{P}|\leq C(\alpha,\gamma)A$. Let $\tilde d_k(x) := \textup{dist}(x, \tilde{\Omega}_k)$, where the minimum is achieved at some $Q_x\in \partial \tilde \Omega_k$, and let $n_{Q_x}$ be the outer unit normal for $\tilde \Omega_k$ at $Q_x$. We then have
\[
\begin{split}
|\nabla \tilde v(x)  n_{P}| &\leq  |\nabla \tilde v(x)   n_{Q_x}|  + |\nabla \tilde v(x)  (n_{P} + n_{Q_x})| \\
&\leq C(\alpha, \gamma) A + C(\alpha) \tilde d_k(x)^{-2\alpha} |n_P + n_{Q_x}|,
\end{split}
\]
where we bounded the first term by Corollary \ref{cor:du} for $x$ and $\tilde \Omega_k$, and the second term by Lemma \ref{lem:du_crude}.

Note that if $\tilde d_k(x) \geq \frac R4$, then then the needed inequality holds because $R^{2\alpha} \geq \frac 1A$.
Hence it  suffices to show that $ |n_P + n_{Q_x}|\leq C(\alpha, \gamma) A \tilde d_k(x)^{2\alpha} $ if $\tilde d_k(x) \leq \frac R4$.   Let $\bar Q_x \in \partial\Omega_k$ be the reflection of $Q_x$ across the $x_1$-axis. Then we have
\[
|P-\bar Q_x| \leq |x-P| + |x-\bar Q_x| \leq 2 {\rm dist}(x,\partial\Omega_k)+\tilde d_k(x) \leq 3\tilde d_k(x),
\]
where in the second inequality we used that $|x-P|<\frac R2$ and Lemma \ref{lem:regularity}(b) imply $ {\rm dist}(x,\partial\Omega_k) > \frac{|x-P|}2$.
Now Lemma \ref{lem:regularity}(c), $R<1$, $L_k \geq 1$ and $\gamma > 2\alpha$ yield
\[
|n_P - n_{\bar Q_x}| \leq 2A|P-\bar Q_x|^{\gamma} \leq   6A \tilde d_k(x)^{2\alpha} .
\]
Symmetry, Lemma \ref{lem:regularity}(d), $\tilde d_k(x)=|x- Q_x|$, $x\in\bar D$,  and $\frac{\gamma}{1+\gamma}>2\alpha$ also give (with $\bar Q_x =: (q_1, q_2)$ and $Q_x = (q_1, -q_2)$)
\[
|n_{Q_x} +  n_{\bar Q_x}| = 2|n_{\bar Q_x}\cdot (1,0)| \leq 4A^{\frac{1}{1+\gamma}} q_2^{\frac{\gamma}{1+\gamma}} \leq 4A^{\frac{1}{1+\gamma}} \tilde d_k(x)^{\frac{\gamma}{1+\gamma}} \leq 4A\tilde d_k(x)^{2\alpha}.
\]
Thus $ |n_P + n_{Q_x}|\leq 10A \tilde d_k(x)^{2\alpha} $ and the proof is finished.
\end{proof}

We now obtain the following analog of Corollary \ref{cor:du} (with an identical proof).

\begin{corollary}
\label{cor:du_multiple}
Let $\omega,u$ satisfy the hypotheses of Proposition \ref{lem:multiple_patch}.  Then for any $x \in \bar D \setminus \partial\Omega$  and any $P\in\partial\Omega$ such that  $|x-P|={\rm dist}(x,\partial\Omega)=:d(x)$ we have
\begin{equation}
\label{eq:normal_v_bound}
\left| \nabla u(x) \frac{x-P}{|x-P|} \right| \leq C(\alpha, \gamma) A
\end{equation}
and
\begin{equation}
\left| (u(x) - u(P))\cdot \frac{x-P}{|x-P|} \right| \leq C(\alpha, \gamma) A |x-P| .
\label{eq:diff_v}
\end{equation}
\end{corollary}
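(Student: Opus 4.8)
The plan is to mimic exactly the proof of Corollary~\ref{cor:du}, now using Proposition~\ref{lem:multiple_patch} in place of Lemma~\ref{lem:onepatch}. Fix $x\in\bar D\setminus\partial\Omega$ and $P\in\partial\Omega$ realizing $|x-P|=\mathrm{dist}(x,\partial\Omega)=d(x)$. Since $P$ is the nearest boundary point, $\frac{x-P}{|x-P|}\in\{n_P,-n_P\}$, so that $\nabla u(x)\frac{x-P}{|x-P|}=\pm\nabla u(x)n_P$ and it suffices to bound $|\nabla u(x)n_P|$. One must check that $x$ is of the admissible form for Proposition~\ref{lem:multiple_patch}: writing $x=P+rn_P$ where $r=\pm d(x)$, the point $x$ lies in $\bar D$ by hypothesis, and the sign of $r$ causes no issue (the proposition allows $|r|\in(0,\tfrac R2)$ regardless of sign). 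So if $d(x)<\tfrac R2$, Proposition~\ref{lem:multiple_patch} immediately gives $|\nabla u(x)n_P|\le C(\alpha,\gamma)A$, hence \eqref{eq:normal_v_bound}.

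For the remaining case $d(x)\ge\tfrac R2$, the point $x$ is far from every patch: $\mathrm{dist}(x,\Omega_k)\ge d(x)\ge\tfrac R2$ for each $k$, and likewise $\mathrm{dist}(x,\tilde\Omega_k)\ge\tfrac R2$ for the reflected patches $\tilde\Omega_k$ (since a nearest point of $\tilde\Omega_k$ to $x\in\bar D$ reflects to a point of $\Omega_k$ at the same or smaller distance, as in the proof of Proposition~\ref{lem:multiple_patch}). Writing $u$ via the odd-extension form \eqref{genulaw} and applying Lemma~\ref{lem:du_crude} on each of these $2N$ pieces with $d=\tfrac R2$ gives $|\nabla u(x)|\le C(\alpha)N(\tfrac R2)^{-2\alpha}\le C(\alpha)A$, where the last inequality uses $R=(4A)^{-1/\gamma-1}$, $A\ge1$, $|\theta_k|\le1$, and $\tfrac{1+\gamma}{\gamma}2\alpha<1$ (which holds because $\gamma>\tfrac{2\alpha}{1-2\alpha}$). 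In particular $|\nabla u(x)n_P|\le C(\alpha,\gamma)A$, completing \eqref{eq:normal_v_bound} in all cases.

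For \eqref{eq:diff_v}, set $y_s:=x+s(P-x)$ for $s\in[0,1]$, so $y_s$ lies on the segment from $x$ to its nearest boundary point $P$; then $\mathrm{dist}(y_s,\partial\Omega)=|y_s-P|=(1-s)|x-P|$, so $P$ is also a nearest boundary point to $y_s$, and $\frac{y_s-P}{|y_s-P|}=\frac{x-P}{|x-P|}$. Moreover $y_s\in\bar D$ since both $x,P\in\bar D$ and $\bar D$ is convex. Applying \eqref{eq:normal_v_bound} at each $y_s$ gives $|\nabla u(y_s)\frac{x-P}{|x-P|}|\le C(\alpha,\gamma)A$ for $s\in[0,1)$. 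Since $u$ is continuous on $\bar D$ (Lemma~\ref{lemma:uniform_u_bound}; note $\omega$ is a fixed compactly supported bounded function here), we may integrate the directional derivative of $(u(y_s)-u(P))\cdot\frac{x-P}{|x-P|}$ in $s$ over $[0,1]$ and use $\frac{d}{ds}y_s=P-x$ to obtain $|(u(x)-u(P))\cdot\frac{x-P}{|x-P|}|\le C(\alpha,\gamma)A\,|x-P|$, which is \eqref{eq:diff_v}.

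The argument is entirely routine given the earlier results; the only point requiring a little care is the verification, in the $d(x)\ge\tfrac R2$ branch, that the reflected patches $\tilde\Omega_k$ are also at distance $\ge\tfrac R2$ from $x$ (so that Lemma~\ref{lem:du_crude} applies to every term of the odd-extended Biot--Savart integral), together with bookkeeping of the exponent inequality $\tfrac{1+\gamma}{\gamma}2\alpha<1$ that makes $R^{-2\alpha}\le C A$. Neither is a genuine obstacle, so there is no hard step here — the corollary is a direct consequence of Proposition~\ref{lem:multiple_patch}.
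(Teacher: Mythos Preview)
Your proof is correct and follows exactly the paper's approach (the paper itself says the proof is identical to that of Corollary~\ref{cor:du}, with Proposition~\ref{lem:multiple_patch} in place of Lemma~\ref{lem:onepatch}). One small wording slip: ``$\mathrm{dist}(x,\Omega_k)\ge d(x)$'' fails if $x\in\Omega_k$, but what Lemma~\ref{lem:du_crude} actually needs---and what holds since $B(x,d(x))\cap\partial\Omega=\emptyset$---is that each $\chi_{\Omega_k}$ (and each $\chi_{\tilde\Omega_k}$) is constant on $B(x,d(x))$.
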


%

Therefore,  the normal component of the velocity $u$ generated by such $\omega$ is Lipschitz in the normal direction relative to $\partial\Omega$.
We will also need this result for $\partial D$.

\begin{proposition}
\label{prop:vertical_v}
Let $\omega,u$ satisfy the hypotheses of Proposition \ref{lem:multiple_patch}.
Then for any $x = (x_1, x_2) \in \bar D$ we have
\begin{equation}
\label{eq:u2_bound}
|u_2(x)| \leq C(\alpha, \gamma) N A^{\frac{2+\gamma}{1+\gamma}} x_2.
\end{equation}
\end{proposition}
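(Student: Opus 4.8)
The plan is to estimate the vertical component $u_2(x)$ directly from the velocity law \eqref{eq:velocity_law}, exploiting the fact that the kernel is tangential to $\partial D$. Writing $u = \sum_k \theta_k u^{(k)}$ where $u^{(k)}$ is the velocity generated by the single patch $\Omega_k$, it suffices (using $|\theta_k|\le 1$) to bound $|u^{(k)}_2(x)|$ for each $k$ by $C(\alpha,\gamma)A^{(2+\gamma)/(1+\gamma)} x_2$, since there are $N$ patches. The key structural observation is that the integrand in \eqref{eq:velocity_law} for $u^{(k)}_2(x)$, namely
\[
\frac{-(x_1-y_1)}{|x-y|^{2+2\alpha}} - \frac{-(x_1-y_1)}{|x-\bar y|^{2+2\alpha}} = -(x_1-y_1)\left(\frac{1}{|x-y|^{2+2\alpha}} - \frac{1}{|x-\bar y|^{2+2\alpha}}\right),
\]
vanishes when $x_2=0$, and more precisely is controlled by $x_2$ times the $y$-integrable quantity coming from the difference of the two kernels. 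Indeed, for $x\in\bar D$ and $y\in D$ one has $|x-\bar y|\ge|x-y|$, and an elementary computation (mean value theorem applied to $s\mapsto (|x-y|^2 + 4s x_2 y_2)^{-1-\alpha}$ as $s$ runs over $[0,1]$, noting $|x-\bar y|^2 = |x-y|^2 + 4x_2y_2$) gives
\[
\left| \frac{1}{|x-y|^{2+2\alpha}} - \frac{1}{|x-\bar y|^{2+2\alpha}} \right| \le C(\alpha)\, \frac{x_2 y_2}{|x-y|^{4+2\alpha}}.
\]

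With this bound, $|u^{(k)}_2(x)| \le C(\alpha)\, x_2 \int_{\Omega_k} \frac{|x_1-y_1|\, y_2}{|x-y|^{4+2\alpha}}\,dy \le C(\alpha)\, x_2 \int_{\Omega_k} \frac{y_2}{|x-y|^{3+2\alpha}}\,dy$. It then remains to show $\int_{\Omega_k} \frac{y_2}{|x-y|^{3+2\alpha}}\,dy \le C(\alpha,\gamma) A^{(1+2\gamma)/(1+\gamma)}$ or so, i.e. a bound by a power of $A$ independent of $x$ (the precise exponent $(2+\gamma)/(1+\gamma)$ in the statement absorbs whatever comes out). I would split this integral into the region far from $\partial\Omega_k$, say $|x-y|\ge \frac R2$ with $R=(4A)^{-1/\gamma-1}$, where the kernel is bounded by $C(\alpha)R^{-3-2\alpha}$ and the integral over $\Omega_k$ (of area $\le CA^2$, with $y_2\le CA$) contributes $C(\alpha,\gamma)$ times a power of $A$; and the region near $\partial\Omega_k$, i.e. $y\in B(P,R)$ where $P\in\partial\Omega_k$ realizes ${\rm dist}(x,\partial\Omega_k)$. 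In the near region I would use Lemma \ref{lem:regularity}(b,d): in the coordinates $(w_1,w_2)$ centered at $P$, the patch lies in the graph region $|w_2| \le 4L_k^{-1-\gamma}A|w_1|^{1+\gamma}$, and $y_2 \le x_2 + |x-y|$, or better, control $y_2$ on $\partial\Omega_k\cap B(P,R)$ via Lemma \ref{lem:regularity}(d) which bounds the vertical component of the normal; integrating the singular kernel $|x-y|^{-3-2\alpha}$ against $y_2$ over this thin curved strip converges precisely because $\gamma > \frac{2\alpha}{1-2\alpha}$ (the same threshold that made $I_1$ converge in Lemma \ref{lem:onepatch}), yielding a bound by $C(\alpha,\gamma)$ times a power of $A$.

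The main obstacle I anticipate is the bookkeeping of the exponent of $A$ in the near-boundary integral: one must carefully track how $L_k^{-1}\le A$ (since $L_k\ge 1$ is assumed, but $L_k^{-1}$ could be as large as the comparison via $F[z]\le A$ permits — actually $L_k\ge 1$ is assumed here so $L_k^{-1}\le 1$) and how $R^{\gamma-2\alpha}$, $R^{-3-2\alpha}$, and the factor $y_2 \lesssim A|w_1|^{1+\gamma}$ (on the patch boundary, near $P$, or $y_2 \lesssim x_2 + R$ crudely) combine. The cleanest route is probably: on $\Omega_k \cap B(P,R)$ bound $y_2$ by $x_2 + |x-y| \le x_2 + R$ is too lossy since we already pulled out one $x_2$; instead bound $\int_{\Omega_k\cap B(P,R)} \frac{y_2}{|x-y|^{3+2\alpha}} dy$ directly using that $\Omega_k\cap B(P,R)$ is the thin region under the graph and $y_2 \le 4L_k^{-1-\gamma}A|w_1|^{1+\gamma} + |({\rm offset\ of\ }P)|$; since $P$ itself satisfies $|n_P\cdot(1,0)|\le 2L_k^{-1}A^{1/(1+\gamma)}p_2^{\gamma/(1+\gamma)}$ by Lemma \ref{lem:regularity}(d), one controls $p_2$ and hence all of $y_2$ on the strip. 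Carrying out the resulting one-dimensional integral $\int_0^R w_1^{-3-2\alpha}\cdot (A w_1^{1+\gamma} + p_2)\cdot(A w_1^{1+\gamma})\,dw_1$ and collecting powers gives the claimed exponent $(2+\gamma)/(1+\gamma)$ on $A$; this is routine once the setup is fixed, so the real work is just organizing the geometric inputs from Section \ref{sec:regularity}. Finally, summing over $k$ produces the factor $N$ in \eqref{eq:u2_bound}.
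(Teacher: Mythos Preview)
There is a genuine gap: the integral $\int_{\Omega_k} \frac{y_2}{|x-y|^{3+2\alpha}}\,dy$ that you arrive at is in general \emph{divergent}. Indeed, if $x$ lies in the interior of $\Omega_k$ with ${\rm dist}(x,\partial\Omega_k)=d>0$, then $B(x,d)\subseteq\Omega_k$ and on this ball $y_2\ge x_2-d$; whenever $x_2>d$ the integrand is bounded below by a positive multiple of $|x-y|^{-3-2\alpha}$, whose integral over $B(x,d)$ is infinite since $3+2\alpha>2$. Your mean-value estimate on the kernel difference is correct but too lossy: it discards the oddness of $(x_1-y_1)|x-y|^{-2-2\alpha}$, which is what makes the area integral of the first kernel finite in the first place. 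The subsequent ``near region'' discussion compounds the problem by describing $\partial\Omega_k\cap B(P,R)$ as lying in the graph strip $|w_2|\le 4L_k^{-1-\gamma}A|w_1|^{1+\gamma}$ and then integrating over that strip --- but the integral is over the solid patch $\Omega_k$, not its boundary.

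The paper's fix is to convert to a \emph{boundary} integral before estimating. Writing $\frac{y_1-x_1}{|x-y|^{2+2\alpha}}=-\frac{1}{2\alpha}\partial_{y_1}|x-y|^{-2\alpha}$ and applying the divergence theorem turns the kernel into $|x-z_k(\xi)|^{-2\alpha}$ (one power less singular, on a one-dimensional set) multiplied by $(n_y)_1=-L_k^{-1}(z_k')_2(\xi)$. The factor $(z_k')_2(\xi)$ is then controlled by Lemma~\ref{lemma:f'bis} applied to the nonnegative function $(z_k)_2$, giving $|(z_k')_2(\xi)|\le 2A^{1/(1+\gamma)}(z_k)_2(\xi)^{\gamma/(1+\gamma)}$; combined with $|x-\bar z_k(\xi)|\ge (z_k)_2(\xi)$ this cancels against the kernel difference and leaves a one-dimensional integral of $|x-z_k(\xi)|^{-2\alpha-1/(1+\gamma)}$, which converges precisely because $2\alpha<\gamma/(1+\gamma)$. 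This boundary-integral step is the missing idea --- it both tames the singularity and provides access to the geometric input (Lemma~\ref{lemma:f'bis}) that produces the linear-in-$x_2$ bound rather than the $x_2^{1-2\alpha}$ you would otherwise be stuck with.
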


\begin{proof}
We have
\begin{equation}
\begin{split}
u_2(x)&= \sum_{k=1}^N \theta_k \int_{\Omega_k} \left(
\frac{y_1-x_1}{|x-y|^{2+2\alpha}} - \frac{y_1-x_1}{|x-\bar{y}|^{2+2\alpha}}\right)dy\\
&= -\sum_{k=1}^N \frac{\theta_k }{2\alpha} \int_{\Omega_k} \partial_{y_1} \left(\frac{1}{|x-y|^{2\alpha}} - \frac{1}{|x-\bar{y}|^{2\alpha}}\right)dy\\
&= -\sum_{k=1}^N \frac{\theta_k }{2\alpha} \int_{\partial \Omega_k} (n_y)_1 \left(\frac{1}{|x-y|^{2\alpha}} - \frac{1}{|x-\bar{y}|^{2\alpha}}\right)d\sigma(y)\\
&=  \sum_{k=1}^N \frac{\theta_k}{2\alpha L_k} \int_{\mathbb{T}}  (\z_k')_2(\xi) \left( \frac{1}{|x-\z_k(\xi)|^{2\alpha}} - \frac{1}{|x-\bar \z_k(\xi)|^{2\alpha}}\right) d\xi,
\end{split}
\end{equation}
where $(n_y)_1=n_y\cdot(1,0)$  for $y\in \partial \Omega_k$ and  $(z_k)_2=z_k\cdot(0,1)$, with $z_k$ a constant speed parametrization of $\partial\Omega_k$.
Hence for any $x\in \bar D$ we have
\begin{equation}
\label{eq:u2bound}
\begin{split}
|u_2(x)| &\leq   \sum_{k=1}^N \frac{\theta_k}{2\alpha L_k} \int_{\mathbb{T}}  \frac{|(\z_k')_2(\xi)|}{|x-\z_k(\xi)|^{2\alpha}} \left| 1-\frac{ |x-\z_k(\xi)|^{2\alpha}}{|x-\bar \z_k(\xi)|^{2\alpha}}\right| d\xi\\
&\leq \sum_{k=1}^N  \frac{\theta_k }{2\alpha L_k} \int_{\mathbb{T}}  \frac{|(\z_k')_2(\xi)| 2x_2}{|x-\z_k(\xi)|^{2\alpha}|x-\bar \z_k(\xi)|} d\xi\\
&\leq   \sum_{k=1}^N \frac{2\theta_k A^{\frac{1}{1+\gamma}} x_2 }{\alpha L_k} \underbrace{ \int_{\mathbb{T}}  \frac{1}{|x-\z_k(\xi)|^{2\alpha+\frac{1}{1+\gamma}}} d\xi}_{=:T_k},
\end{split}
\end{equation}
where in the second inequality we used that $|1-b^{2\alpha}| \leq |1-b|$ for $b\ge 0$ and $\alpha\in (0,\frac{1}{2})$, as well as that $0\leq |x-\bar \z_k(\xi)| - |x-\z_k(\xi)| \leq 2x_2$; and in the last inequality we used
Lemma \ref{lemma:f'bis} for $(\z_k)_2$
and also that $|x-\bar \z_k(\xi)| \geq \max\{ (z_k)_2(\xi),  |x-\z_k(\xi)|\}$.

It now remains to show  that $T_k \leq C(\alpha,\gamma) A$. Let $d_k(x) := d(x,\partial \Omega_k)$, and  consider only the case $d_k(x)\leq 1$ because otherwise clearly $T_k \leq 2\pi$.  Let $\xi_0\in\mathbb T$ be such that $|x-z_k(\xi_0)|=d_k(x)$.
Using $d_k(x)\le 1$ and $F[z_k]\leq A$ 
yields
\[
\begin{split}
T_k &= \int_{|\xi - \xi_0| \leq 2Ad_k(x)} {|x-z_k(\xi)|^{-2\alpha-\frac{1}{1+\gamma}}} d\xi +  \int_{|\xi - \xi_0| > 2Ad_k(x)} {|x-z_k(\xi)|^{-2\alpha-\frac{1}{1+\gamma}}} d\xi \\
&\leq 4Ad_k(x)^{\frac{\gamma}{1+\gamma} - 2\alpha} + \int_{|\xi-\xi_0| > 2Ad_k(x)} {\left(|z_k(\xi)-z_k(\xi_0)|- d_k(x)\right)^{-2\alpha-\frac{1}{1+\gamma}}} d\xi\\
&\leq 4A+ \int_{|\xi-\xi_0| > 2Ad_k(x)} {\left(\frac{1}{2}|z_k(\xi)-z_k(\xi_0)|\right)^{-2\alpha-\frac{1}{1+\gamma}}} d\xi\\
&\leq 4A + (2A)^{2\alpha+\frac{1}{1+\gamma}} \int_{\mathbb{T}} |\xi - \xi_0|^{-2\alpha-\frac{1}{1+\gamma}} d\xi,
\end{split}
\]
which is bounded by $C(\alpha,\gamma)A$ due to $2\alpha < \frac{\gamma}{1+\gamma} $ and $A\geq 1$.
\end{proof}

The above results lead to the following lemma, from which Proposition \ref{P.1.3} will follow.

\begin{lemma} \label{L.5.2}
Consider the setting of Proposition \ref{P.1.3} and assume that $\omega$ is a $C^{1,\gamma}$ patch solution to \eqref{sqg}-\eqref{eq:velocity_law} on  $[0,T)$.  Then $\Phi_t(x)$ is unique for each $(x,t)\in(\bar D\setminus\partial\Omega(0))\times[0,T)$, and for each $T'\in(0, T)$, there is $B<\infty$ such that $d_t(x):={\rm dist} (x,\partial\Omega(t))$ satisfies
\begin{equation} \label{5.1}
d_t(\Phi_t(x)) \ge e^{-Bt}d_0(x) \qquad \text{and} \qquad (\Phi_t(x))_2 \ge e^{-Bt}x_2
\end{equation}
for each $(x,t)\in(\bar D\setminus\partial\Omega(0))\times[0,T']$.
\end{lemma}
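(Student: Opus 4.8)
The plan is to turn the non-tangential velocity estimates of Corollary \ref{cor:du_multiple} and Proposition \ref{prop:vertical_v} into differential inequalities for $d_t(\Phi_t(x))$ and $(\Phi_t(x))_2$. First I would fix $T'\in(0,T)$ and set $A:=\sup_{t\in[0,T']}\max_k\left(\vertiii{\Omega_k(t)}_{1,\gamma}+\textup{dist}(\partial\Omega_i(t),\partial\Omega_k(t))^{-1}+|\theta_k|\right)<\infty$, which is finite because $\omega$ is a $C^{1,\gamma}$ patch solution on $[0,T)$ with pairwise disjoint boundaries continuous in Hausdorff distance (so the distances between distinct boundaries are bounded below on $[0,T']$, and the $H^{m}$/$C^{1,\gamma}$ norms are bounded above); I would also rescale to arrange $L_k\ge 1$ and $|\theta_k|\le1$ as discussed, or simply absorb the general case into the constants. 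With $B:=C(\alpha,\gamma)A$ from those results (plus the contribution $C(\alpha,\gamma)NA^{(2+\gamma)/(1+\gamma)}$ from Proposition \ref{prop:vertical_v}), the second inequality in \eqref{5.1} is the easiest: along a trajectory $\Phi_t(x)$ with $(\Phi_t(x))_2>0$, we have $\frac{d}{dt}(\Phi_t(x))_2=u_2(\Phi_t(x),t)\ge -B(\Phi_t(x))_2$ by \eqref{eq:u2_bound}, and Gronwall gives $(\Phi_t(x))_2\ge e^{-Bt}x_2$. This also shows a trajectory starting in the open half-plane cannot reach $\partial D$ in finite time.

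For the first inequality I would argue as follows. Suppose first $\Phi_t(x)\notin\partial\Omega(t)$ for all $t\in[0,t_x)$; pick $P_t\in\partial\Omega(t)$ realizing $d_t(\Phi_t(x))=|\Phi_t(x)-P_t|$. The key point is that $d_t(\Phi_t(x))$, while not everywhere differentiable as a function of $t$ (both because the nearest point $P_t$ may jump and because $\partial\Omega(t)$ itself moves), is locally Lipschitz on $(0,t_x)$: the motion of $\partial\Omega(t)$ with bounded velocity $u$ (via \eqref{1.3} and \eqref{uLinfty}) gives an upper Lipschitz bound, and the trajectory moves with bounded speed too. Hence $d_t(\Phi_t(x))$ is absolutely continuous and it suffices to bound its derivative a.e. At a time $t$ of differentiability, I would compare $d_{t}(\Phi_{t}(x))$ with $d_{t}(\Phi_{t+h}(x))$ and with $d_{t+h}(\Phi_{t+h}(x))$. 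The first difference is controlled by the \emph{normal} Lipschitz bound \eqref{eq:normal_v_bound}--\eqref{eq:diff_v}: writing $\Phi_{t+h}(x)=\Phi_t(x)+h\,u(\Phi_t(x),t)+o(h)$, and using that the nearest-point direction $\frac{\Phi_t(x)-P_t}{|\Phi_t(x)-P_t|}=\pm n_{P_t}$, Corollary \ref{cor:du_multiple} gives $\big|d_t(\Phi_{t+h}(x))-d_t(\Phi_t(x))\big|\le |h|\,|u(\Phi_t(x),t)\cdot n_{P_t}|+o(h)$, and \eqref{eq:diff_v} together with $u(P_t,t)\cdot n_{P_t}$ being the normal speed of $\partial\Omega(t)$ at $P_t$ bounds $|u(\Phi_t(x),t)\cdot n_{P_t}|\le |u(P_t,t)\cdot n_{P_t}|+C(\alpha,\gamma)A\,d_t(\Phi_t(x))$. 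The second difference, $d_{t+h}(\Phi_{t+h}(x))-d_t(\Phi_{t+h}(x))$, is at least $-|h|\,|u(P_t,t)\cdot n_{P_t}|-o(h)$ because $\partial\Omega(t)$ moves with normal velocity $u\cdot n$ (definition \eqref{1.3}), so the nearest boundary point recedes from $\Phi_{t+h}(x)$ at normal speed at most $\|u\cdot n\|$ near $P_t$; the two normal-speed terms cancel, leaving $\frac{d}{dt}d_t(\Phi_t(x))\ge -C(\alpha,\gamma)A\,d_t(\Phi_t(x))$ a.e., and Gronwall yields $d_t(\Phi_t(x))\ge e^{-Bt}d_0(x)$ on $[0,t_x)$.

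Finally, uniqueness of $\Phi_t(x)$ for $(x,t)\in(\bar D\setminus\partial\Omega(0))\times[0,T')$ and the identity $t_x=T'$ (so the estimates hold on all of $[0,T']$): the two displayed inequalities, \emph{once established on $[0,t_x)$}, show that along any trajectory $d_t(\Phi_t(x))\ge e^{-BT'}d_0(x)>0$ and $(\Phi_t(x))_2\ge e^{-BT'}x_2\ge 0$, so $\Phi_t(x)$ stays in the region where $u$ is smooth (Lemma \ref{lem:du_crude} shows $u$ is $C^\infty$ on $\bar D\setminus\partial\Omega$, and near $\partial D$ but away from $\partial\Omega$ it is also Lipschitz by the same lemma applied with the odd extension), hence the trajectory cannot reach $\partial\Omega(t)$ and is unique by Picard--Lindelöf on compact time subintervals; thus $t_x=T'$. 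One subtlety: near the boundary $\partial D$ itself (where $u$ is defined via the odd reflection and $u_2$ vanishes on $\partial D$), I would invoke Lemma \ref{lem:du_crude} for the odd extension $\eta$ together with the remark after Lemma \ref{lemma:uniform_u_bound} to get the required local Lipschitz regularity of $u$ in $\bar D\setminus\partial\Omega$, so that uniqueness of trajectories holds up to $\bar D$.

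The main obstacle I anticipate is making rigorous the a.e.-differentiability comparison for $d_t(\Phi_t(x))$: one must carefully separate the motion of the point $\Phi_t(x)$ from the motion of the set $\partial\Omega(t)$, and check that the "bad" normal-speed contributions from these two effects exactly cancel to first order in $h$, uniformly enough that only the good term $C(\alpha,\gamma)A\,d_t$ survives. This requires the non-tangential (not merely on-the-normal-line) version of the velocity gradient bound — which is exactly Remark 1 after Lemma \ref{lem:onepatch} and is inherited by Corollary \ref{cor:du_multiple} — since $\Phi_{t+h}(x)$ is generally not on the normal ray to $\partial\Omega(t)$ through $P_t$, only close to it.
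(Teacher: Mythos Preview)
Your proposal is correct and follows essentially the same route as the paper: Proposition~\ref{prop:vertical_v} plus Gronwall for the second inequality, Lipschitz continuity of $t\mapsto d_t(\Phi_t(x))$ together with Corollary~\ref{cor:du_multiple} and the patch-motion relation \eqref{1.3} for the first, and Lemma~\ref{lem:du_crude} for uniqueness of trajectories away from $\partial\Omega$. The paper packages the ``cancellation of normal speeds'' step you flag as the main obstacle into a contradiction argument with explicit parameters $a,\delta,\delta'$ (tracking the minimizing $Q\in\partial\Omega(s')$ back to a point $\tilde Q\in\partial\Omega(s)$ near $P$) rather than splitting into two differences, but the content is the same.
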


\begin{proof}
Let $A\ge 1$ be such that
\[
A\ge \sup_{t\in[0,T']} \left[ \max_{k} \vertiii{\Omega_k(t)}_{1,\gamma}+\max_{k\neq i} {\rm dist} \left( \partial\Omega_k(t),\partial\Omega_i(t) \right)^{-1} \right],
\]
and let $B:=C(\alpha,\gamma)NA^{\frac{2+\gamma}{1+\gamma}}$, with $C(\alpha,\gamma)$ from Corollary \ref{cor:du_multiple} and Proposition \ref{prop:vertical_v}.
To satisfy the hypotheses of Proposition \ref{lem:multiple_patch} on $[0,T']$, we will assume that $|\theta_k| \leq 1$ and that the  lengths $2\pi L_k(t)$ of $\partial\Omega_k(t)$ satisfy $L_k(t)\ge 1$ for all $k$ and $t\in[0,T']$.  As we remarked before Lemma \ref{lem:onepatch} and before Proposition \ref{lem:multiple_patch},
 these two assumptions are not essential and can be removed by adjusting the constants involved in the bounds with extra factors of $A$ and $\Theta:=\sum_{k=1}^N |\theta_k|$. One can also see this by a scaling argument.
 Specifically,  the scaling $\tilde\theta_k:=\theta_k\Theta^{-1}$ and $\tilde \Omega_k(t):= \lambda \Omega_k(\lambda^{-2\alpha}\Theta^{-1} t)$  yields a patch solution $\tilde\omega$ on $[0,A^{2\alpha}\Theta T)$.  Choosing  $\lambda:=A$ makes any constant speed parametrization $Z(t)$ of $\partial \tilde \Omega(t)$ satisfy $\inf_{t\in[0,A^{2\alpha}\Theta T']}|\tilde z_k(\pi,t)-\tilde z_k(0,t)|\ge \pi$ for each $k$ because $F[Z(t)]\le A$.  Hence $|\partial\tilde\Omega_k(t)|\ge 2\pi$
 for each $k$ and $t\in[0,T']$, and of course $|\tilde \theta_k|\le 1$. If the result holds for $\tilde \theta_k$ and $\tilde \Omega_k$ on $[0,A^{2\alpha}\Theta T']$, then it also holds for $\theta_k$ and $\Omega_k$ on $[0,T']$, but with $B$ replaced by $A^{2\alpha}\Theta B$.

Fix now any $x\in \bar D\setminus\partial\Omega(0)$ and let $T'<T$ be any time such that $\Phi_t(x)\notin\partial\Omega(t)$ for all $t\le T'$. To prove the lemma, it suffices to show \eqref{5.1} with this $T'$ and the $B$ from above.

  The second claim in \eqref{5.1} now follows directly from Proposition \ref{prop:vertical_v}, so let us consider the first.
 Notice that the function $f(t):=d_t(\Phi_t(x))$ is Lipschitz on $[0,T']$ (this is proved via the argument from Lemma \ref{lem:onestep}). Note that while $f$ depends on $x$, we will suppress this in the notation. 
 Hence $f'$ exists almost everywhere and $f(t)-f(0)=\int_0^t f'(s)ds$ for $t\in[0,T']$. Gronwall's inequality now shows that if the first claim in \eqref{5.1} does not hold for some $t\in[0,T']$, then there must be $s\in[0,t]$ such that $f(s)>0$ and $f'(s)<-Bf(s)$.  Let $a:=- \frac 13(Bf(s)+f'(s))>0$ and let $\delta>0$ be such that
\begin{equation} \label{5.2}
\left| \big[u(x'',s'') - u(x',s)\big]\cdot \frac{\Phi_s(x)-x'}{|\Phi_s(x)-x'|} \right| \leq B f(s)+a
\end{equation}
whenever $|x''-\Phi_s(x)|\le \delta$, $|s''-s|\le \delta$, and $|x'-P|\le \delta$ for some $P\in\partial\Omega(s)$ such that $|\Phi_s(x)-P|=f(s)$.  Existence of such $\delta$ follows from continuity of $u$ (which holds by the last claim in Lemma \ref{lemma:uniform_u_bound}) and Corollary~\ref{cor:du_multiple}, because \eqref{eq:diff_v} shows \eqref{5.2} with $(x'',s'',x'):=(\Phi_s(x),s,P)$ and $Bf(s)$ instead of $Bf(s)+a$.

Let now $\delta':=\inf_{x'\in S}  | \Phi_s(x) - x'|  - f(s)$ (with $\delta':=\infty$ if $S=\emptyset$), where
\[
S:= \{ x'\in\partial\Omega(s) \,:\, B(x',\delta)\cap \partial\Omega(s) \cap \overline{B(\Phi_s(x),f(s))}=\emptyset\},
\]
and notice that $\delta'>0$ since $\partial\Omega(s)$ is compact and so is $S$.  Because of this, the distance of the points in $S$ ($\subseteq \partial\Omega(s)$) from $\Phi_s(x)$ exceeds $f(s)$ by more than a positive constant, and thus their dynamics will not affect $f'(s).$   
Also let $h:=C^{-1}\min\{\delta',\delta\}$ $(\le\delta)$, where $C:=2\|u\|_{L^\infty}  +a+1$.
Since $f'(s)=-(Bf(s)+3a)$, there are $s'\in(s,s+h)$ with arbitrarily small $s'-s$ such that
\begin{equation} \label{7.1}
f(s')< f(s)-(Bf(s)+2a)(s'-s).
\end{equation}
  Pick such $s'$ so that we also have $ d_H \left(\partial\Omega(s'),X_{u(\cdot,s)}^{s'-s}[\partial\Omega(s)] \right) \le a(s'-s)$ (which is possible by \eqref{1.3}),
 and let $Q\in\partial\Omega(s')$ be such that $|\Phi_{s'}(x)-Q|=f(s')$. There exists $\tilde Q\in\partial\Omega(s)$ such that 
\begin{equation}\label{qtildeq}
|\tilde Q+(s'-s)u(\tilde Q,s)-Q|\le a(s'-s).
\end{equation} 
 Therefore 
\[
|\Phi_{s}(x)-\tilde Q| \le |\Phi_{s'}(x)-Q|+|\Phi_{s'}(x)-\Phi_{s}(x)|+|\tilde Q-Q| \le |\Phi_{s'}(x)-Q|+(2\|u\|_{L^\infty}+a)h < f(s)+\delta',
\]
which implies that $\tilde Q\in \partial\Omega(s)\setminus S$.  Hence $|\tilde Q-P|\le \delta$ for some
$P\in\partial\Omega(s)$ with $|\Phi_s(x)-P|=f(s)$.
Let us now write 
\begin{align*}
\Phi_{s'}(x)-Q &= \Phi_s(x)-\tilde Q  +\Phi_{s'}(x)-\Phi_s(x) + \tilde Q -Q
\\ &= \Phi_s(x)-\tilde Q +
\int_s^{s'} [u(\Phi_{s''}(x),s'')-u(\tilde Q,s)]\,ds''  +(s'-s)u(\tilde Q,s) +\tilde Q - Q.
\end{align*}
Multiplying this equality by $\frac{\Phi_s(x)-\tilde Q}{|\Phi_s(x) - \tilde Q|},$ and using \eqref{qtildeq}, we obtain
\begin{align*}
|\Phi_{s'}(x)-Q|
 & \geq |\Phi_s(x) - \tilde Q | -\int_s^{s'} \left| \big[ u(\Phi_{s''}(x),s'') - u(\tilde Q,s)\big] \cdot \frac{\Phi_s(x)-\tilde Q}{|\Phi_s(x) - \tilde Q|} \right|\,ds'' - a(s'-s)
\\ & \geq f(s) - (Bf(s)+2a)(s'-s),
\end{align*}
where in the last step we used that $|\tilde Q-P|\le \delta$, and that from $\|u\|_{L^\infty}h \le \delta$ we have $|\Phi_{s''}(x)-\Phi_{s}(x)|\le \delta$ for any $s''\in[s,s']$, so  \eqref{5.2} applies. 
The obtained inequality contradicts \eqref{7.1}, and the proof is finished.
\end{proof}

{\it Remark.} We note that the argument above can be extended in a straightforward manner to show that for $(x,t)\in(\bar D\setminus \partial\Omega(0))\times[0,T]$ we in fact have
\begin{equation} \label{5.3}
\frac{d^+}{dt} d_t(\Phi_t(x)) =  \inf_{P\in\partial\Omega(t)\cap \partial B(\Phi_t(x),d_t(\Phi_t(x)))} \left\{ \big[u(\Phi_t(x),t) - u(P,t)\big]\cdot \frac{\Phi_t(x)-P}{|\Phi_t(x)-P|} \right\},
\end{equation}
where $\frac{d^+}{dt}$ is the right derivative.  The left derivative has $\sup$ in place of $\inf$.

\begin{proof}[Proof of Proposition \ref{P.1.3}]
(a)
This follows from Lemma \ref{L.5.2} and smoothness of $u$ away from $\partial\Omega$ (see Lemma \ref{lem:du_crude}).    Indeed, these show that $\Phi_t:[\bar D\setminus \partial\Omega(0)]\to [\bar D\setminus \partial\Omega(t)]$ is injective, and it is surjective by solving the ODE in \eqref{eq:alpha} backwards in time, with any given terminal condition $\Phi_t(x)=y\in \bar D\setminus \partial\Omega(t)$. Note that all estimates of Lemma~\ref{L.5.2} still apply in this case. 

(b)  First note that $\Phi_t:[\bar D\setminus \partial\Omega(0)]\to [\bar D\setminus \partial\Omega(t)]$
 is measure preserving because it is such when restricted to any closed subset of $\bar D\setminus \partial\Omega(0)$ (due to $\nabla \cdot u\equiv 0$, compactness of $\partial\Omega(t)$, and its continuity in time).  Continuity of $\Phi_t(x)$ and $\partial\Omega(t)$ in time also shows that $\Phi_t$ must preserve connected components of $\bar D\setminus \partial\Omega$.

In addition, since the ODE in \eqref{eq:alpha} has unique backwards-in-time solutions with terminal conditions $\Phi_t(x)=y\in \bar D\setminus \partial\Omega(t)$, and they satisfy $x\in\bar D\setminus \partial\Omega(0)$ (due to $\Phi_t:[\bar D\setminus \partial\Omega(0)]\to [\bar D\setminus \partial\Omega(t)]$ being a bijection), any $\Phi_t(x)$ for $(x,t)\in\partial\Omega_k(0)\times[0,T)$ must be in $\partial\Omega(t)$ (and hence in $\partial\Omega_k(t)$ by continuity).
We then also have that for each $t\in[0,T)$ and $y\in\partial\Omega_k(t)$ there is  a solution of \eqref{eq:alpha} such that $\Phi_t(x)=y$ (obtained by solving \eqref{eq:alpha} backwards), and $\Phi_t$ being a bijection  shows that we must have $x\in\partial\Omega_k(0)$.

Finally,  \eqref{1.32} together with uniform continuity of $u$ on a neighborhood of the compact set $\partial\Omega(t)\times\{t\}$  shows that \eqref{1.3} holds for each $t\in(0,T)$.  Hence $\omega$ is a patch solution to \eqref{sqg}-\eqref{eq:velocity_law} on  $[0,T)$.
\end{proof}

\begin{proof}[Proof of Theorem \ref{T.1.7}]
This is  identical to the proofs of Proposition \ref{P.1.3} and Theorem \ref{T.1.1} in the case $D=\Rm\times\Rm^+$, but with Theorem \ref{thm:local} being valid for all $\alpha\in(0,\frac 12)$ when $D=\Rm^2$ \cite{g}, so that Proposition \ref{prop:equal}, Corollary \ref{C.5.7}, and Theorem \ref{thm:unique_patch} then also hold with $\alpha\in(0,\frac 12)$.
\end{proof}

\end{document}